\documentclass[12pt, a4paper, leqno]{article}
\usepackage[margin=2.5cm]{geometry}
\usepackage[english]{babel}

\usepackage[runin]{abstract}
\usepackage{titling}

\usepackage{amsmath}
\usepackage{amsthm}
\usepackage{amsfonts}
\usepackage{amssymb}
\usepackage{stmaryrd}%---for double brackets
\usepackage{bbm}%---for \mathbbm{1}
\usepackage{mathtools}
\usepackage{clrscode}
\usepackage{enumitem}
\usepackage{mdwlist}
\usepackage{diagrams}

%---abstract
\setlength{\abstitleskip}{-\parindent}
\setlength{\absleftindent}{0pt}
\setlength{\absrightindent}{0pt}
\abslabeldelim{.}
%---abstract END

%---keywords abstract-like
\newcommand{\keywordsname}{Key words}
\makeatletter
\newcommand{\keywords}[1]{%
\begin{@bstr@ctlist}
\hspace*{\abstitleskip}{\abstractnamefont\keywordsname\@bslabeldelim}\abstracttextfont\
#1%
\par\end{@bstr@ctlist}
}
\makeatother
%---keywords abstract-like END

%---subjclass abstract-like
\newcommand{\subjclassname}{Mathematics subject classification}
\makeatletter
\newcommand{\subjclass}[2][2010]{%
\begin{@bstr@ctlist}
\hspace*{\abstitleskip}{\abstractnamefont\subjclassname\ (#1)\@bslabeldelim}\abstracttextfont\
#2%
\par\end{@bstr@ctlist}
}
\makeatother
%---subjclass abstract-like END

%---\and from latex source, modified
\makeatletter
\def\and{%				%begin{tabular}
	\end{tabular}%
	%\hskip 1em \@plus.17fil%
	and%
	\begin{tabular}[t]{c}}%
					%\end{tabular}
\makeatother
%--\and from latex source, modified END

%---thanks without a footnotemark
\makeatletter
\def\thanks#1{%\footnotemark
\protected@xdef\@thanks{\@thanks
\protect\footnotetext[\the\c@footnote]{#1}}%
}
\makeatother
%---thanks without a footnotemark END

%--address and email amsart-like, modified
\makeatletter
\let\addresses\@empty      %\let\thankses\@empty
\newcommand{\address}[2][]{\g@addto@macro\addresses{\address{#1}{#2}}}
\newcommand{\curraddr}[2][]{\g@addto@macro\addresses{\curraddr{#1}{#2}}}
\newcommand{\email}[2][]{\g@addto@macro\addresses{\email{#1}{#2}}}
\newcommand{\urladdr}[2][]{\g@addto@macro\addresses{\urladdr{#1}{#2}}}
%
%---[...]
%
\def\enddoc@text{%\ifx\@empty\@translators \else\@settranslators\fi
  \ifx\@empty\addresses \else\@setaddresses\fi}
\AtEndDocument{\enddoc@text}
%---[...]
\def\emailaddrname{e-mail}
%---[...]
\def\@setaddresses{\par
  \nobreak \begingroup
%\footnotesize
%
%---[...]
%
%  \def\\{\unskip, \ignorespaces}%
  \interlinepenalty\@M
  \def\address##1##2{\begingroup%
    \par\addvspace\bigskipamount%\indent
    \@ifnotempty{##1}{(\ignorespaces##1\unskip) }%
    {\noindent\ignorespaces##2}\par\endgroup}%
%
%---[...]
%
  \def\email##1##2{\begingroup
    \@ifnotempty{##2}{\nobreak\noindent\emailaddrname
      \@ifnotempty{##1}{, \ignorespaces##1\unskip}\/:\space
      \ttfamily##2\par}\endgroup}%
%
%---[...]
%
  \addresses
  \endgroup
}

\makeatother
%---address and email amsart-like, modified END

%---enumerate-like lists

\makeatletter
\def\cstar#1{\expandafter\@cstar\csname c@#1\endcsname}
\def\@cstar#1{\ifcase#1\or $\ast$\or $\ast\ast$\or $\ast\ast\ast$\fi}
\AddEnumerateCounter{\cstar}{\@cstar}{$\ast\ast\ast$}
\makeatother

\newlist{conditions}{enumerate}{1}
\newlist{altconditions}{enumerate}{1}
\newlist{starconditions}{enumerate}{1}
\setlist[conditions]{label=\normalfont(\alph*),ref=\normalfont\alph*}
\setlist[altconditions]{label=\normalfont(\alph*$'$),ref=\normalfont\alph*$'$}
\setlist[starconditions]{label=\normalfont(\cstar*),ref=\normalfont\cstar*}
\newcounter{favoritecondition}

%---enumerate-like lists END

%---math commands and environments

\newcommand{\rank}{\func{rank}}
\newcommand{\SC}{\mathcal{S}}
\newcommand{\Z}{\mathbb{Z}}
\newcommand{\R}{\mathbb{R}}
\newcommand{\X}{\mathcal{X}}
\newcommand{\C}{\mathcal{C}}
\newcommand{\RC}{\mathcal{R}}
\newcommand{\F}{\mathbb{F}}
\newcommand{\HB}{\mathbb{H}}
\newcommand{\CB}{\mathbb{C}}
\newcommand{\SB}{\mathbb{S}}
\newcommand{\T}{\mathbb{T}}
\newcommand{\K}{\mathbb{K}}
\newcommand{\Hom}{\func{Hom}}
\newcommand{\FC}{\mathcal{F}}
\newcommand{\TC}{\mathcal{T}}
\newcommand{\Y}{\mathcal{Y}}
\newcommand{\G}{\mathbb{G}}
\newcommand{\VB}{\func{VB}}
\newcommand{\A}{\mathcal{A}}
\newcommand{\PC}{\mathcal{P}}
\newcommand{\PB}{\mathbb{P}}
\newcommand{\codim}{\func{codim}}
\newcommand{\one}{\mathbbm{1}}
\newcommand{\cupproduct}{\mathbin{\smile}}
\newcommand{\Halg}{H_{\mathrm{alg}}}
\newcommand{\Hstr}{H_{\mathrm{str}}}
\newcommand{\VBRstr}{\VB_{\R\mhyphen\mathrm{str}}}
\newcommand{\V}{\mathbb{V}}
\newcommand{\cl}{\func{cl}}
\newcommand{\Spec}{\func{Spec}}
\newcommand{\HCalg}{H_{\CB\mhyphen\mathrm{alg}}}
\newcommand{\HCstr}{H_{\CB\mhyphen\mathrm{str}}}
\newcommand{\HCalgeven}{H_{\CB\mhyphen\mathrm{alg}}^{\mathrm{even}}}
\newcommand{\HCstreven}{H_{\CB\mhyphen\mathrm{str}}^{\mathrm{even}}}
\newcommand{\Heven}{H^{\mathrm{even}}}
\newcommand{\VBCstr}{\VB_{\CB\mhyphen\mathrm{str}}}
\newcommand{\ch}{\func{ch}}
\newcommand{\Q}{\mathbb{Q}}
\newcommand{\KCstr}{K_{\CB\mhyphen\mathrm{str}}}
\newcommand{\KCalg}{K_{\CB\mhyphen\mathrm{alg}}}
\newcommand{\U}{\mathbb{U}}
\newcommand{\E}{\mathbb{E}}

\newtheorem{theorem}{Theorem}[section]
\newtheorem{corollary}[theorem]{Corollary}
\newtheorem{proposition}[theorem]{Proposition}
\newtheorem{lemma}[theorem]{Lemma}
\theoremstyle{definition}
\newtheorem*{acknowledgements}{Acknowledgements}
\newtheorem{definition}[theorem]{Definition}
\newtheorem{example}[theorem]{Example}
\newtheorem{notation}[theorem]{Notation}
\newtheorem{remark}[theorem]{Remark}
\theoremstyle{remark}
\newtheorem*{assertion}{\indent Assertion}

%---abs and norm
\DeclarePairedDelimiter\abs{\lvert}{\rvert}%
\DeclarePairedDelimiter\norm{\lVert}{\rVert}%

% Swap the definition of \abs* and \norm*, so that \abs
% and \norm resizes the size of the brackets, and the 
% starred version does not.
\makeatletter
\let\oldabs\abs
\def\abs{\@ifstar{\oldabs}{\oldabs*}}
\let\oldnorm\norm
\def\norm{\@ifstar{\oldnorm}{\oldnorm*}}
\makeatother
%---abs and norm END

%---hyphen in math mode
\mathchardef\mhyphen="2D
%---hyphen in math mode END

%---math commands and environments END

\title{\bf Stratified-algebraic vector bundles}
\date{}
\author{Wojciech Kucharz\thanks{The first author was partially supported by
NCN (Poland) grant 2011/01/B/ST1/01289.} \and Krzysztof
Kurdyka\thanks{The second author was partially supported by ANR (France)
grant STAAVF.}}

\address{Wojciech Kucharz\\Institute of Mathematics\\Faculty of Mathematics and Computer
Science\\Jagiellonian University\\ul. \L{}ojasiewicza 6\\30-348
Krak\'ow\\Poland}
\email{Wojciech.Kucharz@im.uj.edu.pl}

\address{Krzysztof Kurdyka\\Laboratoire de Math\'ematiques\\UMR 5175 du
CNRS\\Universit\'e de Savoie\\Campus Scientifique\\73 376 Le
Bourget-du-Lac Cedex\newline France}
\email{kurdyka@univ-savoie.fr}

\usepackage[pdftex, pdfauthor={\theauthor}, pdftitle={\thetitle}]{hyperref}

\begin{document}
\maketitle
\thispagestyle{empty}

\begin{abstract}
We investigate stratified-algebraic vector bundles on a real algebraic
variety $X$. A~stratification of $X$ is a finite collection of pairwise
disjoint, Zariski locally closed subvarieties whose union is $X$. A
topological vector bundle $\xi$ on $X$ is called a stratified-algebraic
vector bundle if, roughly speaking, there exists a stratification $\SC$
of $X$ such that the restriction of $\xi$ to each stratum $S$ in $\SC$
is an algebraic vector bundle on $S$. In particular, every algebraic
vector bundle on $X$ is stratified-algebraic. It turns out that
stratified-algebraic vector bundles have many surprising properties,
which distinguish them from algebraic and topological vector bundles.
\end{abstract}

\keywords{Real algebraic variety, stratification, stratified-algebraic
vector bundle, stratified-regular map.}

\subjclass{14P25, 14P99, 14F25, 19A49.}

\tableofcontents

%\begin{document} %--needed for spell checking in vim
\section{Introduction and main results}\label{sec-1}
In real algebraic geometry, the role of algebraic, semi-algebraic and
Nash vector bundles is firmly established. Vector bundles of a new type,
called stratified-algebraic vector bundles, are introduced and
investigated in this paper. Stratified-algebraic vector bundles form an
intermediate category between algebraic and semi-algebraic vector
bundles. They have many desirable features of algebraic vector bundles,
but are more flexible. Some of their properties and applications are
quite unexpected.

For background material on real algebraic geometry we refer to
\cite{bib9}. The term \emph{real algebraic variety} designates a locally
ringed space isomorphic to an algebraic subset of $\R^N$, for some $N$,
endowed with the Zariski topology and the sheaf of real-valued regular
functions (such an object is called an affine real algebraic variety in
\cite{bib9}). The class of real algebraic varieties is identical with
the class of quasi-projective real varieties, cf.
\cite[Proposition~3.2.10, Theorem~3.4.4]{bib9}. Morphisms of real
algebraic varieties are called \emph{regular maps}. Each real algebraic
variety carries also the Euclidean topology, which is induced by the
usual metric on $\R$. Unless explicitly stated otherwise, all
topological notions relating to real algebraic varieties refer to the
Euclidean topology.

Let $X$ be a real algebraic variety. By a \emph{stratification} of $X$
we mean a finite collection $\X$ of pairwise disjoint, Zariski locally
closed subvarieties whose union is $X$. Each subvariety in $\X$ is
called a \emph{stratum} of $\X$; a stratum can be empty. The
stratification $\X$ is said to be \emph{nonsingular} if each stratum in
it is a nonsingular subvariety. A stratification $\X'$ of $X$ is said to
be a \emph{refinement} of $\X$ if each stratum of $\X'$ is contained in
some stratum of $\X$. There is a nonsingular stratification of $X$ which
is a refinement of $\X$. If $\X_1$ and $\X_2$ are stratifications of
$X$, then the collection $\{ S_1 \cap S_2 \mid S_1 \in \X_1, S_2 \in
\X_2 \}$ is a stratification of $X$ that is a refinement of $\X_i$ for
$i=1,2$. These facts will be frequently tacitly used. The stratification
$\{X\}$ of $X$, consisting of only one stratum, is said to be
\emph{trivial}.

Let $\F$ stand for $\R$, $\CB$ or $\HB$ (the quaternions). All $\F$-vector
spaces will be left $\F$-vector spaces. When convenient, $\F$ will be
identified with $\R^{d(\F)}$, where ${d(\F) = \dim_{\R}\F}$.

For any topological $\F$-vector bundle $\xi$ on $X$, denote by $E(\xi)$
its total space and by $\pi(\xi) \colon E(\xi) \to X$ the bundle
projection. The fiber of $\xi$ over a point $x$ in $X$ is ${E(\xi)_x
\coloneqq \pi(\xi)^{-1}(x)}$. Denote by $\varepsilon_X^n(\F)$ the
standard trivial $\F$-vector bundle on $X$ with total space $X \times
\F^n$, where $X \times \F^n$ is regarded as a real algebraic variety.
By an \emph{algebraic $\F$-vector bundle} on $X$ we mean an algebraic
$\F$-vector subbundle of $\varepsilon_X^n(\F)$ for some $n$ (cf.
\cite[Chapters~12 and 13]{bib9} for various characterizations of
algebraic $\F$-vector bundles). In particular, if $\xi$ is an algebraic
$\F$-vector subbundle of $\varepsilon_X^n(\F)$, then $E(\xi)$ is a
Zariski closed subvariety of $X \times \F^n$, $\pi(\xi)$ is the
restriction of the canonical projection $X \times \F^n \to X$, and
$E(\xi)_x$ is an $\F$-vector subspace of $\{x\}\times\F^n$ for each
point $x$ in $X$.

Given a stratification $\X$ of $X$, we now introduce the crucial notion
for this paper.

\begin{definition}\label{def-1-1}
An \emph{$\X$-algebraic $\F$-vector bundle} on $X$ is a topological
$\F$-vector subbundle $\xi$ of $\varepsilon_X^n(\F)$, for some $n$, such
that the restriction $\xi|_S$ of $\xi$ to each stratum $S$ of $\X$ is an
algebraic $\F$-vector subbundle of $\varepsilon_S^n(\F)$. If $\xi$ and
$\eta$ are $\X$-algebraic $\F$-vector bundles on $X$, an \emph{$\X$-algebraic
morphism} $\varphi \colon \xi \to \eta$ is a morphism of topological
$\F$-vector bundles which induces a morphism of algebraic $\F$-vector
bundles $\varphi_S \colon \xi|_S \to \eta|_S$ for each stratum $S$ in
$\X$.
\end{definition}

The conditions imposed on $\varphi$ mean that $\varphi \colon E(\xi) \to
E(\eta)$ is a continuous map, ${\pi(\xi) = \pi(\eta) \circ \varphi}$, the
restriction $\varphi_x \colon E(\xi)_x \to E(\eta)_x$ of $\varphi$ is an
$\F$-linear transformation for each point $x$ in $X$, and the
restriction $\varphi_S \colon \pi(\xi)^{-1}(S) \to \pi(\eta)^{-1}(S)$
of $\varphi$ is a regular map of real algebraic varieties for each
stratum $S$ in $\X$.

If $\xi$ is as in Definition~\ref{def-1-1}, we also say that $\xi$ is an
\emph{$\X$-algebraic $\F$-vector subbundle of $\varepsilon_X^n(\F)$}.

One readily checks that $\X$-algebraic $\F$-vector bundles on $X$
(together with $\X$-algebraic morphisms) form a category. An
$\X$-algebraic morphism of $\X$-algebraic $\F$-vector bundles is an
isomorphism if and only if it is a bijective map. In particular, the
category of algebraic $\F$-vector bundles on $X$ coincides with the
category of $\{X\}$-algebraic $\F$-vector bundles on $X$.

\begin{definition}\label{def-1-2}
A \emph{stratified-algebraic $\F$-vector bundle} on $X$ is an
$\SC$-algebraic $\F$-vector bundle for some stratification $\SC$ of $X$.
If $\xi$ and $\eta$ are stratified-algebraic $\F$-vector bundles on $X$,
a \emph{stratified-algebraic morphism} $\varphi \colon \xi \to \eta$ is
an $\SC$-algebraic morphism for some stratification $\SC$ of $X$ such
that both $\xi$ and $\eta$ are $\SC$-algebraic $\F$-vector bundles.
\end{definition}

A stratified-algebraic $\F$-vector subbundle of $\xi_X^n(\F)$ is defined
in an obvious way.

Stratified-algebraic $\F$-vector bundles on $X$ (together with
stratified-algebraic morphisms) form a category. A stratified-algebraic
morphism of stratified-algebraic $\F$-vector bundles is an isomorphism
if and only if it is a bijective map.

Theory of $\X$-algebraic and stratified-algebraic $\F$-vector bundles is
developed in the subsequent sections. In the present section, we only
announce seven rather surprising results.

Denote by $\SB^d$ the unit $d$-sphere,
\begin{equation*}
\SB^d = \{ (x_0, \ldots, x_d) \in \R^{d+1} \mid x_0^2 + \cdots + x_d^2 =
1 \}.
\end{equation*}
\begin{theorem}\label{th-1-3}
Let $X$ be a compact real algebraic variety homotopically equivalent to
$\SB^d$. Then each topological $\F$-vector bundle on $X$ is isomorphic
to a stratified-algebraic $\F$-vector bundle.
\end{theorem}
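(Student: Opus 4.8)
The plan is to reduce the statement to a computation of the homotopy classification of vector bundles on $\SB^d$ together with an explicit algebraic realization on the sphere, and then transport it along a homotopy equivalence $X \simeq \SB^d$ using stratified-regular maps. First I would recall that topological $\F$-vector bundles on $\SB^d$ are classified by $\pi_{d-1}(G)$, where $G$ is the relevant stable-range structure group (roughly $GL_k(\F)$ or a Grassmannian), so every such bundle is obtained by a clutching construction from a map $\SB^{d-1}\to G$. The first key step is therefore to show that on $\SB^d$ itself every topological $\F$-vector bundle is isomorphic to an algebraic one: this is classical and follows because $\SB^d$ carries enough algebraic maps to realize every homotopy class of clutching function — e.g.\ one can use the fact that the relevant classifying spaces (Grassmannians) have algebraic models whose algebraic homotopy groups surject onto the topological ones in the range that matters, or invoke the known result that on $\SB^d$ every topological vector bundle admits an algebraic structure.

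The second, and main, step is the transport. Given the homotopy equivalence, I would want a \emph{stratified-regular} map $f\colon X\to\SB^d$ which is a homotopy equivalence (or at least induces the right map on the set of isomorphism classes of bundles), and then take $\xi\cong f^*\eta$ for an algebraic bundle $\eta$ on $\SB^d$; the pullback of an algebraic bundle under a stratified-regular map is stratified-algebraic, so this would finish the proof. The existence of such an $f$ is exactly the kind of approximation statement that the general theory developed in the later sections should supply: any continuous map from a real algebraic variety to $\SB^d$ can be approximated, in the $\mathcal{C}^0$ (hence homotopy) sense, by a stratified-regular map — this is a stratified analogue of the classical fact that $\mathcal{C}^\infty$ maps into spheres can be approximated by regular maps after blowing up, but here the extra flexibility of allowing a stratification removes the obstructions. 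So the argument is: pick a continuous homotopy equivalence $g\colon X\to\SB^d$; approximate it by a stratified-regular $f$ homotopic to $g$; let $\eta$ be an algebraic $\F$-vector bundle on $\SB^d$ with $f^*\eta$ topologically isomorphic to the given $\xi$ (possible since $g$, hence $f$, induces a bijection on isomorphism classes and every class on $\SB^d$ is algebraically realized); conclude $\xi\cong f^*\eta$ is stratified-algebraic.

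The delicate point — the one I expect to be the real obstacle — is the approximation of a continuous map $X\to\SB^d$ by a stratified-regular map in its homotopy class. For genuinely regular maps this is false in general (there are homotopical obstructions, and the image of $[\,X,\SB^d\,]_{\mathrm{reg}}$ in $[\,X,\SB^d\,]$ can be a proper subset), so the whole point is that passing to stratified-regular maps makes it true. I would expect this to rest on a stratification construction: decompose $X$ (or a suitable covering/modification) into Zariski locally closed strata on each of which one can straighten the map to be regular, using that $\SB^d\setminus\{\mathrm{pt}\}\cong\R^d$ is affine space and any continuous map into $\R^d$ can be approximated by a regular map on a variety, then patch across strata continuously. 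Handling the patching so that the result is globally continuous while remaining regular stratum-by-stratum, and controlling it so the homotopy class is unchanged, is where the careful work lies; everything else (clutching on $\SB^d$, functoriality of pullback, stability of "stratified-algebraic" under stratified-regular pullback) is comparatively routine.
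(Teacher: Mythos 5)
Your proposal follows essentially the same route as the paper: the paper proves Theorem~\ref{th-1-3} by replacing the homotopy equivalence $h\colon X\to\SB^d$ with a homotopic stratified-regular map (Theorem~\ref{th-2-5}), pulling back an algebraic bundle on $\SB^d$ via Proposition~\ref{prop-2-6}, and invoking the classical fact that every topological $\F$-vector bundle on $\SB^d$ is algebraic. You correctly isolate the homotopy-to-stratified-regular step as the crux; the paper's actual proof of that step produces a map that is regular off a single finite set (via transversality, stereographic projection and the \L{}ojasiewicz inequality) rather than by patching regular maps across many strata, but the overall architecture is the same.
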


Theorem~\ref{th-1-3} makes it possible to demonstrate an essential
difference between algebraic and stratified-algebraic $\F$-vector
bundles.

\begin{example}\label{ex-1-4}
It is well known that each topological $\F$-vector bundle on $\SB^d$ is
isomorphic to an algebraic $\F$-vector bundle, cf.
\cite[Theorem~11.1]{bib44} and \cite[Proposition~12.1.12; pp.~325, 326,
352]{bib9}. This is no longer true if $\SB^d$ is replaced by a
nonsingular real algebraic variety diffeomorphic to $\SB^d$. Indeed, for
every positive integer $k$, there exists a nonsingular real algebraic
variety $\Sigma^{4k}$ that is diffeomorphic to $\SB^{4k}$ and each
algebraic $\F$-vector bundle on $\Sigma^{4k}$ is topologically stably
trivial, cf. \cite[Theorem~9.1]{bib8}. However, on $\SB^{4k}$, and hence
on $\Sigma^{4k}$, there are topological $\F$-vector bundles that are not
stably trivial, cf. \cite{bib28}. On the other hand, according to
Theorem~\ref{th-1-3}, each topological $\F$-vector bundle on
$\Sigma^{4k}$ is isomorphic to a stratified-algebraic $\F$-vector
bundle.
\end{example}

Stratified-algebraic vector bundles on a compact real algebraic variety
are in some sense stable with respect to stratifications. This is made
precise in the following result.

\begin{theorem}\label{th-1-5}
Any compact real algebraic variety $X$ admits a nonsingular stratification $\SC$ such
that each stratified-algebraic $\F$-vector bundle on $X$ is isomorphic
(in the category of stratified-algebraic $\F$-vector bundles on $X$) to
an $\SC$-algebraic $\F$-vector bundle.
\end{theorem}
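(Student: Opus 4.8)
The plan is to use compactness to reduce the problem to a \emph{finite} family of stratified-algebraic bundles, and then to convert a topological isomorphism into a stratified-algebraic one. Two reductions simplify the target. First, it suffices to produce, for each stratified-algebraic $\F$-vector bundle $\xi$ on $X$, an $\SC$-algebraic bundle $\eta'$ together with a merely \emph{topological} isomorphism $\xi\cong\eta'$: once this is done, viewing such an isomorphism as a fiberwise invertible continuous section of the stratified-algebraic bundle $\operatorname{Hom}(\xi,\eta')$ and approximating it, in the $\mathcal{C}^{0}$ topology, by an $\SC'$-algebraic section (for a suitable common refinement $\SC'$ of the relevant stratifications) yields — since fiberwise invertibility is an open condition on the compact space $X$ — a genuine isomorphism in the category of stratified-algebraic bundles. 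Second, since a bundle algebraic over a stratification stays algebraic over any refinement, and since a common refinement of finitely many stratifications can be further refined to a nonsingular one, it is enough to exhibit \emph{one} nonsingular stratification $\SC$ that works for all $\xi$ at once.

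Now the main point. Let $\widetilde{K}(X)$ denote the reduced Grothendieck group of topological $\F$-vector bundles on $X$ (that is, $\widetilde{KO}(X)$, $\widetilde{KU}(X)$, $\widetilde{KSp}(X)$ according as $\F=\R,\CB,\HB$). Since $X$ is compact it has the homotopy type of a finite CW complex, so $\widetilde{K}(X)$ is a finitely generated abelian group. Let $G\subseteq\widetilde{K}(X)$ be the set of classes of stratified-algebraic $\F$-vector bundles on $X$. Then $G$ is a \emph{subgroup}: it is closed under addition because the direct sum of an $\X_{1}$-algebraic and an $\X_{2}$-algebraic bundle is algebraic over any common refinement of $\X_{1}$ and $\X_{2}$; and it is closed under negation because, for a stratified-algebraic subbundle $\xi\subseteq\varepsilon_X^n(\F)$, the orthogonal complement $\xi^{\perp}$ for the standard inner product (which restricts to an algebraic inner product on each stratum) is again stratified-algebraic and satisfies $[\xi]+[\xi^{\perp}]=[\varepsilon_X^n(\F)]=0$. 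Being a subgroup of a finitely generated group, $G$ is finitely generated; choose stratified-algebraic bundles $\zeta_{1},\dots,\zeta_{r}$ whose classes generate $G$, with $\zeta_{j}$ an $\X_{j}$-algebraic subbundle of $\varepsilon_X^{n_{j}}(\F)$, and fix a nonsingular stratification $\SC$ of $X$ refining a common refinement of $\X_{1},\dots,\X_{r}$. Then each $\zeta_{j}$, each $\zeta_{j}^{\perp}$, and every finite direct sum built from them is $\SC$-algebraic.

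Let $\xi$ be an arbitrary stratified-algebraic $\F$-vector bundle on $X$. Writing $[\xi]=\sum_{j}m_{j}[\zeta_{j}]$ with $m_{j}\in\Z$ and using $-[\zeta_{j}]=[\zeta_{j}^{\perp}]$, we get $[\xi]=[\eta_{0}]$ for the $\SC$-algebraic bundle $\eta_{0}=\bigoplus_{m_{j}>0}\zeta_{j}^{\oplus m_{j}}\oplus\bigoplus_{m_{j}<0}(\zeta_{j}^{\perp})^{\oplus(-m_{j})}$. Since $X$ is compact, equality of classes in $\widetilde{K}(X)$ means stable isomorphism, so $\xi\oplus\varepsilon_X^{p}(\F)\cong\eta_{0}\oplus\varepsilon_X^{q}(\F)$ for some $p,q\ge 0$; put $\eta=\eta_{0}\oplus\varepsilon_X^{q}(\F)$, which is still $\SC$-algebraic. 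A topological isomorphism $\xi\oplus\varepsilon_X^{p}(\F)\xrightarrow{\ \sim\ }\eta$ carries the trivial summand onto a rank-$p$ topological subbundle $L\subseteq\eta$ whose complement in $\eta$ is isomorphic to $\xi$, so $\eta/L\cong\xi$. Realizing $L$ as the span of $p$ everywhere linearly independent continuous sections of $\eta$ and approximating these by $\SC$-algebraic sections of $\eta$ — here I invoke the approximation theorem for sections of stratified-algebraic vector bundles over a compact variety, which I would establish beforehand — gives an $\SC$-algebraic trivial subbundle $L'\subseteq\eta$, as close to $L$ as desired; for a close enough approximation the sections stay everywhere independent and $\eta/L'\cong\eta/L\cong\xi$. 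Finally the orthogonal complement $\eta'$ of $L'$ in $\eta$ (for the standard inner product of the ambient $\varepsilon_X^{n}(\F)$) is an $\SC$-algebraic $\F$-vector bundle topologically isomorphic to $\eta/L'$, hence to $\xi$; by the first reduction, $\xi$ is then isomorphic to $\eta'$ in the category of stratified-algebraic $\F$-vector bundles.

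The conceptual core — and the one genuinely new idea — is the observation in the second paragraph: compactness confines the stable classes of stratified-algebraic bundles to a finitely generated group, and orthogonal complementation makes this set an actual group rather than just a monoid, which is exactly what permits a single nonsingular stratification to be fixed once and for all. The rest is analytic: the section-approximation statement, used both to strip off the trivial summand and (in the first reduction) to upgrade topological isomorphisms to stratified-algebraic ones, is the technically heaviest ingredient, and proving it — approximating a continuous section of an $\SC$-algebraic bundle by $\SC$-algebraic sections, compatibly across all strata — is where I would expect the main effort to lie if it is not already available from the preceding development.
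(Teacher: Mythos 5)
Your argument is correct, and its conceptual core---compactness forces the stable classes of stratified-algebraic bundles into a finitely generated group, so finitely many generators $\zeta_1,\dots,\zeta_r$ can be made algebraic over one nonsingular stratification $\SC$---is exactly the core of the paper's proof. The divergence is in how equality of K-theory classes is converted into an actual isomorphism. The paper works with $K_{\F\mathrm{\mhyphen str}}(X)$, deduces its finite generation from the injection into the finitely generated group $K_{\F}(X)$ (Corollary~\ref{cor-3-11}), and then invokes Swan's theorem on projective modules over dense subrings of topological rings \cite{bib44}, packaged as Corollary~\ref{cor-3-12}, to pass in one stroke from ``the class of $\xi$ lies in the image of $K_{\F\mhyphen\SC}(X)$'' to ``$\xi$ is isomorphic in the stratified-algebraic category to an $\SC$-algebraic bundle.'' You instead work in the topological group $\widetilde K(X)$, so finite generation of your subgroup $G$ is immediate and needs no injectivity statement, and you replace Swan's theorem by two explicit approximation arguments: cancelling the trivial summand by perturbing its spanning sections, and upgrading a topological isomorphism to a stratified-algebraic one by approximating it inside $\Hom(\xi,\eta')$. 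Both reduce, via the stratified-algebraic orthogonal projection of Proposition~\ref{prop-3-7} (together with Proposition~\ref{prop-3-15} and the Weierstrass theorem), to approximating continuous maps into $\F^N$ by polynomial maps, so the section-approximation lemma you flag as ``to be established beforehand'' is genuinely available and is not a gap; what your route buys is self-containedness at the cost of redoing by hand the cancellation and rigidity statements that Swan's theorem, through Theorem~\ref{th-3-10} and Corollaries~\ref{cor-3-11} and~\ref{cor-3-12}, supplies wholesale.
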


It is remarkable that the stratification $\SC$ in Theorem~\ref{th-1-5}
is suitable for all stratified-algebraic $\F$-vector bundles on $X$. In
general one cannot take as $\SC$ the trivial stratification $\{ X \}$
of $X$, even if $X$ is a ``simple'' nonsingular real algebraic variety,
cf. Example~\ref{ex-1-4}.

A \emph{multiblowup} of a real algebraic variety $X$ is a regular map
$\pi \colon X' \to X$ which is the composition of a finite number of
blowups with nonsingular centers. If no additional restrictions on the
centers of blowups are imposed, $\pi$ need not be a birational map (for
example, a blowup of $X$ with center of dimension $\dim X$ is not a
birational map).

\begin{theorem}\label{th-1-6}
For any compact real algebraic variety $X$, there exists a birational
multiblowup $\pi \colon X' \to X$, with $X'$ a nonsingular variety, such
that for each stratified-algebraic $\F$-vector bundle $\xi$ on $X$, the
pullback $\F$-vector bundle $\pi^*\xi$ on $X'$ is isomorphic (in the
category of stratified-algebraic $\F$-vector bundles on $X'$) to an
algebraic $\F$-vector bundle on $X'$.
\end{theorem}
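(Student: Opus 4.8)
The plan is to produce a single multiblowup $\pi \colon X' \to X$ that simultaneously resolves the stratification structure of *all* stratified-algebraic bundles at once. First I would invoke Theorem~\ref{th-1-5} to replace an arbitrary stratified-algebraic $\F$-vector bundle by an $\SC$-algebraic one for a fixed nonsingular stratification $\SC = \{S_1, \ldots, S_r\}$ of $X$ depending only on $X$. So it suffices to find a birational multiblowup $\pi \colon X' \to X$ with $X'$ nonsingular such that the pullback of every $\SC$-algebraic $\F$-vector bundle becomes algebraic on $X'$. The key geometric input is a desingularization-type result: by repeatedly blowing up along (Zariski closures of) the strata, arranged by increasing dimension, one can arrange that the preimages of the strata, or rather the preimages of their Zariski closures, become a normal-crossings-type configuration whose pieces are themselves nonsingular and where $\pi$ restricted over the open dense stratum is an isomorphism. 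Concretely, I would aim for a $\pi$ such that $X'$ is covered by a stratification $\SC'$ refining $\pi^{-1}(\SC)$ into Zariski \emph{locally closed} pieces, each of which is now \emph{Zariski open} in $X'$ after the closures have been separated — the point being that if the strata of a nonsingular stratification of a nonsingular variety are Zariski open (equivalently, their complement has codimension making the relevant extension work), then a bundle algebraic on each stratum extends to an algebraic bundle on the whole variety.

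The heart of the argument is the extension step: showing that an $\SC'$-algebraic $\F$-vector bundle $\eta$ on the nonsingular variety $X'$ is isomorphic, in the stratified-algebraic category on $X'$, to an algebraic $\F$-vector bundle, \emph{provided the stratification $\SC'$ has suitable properties guaranteed by the construction of $\pi$}. The mechanism I expect to use is the following: $\eta$ is a topological $\F$-vector subbundle of $\varepsilon_{X'}^n(\F)$ given by a continuous projection $p \colon X' \to \mathrm{Mat}_n(\F)$ (with $p(x)$ the orthogonal projection onto $E(\eta)_x$) which is regular on each stratum of $\SC'$. One wants to replace $p$ by a globally regular map with the same image bundle up to isomorphism. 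When the strata are Zariski open, a regular function on each open piece of an open cover agreeing on overlaps glues to a global regular function; when they only form a stratification, one uses the construction of $\pi$ to ensure the top-dimensional stratum is Zariski open dense and the lower strata sit inside nonsingular hypersurfaces in such a way that one can inductively extend regular data across them. I would phrase this using the notion, presumably developed earlier in the paper, that a stratified-regular function on a nonsingular variety whose ``bad locus'' is contained in a divisor with good properties is, after a further blowup, regular; and then apply it entrywise to (a matrix representative of) the bundle.

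The main obstacle I anticipate is precisely controlling this extension across the lower-dimensional strata: a stratified-regular function need not be regular, and a priori the only cure is further blowing up, but each further blowup changes $X'$ and could in principle reintroduce bad strata. The resolution is to do the blowups in the right order — bottom-up by dimension, as in standard resolution of singularities / Hironaka-type arguments adapted to the real algebraic setting — and to check that the process terminates because each blowup strictly decreases an appropriate complexity invariant (e.g. the dimension of the locus where the bundle data fails to be regular, or the number of strata of positive codimension carrying nontrivial data). A secondary technical point is maintaining birationality of $\pi$: all centers must be nowhere dense, i.e., contained in the complement of the top stratum, which is automatic if we only ever blow up inside the (closures of the) lower-dimensional strata. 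Finally, since $\pi$ is birational and $X'$ nonsingular and compact, one should double-check that the pullback bundle $\pi^*\xi$ is genuinely realized as a topological $\F$-vector subbundle of a trivial bundle on $X'$ so that the algebraic structure produced is of the required kind; this is routine given the earlier setup on pullbacks of $\X$-algebraic bundles.
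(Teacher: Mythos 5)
There is a genuine gap, and it is at the step you yourself identify as the ``heart of the argument.'' Your plan is to choose the centers of the blowups from the geometry of a fixed stratification $\SC$ (closures of strata, arranged by dimension) and then to prove an extension principle of the form: on a nonsingular $X'$ whose stratification $\SC'$ is ``suitable,'' every $\SC'$-algebraic bundle is isomorphic to an algebraic one. No such principle can hold, because whether a stratified-algebraic bundle on a nonsingular variety is algebraic is a global obstruction-theoretic question, not a local gluing question: Example~\ref{ex-1-4} exhibits a nonsingular variety $\Sigma^{4k}$ carrying stratified-algebraic bundles (for a suitable nonsingular stratification, indeed for the universal one furnished by Theorem~\ref{th-1-5}) that are not isomorphic to any algebraic bundle, no matter how the strata sit inside $\Sigma^{4k}$. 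Moreover, the loci where the bundle data fails to be regular, and hence the centers needed to resolve them, depend on the individual bundle (equivalently, on its classifying map into a multi-Grassmannian); since there are infinitely many isomorphism classes of $\SC$-algebraic bundles, a single finite sequence of blowups chosen from the stratification alone cannot resolve all of them, and your proposed termination invariant (``dimension of the locus where the bundle data fails to be regular'') is likewise bundle-dependent, so it does not yield a universal $\pi$.

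What makes the theorem true is a finiteness input you do not use: $K_{\F}(X)$, hence $K_{\F\mathrm{\mhyphen str}}(X)$, is finitely generated (Corollary~\ref{cor-3-11}). The paper picks finitely many stratified-algebraic bundles $\xi_1,\dots,\xi_r$ generating $K_{\F\mathrm{\mhyphen str}}(X)$, writes each as $f_i^*\gamma_{K_i}(\F^{n_i})$ for a stratified-regular $f_i$ (Proposition~\ref{prop-3-4}), and applies Hironaka's resolution of singularities and of points of indeterminacy to the single product map $(f_1,\dots,f_r)$ to obtain a birational multiblowup $\pi$ making all $f_i\circ\pi$ regular, hence all $\pi^*\xi_i$ algebraic. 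The passage from the generators to an arbitrary $\xi$ is then not geometric at all: it uses the injectivity of the canonical maps on K-theory (Swan's theorem, via Corollaries~\ref{cor-3-11} and~\ref{cor-3-12}) to conclude that $\pi^*\xi$, whose class lies in the image of $K_{\F\mathrm{\mhyphen alg}}(X')\to K_{\F\mathrm{\mhyphen str}}(X')$, is actually isomorphic to an algebraic bundle. Your proposal would need to be reorganized around this finite-generation-plus-K-theory mechanism; the purely geometric ``resolve the stratification'' strategy does not close.
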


The multiblowup $\pi \colon X' \to X$ in Theorem~\ref{th-1-6} is chosen
in a universal way, that is, it does not depend on $\xi$.

Any topological $\F$-vector bundle $\xi$ can be regarded as an
$\R$-vector bundle, which is indicated by $\xi_{\R}$. If $\xi$ is
$\X$-algebraic, then so is $\xi_{\R}$.

\begin{theorem}\label{th-1-7}
Let $X$ be a compact real algebraic variety. A topological $\F$-vector
bundle $\xi$ on $X$ is isomorphic to a stratified-algebraic $\F$-vector
bundle if and only if the topological $\R$-vector bundle $\xi_{\R}$ is
isomorphic to a stratified-algebraic $\R$-vector bundle.
\end{theorem}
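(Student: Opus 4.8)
The plan is to reduce the problem to a statement about each individual stratum, where we can invoke a classical result relating complex/quaternionic algebraic vector bundles to their underlying real algebraic vector bundles. The key observation is that "being isomorphic to a stratified-algebraic $\F$-vector bundle" is, by Definition~\ref{def-1-2}, a condition that can be checked one stratum at a time after passing to a sufficiently fine stratification, and the same is true for $\R$-coefficients; so the heart of the matter is the purely algebraic question of whether an algebraic $\R$-vector bundle on an affine real algebraic variety $S$ that happens to admit a compatible $\F$-structure topologically can be given a compatible \emph{algebraic} $\F$-structure.

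First I would establish the easy direction: if $\xi$ is isomorphic to a stratified-algebraic $\F$-vector bundle $\xi'$, then $\xi'_{\R}$ is a stratified-algebraic $\R$-vector bundle (this is already noted in the excerpt, just before the theorem statement), and $\xi_{\R} \cong \xi'_{\R}$ topologically, so $\xi_{\R}$ is isomorphic to a stratified-algebraic $\R$-vector bundle. Second, for the converse, suppose $\xi_{\R}$ is isomorphic to a stratified-algebraic $\R$-vector bundle $\zeta$; pick a stratification $\SC$ of $X$ with respect to which $\zeta$ is $\SC$-algebraic. On each stratum $S \in \SC$, $\zeta|_S$ is an algebraic $\R$-vector bundle on $S$, and the topological isomorphism $\xi_{\R} \cong \zeta$ equips $\zeta|_S$ (topologically) with the structure of the underlying real bundle of a complex/quaternionic bundle. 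The task on $S$ is to promote this to an algebraic $\F$-structure: concretely, to find an algebraic vector bundle endomorphism $J$ of $\zeta|_S$ with $J^2 = -\mathrm{id}$ (for $\F = \CB$), respectively a pair $I, J$ of algebraic endomorphisms satisfying the quaternion relations (for $\F = \HB$), which is topologically conjugate to the given complex (resp. quaternionic) structure. Here I would appeal to the known fact --- the real-algebraic analogue of the Grothendieck-type argument that complex algebraic $K$-theory is controlled by the real one on affine varieties, as developed in earlier sections of the paper --- that such an algebraic $\F$-structure exists on $\zeta|_S$ whenever a topological one does, possibly after adding a trivial summand and after refining $S$ further into finitely many Zariski locally closed pieces. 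Taking the common refinement of all these stratum-by-stratum refinements yields a single stratification $\SC'$ of $X$ and an $\SC'$-algebraic $\F$-vector bundle topologically isomorphic to $\xi$.

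The main obstacle is the stratum-level step: going from an algebraic $\R$-vector bundle that is topologically the realification of an $\F$-bundle to one that is algebraically the realification of an algebraic $\F$-bundle. One cannot expect this on a fixed stratum without further subdivision, since the algebraic endomorphism ring of $\zeta|_S$ need not be large enough to contain the required complex (or quaternionic) structure; the standard remedy is to stratify $S$ more finely --- ultimately this is where the noncompactness or singularities are absorbed --- so that on each piece the relevant algebraic vector bundle becomes, say, strongly algebraic or trivial, and the $\F$-structure can be imposed by hand. I expect this to have been handled in the paper by a prior lemma on the relationship between $\VB$-groups (or $K$-groups) with $\R$- and $\F$-coefficients in the stratified-algebraic setting, so that the proof here is essentially: reduce to strata, quote that lemma, take a common refinement. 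A secondary technical point is bookkeeping with the ambient trivial bundles $\varepsilon^n$: one must ensure that the $\F$-subbundle structures on different strata can be realized simultaneously inside one $\varepsilon_X^m(\F)$, which is routine given that $X$ is compact (finitely many strata, and each subbundle embeds in a trivial bundle of bounded rank).
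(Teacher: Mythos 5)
There is a genuine gap, and it sits exactly at the step you flag as ``the main obstacle'' and then dispose of by citation. The lemma you appeal to --- that an algebraic $\R$-vector bundle on an affine variety $S$ which topologically underlies an $\F$-bundle can, after refining $S$ into Zariski locally closed pieces, be given an algebraic $\F$-structure --- is not a known fact, is not ``developed in earlier sections of the paper,'' and in its unrefined form is provably false: Example~\ref{ex-1-11} exhibits a topological $\CB$-line bundle $\lambda^{\CB}$ on $\T^n$ whose realification is isomorphic to an algebraic $\R$-vector bundle while $\lambda^{\CB}$ itself is not isomorphic to any algebraic $\CB$-line bundle. The refined version (``after further stratification it works'') is essentially equivalent to the theorem being proved, so invoking it is circular. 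The paper's actual mechanism for this step is Lemma~\ref{lem-6-5}: starting from an algebraic model $\theta$ of $\xi_{\R}$ on a compact \emph{nonsingular} variety, one takes an algebraic section of $\theta$ transverse to the zero section, blows up along its zero locus $Z$, uses Proposition~\ref{prop-6-4} and Corollary~\ref{cor-6-3} to split off an algebraic line subbundle of the pullback, tensors it up to an $\F$-line subbundle of $\pi^*\xi$, and inducts on the rank; the locus $Z$ is then handled by a separate induction and the whole construction is reassembled via the filtration/multiblowup descent of Theorem~\ref{th-5-4}. Getting into a position to run this argument at all requires Theorem~\ref{th-1-6} first (a universal birational multiblowup making $(\pi^*\xi)_{\R}$ honestly algebraic on a nonsingular $X'$), plus an induction on $\dim X$ to treat the center of the multiblowup. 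None of this geometry is replaceable by a formal $K$-theoretic comparison of $\R$- and $\F$-coefficients on a fixed stratum.

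A second, independent problem is your reduction itself. Producing, for each stratum $S$ of some stratification, an algebraic $\F$-bundle isomorphic to $\xi|_S$ does not produce a stratified-algebraic $\F$-bundle on $X$: by Definition~\ref{def-1-1} one needs a single topological $\F$-subbundle of some $\varepsilon_X^m(\F)$, continuous across stratum closures, whose restriction to \emph{each} stratum is algebraic, and the stratum-wise isomorphisms you construct have no reason to match up along the boundaries of strata. This gluing is not ``routine bookkeeping''; it is the content of the relative approximation results of Sections~\ref{sec-4} and~\ref{sec-5} (Theorem~\ref{th-4-4}, Propositions~\ref{prop-4-11} and~\ref{prop-4-13}, Theorem~\ref{th-5-4}), which approximate classifying maps into multi-Grassmannians while keeping them fixed on a prescribed Zariski closed subvariety and use the homotopy extension theorem to propagate the construction up a filtration. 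Without both ingredients --- the blowup/rank induction and the relative gluing machinery --- the proposal does not yield a proof.
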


This result is unexpected since it may happen that a topological
$\F$-vector bundle $\xi$ (with $\F=\CB$ or $\F=\HB$) is not isomorphic
to any algebraic $\F$-vector bundle, whereas the $\R$-vector bundle
$\xi_{\R}$ is isomorphic to an algebraic $\R$-vector bundle, cf. the
$\CB$-line bundle $\lambda^{\CB}$ in Example~\ref{ex-1-11}.

Theorems~\ref{th-1-5}, \ref{th-1-6} and \ref{th-1-7} are equivalent to
certain approximation results, cf. Theorems~\ref{th-4-8}, \ref{th-4-9}
and \ref{th-6-6}.

It is possible to give, in some cases, a simple geometric criterion for
a topological vector bundle to be isomorphic to a stratified-algebraic
vector bundle.

Let $X$ be a compact nonsingular real algebraic variety. A smooth (of
class~$\C^{\infty}$) \mbox{$\F$-vector} bundle $\xi$ on $X$ is said to be
\emph{adapted} if there exists a smooth section ${u \colon X \to \xi}$
transverse to the zero section and such that its zero locus $Z(u)
\coloneqq \{ {x \in X} \mid {u(x)=0} \}$, which is a compact smooth
submanifold of $X$, is smoothly isotopic to a nonsingular Zariski
locally closed subvariety $Z$ of $X$. In that case, $Z$ is a closed
subset of $X$ in the Euclidean topology, but need not be Zariski closed.
If $\rank \xi = \dim X$, then $\xi$ is adapted since the zero locus of
any smooth section of $\xi$ that is transverse to the zero section is a
finite set.

\begin{theorem}\label{th-1-8}
Let $X$ be a compact nonsingular real algebraic variety. If a smooth
$\F$-line bundle $\xi$ on $X$ is adapted, then it is topologically
isomorphic to a stratified-algebraic $\F$-line bundle.
\end{theorem}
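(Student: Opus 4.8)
The plan is to move the problem, by an isotopy, to the case where the zero locus of the section is an honest nonsingular subvariety, and then to exploit two facts: the bundle is trivial away from that subvariety, while along it the bundle is the normal bundle, which is algebraic. First I would dispose of the trivial case: if the zero locus $Z(u)$ is empty, then $u$ is nowhere zero and $\xi$ is trivial. Otherwise, by the definition of ``adapted'' there is a smooth isotopy carrying the compact submanifold $Z(u)$ onto a nonsingular Zariski locally closed subvariety $Z$ of $X$; by the isotopy extension theorem this extends to an ambient isotopy $h_t\colon X\to X$ with $h_0=\mathrm{id}_X$ and $h_1(Z(u))=Z$. Since $h_1$ is homotopic to $\mathrm{id}_X$, the bundle $h_1^*\xi$ is topologically isomorphic to $\xi$, and $h_1^*u$ is a section of $h_1^*\xi$ transverse to the zero section with zero locus $Z$. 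Replacing $(\xi,u)$ by $(h_1^*\xi,h_1^*u)$, I may assume $Z(u)=Z$ is a nonsingular Zariski locally closed subvariety; by hypothesis $Z$ is closed in the Euclidean topology, hence compact.

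Next I would record the two key observations. Over $X\setminus Z$ the section $u$ is nowhere zero, so $x\mapsto\F\cdot u(x)$ trivializes $\xi|_{X\setminus Z}$; thus $\xi$ is trivial over $X\setminus Z$. Along $Z$, the derivative of $u$ induces an isomorphism of real vector bundles from the normal bundle $\nu_{Z/X}$ onto $\xi|_Z$, so the underlying $\R$-vector bundle $(\xi|_Z)_\R$ is isomorphic to $\nu_{Z/X}$. As $Z$ is a nonsingular Zariski locally closed subvariety of the nonsingular variety $X$, the topological normal bundle $\nu_{Z/X}$ is isomorphic to the algebraic normal bundle of $Z$ in $X$, in particular to a stratified-algebraic $\R$-vector bundle on the compact variety $Z$. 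Theorem~\ref{th-1-7}, applied to $Z$, then yields that $\xi|_Z$ is topologically isomorphic to a stratified-algebraic $\F$-vector bundle on $Z$, which must have rank~$1$; say $\xi|_Z$ is isomorphic to a $\TC$-algebraic $\F$-line bundle on $Z$ for some stratification $\TC$ of $Z$.

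It remains to assemble this over $X$. Since $Z$ is a nonsingular Zariski locally closed subvariety and the strata of $\TC$ are Zariski locally closed in $X$, there is a nonsingular stratification $\SC$ of $X$ such that $Z$ is a union of strata of $\SC$, the strata contained in $Z$ refine $\TC$, and every stratum not contained in $Z$ is contained in $X\setminus Z$. For $S\in\SC$ with $S\subseteq Z$ the bundle $\xi|_S$ is isomorphic to an algebraic $\F$-vector bundle, because $\xi|_Z$ is, up to isomorphism, $\TC$-algebraic; for $S\subseteq X\setminus Z$ the bundle $\xi|_S$ is trivial. Invoking the characterization (developed in the earlier sections, in the form of the approximation theorems referred to in the introduction) of topological $\F$-vector bundles isomorphic to stratified-algebraic ones in terms of their restrictions to the strata of a stratification, one concludes that $\xi$ is topologically isomorphic to a stratified-algebraic $\F$-vector bundle on $X$, which being of rank~$1$ is an $\F$-line bundle.

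The main obstacle is this last step. The isomorphisms produced on the individual strata --- via Theorem~\ref{th-1-7} on $Z$ and the trivialization over $X\setminus Z$ --- need not be mutually compatible, and turning stratum-wise algebraicity into a genuine $\SC$-algebraic subbundle of a trivial bundle is precisely what requires the stratified approximation machinery of the body of the paper; the remaining ingredients (isotopy extension, homotopy invariance of pullback bundles, and the identification of the topological normal bundle of a nonsingular subvariety with the algebraic one) are routine.
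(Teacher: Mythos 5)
Your reduction to the case where the zero locus is a nonsingular Zariski locally closed subvariety $Z$, the triviality of $\xi$ over $X\setminus Z$, and the identification of $(\xi|_Z)_{\R}$ with the normal bundle (hence, via Theorem~\ref{th-1-7}, the stratified-algebraicity of $\xi|_Z$) all match the paper. But your final step is a genuine gap, not a routine appeal to ``approximation machinery'': there is no result in the paper (and none provable by its methods) asserting that a topological bundle whose restriction to each stratum of some stratification admits an algebraic structure is itself stratified-algebraic. Being $\SC$-algebraic is a single global condition --- one subbundle of one trivial bundle whose restrictions to the strata are simultaneously algebraic --- and the paper's gluing criteria (Corollary~\ref{cor-5-3}, Theorem~\ref{th-5-4}) are instead stated in terms of a filtration $\FC=(X_{-1},\dots,X_m)$: for each $i$ one must exhibit a \emph{multiblowup} $\pi_i\colon X_i'\to X_i$ over $X_{i-1}$ such that the pullback $\pi_i^*(\xi|_{X_i})$ of the restriction to the Zariski \emph{closed} subvariety $X_i$ admits a (stratified-)algebraic structure. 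With the filtration $(\varnothing,Z,X)$ this means producing a multiblowup of all of $X$ over $Z$ after which $\xi$ pulls back to a stratified-algebraic bundle; knowing only that $\xi|_{X\setminus Z}$ is trivial and $\xi|_Z$ is stratified-algebraic does not yield this.

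The missing idea is the blowup of $X$ along $Z$. The paper first reduces (Lemma~\ref{lem-6-8}, via a resolution over $W=V\setminus Z$ where $V$ is the Zariski closure of $Z$ --- a step you also omit, though for line bundles $\xi|_W$ is trivially handled) to the case where $Z$ is Zariski \emph{closed}, and then shows (Proposition~\ref{prop-6-2}, Corollary~\ref{cor-6-3}) that if $\pi\colon X'\to X$ is the blowup with center $Z$, then $\pi^*(\xi_{\R})$ contains a smooth $\R$-line subbundle isomorphic to $\lambda(D)$, the algebraic line bundle attached to the exceptional divisor $D=\pi^{-1}(Z)$; since $\xi$ has rank one, $\pi^*\xi\cong\F\otimes\lambda(D)$ is algebraic. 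That is exactly the hypothesis Theorem~\ref{th-5-4} requires for the top piece of the filtration, and it is the heart of the proof. Your write-up correctly isolates where the difficulty lies but leaves it unresolved, and the ``characterization in terms of restrictions to strata'' you invoke is not a result the paper provides.
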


Theorem~\ref{th-1-8} with $\F = \R$ is not interesting since then a
stronger result, asserting that $\xi$ is isomorphic to an algebraic
$\R$-line bundle, is known, cf. \cite[Theorem~12.4.6]{bib9}. However,
$\xi$ need not be isomorphic to any algebraic $\F$-line bundle for
$\F=\CB$ or $\F=\HB$, cf. the $\F$-line bundle $\lambda^{\F}$ in
Example~\ref{ex-1-11}.

Assuming that $\F=\R$ or $\F=\CB$, the $r$th exterior power of any
$\F$-vector bundle of rank $r$ is denoted by $\det \xi$. Thus, $\det
\xi$ is an $\F$-line bundle.

\begin{theorem}\label{th-1-9}
Let $X$ be a compact nonsingular real algebraic variety and let $\xi$ be
a smooth $\F$-vector bundle of rank $2$ on $X$, where $\F = \R$ or
$\F=\CB$. If both $\F$-vector bundles $\xi$ and $\det \xi$ are adapted,
then $\xi$ is topologically isomorphic to a stratified-algebraic
$\F$-vector bundle.
\end{theorem}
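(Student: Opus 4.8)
The plan is to use Theorem~\ref{th-1-8} to replace $\det\xi$ by a stratified-algebraic line bundle, and then to transport algebraicity to $\xi$ by blowing up the zero locus of a section of $\xi$, after which $\xi$ decomposes into line bundles. Since $\xi$ is adapted, there is a smooth section $u\colon X\to\xi$ transverse to the zero section whose zero locus $Z(u)$ is smoothly isotopic to a nonsingular Zariski locally closed subvariety $Z$ of $X$; because the topological isomorphism type of $\xi$ is unchanged when $X$ is moved by an ambient isotopy, I may assume $Z(u)=Z$. Then $Z$ is nonsingular of codimension $2d(\F)$, $u$ vanishes exactly and transversally along $Z$, the nowhere-vanishing section $u|_{X\setminus Z}$ trivializes an $\F$-line subbundle, and since $\rank\xi=2$ this gives
\[
\xi|_{X\setminus Z}\;\cong\;\varepsilon_{X\setminus Z}^{1}(\F)\oplus(\det\xi)|_{X\setminus Z},
\]
while the derivative of $u$ identifies $\xi|_Z$ with the normal bundle $N_{Z/X}$. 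Since $\det\xi$ is adapted, Theorem~\ref{th-1-8} yields a stratified-algebraic $\F$-line bundle $\lambda$ on $X$ topologically isomorphic to $\det\xi$.

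Next I would blow up. Working over the Zariski-open set $X\setminus(\overline{Z}\setminus Z)$, in which $Z$ is Zariski closed, let $\pi\colon X'\to X$ be the blowup along $Z$, let $E$ be its exceptional divisor and $s_E$ the canonical section of the algebraic $\F$-line bundle $\mathcal{O}_{X'}(E)$. Because $u$ vanishes transversally along $Z$, the section $\pi^*u$ vanishes to order one along $E$, so $\pi^*u=s_E\cdot\widetilde u$ for a \emph{nowhere-vanishing} section $\widetilde u$ of $\pi^*\xi\otimes_{\R}\mathcal{O}_{X'}(-E)$. Hence that rank-$2$ $\F$-vector bundle splits off a trivial $\F$-line bundle, and a short computation with determinants gives
\[
\pi^*\xi\;\cong\;\mu_{1}\oplus\mu_{2},
\]
where $\mu_{1}$ and $\mu_{2}$ are $\F$-line bundles obtained from $\mathcal{O}_{X'}(\pm E)$ and $\pi^*(\det\xi)$ by tensor products and, when $\F=\CB$, extension of scalars. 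In particular $\mu_{1}$ is algebraic, and $\mu_{2}$ is topologically isomorphic to a tensor product of $\pi^*\lambda$ with an algebraic $\F$-line bundle, hence stratified-algebraic. Therefore $\pi^*\xi$ is stratified-algebraic on $X'$, and the theorem is reduced to descending this structure along $\pi$.

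The step I expect to be the main obstacle is exactly this descent, equivalently the gluing near $Z$. Over $X\setminus Z$ the map $\pi$ is an isomorphism, so $\xi$ is already stratified-algebraic there (which also follows directly from the first displayed splitting together with $\det\xi\cong\lambda$); over $Z$ one must combine $\xi|_Z\cong N_{Z/X}$ with the structure inherited from $X'$. One then has to assemble these compatible pieces of data into a single topological $\F$-vector subbundle of some $\varepsilon_X^{n}(\F)$ that is algebraic on every stratum of a nonsingular stratification of $X$ refining $\{X\setminus Z,\ Z,\ \overline{Z}\setminus Z\}$ --- equivalently, by the approximation reformulations underlying Theorems~\ref{th-1-5}, \ref{th-1-6} and~\ref{th-1-7}, one has to approximate the classifying map of $\xi$ by a stratified-regular one. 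The delicate point is that the splitting $\xi\cong\varepsilon^{1}(\F)\oplus\det\xi$ valid off $Z$ does not extend across $Z$: in general $\xi|_Z$ is not isomorphic to $\varepsilon_Z^{1}(\F)\oplus(\det\xi)|_Z$, their Euler classes differing by the self-intersection class of $Z$, so the reconciliation over a tubular neighbourhood of $Z$ genuinely requires an approximation argument, while the bookkeeping caused by $Z$ being only Zariski locally closed ($\overline{Z}\setminus Z$ has codimension greater than $2$ and is handled separately) is routine.
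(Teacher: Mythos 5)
Your strategy coincides with the paper's own proof up to the point where you stop: Theorem~\ref{th-1-8} applied to $\det\xi$, a transverse section whose zero locus $Z$ is a nonsingular Zariski locally closed subvariety, the blowup $\pi$ along $Z$, the extraction of an algebraic line subbundle $\xi_1$ of $\pi^*\xi$ (this is Corollary~\ref{cor-6-3}), and the determinant identity $\xi_2\cong\xi_1^{\vee}\otimes\pi^*(\det\xi)$ to conclude that $\pi^*\xi$ is stratified-algebraic. But the descent along $\pi$, which you correctly single out as the main obstacle, is left entirely open, and it is the heart of the matter. The paper does not glue over a tubular neighbourhood of $Z$ at all; it invokes Theorem~\ref{th-5-4}: for the filtration $\FC=(\varnothing,Z,X)$ it suffices that $\xi|_Z$ admit a stratified-algebraic structure and that the blowup of $X$ over $Z$ pull $\xi$ back to a stratified-algebraic bundle. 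Theorem~\ref{th-5-4} is itself proved by induction along the filtration using Proposition~\ref{prop-4-13} (relative approximation of maps into multi-Grassmannians, resting on the Koll\'ar--Nowak extension theorem) together with the homotopy extension theorem --- precisely the approximation argument you anticipate but do not supply. Without this descent principle, or an equivalent, the proof is incomplete exactly where you flagged it.

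Two further points needed to run the descent are missing from your sketch. First, the input ``$\xi|_Z$ is stratified-algebraic'' does not follow directly from $\xi|_Z\cong N_{Z/X}$: for $\F=\CB$ this is only an isomorphism of the underlying $\R$-vector bundles, and passing from a stratified-algebraic structure on $(\xi|_Z)_{\R}$ to one on the $\CB$-bundle $\xi|_Z$ requires Theorem~\ref{th-1-7}, a nontrivial result of the same section. Second, the reduction to the case where $Z$ is Zariski closed is not routine bookkeeping: the paper's Lemma~\ref{lem-6-8} resolves the singularities of $\overline{Z}$ by a multiblowup over $W=\overline{Z}\setminus Z$ and again appeals to Theorem~\ref{th-5-4}, this time with the filtration $(\varnothing,W,X)$, using that $\xi|_W\cong\varepsilon_W^{1}(\F)\oplus(\det\xi)|_W$ is stratified-algebraic because the section is nowhere zero on $W$. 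So the outline is the right one, but the argument as written establishes only that $\pi^*\xi$ is stratified-algebraic, not that $\xi$ is.
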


It is not a serious restriction that the vector bundle $\xi$ in the last
two theorems is smooth. In fact, any topological $\F$-vector bundle on a
smooth manifold is topologically isomorphic to a smooth $\F$-vector
bundle, cf. \cite{bib27}. Actually, suitably refined versions of
Theorems~\ref{th-1-8} and~\ref{th-1-9} hold true even if the variety $X$
is possibly singular, cf. Theorems~\ref{th-6-9} and~\ref{th-6-10}.

We select one more result for this section.

\begin{theorem}\label{th-1-10}
Let $X = X_1 \times \cdots \times X_n$, where each $X_i$ is a
compact real algebraic variety homotopically equivalent to the unit
$d_i$-sphere for $1 \leq i \leq n$. If $\F=\CB$ or $\F=\HB$, then each
topological $\F$-vector bundle on $X$ is isomorphic to a
stratified-algebraic $\F$-vector bundle. If $\xi$ is a topological
$\R$-vector bundle on $X$, then the direct sum $\xi \oplus \xi$ is
isomorphic to a stratified-algebraic $\R$-vector bundle.
\end{theorem}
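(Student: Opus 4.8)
The plan is to reduce everything to Theorem~\ref{th-1-3} together with standard facts about vector bundles on spheres and the structure of K-theory of products of spheres. First I would observe that, since each $X_i$ is compact and homotopically equivalent to $\SB^{d_i}$, the reduced K-theory groups $\widetilde{K_{\F}}(X_i)$ agree with $\widetilde{K_{\F}}(\SB^{d_i})$, and these are well understood: $\widetilde{K_{\CB}}(\SB^d)$ is $\Z$ for $d$ even and $0$ for $d$ odd (Bott periodicity), $\widetilde{KO}(\SB^d)$ is $\Z$, $\Z/2$, $\Z/2$, $0$, $\Z$, $0$, $0$, $0$ according to $d \bmod 8$, and $\widetilde{KSp}(\SB^d)$ is the corresponding shift. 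The Künneth-type description of the K-theory of a product of spheres then expresses $\widetilde{K_{\F}}(X)$ in terms of external products of generators $\beta_i \in \widetilde{K_{\F}}(X_i)$. The key structural point is that every class in $\widetilde{K_{\F}}(X)$ is a $\Z$-linear combination of external tensor products $\beta_{i_1} \boxtimes \cdots \boxtimes \beta_{i_r}$ of generators pulled back from the factors.

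Next I would use the multiplicativity of the stratified-algebraic category: if $\xi_i$ is a stratified-algebraic $\F$-vector bundle on $X_i$ for each $i$, then the external tensor product $\xi_1 \boxtimes \cdots \boxtimes \xi_n$ (pullbacks along the projections, tensored together) is stratified-algebraic on the product $X$ — one takes the product stratification $\{S_1 \times \cdots \times S_n\}$, on each stratum of which the bundle is an honest algebraic tensor product of algebraic bundles. Likewise, direct sums of stratified-algebraic bundles are stratified-algebraic, and a bundle stably isomorphic to a stratified-algebraic bundle is itself stratified-algebraic after adding a trivial bundle — but on the sphere-factors one can do better, since (as noted in Example~\ref{ex-1-4}) every topological bundle on $\SB^{d}$, hence by Theorem~\ref{th-1-3} every topological bundle on $X_i$, is isomorphic to a stratified-algebraic one, and the generators $\beta_i$ of $\widetilde{K_{\F}}(X_i)$ can therefore be realized by stratified-algebraic bundles on $X_i$. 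For $\F = \CB$ or $\F = \HB$: given a topological $\F$-vector bundle $\xi$ on $X$, write its class in $K_{\F}(X)$ as a combination of external products of the $\beta_i$; realize each factor stratified-algebraically on the relevant $X_i$; form the corresponding stratified-algebraic external tensor products and direct sums on $X$; this produces a stratified-algebraic bundle stably isomorphic to $\xi$, and then adding a trivial bundle and using cancellation (valid once the rank exceeds the dimension of $X$, after stabilizing) gives a stratified-algebraic bundle isomorphic to $\xi$ — here one invokes that two $\F$-vector bundles on a space of dimension $\leq 2\,\mathrm{rank}$ are isomorphic iff stably isomorphic, which for $\F=\CB,\HB$ is available because of the higher connectivity of the relevant stable range, or one simply works in the stable range after adding trivial summands on both sides of the original $\xi$.

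The reason the real case only yields $\xi \oplus \xi$ is the $2$-torsion in $\widetilde{KO}(\SB^d)$ for $d \equiv 1, 2 \pmod 8$: the generators there are not realized by the image of complex or quaternionic bundles, and the external-product argument above does not directly exhibit them stratified-algebraically. However, $2$ times such a torsion generator, or more precisely the class $[\xi] + [\xi] = 2[\xi]$, lies in the subgroup generated by products of generators that \emph{are} so realizable — this is where one uses that $\widetilde{KO}(\SB^1) \otimes \widetilde{KO}(\SB^1)$ maps to a torsion-free piece, or that doubling kills the obstruction coming from $w_1, w_2$ — so $\xi \oplus \xi$ has K-theory class realized by a stratified-algebraic bundle, and then the stable-to-unstable passage finishes as before. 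I expect the main obstacle to be precisely this last point: controlling the $2$-torsion in $\widetilde{KO}$ of the product and verifying cleanly that doubling lands one in the realizable subgroup, together with the routine but slightly delicate bookkeeping needed to pass from a stable isomorphism back to an honest isomorphism within the stratified-algebraic category (which requires a stratified-algebraic version of the standard cancellation theorem, presumably established earlier in the paper).
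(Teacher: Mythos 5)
Your reduction rests on the claim that every class in $\widetilde{K_{\F}}(X)$ is a $\Z$-linear combination of external products of classes pulled back from the factors $X_i$. This is false as soon as two of the $d_i$ are odd, and that is exactly the essential case. Take $X=\T^2=\SB^1\times\SB^1$ and $\F=\CB$: every $\CB$-vector bundle on $\SB^1$ is trivial, so every external product formed from the factors is trivial, yet $\widetilde{K_{\CB}}(\T^2)\cong\Z$, coming from the cross-term $K^1(\SB^1)\otimes K^1(\SB^1)$ in the K\"unneth formula and generated by a line bundle with $c_1\neq 0$. These cross-term classes are precisely what makes the theorem nontrivial (Example~\ref{ex-1-11} shows they are never realized by algebraic bundles on $\T^n$), and realizing them stratified-algebraically requires an input your argument does not contain: a stratified-regular map $X_j\times X_l\to\SB^{d_j+d_l}$ of degree one, which is what Theorem~\ref{th-2-5} supplies and what the paper exploits through Lemma~\ref{lem-8-14} and Proposition~\ref{prop-8-15}. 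A secondary problem: for $\F=\HB$ there is no tensor product of $\HB$-bundles, so your external-tensor-product step is not even defined in that case; the paper instead reduces $\HB$ to $\CB$ via $c_{2i+1}(\xi_{\CB})=0$ and Theorem~\ref{th-1-7}.

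The paper's route is cohomological rather than K-theoretic: it introduces the subring $\HCstreven(X;\Z)$ of stratified-$\CB$-algebraic classes, shows that for a product of sphere-like varieties it equals all of $\Heven(X;\Z)$ (Proposition~\ref{prop-8-15}, with Theorem~\ref{th-2-5} handling the odd--odd cross terms), and converts this into a Chern-class criterion for admitting a stratified-algebraic structure (Theorem~\ref{th-8-11}, using Grothendieck's formula and Peterson's stable classification of $\CB$-bundles on torsion-free spaces). The stable-to-unstable passage you worry about is indeed available and clean: it is Corollary~\ref{cor-3-14}, via Swan's theorem, so that part of your plan is sound. For the real statement the correct argument is simply to complexify: $c_{2i+1}(\CB\otimes\xi)=0$ because these classes are $2$-torsion and $H^*(X;\Z)$ is torsion-free, $c_{2k}(\CB\otimes\xi)=(-1)^k p_k(\xi)$ lies in $\HCstr^{4k}(X;\Z)$, hence $\CB\otimes\xi$ is stratified-algebraic and $(\CB\otimes\xi)_{\R}=\xi\oplus\xi$; this replaces your inconclusive discussion of $2$-torsion in $\widetilde{KO}(\SB^d)$.
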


We do not know if in Theorem~\ref{th-1-10} each topological $\R$-vector
bundle on $X$ is isomorphic to a stratified-algebraic $\R$-vector
bundle. This remains an open problem even for $\R$-vector bundles on the
standard $n$-torus
\begin{equation*}
\T^n = \SB^1 \times \cdots \times \SB^1 \quad \textrm{(the $n$-fold
product)}.
\end{equation*}

It is worthwhile to contrast the results above with the behavior of
algebraic $\F$-vector bundles on $\T^n$. As usual, the $k$th Chern class
of a $\CB$-vector bundle $\xi$ will be denoted by $c_k(\xi)$. Any
$\F$-vector bundle $\zeta$ can be regarded as a $\K$-vector bundle,
denoted $\zeta_{\K}$, where $\K \subseteq \F$ and $\K$ stands for $\R$,
$\CB$ or $\HB$. In particular, $\zeta_{\F}=\zeta$. If $\zeta$ is an
$\HB$-vector bundle, then $\zeta_{\R} = (\zeta_{\CB})_{\R}$.

\begin{example}\label{ex-1-11}
Every algebraic $\CB$-vector bundle on $\T^n$ is algebraically stably
trivial, cf. \cite{bib11} or \cite[Corollary~12.6.6]{bib9}. In
particular, every algebraic $\CB$-line bundle on $\T^n$ is algebraically
trivial. Consequently, for each algebraic $\F$-vector bundle $\zeta$ on
$\T^n$, where $\F=\CB$ or $\F=\HB$, one has $c_k(\zeta_{\CB}) = 0$ for
every $k \geq 1$. Furthermore, for each algebraic $\R$-vector bundle
$\eta$ on $\T^n$, the direct sum $\eta \oplus \eta$ is algebraically
stably trivial (it suffices to consider the complexification $\CB
\otimes \eta$ of $\eta$ and make use of the equality $(\CB \otimes
\eta)_{\R} = \eta \oplus \eta)$. On the other hand, if $1 \leq n\leq 3$, then
each topological $\R$-vector bundle on $\T^n$ is isomorphic to an
algebraic $\R$-vector bundle \cite{bib13}.

Assume now that $\F=\CB$ or $\F=\HB$. We choose a smooth $\F$-line
bundle $\theta^{\F}$ on $\T^{d(\F)}$ such that $c_1(\theta^{\CB}) \neq
0$ and $c_2((\theta^{\HB})_{\CB}) \neq 0$. If $n \geq d(\F)$ and $p_{\F}
\colon \T^n = \T^{d(\F)} \times \T^{n - d(\F)} \to \T^{d(\F)}$ is the
canonical projection, then the smooth $\F$-line bundle
\begin{equation*}
\lambda^{\F} \coloneqq p_{\F}^* \theta^{\F}
\end{equation*}
on $\T^n$ is not topologically isomorphic to any algebraic $\F$-line
bundle. This assertion holds since $c_1(\lambda^{\CB}) \neq 0$ and
$c_2((\lambda^{\HB})_{\CB}) \neq 0$. Obviously, $\theta^{\F}$ is
adapted, and hence $\lambda^{\F}$ is adapted too. Furthermore, the
$\R$-vector bundle $(\lambda^{\CB})_{\R}$ on $\T^n$ is isomorphic to an
algebraic $\R$-vector bundle since the $\R$-vector bundle
$(\theta^{\CB})_{\R}$ on $\T^2$ is isomorphic to an algebraic
$\R$-vector bundle, $p_{\CB}$ is a regular map, and
\begin{equation*}
(\lambda^{\CB})_{\R} = p_{\CB}^* ( (\theta^{\CB})_{\R} ).
\end{equation*}
Finally, if $\xi = (\lambda^{\HB})_{\R}$, then the $\R$-vector bundle
$\xi \oplus \xi$ on $\T^n$ is not isomorphic to any algebraic
$\R$-vector bundle. Indeed, supposing otherwise, the complexification
$\CB \otimes (\xi \oplus \xi)$ of $\xi \oplus \xi$ would be isomorphic
to an algebraic $\CB$-vector bundle, and hence $c_2(\CB \otimes (\xi
\oplus \xi)) =0$. However, one has $c_1( (\lambda^{\HB} \oplus
\lambda^{\HB})_{\CB} ) = 0$, which implies
\begin{equation*}
c_2( \CB \otimes (\xi \oplus \xi) ) = c_2( \CB \otimes (\lambda^{\HB}
\oplus \lambda^{\HB})_{\R} ) = 2 c_2( (\lambda^{\HB} \oplus
\lambda^{\HB})_{\CB} ) = 4 c_2 ( (\lambda^{\HB})_{\CB} ) \neq 0,
\end{equation*}
cf. \cite[Corollary~15.5]{bib38}.
\end{example}

There are other significant differences between algebraic and
stratified-algebraic vector bundles. The interested reader can identify
them by consulting papers devoted to algebraic vector bundles on real
algebraic varieties \cite{bib5, bib6, bib7, bib8, bib9, bib10, bib13,
bib14}. It should be mentioned that there are topological $\F$-line
bundles which are not isomorphic to stratified-algebraic $\F$-line
bundles. For instance, such $\F$-line bundles exist on some nonsingular
real algebraic varieties diffeomorphic to $\T^n$, cf.
Example~\ref{ex-7-10}.

The paper is organized as follows. In Section~\ref{sec-2}, we define in
a natural way stratified-regular maps. Some homotopical properties of
such maps imply Theorem~\ref{th-1-3}. In Section~\ref{sec-3}, we study
relationships between vector bundles of various types and appropriate
finitely generated projective modules. As a model serve classical
results due to Serre \cite{bib41} and Swan \cite{bib43}. Subsequently,
we prove Theorems~\ref{th-1-5} and~\ref{th-1-6} by applying \cite{bib44}
and some results from K-theory. Section~\ref{sec-4} is devoted to maps
with values in a Grassmannian (actually, a multi-Grassmannian). The main
topic is the approximation of continuous maps by stratified-regular maps.
Approximation results of this kind are essential for the rest of the
paper. As the starting point for these results the extension theorem of
Koll\'ar and Nowak \cite{bib31} is indispensable. In
Section~\ref{sec-5}, we show how certain properties of a vector bundle
can be deduced from the behavior of its restrictions to Zariski closed
subvarieties of the base space. This is useful in proofs by induction in
Section~\ref{sec-6}. In particular, Theorems~\ref{th-1-7}, \ref{th-1-8}
and~\ref{th-1-9} are proved in Section~\ref{sec-6}. Algebraic and
$\CB$-algebraic cohomology classes have many applications in real
algebraic geometry, cf. \cite{bib1, bib2, bib5, bib6, bib8, bib9, bib10,
bib12, bib13, bib14, bib16, bib17, bib18, bib19, bib32, bib34}. We
introduce stratified-algebraic and stratified-$\CB$-algebraic cohomology
classes in Section~\ref{sec-7} and Section~\ref{sec-8}, respectively.
They prove to be very useful in our investigation of
stratified-algebraic vector bundles. In particular,
stratified-$\CB$-algebraic cohomology classes play a key role in the
proof of Theorem~\ref{th-1-10} given in Section~\ref{sec-8}.

\begin{acknowledgements}
We greatly benefited from studying \cite{bib23} and \cite{bib31}, as
well as earlier versions of these papers. Our discussions with
G.~Fichou, J.~Huisman, F.~Mangolte and J.-Ph.~Monnier about their paper
\cite{bib23} provided additional inspiration.
\end{acknowledgements}

%\begin{document} %---for spell checking in vim
\section{Stratified-regular maps}\label{sec-2}

Throughout this section, $X$ and $Y$ denote real algebraic varieties,
and $\X$ denotes a stratification of $X$.

To begin with, we introduce maps that will be crucial for the
investigation of $\X$-algebraic and stratified-algebraic vector bundles.

\begin{definition}\label{def-2-1}
A map $f \colon X \to Y$ is said to be \emph{$\X$-regular} if it is
continuous and its restriction to each stratum in $\X$ is a regular map.
Furthermore, $f$ is said to be \emph{stratified-regular} if it is
$\SC$-regular for some stratification $\SC$ of $X$.
\end{definition}

In particular, $f$ is $\{X\}$-regular if and only if it is a regular
map. Following \cite{bib31,bib32,bib34}, we say that $f \colon X \to Y$
is a \emph{continuous rational} map if $f$ is continuous and its
restriction to some Zariski open and dense subvariety of $X$ is a
regular map.

By a \emph{filtration} of $X$ we mean a finite sequence $\FC = (X_{-1},
X_0, \ldots, X_m)$ of Zariski closed subvarieties satisfying
\begin{equation*}
\varnothing = X_{-1} \subseteq X_0 \subseteq \ldots \subseteq X_m = X.
\end{equation*}
The collection $\overline{\FC} \coloneqq \{ X_i \setminus X_{i-1} \mid 0 \leq
i \leq m \}$ is a stratification of $X$.

\begin{proposition}\label{prop-2-2}
For a map $f \colon X \to Y$, the following conditions are equivalent:
\begin{conditions}%
\item\label{prop-2-2-a} The map $f$ is stratified-regular.

\item\label{prop-2-2-b} There exists a filtration $\FC$ of $X$ such that
$f$ is $\overline{\FC}$-regular and each stratum in $\overline{\FC}$ is
nonsingular and equidimensional.%
\setcounter{favoritecondition}{\value{conditionsi}}%
\suspend{conditions}%
\begin{altconditions}[start=\value{favoritecondition}]
\item\label{prop-2-2-b1}%
There exists a filtration $\FC'$ of $X$ such that $f$ is $\overline{\FC}'$-regular.
\end{altconditions}
\resume{conditions}%
\item\label{prop-2-2-c} For each Zariski closed subvariety $Z$ of $X$,
the restriction $f|_Z \colon Z \to Y$ is a continuous rational map.
\setcounter{favoritecondition}{\value{conditionsi}}
\end{conditions}
\end{proposition}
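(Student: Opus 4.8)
The plan is to establish the cycle of implications $\ref{prop-2-2-a}\Rightarrow\ref{prop-2-2-b}\Rightarrow\ref{prop-2-2-b1}\Rightarrow\ref{prop-2-2-a}$, and then to prove $\ref{prop-2-2-a}\Leftrightarrow\ref{prop-2-2-c}$ separately. The implication $\ref{prop-2-2-b}\Rightarrow\ref{prop-2-2-b1}$ is trivial, and $\ref{prop-2-2-b1}\Rightarrow\ref{prop-2-2-a}$ is immediate since $\overline{\FC}'$ is by definition a stratification of $X$, so $f$ being $\overline{\FC}'$-regular means $f$ is stratified-regular. The substantive content is therefore in $\ref{prop-2-2-a}\Rightarrow\ref{prop-2-2-b}$ and in the equivalence with $\ref{prop-2-2-c}$.

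For $\ref{prop-2-2-a}\Rightarrow\ref{prop-2-2-b}$: suppose $f$ is $\SC$-regular for some stratification $\SC$. First I would pass to a nonsingular refinement $\SC'$ of $\SC$ (such a refinement exists, as recalled in Section~\ref{sec-1}), noting that $f$ remains $\SC'$-regular since the restriction of a regular map to a Zariski locally closed subvariety is again regular. The task is then to convert the stratification $\SC'$ into one of the form $\overline{\FC}$ coming from a filtration, while keeping the strata nonsingular and making them equidimensional. The idea is to build the filtration $\FC=(X_{-1},\dots,X_m)$ by setting $X_i$ to be the Zariski closure of the union of those strata of $\SC'$ (further decomposed into their equidimensional, i.e. pure-dimensional, components if necessary) of dimension at most $i$; one checks that $X_i$ is Zariski closed, that $X_i\setminus X_{i-1}$ is Zariski locally closed, and that the Zariski closure operation only adds points lying in lower-dimensional strata, so that each set $X_i\setminus X_{i-1}$ is a union of strata of (a refinement of) $\SC'$ together with possibly some nonsingular locus adjustments. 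The key point making this work is that one can further refine so that each $X_i\setminus X_{i-1}$ is nonsingular and of pure dimension $i$: decompose each stratum into irreducible components, and within each component remove the singular locus (which is a proper Zariski closed subset, hence lies in lower dimension) and push it down the filtration. Since $f$ restricted to any Zariski locally closed subvariety of a stratum on which $f$ is regular is still regular, $f$ is $\overline{\FC}$-regular for the resulting filtration.

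For $\ref{prop-2-2-a}\Leftrightarrow\ref{prop-2-2-c}$: assume $\ref{prop-2-2-a}$ and let $Z\subseteq X$ be Zariski closed. Intersecting a defining stratification $\SC$ of $X$ for $f$ with $Z$ gives a stratification $\SC_Z=\{S\cap Z\mid S\in\SC\}$ of $Z$, and $f|_Z$ is $\SC_Z$-regular, hence continuous. Among the strata of $\SC_Z$ there is one, say $S_0\cap Z$ with $S_0\in\SC$, that is Zariski open and dense in $Z$ after possibly replacing $\SC$ by a refinement so that $Z$ is a union of strata and one of them is open in $Z$; then $f|_{S_0\cap Z}$ is regular, so $f|_Z$ is continuous rational. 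Here one uses that any stratification of an irreducible variety has exactly one stratum that is Zariski dense, and for reducible $Z$ one argues component by component and glues. Conversely, assume $\ref{prop-2-2-c}$. I would construct the filtration inductively: apply $\ref{prop-2-2-c}$ with $Z=X$ to get a Zariski open dense $U\subseteq X$ with $f|_U$ regular; let $X_{m-1}=X\setminus U$ (Zariski closed, of strictly smaller dimension in each component), and repeat the argument with $X$ replaced by $X_{m-1}$, using that $f|_{X_{m-1}}$ again satisfies $\ref{prop-2-2-c}$ because Zariski closed subvarieties of $X_{m-1}$ are Zariski closed in $X$. Since dimension drops, this terminates after finitely many steps and produces a filtration $\FC$ with $f$ regular on each $X_i\setminus X_{i-1}$, hence $f$ is $\overline{\FC}$-regular, which is condition $\ref{prop-2-2-b1}$, already shown equivalent to $\ref{prop-2-2-a}$.

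The main obstacle I anticipate is the bookkeeping in $\ref{prop-2-2-a}\Rightarrow\ref{prop-2-2-b}$: one must simultaneously arrange that the pieces of the filtration are nonsingular, equidimensional, \emph{and} that they refine (or at least are compatible with) a stratification on which $f$ is already regular, and it requires care to see that the successive Zariski-closure and singular-locus removals do not spoil regularity of $f$ on the resulting strata. This is handled by the general principle, used tacitly throughout the paper, that the restriction of a regular (resp. $\X$-regular) map to a Zariski locally closed subvariety is again regular (resp. regular on the induced strata), together with the fact that refining a stratification preserves $\X$-regularity.
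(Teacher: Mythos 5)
Your reduction of (\ref{prop-2-2-b})$\Rightarrow$(\ref{prop-2-2-b1})$\Rightarrow$(\ref{prop-2-2-a}) to trivialities is right, and your arguments for (\ref{prop-2-2-a})$\Rightarrow$(\ref{prop-2-2-c}) (a dense open stratum in each irreducible component of $Z$) and for (\ref{prop-2-2-c})$\Rightarrow$(\ref{prop-2-2-a}) (peel off the Zariski open dense regular locus and induct on dimension) are essentially the paper's. The gap is in your direct proof of (\ref{prop-2-2-a})$\Rightarrow$(\ref{prop-2-2-b}). Setting $X_i$ equal to the Zariski closure of the union of the strata of dimension at most $i$ rests on the claim that ``the Zariski closure operation only adds points lying in lower-dimensional strata,'' but stratifications in this paper are not required to satisfy any frontier condition: $\overline{S}\setminus S$ can meet strata of dimension $\geq \dim S$ (e.g.\ in $\R^2$ take $S_1=\{y=x^2,\,x\neq 0\}$, $S_2=\{y=0\}$, $S_3=\{y\neq 0,\,y\neq x^2\}$; then $\overline{S_1}$ contains the origin, which lies in $S_2$). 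More seriously, even when $X_i\setminus X_{i-1}$ happens to be a union of strata of $\SC'$, regularity of $f$ on each of those strata does not yield regularity of $f$ on their union: a map regular on a closed subvariety $A$ of $V$ and on $V\setminus A$ need not be regular on $V$, and $\overline{\FC}$-regularity demands regularity on the whole difference $X_i\setminus X_{i-1}$ as a single variety. This gluing problem is exactly the nontrivial content of the implication, and your construction does not address it.

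The repair is to reorganize rather than to patch: prove the cycle (\ref{prop-2-2-a})$\Rightarrow$(\ref{prop-2-2-c})$\Rightarrow$(\ref{prop-2-2-b})$\Rightarrow$(\ref{prop-2-2-b1})$\Rightarrow$(\ref{prop-2-2-a}), which is what the paper does. Your inductive argument for (\ref{prop-2-2-c})$\Rightarrow$(\ref{prop-2-2-a}) already produces a filtration; to get (\ref{prop-2-2-b}) from it one only has to choose, at each stage, the dense open set $X^0$ on which $f$ is regular to be disjoint from the singular locus, and to absorb the lower-dimensional irreducible components into the next closed set $Z=(X\setminus X^0)\cup(\text{components of dimension}<\dim X)$, so that each difference is nonsingular of pure dimension. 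The point of this route is that at every step one works with a \emph{single} Zariski open dense subset on which $f$ is regular (the open pieces coming from different irreducible components are disjoint and open in their union, so regularity glues there), which sidesteps both difficulties above. With that reorganization your proposal becomes the paper's proof; as written, the direct (\ref{prop-2-2-a})$\Rightarrow$(\ref{prop-2-2-b}) step does not go through.
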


\begin{proof}
Suppose that condition (\ref{prop-2-2-c}) holds. Since the map $f$ is
continuous rational, there exists a Zariski open and dense subvariety
$X^0$ of $X$ such that the restriction of $f$ to $X^0$ is a regular map.
We can choose such an $X^0$ disjoint from the singular locus of $X$. Let
$Z$ be the union of $X \setminus X^0$ and all the irreducible components
of $X$ of dimension strictly less than $\dim X$. Then $Z$ is a Zariski
closed subvariety of $X$ with $\dim Z < \dim X$, and $X \setminus Z$ is
nonsingular of pure dimension. Since the map $f|_Z$ is continuous
rational, the construction above can be repeated with $X$ replaced by
$Z$. By continuing this process, we conclude that (\ref{prop-2-2-b})
holds.

If $V$ is an irreducible Zariski closed subvariety of $X$, then there
exists a stratum $S$ in $\X$ such that the intersection $S \cap V$ is
nonempty and Zariski open in $V$ (hence Zariski dense in $V$). It
follows that (\ref{prop-2-2-a}) implies (\ref{prop-2-2-c}).

It is obvious that (\ref{prop-2-2-b}) implies (\ref{prop-2-2-b1}), and
(\ref{prop-2-2-b1}) implies (\ref{prop-2-2-a}).
\end{proof}

\begin{remark}\label{rem-2-3}
Proposition~\ref{prop-2-2} (with $Y=\R$) appears in the paper of
Koll\'ar and Nowak \cite{bib31} as a comment following Definition~8. In
\cite{bib31}, the attention is focused on functions satisfying
condition~(\ref{prop-2-2-c}), called continuous hereditarily rational
functions. By \cite[Proposition~7]{bib31}, if the variety $X$ is
nonsingular, then condition (\ref{prop-2-2-c}) is equivalent to
\begin{altconditions}[start=\value{favoritecondition}]
\item\label{rem-2-3-c'}\textit{The map $f$ is continuous rational.}
\end{altconditions}
If $X$ is singular, then (\ref{rem-2-3-c'}) need not imply (\ref{prop-2-2-c}),
cf. \cite[Examples~2 and~3]{bib31}. Continuous rational maps defined on
nonsingular real algebraic varieties are investigated in \cite{bib32,
bib34}. According to \cite[Theorem~9]{bib31} and
\cite[Corolaire~4.40]{bib23}, stratified-regular maps coincide with
``\emph{applications r\'egulues}'' between real algebraic varieties,
which are introduced in a more general framework in
\cite{bib23}. Notions introduced and methods developed in \cite{bib35,
bib36, bib37, bib39} are important in the study of the geometry of
``\emph{fonctions r\'egulues}'', cf. \cite{bib23}.
\end{remark}

The proof of Theorem~\ref{th-1-3} requires some knowledge of homotopy
classes represented by stratified-regular maps with values in the unit
$d$-sphere $\SB^d$, cf. Theorem~\ref{th-2-5}. We first need the
following technical result.

\begin{lemma}\label{lem-2-4}
Assume that the variety $X$ is compact of dimension $d$. Let $A$ be the
union of the singular locus of $X$ and all the irreducible components of
$X$ of dimension at most $d -1$. Let $\varphi \colon X \to \R^d$ be a
continuous map which is constant in a neighborhood of $A$ and smooth on
$X \setminus A$. Assume that $0$ in $\R^d$ is a regular value of
$\varphi|_{X \setminus A}$, and the inverse image $V \coloneqq
\varphi^{-1}(0)$ is disjoint from $A$. Then there exists a regular map
$\psi \colon X \to \R^d$ such that $\psi$ is equal to $\varphi$ on $A$,
$\psi^{-1}(0)=V$, and the differentials $d\psi_x$ and $d\varphi_x$ are
equal for every point $x$ in $V$.
\end{lemma}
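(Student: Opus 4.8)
The plan is to approximate the smooth map $\varphi$ by a regular map, but very carefully, so as to preserve the exact behavior of $\varphi$ on $A$ and along $V=\varphi^{-1}(0)$. The starting observation is that since $\varphi$ is constant, say equal to a point $c\in\R^d$, on a neighborhood $U$ of $A$, and since $V$ is disjoint from $A$, after shrinking $U$ we may assume $\overline{U}\cap V=\varnothing$ and $c\neq 0$ (if $c=0$ there is nothing happening near $A$ that constrains us, but in fact $V$ disjoint from $A$ forces $c\neq0$ on the component of $U$ meeting $A$; I would spell this out). So we must build $\psi$ with $\psi|_A\equiv c$, $\psi^{-1}(0)=V$, and $d\psi_x=d\varphi_x$ for $x\in V$.

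First I would choose a regular function $\alpha\colon X\to\R$ that vanishes precisely on $A$ together with all the ``lower-dimensional junk'', more precisely such that $Z(\alpha)\supseteq A$, $\alpha$ does not vanish identically on any component of $X$ of dimension $d$, and $\alpha>0$ on a neighborhood of $V$; such an $\alpha$ exists because $A$ is Zariski closed of dimension $<d$ and $X\setminus A$ is dense. Multiplying by $\alpha^{2N}$ for large even $N$ will let me kill the difference $\psi_0-\varphi$ near $A$ to high order while leaving the behavior near $V$ untouched. The core approximation step is to produce a regular map $\psi_0\colon X\to\R^d$ that is close to $\varphi$ in the $\C^1$ topology on a neighborhood of $V$ and close in $\C^0$ on all of $X$: on the nonsingular locus $X\setminus A$ one can invoke the classical Weierstrass-type approximation of smooth maps on compact (real algebraic) manifolds by regular maps (e.g.\ via embedding $X\setminus A$ and using \cite[Chapters~12, 13]{bib9}); the subtlety is doing it so that the values on the compact set $A$ are controlled too, which is where I would instead first extend and approximate on the ambient $\R^N$ and then correct.

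The correction step is where the real work lies. Write $\psi = c + \alpha^{2N}\big(\text{regular approximation of }(\varphi-c)/\alpha^{2N}\big)$ is not quite right because $(\varphi-c)/\alpha^{2N}$ is only defined off $A$; instead I would set $\psi := \varphi$-target-value adjusted as follows. Pick a regular map $g\colon X\to\R^d$ with $g$ $\C^1$-close to $\varphi$ near $V$ and $\C^0$-close on $X$, as above. Then $g$ need not equal $c$ on $A$ nor have zero set exactly $V$. To fix the value on $A$: replace $g$ by $g_1 := c + (g-c)\cdot\beta$ where $\beta\colon X\to\R$ is a regular function with $\beta\equiv 1$ near $V$ and $\beta\equiv 0$ near $A$ — such a $\beta$ does not exist regularly in general, so instead I use $\beta := \alpha^{2M}/(\alpha^{2M}+\gamma^{2})$ where $\gamma$ is regular vanishing on a Zariski closed set containing $V$ but not meeting $A$; this $\beta$ is regular, equals $0$ exactly on $A\cup Z(\alpha)$ and is close to $1$ away from it, for $M$ large. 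Then $g_1$ is regular, $g_1|_A\equiv c$, and $g_1$ is still $\C^1$-close to $\varphi$ near $V$. The zero set of $g_1$ near $V$ is then a $\C^0$-small, $\C^1$-close perturbation of $V$; since $0$ is a regular value of $\varphi|_{X\setminus A}$ and $V$ is compact and nonsingular, the implicit function theorem gives a regular (even isotopic to the identity) diffeomorphism $h$ of a neighborhood of $V$ carrying $g_1^{-1}(0)$ back onto $V$ — but to get $\psi^{-1}(0)=V$ \emph{exactly} and the differential condition exactly, the cleanest device is to instead define $\psi$ by a final algebraic adjustment: choose a regular map vanishing to first order exactly along $V$, realize $d\varphi|_V$ as its normal derivative (possible because $0$ is a regular value, so the normal bundle of $V$ in $X$ is trivialized by $d\varphi$), and add a high multiple of $\alpha^{2N}$ times the error. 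I expect the main obstacle to be exactly this: arranging simultaneously that $\psi^{-1}(0)=V$ \emph{on the nose} and $d\psi_x=d\varphi_x$ \emph{on the nose} for all $x\in V$, rather than merely up to small error; this will force a somewhat delicate bookkeeping of which regular functions vanish on which Zariski closed sets, and the use of the compactness of $V$ and $A$ to convert ``close'' into ``equal'' via one last homotopy/rescaling argument. I would handle it by first securing the $\C^1$-jet of $\psi$ along $V$ to be exactly that of $\varphi$ (an interpolation-type argument using the triviality of the normal bundle) and only then perturbing $\psi$ away from $V$ and $A$ by terms divisible by a high power of a regular function vanishing on $V\cup A$, so that neither the zero set nor the differential along $V$, nor the value on $A$, is disturbed.
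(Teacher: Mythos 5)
The proposal misses the one observation on which the paper's proof is built and which makes the lemma tractable: since $X\setminus A$ is a smooth manifold of pure dimension $d$ and $0$ is a regular value of $\varphi|_{X\setminus A}$, the fibre $V=\varphi^{-1}(0)$ is a compact $0$-dimensional manifold, i.e.\ a \emph{finite} set. You treat $V$ throughout as a positive-dimensional submanifold (trivialized normal bundle, tubular neighbourhoods, a diffeomorphism carrying $g_1^{-1}(0)$ back onto $V$), and the steps you yourself flag as ``the real work'' --- prescribing the exact $1$-jet of a regular map along $V$ and forcing $\psi^{-1}(0)=V$ on the nose --- are exactly the steps you never carry out; for a positive-dimensional $V$ they would indeed be genuinely problematic (composing with a diffeomorphism produced by the implicit function theorem does not preserve regularity, and one cannot in general prescribe the $1$-jet of a regular map along a subvariety). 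Once $V$ is known to be finite, the first-order data are obtained by elementary Hermite interpolation: embed $X\subseteq\R^n$, extend $\varphi$ to a smooth $f\colon\R^n\to\R^d$ constant equal to $c$ near $A$, choose a polynomial map $\eta$ with $\eta=f$ and $d\eta=df$ at the finitely many points of $V$, and correct it to $g=\eta-\alpha^2(\eta-c)$ with $\alpha$ a polynomial vanishing on $V$ and equal to $1$ on $A$.

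The second missing ingredient is the mechanism for passing from ``close'' to ``equal''. The paper writes $f-g=p_1\lambda_1+\cdots+p_r\lambda_r$, where $p_1,\dots,p_r$ generate the ideal $I(V)^2I(A)$ and the $\lambda_j$ are smooth (a local computation at points of $A$ and of $\R^n\setminus A$ plus a partition of unity), and then replaces the $\lambda_j$ by polynomial maps $q_j$ via Stone--Weierstrass in the $\C^1$ norm. The map $\psi=(g+p_1q_1+\cdots+p_rq_r)|_X$ then \emph{automatically} equals $\varphi$ on $A$, vanishes on $V$, and has $d\psi_x=d\varphi_x$ for $x\in V$, because every $p_j$ vanishes on $A$ and vanishes to second order on $V$ --- no matter what the $q_j$ are; the only property requiring the approximation to be fine is $\psi^{-1}(0)=V$, which follows from $\C^1$-closeness because $d\varphi_x$ is invertible at each of the finitely many points of $V$ and $\norm{\varphi}$ is bounded below off a neighbourhood of $V$. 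Your final step of ``adding a high multiple of $\alpha^{2N}$ times the error'' cannot work as stated, since the error is only smooth, not regular, so the sum is not a regular map; the ideal-theoretic decomposition above is precisely the device that repairs this, and it is absent from your proposal.
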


\begin{proof}
We may assume that $X$ is a Zariski closed subvariety of $\R^n$ for some
$n$. Then we can find a smooth map $f \colon \R^n \to \R^d$ which is an
extension of $\varphi$ and is constant on a neighborhood $U$ of $A$ in
$\R^n$. Thus $U \subseteq f^{-1}(c)$ for some point $c=(c_1, \ldots,
c_d)$ in $\R^d$. Since $V$ is a finite set, there exists a polynomial
map $g \colon \R^n \to \R^d$ such that $g(x)=f(x)$ and $dg_x = df_x$ for
every point $x$ in $V$, and $A \subseteq g^{-1}(c)$. Indeed, $g$ can be
constructed as follows. First we choose a polynomial map $\eta \colon
\R^n \to \R^d$ with $\eta(x)=f(x)$ and $d\eta_x=df_x$ for every $x$ in
$V$. If $\alpha \colon \R^n \to \R$ is a polynomial function
satisfying $V \subseteq \alpha^{-1}(0)$ and $A \subseteq
\alpha^{-1}(1)$, then the map $g \coloneqq \eta - \alpha^2(\eta-c)$ satisfies
all the requirements.

For any subset $Z$ of $\R^n$, we denote by $I(Z)$ the ideal of all
polynomial functions on $\R^n$ vanishing on $Z$. Let $p_1, \ldots, p_r$
be generators of the ideal $I(V)^2I(A)$.

\begin{assertion}
There exist smooth maps $\lambda_1 \colon \R^n \to \R^d, \ldots,
\lambda_r \colon \R^n \to \R^d$ such that $f=g+p_1 \lambda_1 + \cdots +
p_r\lambda_r$.
\end{assertion}

The Assertion can be proved as follows. For any point $x$ in $\R^n$,
denote by $\C^{\infty}_x$ the ring of germs at $x$ of smooth functions
on $\R^n$. Let $f = (f_1, \ldots, f_d)$, $g = (g_1, \ldots, g_d)$ and
$h_i = f_i - g_i$ for $1 \leq i \leq d$. For every point $x$ in $\R^n
\setminus A$, the germ $(h_i)_x$ of $h_i$ at $x$ belongs to the ideal
$I(V)^2I(A)\C^{\infty}_x$. Obviously, $g_i - c_i$ belongs to $I(A)$. If
$x$ is a point in $A$, then $f_i - c_i$ vanishes in a
neighborhood of $x$, and hence $(h_i)_x = ( (f_i)_x - c_i) - ( (g_i)_x -
c_i)$ belongs to $I(V)^2I(A)\C^{\infty}_x$. Making use of partition of
unity, we obtain smooth functions $\lambda_{i1}, \ldots, \lambda_{ir}$
on $\R^n$ for which $h_i = p_1\lambda_{i1} + \cdots +
p_r\lambda_{ir}$. Thus, the Assertion holds with $\lambda_j =
(\lambda_{1j}, \ldots \lambda_{dj})$ for $1 \leq j \leq r$.

Now, let $q_1 \colon \R^n \to \R^d, \ldots, q_r \colon \R^n \to \R^d$ be
polynomial maps and let $\psi \colon X \to \R^d$ be the restriction of
the map $g + p_1q_1 + \cdots p_rq_r$. Then $\psi = \varphi$ on $A$, $V
\subseteq \psi^{-1}(0)$, and $d\psi_x = d\varphi_x$ for every point $x$
in $V$. According to the Stone--Weierstrass theorem, given $\varepsilon
> 0$, we can find polynomial maps $q_j$ such that
\begin{equation*}
\norm{\lambda_j(x) - q_j(x)} + \sum_{k=1}^{n} \norm{ \frac{\partial
\lambda_j}{\partial x_k}(x) - \frac{\partial q_j}{\partial x_k}(x)} <
\varepsilon
\end{equation*}
for all $x$ in $X$ and $1 \leq j \leq r$. If $\varepsilon$ is
sufficiently small, then $\psi^{-1}(0) = V$, and hence $\psi$ satisfies
all the requirements.
\end{proof}

\begin{theorem}\label{th-2-5}
If the variety $X$ is compact of dimension $d$, then each continuous map
from $X$ into $\SB^d$ is homotopic to a stratified-regular map.
\end{theorem}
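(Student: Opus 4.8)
The plan is to reduce, via obstruction theory, to the problem of making a single continuous map $X \to \SB^d$ stratified-regular after a homotopy, and then to handle this by a dimension-reducing induction that peels off the top-dimensional (nonsingular, equidimensional) part of $X$ using Lemma~\ref{lem-2-4}. First I would set up the induction on $d = \dim X$. Let $A$ be the union of the singular locus of $X$ and the irreducible components of dimension at most $d-1$; then $\dim A \le d-1$ and $X \setminus A$ is a nonsingular real algebraic variety of pure dimension $d$. Given a continuous map $\varphi_0 \colon X \to \SB^d$, the inductive hypothesis applied to $A$ (after refining the stratification so that $A$ is a union of strata) lets me assume, after a homotopy, that $\varphi_0|_A$ is stratified-regular; concretely, by Proposition~\ref{prop-2-2} there is a filtration of $A$ whose associated stratification makes $\varphi_0|_A$ regular on each stratum.

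The next step is the key geometric move. Since $\dim(X \setminus A) = d$ and $\SB^d \setminus \{N\}$ is diffeomorphic to $\R^d$, by a transversality/general-position argument I would homotope $\varphi_0$ (rel a neighborhood of $A$, using that a homotopy can be constructed supported away from $A$ because $\SB^d$ is simply connected in the relevant range — more carefully, one first homotopes $\varphi_0$ to a map that is constant equal to $N$ on a neighborhood of $A$, which is possible since $A$ has dimension $< d$ and $\pi_i(\SB^d)=0$ for $i<d$; one must check this does not destroy the stratified-regularity already arranged on $A$, or alternatively arrange the constancy before treating $A$). After this I have a map $\varphi$ that is constant near $A$, smooth on $X \setminus A$, with $N \in \SB^d$ a regular value and $\varphi^{-1}(N) =: V$ a compact smooth submanifold of $X \setminus A$ of dimension $0$, i.e. a finite set disjoint from $A$. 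Composing with stereographic projection $\SB^d \setminus\{N\} \to \R^d$ (suitably so that the ``antipodal'' constant value near $A$ goes to $0 \in \R^d$) puts us exactly in the setting of Lemma~\ref{lem-2-4}: a continuous $\psi' \colon X \to \R^d$, constant near $A$, smooth on $X\setminus A$, with $0$ a regular value and $(\psi')^{-1}(0)$ a finite set disjoint from $A$.

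Now I apply Lemma~\ref{lem-2-4} to obtain a regular map $\psi \colon X \to \R^d$ agreeing with $\psi'$ on $A$, with $\psi^{-1}(0) = V$ and the same differentials along $V$. Reversing the stereographic projection, $\psi$ gives a stratified-regular map $X \to \SB^d$: it is regular on $X \setminus A$ (the locus where the composite with the inverse stereographic chart is defined and regular — or rather, one uses that $\psi$ is a regular map into $\R^d = \SB^d\setminus\{N\}$, hence regular into $\SB^d$ on that Zariski open set), and on $A$ it equals the previously-arranged stratified-regular map; combining the filtration of $A$ with the Zariski-closed pair $(A, X)$ yields a filtration of $X$ whose associated stratification witnesses stratified-regularity, by Proposition~\ref{prop-2-2}\,(\ref{prop-2-2-b1})$\Rightarrow$(\ref{prop-2-2-a}). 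Finally, I must check that the resulting map is homotopic to $\varphi_0$: this is where one uses that two smooth maps $X \to \SB^d$ with $N$ a common regular value and with oriented-cobordant (in fact equal, with matching framings) preimages of $N$ are homotopic — the standard Pontryagin–Thom correspondence — together with the agreement of the maps on $A$.

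The main obstacle I expect is the bookkeeping around the neighborhood of $A$: ensuring simultaneously that $\varphi$ is \emph{constant} near $A$ (needed to invoke Lemma~\ref{lem-2-4}) \emph{and} that the already-fixed stratified-regular map on $A$ is compatible with this constancy and with the regular map $\psi$ produced by the lemma. One clean way around this is to not demand that $\varphi_0|_A$ be stratified-regular a priori, but instead to first homotope $\varphi_0$ to be constant (equal to $N$) on an open neighborhood of $A$ — legitimate since the pair $(X,A)$ is, up to homotopy, obtained by attaching cells of dimension $\le d$ and $\pi_{<d}(\SB^d)=0$ handles the cells of dimension $<d$ while the $d$-cells can be pushed off a point — and only then run Lemma~\ref{lem-2-4} directly on all of $X$ (its hypotheses only require $\varphi$ constant near $A$ and $0$ a regular value with finite preimage disjoint from $A$, which a generic small perturbation supported on $X\setminus A$ achieves). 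Then no inductive use of the theorem on $A$ is needed at all, and the homotopy at the end is again Pontryagin–Thom. Verifying that this perturbation-to-constant step respects the regular-value and disjointness conditions, and that the final map lands back in $\SB^d$ regularly on a Zariski-dense open set, are the points requiring care.
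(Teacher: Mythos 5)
There is a genuine gap, and it sits exactly where the real work of the paper's proof happens. Your plan is to homotope $f$ so that a chosen regular value $y$ has finite preimage disjoint from $A$, compose with a stereographic projection to land in $\R^d$, apply Lemma~\ref{lem-2-4} to obtain a regular map $\psi \colon X \to \R^d$ with the prescribed zero set, and return to the sphere via the inverse projection. But the composite of $f$ with a stereographic projection is undefined on the preimage of the center of projection, and that preimage is nonempty whenever $f$ is surjective (e.g.\ whenever $f$ has nonzero degree), so the map $\psi'$ you feed into Lemma~\ref{lem-2-4} does not exist as stated; your description of which point is the center and which is the regular value is also internally inconsistent (a map constant equal to $N$ near $A$ cannot have $\varphi^{-1}(N)$ disjoint from $A$). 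If you repair this by cutting off near $f^{-1}(-y)$, as the paper does with the function $\kappa$, the resulting globally defined map into $\R^d$ necessarily acquires extra zeros beyond $f^{-1}(y)$: over the contractible target $\R^d$ the local degrees at the zeros must sum to zero, so a compensating finite set $W$ of spurious zeros is unavoidable when $\deg f \neq 0$. Most tellingly, your final map $p^{-1}\circ\psi$ is an honest regular map into $\SB^d$ that misses the center of projection and is therefore null homotopic; since every regular map $\T^n \to \SB^n$ is null homotopic (as the paper recalls) while continuous maps of nonzero degree exist, your construction cannot produce a map homotopic to $f$ in general.

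The idea you are missing is the paper's treatment of the spurious zeros $W = V \setminus f^{-1}(y)$: choose a regular function $\alpha$ with $\alpha^{-1}(0)=W$ and $\alpha^{-1}(1)=f^{-1}(y)\cup A$, replace $\psi$ by $\bar\psi = \psi/\alpha^{2k+1}$, and use the \L{}ojasiewicz inequality to show $\norm{\bar\psi(x)} \to \infty$ as $x$ approaches $W$, so that $g = p^{-1}\circ\bar\psi$ extends continuously by sending $W$ to the antipode $-y$. The map $g$ is then regular only on $X \setminus W$ --- which is precisely why the theorem yields a stratified-regular rather than a regular map --- and satisfies $g^{-1}(y)=f^{-1}(y)$ with the correct differentials, after which the paper verifies the homotopy by explicit interpolations in the two stereographic charts. (Your appeal to Pontryagin--Thom would in any case need a relative version valid on the possibly singular $X$, the maps being smooth only on $X\setminus A$.) Your handling of the neighborhood of $A$ --- making $f$ constant there using $\dim A < d$ rather than inducting --- does agree with the paper, but it is not where the difficulty lies.
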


\begin{proof}
Let $A$ be the union of the singular locus of $X$ and all the
irreducible components of $X$ of dimension at most $d-1$. Since $(X,A)$
is a polyhedral pair \cite[Corollary~9.3.7]{bib9}, the restriction of
$f$ to some neighborhood of $A$ is null homotopic. Hence, according to
the homotopy extension theorem \cite[p.~90, Theorem~1.4]{bib27},
the map $f$ can be deformed without affecting its homotopy class so that
$f$ is constant in a compact neighborhood $L$ of $A$. Furthermore, we
may assume that $f$ is smooth on $X \setminus A$. By Sard's theorem,
there exists a regular value $y$ in $\SB^d$ of the smooth map $f|_{X
\setminus A}$ such that both points $y$ and $-y$ are in $\SB^d \setminus
f(L)$. In particular, the set $f^{-1}(y)$ is finite. We choose a compact
neighborhood $K$ of $f^{-1}(y)$ in $X$ which is disjoint from $L \cup
f^{-1}(-y)$ and such that each point in $K$ is a regular point of $f$.
Let $p \colon \SB^d \setminus \{-y\} \to \R^d$ be the stereographic
projection (in particular, $p(y)=0$). Let $\kappa \colon X \to \R$ be a
continuous function, smooth on $X \setminus A$, with $\kappa = 1$ on $K
\cup L$ and $\kappa = 0$ in a neighborhood of $f^{-1}(-y)$. The map $\xi
= (\xi_1, \ldots, \xi_d) \colon X \to \R^d$, defined by
\begin{equation*}
\xi(x) =
\begin{cases}
\kappa(x)p(f(x)) & \textrm{for } x \textrm{ in } X \setminus
f^{-1}(-y)\\
0 & \textrm{for } x \textrm{ in } f^{-1}(-y),
\end{cases}
\end{equation*}
is continuous, smooth on $X \setminus A$, $\xi^{-1}(0)\cap L =
\varnothing$, $f^{-1}(y) \subseteq \xi^{-1}(0)$, and each point in $K$
is a regular point of $\xi$. Let $\lambda \colon X \to \R$ be a
continuous function, smooth on $X \setminus A$, with $\lambda^{-1}(0) =
K \cup L$ (such a function exists, cf. for instance
\cite[Theorem~14.1]{bib21}). The map $\eta \colon X \times \R^d \to
\R^d$, defined by
\begin{equation*}
\eta( x, (s_1,\ldots s_d) ) = ( \xi_1(x) + s_1\lambda(x)^2, \ldots,
\xi_d(x) + s_d\lambda(x)^2 ),
\end{equation*}
is continuous, smooth on $(X \setminus A) \times \R^d$, and $0$ in
$\R^d$ is a regular value of the restriction of $\eta$ to $(X \setminus
A) \times \R^d$. According to the parametric transversality theorem, we
can choose a point $(s_1, \ldots, s_d)$ in $\R^d$ such that if $\varphi
\colon X \to \R^d$ is defined by $\varphi(x) = \eta( x, (s_1, \ldots,
s_d) )$ for all $x$ in $X$, then $0$ in $\R^d$ is a regular value of the
restriction of $\varphi$ to $X \setminus A$. By construction, $\varphi$
is constant in a neighborhood of $A$, and the set $V \coloneqq
\varphi^{-1}(0)$ is disjoint from $A$. Furthermore, $V$ is a finite set
containing $f^{-1}(y)$. Let $\psi \colon X \to \R^d$ be a regular map as
in Lemma~\ref{lem-2-4}. We choose a regular function $\alpha \colon X
\to \R$ with $\alpha^{-1}(0) = W$ and $\alpha^{-1}(1) = f^{-1}(y)\cup A$,
where $W \coloneqq V \setminus f^{-1}(y)$. For example, we can take
$\alpha = \alpha_1^2 / (\alpha_1^2 + \alpha_2^2)$, where $\alpha_1$ and
$\alpha_2$ are regular functions on $X$ satisfying $\alpha_1^{-1}(0) =
W$ and $\alpha_2^{-1}(0) = f^{-1}(y) \cup A$. By the \L{}ojasiewicz
inequality \cite[Corollary~2.6.7]{bib9}, there exist a neighborhood $U$
of $W$ in $X$, a positive real number $c$, and a positive integer $k$
for which
\begin{equation*}
\norm{\psi(x)} \geq c\alpha(x)^{2k} \quad \textrm{for all $x$ in $U$}.
\end{equation*}
Let $\bar{\psi}(x) = (1 / \alpha(x)^{2k+1}) \psi(x)$ for $x$ in $X
\setminus W$. The map $g \colon X \to \SB^d$, defined by
\begin{equation*}
g(x)=
\begin{cases}
p^{-1}(\bar{\psi}(x)) & \textrm{for $x$ in $X \setminus W$}\\
-y & \textrm{for $x$ in $W$},
\end{cases}
\end{equation*}
is continuous. Since $p$ is a biregular isomorphism, the restriction
$g|_{X \setminus W}$ is a regular map, and hence $g$ is a
stratified-regular map. It suffices to prove that $f$ is homotopic to
$g$.

The map $\bar{\psi} \colon X \setminus W \to \R^d$ has the following
properties: $\psi^{-1}(0) = f^{-1}(y) = V \setminus W$ and
$d\bar{\psi}_x = d\varphi_x$ for every point $x$ in $f^{-1}(y)$. If $U_1$
is a sufficiently small open neighborhood of $f^{-1}(y)$ in $X$, then
\begin{equation*}
(1-t) \varphi(x) + t\bar{\psi}(x) \neq 0 \quad \textrm{for all $(x,t)$
in $(U_1 \setminus f^{-1}(y)) \times [0,1]$}.
\end{equation*}
We may assume that $U_1 \subseteq K$, and hence $\varphi(x) = p(f(x))$
for all $x$ in $U_1$. Let $U_2$ be an open neighborhood of $f^{-1}(y)$
whose closure $\overline{U_2}$ is contained in $U_1$. Choose a
continuous function $\mu \colon X \to [0,1]$, smooth on $X \setminus A$,
with $\mu = 1$ on $\overline{U_2}$ and $\mu = 0$ in a neighborhood of $X
\setminus U_1$. Then the map $F \colon X \times [0,1] \to \SB^p$,
defined by
\begin{equation*}
F(x,t)=
\begin{cases}
p^{-1} ( (1 - t\mu(x)) \varphi(x) + t\mu(x) \bar{\psi}(x) ) &
\textrm{for $(x,t)$ in $U_1 \times [0,1]$} \\
f(x) & \textrm{for $(x,t)$ in $(X \setminus U_1) \times [0,1]$},
\end{cases}
\end{equation*}
is continuous. Furthermore, if $F_t \colon X \to \SB^p$ is defined by
$F_t(x) = F(x,t)$, then $F_t^{-1}(y) = f^{-1}(y) = g^{-1}(y)$ for every
$t$ in $[0,1]$, and $F_0 = f$. It remains to prove that the map $\bar{f}
\coloneqq F_1$ is homotopic to $g$. This can be done as follows. Note
that $\bar{f}=g$ on $U_2$. If $q \colon \SB^d \setminus \{y\} \to \R^d$
is the stereographic projection, then
\begin{equation*}
G(x,t)=
\begin{cases}
q^{-1} ( (1-t)q( \bar{f}(x) ) + tq(g(x)) ) & \textrm{for $(x,t)$ in $(X
\setminus f^{-1}(y)) \times [0,1]$}\\
\bar{f}(x) & \textrm{for $(x,t)$ in $U_2 \times [0,1]$}
\end{cases}
\end{equation*}
is a homotopy between $\bar{f}$ and $g$.
\end{proof}

A special case of Theorem~\ref{th-2-5}, with $X$ nonsingular, is
contained in \cite[Theorem~1.2]{bib32}.

Recall that each regular map from
$\T^n$ into $\SB^n$ is null homotopic, cf. \cite{bib11} or \cite{bib12}.
In particular, Theorem~\ref{th-2-5} shows that stratified-regular maps are more flexible than regular
maps. Other results illustrating this point can be found in
\cite{bib32, bib34}.

For any continuous map $f \colon X \to Y$ and any topological
$\F$-vector subbundle $\theta$ of $\varepsilon_Y^n(\F)$, the pullback
$f^*\theta$ will be regarded as a topological $\F$-vector subbundle of
$\varepsilon_X^n(\F)$.

\begin{proposition}\label{prop-2-6}
If the map $f$ is $\X$-regular, and the $\F$-vector bundle $\theta$ is
algebraic, then $f^*\theta$ is $\X$-algebraic. Similarly, if $f$ is
stratified-regular and $\theta$ is stratified-algebraic, then
$f^*\theta$ is stratified-algebraic.
\end{proposition}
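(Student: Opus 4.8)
The plan is to reduce everything to the case where $f$ is $\X$-regular and $\theta$ is algebraic, since the stratified-regular/stratified-algebraic statement follows from this by passing to a common refinement. So first I would fix a stratification $\X$ of $X$, an $\X$-regular map $f \colon X \to Y$, and an algebraic $\F$-vector subbundle $\theta$ of $\varepsilon_Y^n(\F)$. Recall that $f^*\theta$ is by definition the topological $\F$-vector subbundle of $\varepsilon_X^n(\F)$ whose total space is $E(f^*\theta) = \{(x,v) \in X \times \F^n \mid (f(x),v) \in E(\theta)\}$, with fiber $(f^*\theta)_x = \{x\} \times E(\theta)_{f(x)} \subseteq \{x\} \times \F^n$; this is visibly a topological $\F$-vector subbundle of $\varepsilon_X^n(\F)$ because $f$ is continuous and $\theta$ is a topological subbundle of $\varepsilon_Y^n(\F)$.

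The key point is then to check that for each stratum $S$ in $\X$, the restriction $(f^*\theta)|_S$ is an algebraic $\F$-vector subbundle of $\varepsilon_S^n(\F)$. Here I would use that $f|_S \colon S \to Y$ is a regular map and that $E(\theta)$ is a Zariski closed subvariety of $Y \times \F^n$ (by the discussion of algebraic $\F$-vector bundles following Definition~1.1). Consider the regular map $f|_S \times \mathrm{id}_{\F^n} \colon S \times \F^n \to Y \times \F^n$. Its preimage of the Zariski closed set $E(\theta)$ is exactly $E((f^*\theta)|_S) = \{(x,v) \in S \times \F^n \mid (f(x),v) \in E(\theta)\}$, hence $E((f^*\theta)|_S)$ is a Zariski closed subvariety of $S \times \F^n$. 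Combined with the fact that each fiber $(f^*\theta)_x = E(\theta)_{f(x)}$ is an $\F$-vector subspace of $\{x\} \times \F^n$ and that $(f^*\theta)|_S$ is already a topological subbundle, this shows $(f^*\theta)|_S$ is an algebraic $\F$-vector subbundle of $\varepsilon_S^n(\F)$ (one may invoke the characterizations of algebraic $\F$-vector bundles in \cite[Chapters~12 and 13]{bib9}, or simply note that a topological $\F$-vector subbundle of $\varepsilon_S^n(\F)$ with Zariski closed total space is algebraic by definition). Therefore $f^*\theta$ is $\X$-algebraic.

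For the second assertion, suppose $f$ is stratified-regular and $\theta$ is stratified-algebraic: choose a stratification $\X_1$ of $X$ with $f$ being $\X_1$-regular, and a stratification $\Y_1$ of $Y$ with $\theta$ being $\Y_1$-algebraic. The only mild subtlety — and the step I expect to need the most care — is that $f$ need not map strata of $\X_1$ into strata of $\Y_1$, so I would pass to the common refinement $\X_2 \coloneqq \{ S \cap f^{-1}(T) \mid S \in \X_1,\ T \in \Y_1 \}$ of $X$; each $S \cap f^{-1}(T)$ is Zariski locally closed in $X$ because $f|_S$ is regular and $T$ is Zariski locally closed in $Y$, so $\X_2$ is indeed a stratification of $X$ refining $\X_1$, and for every stratum $S'$ of $\X_2$ the restriction $f|_{S'}$ is regular with image contained in a single stratum $T$ of $\Y_1$. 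Now apply the $\X$-algebraic case, with $\X = \X_2$ and $Y$ replaced by the stratum $T$ (on which $\theta|_T$ is algebraic), to conclude that $(f^*\theta)|_{S'} = (f|_{S'})^*(\theta|_T)$ is an algebraic $\F$-vector subbundle of $\varepsilon_{S'}^n(\F)$ for each $S' \in \X_2$; hence $f^*\theta$ is $\X_2$-algebraic, and in particular stratified-algebraic. $\qed$
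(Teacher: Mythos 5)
Your proof is correct and follows essentially the same route as the paper: the paper dismisses the first assertion as obvious and, for the second, passes to exactly the common refinement $\{\,T\cap f^{-1}(P)\mid T\in\TC,\ P\in\Y\,\}$ that you construct. Your only addition is to spell out why the pullback along a regular map of an algebraic subbundle is algebraic (via the Zariski closedness of the total space, which the paper's cited characterizations in \cite[Chapters~12 and 13]{bib9} cover), so there is nothing to correct.
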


\begin{proof}
The first assertion is obvious. For the second assertion, let $\TC$ be a
stratification of $X$ such that the map $f$ is $\TC$-regular, and let
$\Y$ be a stratification of $Y$ such that the $\F$-vector bundle
$\theta$ is $\Y$-algebraic. Then $\SC \coloneqq \{ T \cap f^{-1}(P) \mid
T \in \TC, P \in \Y \}$ is a stratification of $X$, and $f^*\theta$ is
$\SC$-algebraic.
\end{proof}

\begin{proof}[Proof of Theorem~\ref{th-1-3}.]
Let $h \colon X \to \SB^d$ be a homotopy equivalence. By
Theorem~\ref{th-2-5}, $h$ is homotopic to a stratified-regular map $f
\colon X \to \SB^d$. According to Proposition~\ref{prop-2-6}, if
$\theta$ is an algebraic $\F$-vector bundle on $\SB^d$, then $f^*\theta$
is a stratified-algebraic $\F$-vector bundle on $X$. The proof is
complete since every topological $\F$-vector bundle on $\SB^d$ is
isomorphic to an algebraic $\F$-vector bundle (cf.
Example~\ref{ex-1-4}).
\end{proof}

%\begin{document} %---for spell checking in vim
\section[Basic properties of stratified-algebraic vector
bundles]{\texorpdfstring{Basic properties of stratified-algebraic\\ vector
bundles}{Basic properties of stratified-algebraic vector bundles}}\label{sec-3}

Throughout this section, $X$ denotes a real algebraic variety, and $\X$
is a stratification of $X$. All modules that appear below are left
modules.

Vector bundles are often investigated by means of maps into
Grassmannians, cf. \cite{bib3, bib5, bib9, bib10, bib25, bib27, bib28,
bib29}. As in \cite{bib9, bib10}, the Grassmannian $\G_k(\F^n)$ of
$k$-dimensional $\F$-vector subspaces of $\F^n$ will be regarded as a
real algebraic variety. The tautological $\F$-vector bundle on
$\G_k(\F^n)$ will be denoted by $\gamma_k(\F^n)$. If $\xi$ is a
topological (resp. algebraic) $\F$-vector subbundle of $\varepsilon_X^n(\F)$ of rank $k$,
then the map $f \colon X \to \G_k(\F^n)$ defined by
\begin{equation*}
E(\xi)_x = \{ x \} \times f(x) \quad \textrm{for all $x$ in $X$}
\end{equation*}
is continuous (resp. regular).

Let $K$ be a finite nonempty collection of nonnegative integers and $n$ an
integer such that $n \geq k$ for every $k$ in $K$. We denote by
$\G_K(\F^n)$ the disjoint union of all $\G_k(\F^n)$ for $k$ in~$K$. The
tautological $\F$-vector bundle $\gamma_K(\F^n)$ on $\G_K(\F^n)$ is the
bundle whose restriction to $\G_k(\F^n)$ is $\gamma_k(\F^n)$ for
each $k$ in $K$. In particular, $\G_K(\F^n)$ is a real algebraic
variety, and $\gamma_K(\F^n)$ is an algebraic $\F$-vector subbundle of
the standard trivial $\F$-vector bundle on $\G_K(\F^n)$ with fiber
$\F^n$. We call $\G_K(\F^n)$ the $(K,n)$-\emph{multi-Grassmannian}. It
is explained below why we need this notion.

Any algebraic $\F$-vector bundle on $X$ has constant rank on each
Zariski connected component of $X$. This observation can be partially
generalized as follows.

\begin{proposition}\label{prop-3-1}
Assume that the variety $X$ is nonsingular. Then any stratified-algebraic
$\F$-vector bundle on $X$ has constant rank on each irreducible
component of $X$.
\end{proposition}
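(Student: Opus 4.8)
The plan is to reduce to the case where $X$ is irreducible. Let $V$ be an irreducible component of $X$; since $X$ is nonsingular, $V$ is both Zariski open and Zariski closed in $X$, and $V$ is itself a nonsingular irreducible real algebraic variety. If $\xi$ is a stratified-algebraic $\F$-vector bundle on $X$, say $\SC$-algebraic for some stratification $\SC$ of $X$, then the collection $\{ S \cap V \mid S \in \SC \}$ is a stratification of $V$ and $\xi|_V$ is $\SC|_V$-algebraic; moreover the rank of $\xi$ on $V$ equals the rank of $\xi|_V$. So it suffices to show that a stratified-algebraic $\F$-vector bundle on an \emph{irreducible} nonsingular real algebraic variety has constant rank.

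So assume $X$ is irreducible and nonsingular, and let $\xi$ be an $\SC$-algebraic $\F$-vector subbundle of $\varepsilon_X^n(\F)$. The key point is that among the (finitely many) strata $S$ of $\SC$, there is exactly one, call it $S_0$, that is Zariski dense in $X$: indeed, as noted in the proof of Proposition~\ref{prop-2-2}, for the irreducible variety $X$ there is a stratum $S$ of $\SC$ with $S \cap X = S$ nonempty and Zariski open in $X$, hence Zariski dense; and two distinct strata cannot both be dense since they are disjoint. On $S_0$ the restriction $\xi|_{S_0}$ is an algebraic $\F$-vector bundle, hence has constant rank $k_0$ on the Zariski connected (being Zariski dense in the irreducible $X$, in fact Zariski irreducible) variety $S_0$. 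I claim $\xi$ has rank $k_0$ everywhere. Let $U = \{ x \in X \mid \rank \xi_x = k_0 \}$; since $\xi$ is a topological vector subbundle of $\varepsilon_X^n(\F)$, the rank function $x \mapsto \dim_\F E(\xi)_x$ is locally constant on $X$ in the Euclidean topology, so $U$ is open and closed in the Euclidean topology. As $X$ is irreducible, it is connected in the Euclidean topology (an irreducible real algebraic variety that is nonsingular is a connected manifold — more precisely, the Euclidean closure of the set of nonsingular points of an irreducible variety is connected, and here every point is nonsingular). Since $S_0$ is nonempty, $U \supseteq S_0 \neq \varnothing$, so $U = X$ by connectedness. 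Hence $\rank \xi_x = k_0$ for all $x \in X$, as required.

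The only subtle point is the assertion that an irreducible nonsingular real algebraic variety is connected in the Euclidean topology; this is standard (it follows, e.g., from the fact that the nonsingular locus of an irreducible real algebraic variety has connected Euclidean closure, cf. \cite[Chapter~3]{bib9}), but it is essential — it is exactly here that the nonsingularity hypothesis on $X$ is used, and it is the reason the conclusion can fail for singular $X$. I expect this topological input, together with the identification of the unique dense stratum, to be the heart of the argument; everything else is a routine reduction.
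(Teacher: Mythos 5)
Your overall strategy is the one the paper uses: reduce to an irreducible nonsingular $X$, locate the unique stratum $S_0$ that is nonempty and Zariski open (hence Zariski dense) in $X$, note that $\xi|_{S_0}$ has constant rank $k_0$, and then propagate $k_0$ to all of $X$ using local constancy of the rank in the Euclidean topology. The gap is in the propagation step. You close the argument by asserting that an irreducible nonsingular real algebraic variety is connected in the Euclidean topology, and, ``more precisely,'' that the Euclidean closure of the nonsingular locus of an irreducible variety is connected. Both claims are false: the plane cubic $\{(x,y)\in\R^2 \mid y^2=x^3-x\}$ is irreducible and everywhere nonsingular, yet it has two Euclidean connected components (an oval over $[-1,0]$ and an unbounded branch over $[1,\infty)$). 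Irreducibility controls the Zariski topology, not the Euclidean one, so the ``$U$ is open and closed, hence $U=X$'' argument does not go through as written.

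The repair is to replace connectedness by Euclidean density of $S_0$, which is what the paper's terse proof implicitly relies on. Since $X$ is irreducible, $X\setminus S_0$ is a proper Zariski closed subvariety and hence has dimension strictly less than $\dim X$; since $X$ is nonsingular and irreducible, it is a smooth manifold of pure dimension $\dim X$, and a semialgebraic subset of strictly smaller dimension has empty interior there. Thus $S_0$ is dense in $X$ for the Euclidean topology, and a Euclidean-locally-constant function equal to $k_0$ on a dense set (its level set at $k_0$ being both closed and dense) equals $k_0$ everywhere. This is also exactly where nonsingularity enters: it guarantees that the exceptional set $X\setminus S_0$ cannot contain a Euclidean-open piece of $X$. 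Compare Example~\ref{ex-3-2}, where $C$ is irreducible but fails to be a $1$-manifold at the origin, the stratum $S_1=\{(0,0)\}$ is Euclidean open in $C$, and the conclusion fails. The rest of your argument (reduction to an irreducible component, uniqueness of the dense stratum, constancy of the rank of the algebraic bundle $\xi|_{S_0}$ on the irreducible $S_0$) is correct.
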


\begin{proof}
We may assume that $X$ is irreducible. It suffices to note that in each
stratification of $X$, one can find a stratum which is nonempty and
Zariski open in $X$.
\end{proof}

However, we encounter a different phenomenon for vector bundles on
singular varieties.

\begin{example}\label{ex-3-2}
The real algebraic curve
\begin{equation*}
C \coloneqq \{ (x,y) \in \R^2 \mid y^2 = x^2(x-1) \}
\end{equation*}
is irreducible, and hence Zariski connected. It has two connected
components in the Euclidean topology, $S_1 = \{(0,0)\}$ and $S_2 = C
\setminus S_1$. The collection $\C = \{S_1,S_2\}$ is a stratification of
$C$. Let $\xi$ be the topological $\F$-vector subbundle of
$\varepsilon_C^2(\F)$ such that $E(\xi|_{S_1}) = \{(0,0)\} \times (\F
\times \{0\})$ and $\xi|_{S_2} = \varepsilon_{S_2}^2(\F)$. Then $\xi$ is
$\C$-algebraic and it does not have constant rank. Furthermore, if $K =
\{1,2\}$ and $f \colon C \to \G_K(\F^2)$ is a map such that $f(S_1)
\subseteq \G_1(\F^2)$ and $f(S_2) \subseteq \G_2(\F^2)$, then $f$ is
$\C$-regular
with $\xi = f^*\gamma_K(\F^2)$.
\end{example}

\begin{proposition}\label{prop-3-3}
Any $\X$-algebraic $\F$-vector bundle $\xi$ on $X$ is of the form $\xi =
f^* \gamma_K(\F^n)$ for some multi-Grassmannian $\G_K(\F^n)$ and
$\X$-regular map $f \colon X \to \G_K(\F^n)$.
\end{proposition}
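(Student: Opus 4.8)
The plan is to exhibit $f$ as the tautological classifying map of $\xi$, regarded as a subbundle of $\varepsilon_X^n(\F)$; the only issues are to choose the right multi-Grassmannian as target and to verify $\X$-regularity. First I would let $K$ be the set of all integers $k$ such that $\xi$ has rank $k$ at some point of $X$. This is a finite nonempty set, and since $\xi$ is a subbundle of $\varepsilon_X^n(\F)$ we have $0 \le k \le n$ for every $k \in K$, so the multi-Grassmannian $\G_K(\F^n)$ is defined. I then define $f \colon X \to \G_K(\F^n)$ by letting $f(x)$ be the $\F$-vector subspace $V_x$ of $\F^n$ for which $E(\xi)_x = \{x\} \times V_x$. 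With this definition, $f^*\gamma_K(\F^n)$, viewed as a subbundle of $\varepsilon_X^n(\F)$, has fiber $\{x\} \times V_x = E(\xi)_x$ over each $x \in X$, so it coincides with $\xi$. Hence everything reduces to checking that $f$ is $\X$-regular.

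For continuity of $f$, I would use that the rank of a topological vector bundle is locally constant in the Euclidean topology: for each $k \in K$ the set $X_k$ of points of $X$ at which $\xi$ has rank $k$ is both open and closed in $X$, and $f^{-1}(\G_k(\F^n)) = X_k$. Since $\G_K(\F^n)$ is the topological disjoint union of the Grassmannians $\G_k(\F^n)$, it is enough to observe that $f|_{X_k} \colon X_k \to \G_k(\F^n)$ is the classifying map of the rank-$k$ topological subbundle $\xi|_{X_k}$ of $\varepsilon_{X_k}^n(\F)$, which is continuous by the standard fact recalled at the beginning of this section.

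For $\X$-regularity, I would fix a stratum $S \in \X$. Since $\xi|_S$ is an algebraic $\F$-vector subbundle of $\varepsilon_S^n(\F)$, it has constant rank on each Zariski connected component of $S$. Writing $S = S^{(1)} \sqcup \cdots \sqcup S^{(m)}$ for the decomposition of $S$ into its Zariski connected components, which are Zariski open and closed in $S$, and letting $k_j$ be the rank of $\xi|_{S^{(j)}}$, the restriction $f|_{S^{(j)}} \colon S^{(j)} \to \G_{k_j}(\F^n)$ is the classifying map of the algebraic subbundle $\xi|_{S^{(j)}}$ of $\varepsilon_{S^{(j)}}^n(\F)$, hence is a regular map. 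As regularity is local for the Zariski topology and the $S^{(j)}$ are Zariski open in $S$, the restriction $f|_S$ is regular; together with the continuity of $f$ established above, this shows that $f$ is $\X$-regular.

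The argument is largely bookkeeping, and the point deserving attention is exactly the one the authors have flagged in advance: an $\X$-algebraic bundle need not have constant rank (cf. Example~\ref{ex-3-2}), so one is genuinely forced to take as target the multi-Grassmannian $\G_K(\F^n)$ rather than a single $\G_k(\F^n)$, and to split each stratum into its Zariski connected components before invoking the classical algebraic classifying map. I do not expect any obstacle beyond being careful about this.
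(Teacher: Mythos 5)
Your proposal is correct and follows essentially the same route as the paper: define $K$ as the set of fiber dimensions, let $f$ be the tautological classifying map into $\G_K(\F^n)$, and verify continuity globally and regularity stratum by stratum using the standard fact about classifying maps recalled at the start of Section~\ref{sec-3}. The paper's proof is just a terser version of yours (it derives finiteness of $K$ from Zariski-local constancy of the rank on each stratum rather than from $K \subseteq \{0,\dots,n\}$, and leaves the decomposition into Zariski connected components implicit), so no further comment is needed.
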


\begin{proof}
Let $\xi$ be an $\X$-algebraic $\F$-vector subbundle of
$\varepsilon_X^n(\F)$. For each stratum $S$ in $\X$, the function
\begin{equation*}
S \to \Z, \quad x \to \dim_{\F} E(\xi)_x
\end{equation*}
is locally constant in the Zariski topology, the $\F$-vector bundle
$\xi|_S$ being algebraic. Hence, the set $K \coloneqq \{ \dim_{\F}
E(\xi)_x \mid x \in X \}$ is finite. The map $f \colon X \to \G_K(\F^n)$
defined by
\begin{equation*}
E(\xi)_x = \{x\} \times f(x) \quad \textrm{for all $x$ in $X$}
\end{equation*}
is continuous and $\xi = f^*\gamma_K(\F^n)$. Furthermore, the map $f|_S$
is regular. Thus $f$ is $\X$-regular, as required.
\end{proof}

\begin{proposition}\label{prop-3-4}
Any stratified-algebraic $\F$-vector bundle $\xi$ on $X$ is of the
form
\begin{equation*}
\xi = f^* \gamma_K(\F^n)
\end{equation*}
for some multi-Grassmannian $\G_K(\F^n)$ and
stratified-regular map ${f \colon X \to \G_K(\F^n)}$.
\end{proposition}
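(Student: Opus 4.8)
The plan is to deduce this from Proposition~\ref{prop-3-3} together with the comparison of stratifications. Let $\xi$ be a stratified-algebraic $\F$-vector bundle on $X$. By Definition~\ref{def-1-2}, there is a stratification $\SC$ of $X$ such that $\xi$ is $\SC$-algebraic; in particular $\xi$ is a topological $\F$-vector subbundle of some $\varepsilon_X^n(\F)$ whose restriction to each stratum is algebraic. First I would apply Proposition~\ref{prop-3-3} with $\X$ replaced by $\SC$: this produces a finite nonempty set $K$ of nonnegative integers, an integer $n$, a multi-Grassmannian $\G_K(\F^n)$, and an $\SC$-regular map $f \colon X \to \G_K(\F^n)$ with $\xi = f^*\gamma_K(\F^n)$.

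It then remains only to observe that an $\SC$-regular map is in particular stratified-regular. This is immediate from Definition~\ref{def-2-1}: a map is stratified-regular if it is $\TC$-regular for \emph{some} stratification $\TC$ of $X$, and here $\TC = \SC$ works. Hence $f$ is stratified-regular, and the displayed equality $\xi = f^*\gamma_K(\F^n)$ from Proposition~\ref{prop-3-3} is exactly the desired conclusion.

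There is essentially no obstacle here; the statement is a routine reformulation obtained by unwinding the definition of ``stratified-algebraic'' into the $\X$-algebraic case and invoking Proposition~\ref{prop-3-3}. The only point worth a word is the finiteness of $K$, but this is already established inside the proof of Proposition~\ref{prop-3-3} (the rank function is locally constant in the Zariski topology on each stratum, and there are finitely many strata), so nothing new is needed. One may therefore simply write: ``This follows immediately from Definition~\ref{def-1-2} and Proposition~\ref{prop-3-3}.''
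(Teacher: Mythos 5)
Your proposal is correct and matches the paper's argument exactly: the paper's entire proof is ``It suffices to apply Proposition~\ref{prop-3-3},'' which is precisely your reduction via Definition~\ref{def-1-2} plus the observation that an $\SC$-regular map is stratified-regular. Nothing further is needed.
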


\begin{proof}
It suffices to apply Proposition~\ref{prop-3-3}.
\end{proof}

\begin{proposition}\label{prop-3-5}
For a topological $\F$-vector bundle $\xi$ on $X$, the following
conditions are equivalent:
\begin{conditions}%
\item\label{prop-3-5-a} The bundle $\xi$ is stratified-algebraic.

\item\label{prop-3-5-b} There exists a filtration $\FC$ of $X$ such
that $\xi$ is $\overline{\FC}$-algebraic and each stratum in
$\overline{\FC}$ is nonsingular and equidimensional.%
\setcounter{favoritecondition}{\value{conditionsi}}%
\end{conditions}%
\begin{altconditions}[start=\value{favoritecondition}]%
\item\label{prop-3-5-b'} There exists a filtration $\FC'$ of $X$ such
that $\xi$ is $\overline{\FC}'$-algebraic.
\end{altconditions}
\end{proposition}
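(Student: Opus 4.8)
The plan is to establish the cycle of implications (\ref{prop-3-5-b}) $\Rightarrow$ (\ref{prop-3-5-b'}) $\Rightarrow$ (\ref{prop-3-5-a}) $\Rightarrow$ (\ref{prop-3-5-b}), mirroring the structure used for Proposition~\ref{prop-2-2}. The first implication is trivial, since a filtration witnessing (\ref{prop-3-5-b}) is in particular a filtration witnessing (\ref{prop-3-5-b'}). For (\ref{prop-3-5-b'}) $\Rightarrow$ (\ref{prop-3-5-a}), note that if $\FC' = (X_{-1}, \ldots, X_m)$ is a filtration with $\xi$ being $\overline{\FC}'$-algebraic, then $\overline{\FC}'$ is a stratification of $X$, so by Definition~\ref{def-1-2} the bundle $\xi$ is stratified-algebraic.

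The substantive direction is (\ref{prop-3-5-a}) $\Rightarrow$ (\ref{prop-3-5-b}). First I would use Proposition~\ref{prop-3-4} to write $\xi = f^*\gamma_K(\F^n)$ for some multi-Grassmannian $\G_K(\F^n)$ and some stratified-regular map $f \colon X \to \G_K(\F^n)$. Then I would invoke the equivalence (\ref{prop-2-2-a}) $\Leftrightarrow$ (\ref{prop-2-2-b}) of Proposition~\ref{prop-2-2}: since $f$ is stratified-regular, there exists a filtration $\FC$ of $X$ such that $f$ is $\overline{\FC}$-regular and each stratum in $\overline{\FC}$ is nonsingular and equidimensional. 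Finally, by the first assertion of Proposition~\ref{prop-2-6} (applied stratum by stratum, or directly to the $\overline{\FC}$-regular map $f$ and the algebraic bundle $\gamma_K(\F^n)$), the pullback $f^*\gamma_K(\F^n) = \xi$ is $\overline{\FC}$-algebraic. Since the strata of $\overline{\FC}$ are nonsingular and equidimensional by construction, this is exactly condition (\ref{prop-3-5-b}).

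I do not expect any real obstacle here: the proposition is essentially the vector-bundle analogue of Proposition~\ref{prop-2-2}, and all the work has already been done --- the translation between bundles and maps into multi-Grassmannians (Propositions~\ref{prop-3-3} and~\ref{prop-3-4}), the pullback compatibility (Proposition~\ref{prop-2-6}), and the filtration-refinement statement for stratified-regular maps (Proposition~\ref{prop-2-2}). The only point requiring a moment's care is that the rank function of $\xi$ may be non-constant, which is precisely why one must pull back from a multi-Grassmannian $\G_K(\F^n)$ rather than an ordinary Grassmannian; but Proposition~\ref{prop-3-4} already accommodates this. So the proof reduces to one line: \emph{combine Proposition~\ref{prop-3-4} with Proposition~\ref{prop-2-2}.}
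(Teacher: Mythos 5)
Your proposal is correct and follows the paper's own argument essentially verbatim: the paper likewise proves (\ref{prop-3-5-a})~$\Rightarrow$~(\ref{prop-3-5-b}) by combining Proposition~\ref{prop-3-4} with Proposition~\ref{prop-2-2} and noting that the remaining two implications are immediate. Your explicit appeal to Proposition~\ref{prop-2-6} for the pullback step only makes precise what the paper leaves as ``it follows.''
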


\begin{proof}
By Proposition~\ref{prop-3-4}, if the $\F$-vector bundle $\xi$ is
stratified-algebraic, then 
\begin{equation*}
\xi=f^*\gamma_K(\F^n)
\end{equation*}
for some
multi-Grassmannian $\G_K(\F^n)$ and stratified-regular map $f \colon X
\to \G_K(\F^n)$. According to Proposition~\ref{prop-2-2}, there exists a
filtration $\FC$ of $X$ such that $f$ is $\overline{\FC}$-regular and
each stratum in $\overline{\FC}$ is nonsingular and equidimensional. It
follows that $\xi$ is $\overline{\FC}$-algebraic. Consequently,
(\ref{prop-3-5-a}) implies (\ref{prop-3-5-b}). It is obvious that
(\ref{prop-3-5-b}) implies (\ref{prop-3-5-b'}), and (\ref{prop-3-5-b'})
implies (\ref{prop-3-5-a}).
\end{proof}

Recall that $d(\F) = \dim_{\R}\F$. As a consequence of
Theorem~\ref{th-2-5}, we obtain the following result on vector bundles
on low-dimensional varieties.

\begin{corollary}\label{cor-3-6}
Assume that the variety $X$ is compact and $\dim X \leq d(\F)$. Then any
topological $\F$-vector bundle of constant rank on $X$ is topologically
isomorphic to a stratified-algebraic $\F$-vector bundle.
\end{corollary}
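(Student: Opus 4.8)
The plan is to reduce the statement to Theorem~\ref{th-2-5} via the classifying-space description of vector bundles. Let $\xi$ be a topological $\F$-vector bundle of constant rank $k$ on $X$. First I would replace $\xi$ by a vector bundle that is a subbundle of a trivial bundle: since $X$ is compact, $\xi$ embeds as a topological $\F$-vector subbundle of $\varepsilon_X^n(\F)$ for some $n$, so $\xi = f^*\gamma_k(\F^n)$ for a continuous classifying map $f \colon X \to \G_k(\F^n)$. The goal is then to deform $f$ to a stratified-regular map $g$; by Proposition~\ref{prop-2-6} the pullback $g^*\gamma_k(\F^n)$ is stratified-algebraic, and since $g$ is homotopic to $f$ we get $g^*\gamma_k(\F^n) \cong f^*\gamma_k(\F^n) = \xi$, which finishes the proof.

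So the real content is: every continuous map from a compact variety $X$ of dimension $\leq d(\F)$ into a Grassmannian $\G_k(\F^n)$ is homotopic to a stratified-regular map. The key step is to bring Theorem~\ref{th-2-5} to bear. The point is that $\G_k(\F^n)$ has a cell structure (or handle decomposition) whose cells have real dimension a multiple of $d(\F)$: for $\F=\R$ all cells, for $\F=\CB$ only even-dimensional cells, for $\F=\HB$ only cells of dimension divisible by $4$. Consequently, the $d(\F)$-skeleton of $\G_k(\F^n)$ already carries all the homotopy that a complex of dimension $\leq d(\F)$ can see: any continuous map $X \to \G_k(\F^n)$ is homotopic to one whose image lies in (a neighborhood of) the $d(\F)$-skeleton, and — because $\dim X \leq d(\F)$ — obstruction theory lets us further homotope so that the map factors, up to homotopy, through a map to a wedge of $d(\F)$-spheres, or more directly, so that it is classified by its behaviour on the bottom cell. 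I would make this precise by noting that for $\dim X \le d(\F)$ the relevant set of homotopy classes $[X, \G_k(\F^n)]$ is controlled by $H^{d(\F)}(X; \pi_{d(\F)}(\G_k(\F^n)))$ together with $\pi_0$, and $\pi_{d(\F)}(\G_k(\F^n))$ is $\Z$ or $\Z/2$ in the pertinent range, realized by a generator coming from an $\F$-line bundle; each such class is pulled back, along a fixed regular map of Grassmannians, from a generator of $\pi_{d(\F)}(\SB^{d(\F)})$ under an identification of the bottom cell of $\G_1(\F^{n})$ with $\SB^{d(\F)}$ that is biregular.

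Concretely, here is the route I would actually write down. Reduce to $X$ connected and to the universal $\F$-line bundle case by writing $\xi$ in terms of line bundles is too lossy for $\F = \R$, so instead: use that, modulo a bundle that is both algebraic and topologically trivial over the $(d(\F)-1)$-skeleton, $\xi$ is pulled back from the Grassmannian along a map which, after a homotopy supported away from the top cells, is the composite of the collapse map $X \to X/X^{(d(\F)-1)} = \bigvee \SB^{d(\F)}$ with a map $\bigvee \SB^{d(\F)} \to \G_k(\F^n)$ that can be taken regular (it lands in finitely many cells of a fixed algebraic Grassmannian). Now apply Theorem~\ref{th-2-5} componentwise: each composite $X \to \bigvee \SB^{d(\F)} \to \SB^{d(\F)}$ (onto one wedge summand) is homotopic to a stratified-regular map, these can be assembled, by intersecting the finitely many stratifications, into a single stratified-regular map $X \to \bigvee \SB^{d(\F)}$, and postcomposing with the regular map into $\G_k(\F^n)$ keeps it stratified-regular. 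This yields the desired $g$.

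The main obstacle, and the step I expect to need the most care, is the cellular/obstruction-theoretic reduction of an arbitrary continuous map $X \to \G_k(\F^n)$ to one that factors through a wedge of $d(\F)$-spheres mapping regularly into the Grassmannian — in particular making the "wedge of spheres" step genuinely identify with the real algebraic $\SB^{d(\F)}$ biregularly, so that Theorem~\ref{th-2-5} applies verbatim. For $\F = \R$ this is essentially standard since $[X, \G_k(\R^n)]$ for $\dim X \le 1$ is governed by $w_1$ alone and $\G_1(\R^n) = \PB^{n-1}(\R)$ contains $\PB^1(\R) \cong \SB^1$ biregularly; for $\F = \CB$ (so $\dim X \le 2$) one uses $c_1$ and $\PB^1(\CB)\cong\SB^2$; for $\F = \HB$ (so $\dim X \le 4$) one uses the generator of $\pi_4(\G_k(\HB^n))$ and $\SB^4\cong\HB\PB^1$. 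One must check at each stage that the algebraic models of these low-dimensional projective spaces sit inside the algebraic Grassmannian $\G_k(\F^n)$ by a regular embedding, which they do by the standard constructions in \cite{bib9}. With that in hand the argument is a clean concatenation of Theorem~\ref{th-2-5} and Proposition~\ref{prop-2-6}.
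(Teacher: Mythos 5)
Your overall strategy --- realize $\xi$ as $f^*\gamma_k(\F^n)$, deform $f$ to a stratified-regular map using Theorem~\ref{th-2-5}, and conclude with Proposition~\ref{prop-2-6} --- is exactly the paper's, but the concrete reduction you commit to has a genuine gap. You propose to factor $f$ (up to homotopy) through the collapse $X \to X/X^{(d(\F)-1)} = \bigvee \SB^{d(\F)}$ and then to make each composite $X \to \SB^{d(\F)}$ stratified-regular ``componentwise'' and reassemble. The reassembly step fails as stated: a map into a wedge is the data of maps $h_1,\dots,h_m \colon X \to \SB^{d(\F)}$ with the property that at each point of $X$ at most one $h_i$ differs from the basepoint, and Theorem~\ref{th-2-5} gives no control whatsoever over where its homotopy moves points, so the independently produced stratified-regular maps $g_i$ will not satisfy this compatibility and do not define a map into $\bigvee\SB^{d(\F)}$ at all. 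One would need a relative version of Theorem~\ref{th-2-5} (constancy on a prescribed closed set), which is not available here. There is also the secondary issue that the wedge must be realized as a real algebraic variety and the map $\bigvee\SB^{d(\F)} \to \G_k(\F^n)$ handled at the singular wedge point, which you do not address.

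The detour through a wedge is in any case unnecessary, and the paper avoids it in two strokes. First, since $\dim X \le d(\F)$, a rank-$r$ bundle splits off a trivial summand of rank $r-1$ (and is trivial outright if $\dim X < d(\F)$), so one may assume $\xi$ is a line bundle and $\dim X = d(\F)$; second, a line bundle over such an $X$ is pulled back along a single continuous map $f\colon X \to \G_1(\F^2)$, and $\G_1(\F^2)$ is biregularly isomorphic to $\SB^{d(\F)}$, so Theorem~\ref{th-2-5} applies verbatim to $f$ with no assembly needed. Your parenthetical alternative --- compressing $f$ into the $d(\F)$-skeleton of $\G_k(\F^n)$, which is a \emph{single} $\G_1(\F^2)\cong\SB^{d(\F)}$ (the closure of the unique bottom Schubert cell), not a wedge --- would also have worked and is closer in spirit to a correct argument; but the route you actually wrote down does not go through.
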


\begin{proof}
Let $\xi$ be a topological $\F$-vector bundle of rank $r \geq 1$ on $X$.
Since $\dim X \leq d(\F)$, the bundle $\xi$ splits off a trivial vector
bundle of rank $r-1$. Moreover, if $\dim X < d(\F)$, then $\xi$ is
topologically trivial. These are well known topological facts, cf.
\cite[p.~99]{bib28}. Hence we may assume without loss of generality that
$r=1$ and $\dim X = d(\F)$. Then there exists a continuous map $f \colon
X \to \G_1(\F^2)$ such that the pullback $\F$-line bundle
$f^*\gamma_1(\F^2)$ is isomorphic to $\xi$. Recall that $\G_1(\F^2)$ is
biregularly isomorphic to the unit $d(\F)$-sphere. Consequently,
according to Theorem~\ref{th-2-5}, the map $f$ can be assumed to be
stratified-regular. Thus, it suffices to apply
Proposition~\ref{prop-2-6}.
\end{proof}

Subsequent results require some preparation. For any real algebraic
variety $Y$, denote by $\C(X,Y)$ the set of all continuous maps
from $X$ into $Y$. There are the following inclusions:
\begin{equation*}
\RC(X,Y) \subseteq \RC_{\X}(X,Y) \subseteq \RC^0(X,Y) \subseteq \C(X,Y),
\end{equation*}
where $\RC(X,Y)$ (resp. $\RC_{\X}(X,Y)$, $\RC^0(X,Y)$) is the set of all
regular (resp. $\X$-regular, stratified-regular) maps. Each of the sets
$\RC(X,\F)$, $\RC_{\X}(X,\F)$ and $\RC^0(X,\F)$ is a subring of the ring
$\C(X,\F)$. We next discuss various aspects of the Serre--Swan
construction \cite{bib41, bib43}, relating vector bundles and finitely
generated projective modules.

If $\xi$ is a topological $\F$-vector bundle on $X$, then the set
$\Gamma(\xi)$ of all (global) continuous sections of $\xi$ is a
$\C(X,\F)$-module. If $\varphi \colon \xi \to \eta$ is a morphism of
topological $\F$-vector bundles on $X$, then
\begin{equation*}
\Gamma(\varphi) \colon \Gamma(\xi) \to \Gamma(\eta), \quad
\Gamma(\varphi)(\sigma) = \varphi \circ \sigma
\end{equation*}
is a homomorphism of $\C(X,\F)$-modules. Since $X$ is homotopically
equivalent to a compact subset of $X$ (cf.
\cite[Corollary~9.3.7]{bib9}), it follows form \cite{bib43} that
$\Gamma$ is an equivalence of the category of topological $\F$-vector
bundles on $X$ with the category of finitely generated projective
$\C(X,\F)$-modules. We give below suitable counterparts of this result
for $\X$-algebraic and stratified-algebraic $\F$-vector bundles on $X$.

If $\xi$ is an $\X$-algebraic $\F$-vector bundle on $X$, an
\emph{$\X$-algebraic section} $u \colon X \to \xi$ is a continuous
section whose restriction $u|_S \colon S \to \xi|_S$ to each stratum $S$
in $\X$ is an algebraic section. In other words, $u \colon X \to E(\xi)$
is a continuous map such that $\pi(\xi) \circ u$ is the identity map of
$X$, and the restriction $u|_S \colon S \to \pi(\xi)^{-1}(S)$ is a
regular map of real algebraic varieties for each stratum $S$ in $\X$. The
set $\Gamma_{\X}(\xi)$ of all (global) $\X$-algebraic sections of $\xi$
is an $\RC_{\X}(X,\F)$-module. For any $\X$-algebraic morphism $\varphi
\colon \xi \to \eta$ of $\X$-algebraic $\F$-vector bundles on X,
\begin{equation*}
\Gamma_{\X}(\varphi) \colon \Gamma_{\X}(\xi) \to \Gamma_{\X}(\eta),
\quad \Gamma_{\X}(\varphi)(u)=\varphi \circ u
\end{equation*}
is a homomorphism of $\RC_{\X}(X,\F)$-modules.

The $\RC(X,\F)$-module $\Gamma_{\X}(\varepsilon_X^n(\F))$ is canonically
isomorphic to the direct sum $\RC_{\X}(X,\F)^n$ of $n$ copies of
$\RC_{\X}(X,\F)$. If $\xi$ is an $\X$-algebraic $\F$-vector subbundle of
$\varepsilon_X^n(\F)$, then $\Gamma_{\X}(\xi)$ will be regarded as a
submodule of $\Gamma_{\X}(\varepsilon_X^n(\F))$.

For any topological $\F$-vector subbundle $\xi$ of
$\varepsilon_X^n(\F)$, let $\xi^{\perp}$ denote its orthogonal
complement with respect to the standard inner product $\F^n \times \F^n
\to \F$. Thus, $\xi^{\perp}$ is a topological $\F$-vector subbundle of
$\varepsilon_X^n(\F)$ and $\xi \oplus \xi^{\perp} =
\varepsilon_X^n(\F)$. The orthogonal projection $\varepsilon_X^n(\F) \to
\xi$ is a topological morphism of $\F$-vector bundles.

\begin{proposition}\label{prop-3-7}
If $\xi$ is an $\X$-algebraic $\F$-vector subbundle of
$\varepsilon_X^n(\F)$, then $\xi^{\perp}$ also is an $\X$-algebraic
$\F$-vector subbundle of $\varepsilon_X^n(\F)$, and the orthogonal
projection $\varepsilon_X^n(\F) \to \xi$ is an $\X$-algebraic morphism.
In particular, $\xi$ is generated by $n$ (global) $\X$-algebraic
sections.
\end{proposition}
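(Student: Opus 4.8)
The plan is to exhibit $\xi^{\perp}$ explicitly as an $\X$-algebraic $\F$-vector subbundle and to realize the orthogonal projection as an $\X$-algebraic morphism, by checking the required regularity one stratum at a time. Since $\xi$ is $\X$-algebraic, for each stratum $S$ in $\X$ the restriction $\xi|_S$ is an algebraic $\F$-vector subbundle of $\varepsilon_S^n(\F)$. The key classical fact I would invoke is that for an algebraic $\F$-vector subbundle of a trivial bundle, the orthogonal complement with respect to the standard inner product is again algebraic, and the orthogonal projection onto it is an algebraic morphism; this is standard (cf.\ the treatment of algebraic vector bundles in \cite[Chapters~12 and 13]{bib9}). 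Concretely, if $E(\xi|_S)_x$ is cut out inside $\{x\}\times\F^n$ by $\F$-linear equations whose coefficients are regular functions of $x\in S$, then the orthogonal projection $P(x)\colon\F^n\to\F^n$ onto $E(\xi|_S)_x$ has matrix entries that are regular functions on $S$ — one can write $P(x)$ via a Gram-matrix formula $P(x) = M(x)(M(x)^*M(x))^{-1}M(x)^*$ on each Zariski open piece where a fixed set of columns of a regular local frame is independent, and these formulas patch to a globally regular map $S\to\mathrm{Mat}_n(\F)$ since the projection itself is intrinsically defined.

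First I would observe that $\xi^{\perp}$, as a topological $\F$-vector subbundle of $\varepsilon_X^n(\F)$, has total space $E(\xi^{\perp}) = \{(x,v)\in X\times\F^n \mid v \perp E(\xi)_x\}$, and that over each stratum $S$ this coincides with $E((\xi|_S)^{\perp})$, which is Zariski closed in $S\times\F^n$ and is the total space of an algebraic $\F$-vector subbundle of $\varepsilon_S^n(\F)$ by the fact quoted above. Hence $\xi^{\perp}$ is $\X$-algebraic. Second, the orthogonal projection $\rho\colon\varepsilon_X^n(\F)\to\xi$ is continuous and fiberwise $\F$-linear by construction, and its restriction $\rho_S$ is precisely the orthogonal projection $\varepsilon_S^n(\F)\to\xi|_S$, which is an algebraic morphism; therefore $\rho$ is an $\X$-algebraic morphism in the sense of Definition~\ref{def-1-1}. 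Third, for the final clause, I would compose $\rho$ with the $n$ standard $\X$-algebraic sections $e_1,\dots,e_n$ of $\varepsilon_X^n(\F)$ (the constant sections $x\mapsto(x,\text{$i$th basis vector})$, which lie in $\Gamma_{\X}(\varepsilon_X^n(\F))\cong\RC_{\X}(X,\F)^n$): the sections $\rho\circ e_1,\dots,\rho\circ e_n$ are $\X$-algebraic sections of $\xi$, and since $\rho_x$ is surjective onto $E(\xi)_x$ for every $x$, the vectors $\rho(e_1(x)),\dots,\rho(e_n(x))$ span $E(\xi)_x$, so $\xi$ is generated by these $n$ $\X$-algebraic sections.

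The main obstacle is the stratumwise claim that the orthogonal projection onto an algebraic subbundle of a trivial bundle is algebraic — equivalently, that $\xi^{\perp}$ is algebraic over each stratum. This is where a little care is needed: one must produce, Zariski-locally on $S$, regular formulas for the projection matrix and then argue that the intrinsically defined (hence well-defined and Zariski-locally regular) matrix-valued function is in fact a regular map $S\to\mathrm{Mat}_n(\F)$, using that a function regular on a Zariski open cover of a variety is regular. Once this local-to-global regularity of $\rho_S$ is in hand, everything else is formal: the decomposition $\xi\oplus\xi^{\perp}=\varepsilon_X^n(\F)$ is automatic fiberwise, the complementary projection $\mathrm{id}-\rho_S$ handles $\xi^{\perp}$, and the generation statement follows immediately by pushing forward the standard frame. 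I would keep the write-up short by citing \cite{bib9} for the algebraic case and emphasizing only that $\X$-algebraicity is checked stratum by stratum.
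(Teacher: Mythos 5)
Your proposal is correct and follows essentially the same route as the paper: reduce to each stratum, invoke the standard fact that the orthogonal complement of an algebraic subbundle of a trivial bundle is algebraic and the orthogonal projection is an algebraic morphism, and obtain the generation statement by projecting the standard frame. The paper simply states the stratumwise claim as "one readily checks," whereas you fill in that check with the Gram-matrix formula; the substance is identical.
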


\begin{proof}
Let $S$ be a stratum in $\X$. One readily checks that $(\xi|_S)^\perp =
(\xi^{\perp})|_S$ is an algebraic $\F$-vector subbundle of
$\varepsilon_S^n(\F) = \varepsilon_X^n(\F)|_S$, and the orthogonal
projection $\varepsilon_S^n(\F) \to \xi|_S$ is an algebraic morphism.
The last assertion in the proposition follows immediately.
\end{proof}

\begin{proposition}\label{prop-3-8}
If $\xi$ is an $\X$-algebraic $\F$-vector bundle on $X$, then the
$\RC_{\X}(X,\F)$-module $\Gamma_{\X}(\xi)$ is finitely generated and
projective. Furthermore, $\Gamma_{\X}$ is an equivalence of the category
of $\X$-algebraic $\F$-vector bundles on $X$ with the category of
finitely generated projective $\RC_{\X}(X,\F)$-modules.
\end{proposition}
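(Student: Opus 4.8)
The plan is to mimic the classical Serre--Swan argument, now carried out in the $\X$-algebraic category, using Proposition~\ref{prop-3-7} as the crucial input that makes every $\X$-algebraic bundle a direct summand of a trivial one. First I would establish that $\Gamma_{\X}(\xi)$ is finitely generated and projective: given an $\X$-algebraic $\F$-vector subbundle $\xi$ of $\varepsilon_X^n(\F)$, Proposition~\ref{prop-3-7} provides an $\X$-algebraic complement $\xi^{\perp}$ with $\xi \oplus \xi^{\perp} = \varepsilon_X^n(\F)$, hence $\Gamma_{\X}(\xi) \oplus \Gamma_{\X}(\xi^{\perp}) = \Gamma_{\X}(\varepsilon_X^n(\F)) \cong \RC_{\X}(X,\F)^n$, so $\Gamma_{\X}(\xi)$ is a direct summand of a free module of finite rank. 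Every $\X$-algebraic $\F$-vector bundle is isomorphic to such a subbundle (this is part of Definition~\ref{def-1-1}), so the first assertion follows.

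Next I would check that $\Gamma_{\X}$ is a functor (already observed in the text) and verify it is fully faithful. For faithfulness: if $\varphi \colon \xi \to \eta$ induces the zero map on $\X$-algebraic sections, then since by Proposition~\ref{prop-3-7} the bundle $\xi$ is generated by its $\X$-algebraic sections, $\varphi$ vanishes on each fiber, so $\varphi = 0$. For fullness: given a homomorphism $\Phi \colon \Gamma_{\X}(\xi) \to \Gamma_{\X}(\eta)$ of $\RC_{\X}(X,\F)$-modules, I would realize $\xi$ as a subbundle of $\varepsilon_X^n(\F)$ with $\X$-algebraic projection $p \colon \varepsilon_X^n(\F) \to \xi$, and $\eta$ as a subbundle of $\varepsilon_X^m(\F)$; composing $\Phi$ with the inclusion $\Gamma_{\X}(\eta) \hookrightarrow \RC_{\X}(X,\F)^m$ and precomposing with $\RC_{\X}(X,\F)^n \to \Gamma_{\X}(\xi)$ (induced by $p$), one gets an $\RC_{\X}(X,\F)$-linear map $\RC_{\X}(X,\F)^n \to \RC_{\X}(X,\F)^m$, which is given by an $m \times n$ matrix over $\RC_{\X}(X,\F)$, i.e.\ an $\X$-algebraic morphism $\varepsilon_X^n(\F) \to \varepsilon_X^m(\F)$. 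Restricting to $\xi$ and checking it lands in $\eta$ (using that $\Phi$ maps into $\Gamma_{\X}(\eta)$ and that $\X$-algebraic sections separate fibers) yields an $\X$-algebraic morphism $\varphi \colon \xi \to \eta$ with $\Gamma_{\X}(\varphi) = \Phi$.

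Finally I would show $\Gamma_{\X}$ is essentially surjective: given a finitely generated projective $\RC_{\X}(X,\F)$-module $P$, write it as the image of an idempotent $e \in M_n(\RC_{\X}(X,\F))$, so $P \oplus Q \cong \RC_{\X}(X,\F)^n$ with $Q = \ker e$. The idempotent $e$ is an $\X$-algebraic endomorphism of $\varepsilon_X^n(\F)$ (since each entry is $\X$-regular), hence $e_x$ is an idempotent $\F$-linear endomorphism of $\F^n$ for each $x$; its image defines a topological subbundle $\xi$ of $\varepsilon_X^n(\F)$ whose restriction to each stratum $S$ is the image of the algebraic idempotent $e|_S$, so $\xi|_S$ is an algebraic subbundle — thus $\xi$ is $\X$-algebraic. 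One then identifies $\Gamma_{\X}(\xi)$ with $eP' = \operatorname{im} e = P$. The main obstacle I expect is the bookkeeping in essential surjectivity: one must be careful that $e_x$ having locally varying rank is fine (the rank is Zariski-locally constant on each stratum since $e|_S$ is algebraic), that the image of a fiberwise idempotent genuinely is a topological subbundle (standard, via the complementary idempotent $1-e$ giving local trivializations), and that $\Gamma_{\X}(\operatorname{im} e)$ recovers exactly the submodule $e\,\RC_{\X}(X,\F)^n \cong P$ rather than something larger — this last point uses that an $\X$-algebraic section of $\varepsilon_X^n(\F)$ lying pointwise in $\operatorname{im} e_x$ is fixed by $e$ and hence lies in $e\,\RC_{\X}(X,\F)^n$.
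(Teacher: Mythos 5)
Your proposal is correct and follows essentially the same route as the paper: the first assertion via Proposition~\ref{prop-3-7} and the splitting $\Gamma_{\X}(\xi)\oplus\Gamma_{\X}(\xi^{\perp})=\Gamma_{\X}(\varepsilon_X^n(\F))$, and the categorical equivalence by adapting the classical Serre--Swan argument. The paper simply delegates the fully-faithful and essentially-surjective steps to the treatment in Atiyah's book, whereas you have written out those standard details (matrix realization of module homomorphisms, image of an idempotent) explicitly; the points you flag as potential obstacles are handled exactly as you indicate.
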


\begin{proof}
According to Proposition~\ref{prop-3-7}, if $\xi$ is an $\X$-algebraic
$\F$-vector subbundle of $\varepsilon_X^n(\F)$, then
\begin{equation*}
\Gamma_{\X}(\xi) \oplus \Gamma_{\X}(\xi^{\perp}) =
\Gamma_{\X}(\varepsilon_X^n(\F)).
\end{equation*}
Hence, $\Gamma_{\X}(\xi)$ is a finitely generated projective
$\RC_{\X}(X,\F)$-module. Furthermore, $\Gamma_{\X}$ is an equivalence of
categories since, in view of Proposition~\ref{prop-3-7}, the proof given
in \cite[pp.~30, 31]{bib3} that $\Gamma$ is an equivalence of categories
in the topological framework can easily be adapted to $\Gamma_{\X}$.
\end{proof}

If $\xi$ is an algebraic $\F$-vector bundle on $X$, then the set
$\Gamma_{\mathrm{alg}}(\xi)$ of all (global) algebraic sections of $\xi$
is an $\RC(X,\F)$-module. For any morphism $\varphi \colon \xi \to \eta$
of algebraic $\F$-vector bundles on $X$,
\begin{equation*}
\Gamma_{\mathrm{alg}}(\varphi) \colon \Gamma_{\mathrm{alg}} (\xi) \to
\Gamma_{\mathrm{alg}}(\eta), \quad \Gamma_{\mathrm{alg}} (\varphi)
(\sigma) = \varphi \circ \sigma
\end{equation*}
is a homomorphism of $\RC(X, \F)$-modules. It is well known that
$\Gamma_{\mathrm{alg}}$ is an equivalence of the category of algebraic
$\F$-vector bundles on $X$ with the category of finitely generated
projective $\RC(X,\F)$-modules, cf. \cite[Proposition~12.1.12]{bib9}.
This result is a special case of Proposition~\ref{prop-3-8} since
algebraic $\F$-vector bundles on $X$ coincide with $\{X\}$-algebraic
$\F$-vector bundles, and $\RC(X,\F) = \RC_{\{X\}}(X,\F)$.

There is also a version of Proposition~\ref{prop-3-8} for
stratified-algebraic vector bundles. If $\xi$ is a stratified-algebraic
$\F$-vector bundle on $X$, a \emph{stratified-algebraic section} $u
\colon X \to \xi$ is an $\SC$-algebraic section for some stratification
$\SC$ of $X$ such that $\xi$ is an $\SC$-algebraic $\F$-vector bundle. The
set $\Gamma_{\mathrm{str}}(\xi)$ of all (global) stratified-algebraic
sections of $\xi$ is an $\RC^0(X, \F)$-module. For any
stratified-algebraic morphism $\varphi \colon \xi \to \eta$ of
stratified-algebraic $\F$-vector bundles on $X$,
\begin{equation*}
\Gamma_{\mathrm{str}}(\varphi) \colon \Gamma_{\mathrm{str}}(\xi) \to
\Gamma_{\mathrm{str}}(\eta), \quad \Gamma_{\mathrm{str}} (\varphi)
(\sigma) = \varphi \circ \sigma
\end{equation*}
is a homomorphism of $\RC^0(X,\F)$-modules.

\begin{proposition}\label{prop-3-9}
If $\xi$ is a stratified-algebraic $\F$-vector bundle on $X$, then the
$\RC^0(X,\F)$-module $\Gamma_{\mathrm{str}}(\xi)$ is finitely generated
and projective. Furthermore, $\Gamma_{\mathrm{str}}$ is an equivalence of
the category of stratified-algebraic $\F$-vector bundles on $X$ with the
category of finitely generated projective $\RC^0(X,\F)$-modules.
\end{proposition}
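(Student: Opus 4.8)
The plan is to mirror, with some extra bookkeeping over stratifications, the proof of Proposition~\ref{prop-3-8}. First I would record the structural input. By Definition~\ref{def-1-2}, a stratified-algebraic $\F$-vector bundle $\xi$ on $X$ is an $\SC$-algebraic $\F$-vector subbundle of $\varepsilon_X^n(\F)$ for some stratification $\SC$ of $X$ and some $n$. Applying Proposition~\ref{prop-3-7} to the stratification $\SC$, the orthogonal complement $\xi^{\perp}$ is again $\SC$-algebraic, hence stratified-algebraic, and the orthogonal projection $p \colon \varepsilon_X^n(\F) \to \xi$ is an $\SC$-algebraic morphism, in particular a stratified-algebraic morphism.

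Next I would establish the direct sum decomposition
\begin{equation*}
\Gamma_{\mathrm{str}}(\xi) \oplus \Gamma_{\mathrm{str}}(\xi^{\perp}) = \Gamma_{\mathrm{str}}(\varepsilon_X^n(\F)) = \RC^0(X,\F)^n.
\end{equation*}
The one point needing care is that the projection $p$ carries an \emph{arbitrary} stratified-algebraic section $u$ of $\varepsilon_X^n(\F)$ to a stratified-algebraic section of $\xi$: such a $u$ is $\TC$-algebraic for some stratification $\TC$, and passing to the common refinement $\SC' = \{ S \cap T \mid S \in \SC,\ T \in \TC \}$ makes both $\xi$ and $u$ into $\SC'$-compatible data, so the stratum-wise computation of Proposition~\ref{prop-3-7} shows that $p \circ u$ is $\SC'$-algebraic, hence stratified-algebraic. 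Thus $u = p\circ u + (\mathrm{id} - p)\circ u$ exhibits the decomposition, and $\Gamma_{\mathrm{str}}(\xi)$ is a direct summand of the free module $\RC^0(X,\F)^n$, whence finitely generated and projective.

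For the equivalence of categories I would verify that $\Gamma_{\mathrm{str}}$ is fully faithful and essentially surjective. Full faithfulness follows exactly as in Proposition~\ref{prop-3-8}: given stratified-algebraic bundles $\xi,\eta$, realized as subbundles of trivial bundles, and an $\RC^0(X,\F)$-module homomorphism $\Gamma_{\mathrm{str}}(\xi) \to \Gamma_{\mathrm{str}}(\eta)$, one fixes a single stratification $\SC$ over which $\xi$, $\eta$, and finitely many generating sections are all defined, and then the argument of \cite[pp.~30, 31]{bib3}, already used for $\Gamma_{\X}$, produces the unique stratified-algebraic morphism inducing it. For essential surjectivity, a finitely generated projective $\RC^0(X,\F)$-module $M$ is the image of an idempotent $n\times n$ matrix $e$ with entries in $\RC^0(X,\F)$; the finitely many entries lie in $\RC_{\SC}(X,\F)$ for a common stratification $\SC$, and $e^2 = e$ persists over this subring. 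Since $e$ is a continuous idempotent endomorphism of $\varepsilon_X^n(\F)$, the image $\xi := e(\varepsilon_X^n(\F))$ is a topological $\F$-vector subbundle of $\varepsilon_X^n(\F)$, and for each stratum $S$ of $\SC$ the restriction $e|_S$ is an algebraic idempotent endomorphism of $\varepsilon_S^n(\F)$, whose image is an algebraic $\F$-vector subbundle (cf.\ \cite[Proposition~12.1.12]{bib9}); hence $\xi$ is $\SC$-algebraic and thus stratified-algebraic. One then checks, again by passing to common refinements, that $\Gamma_{\mathrm{str}}(\xi) = e\cdot\RC^0(X,\F)^n = M$.

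I expect the main obstacle to be purely organizational rather than conceptual: every step manipulates a finite collection of stratified-algebraic data (a bundle together with its orthogonal complement and a section; two bundles and a morphism; the entries of an idempotent matrix), and at each step one must invoke the fact recalled in Section~\ref{sec-1} that any two stratifications admit a common refinement, in order to descend to a single stratification $\SC$ on which the corresponding statement for $\SC$-algebraic objects --- namely Propositions~\ref{prop-3-7} and~\ref{prop-3-8} --- applies verbatim. Once this reduction is carried out systematically, the argument is just the classical Serre--Swan construction and no further idea is required.
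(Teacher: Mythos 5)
Your proposal is correct and follows essentially the same route as the paper, whose proof of Proposition~\ref{prop-3-9} simply says to proceed as in Proposition~\ref{prop-3-8}; your explicit bookkeeping with common refinements of stratifications is exactly the (tacitly used) reduction the authors intend. No gaps.
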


\begin{proof}
One proceeds as in the proof of Proposition~\ref{prop-3-8}.
\end{proof}

We identify the direct sum $\varepsilon_X^n(\F) \oplus
\varepsilon_X^m(\F)$ with $\varepsilon_X^{n+m}(\F)$. Consequently, if
$\xi \subseteq \varepsilon_X^n(\F)$ and $\eta \subseteq
\varepsilon_X^m(\F)$ are $\F$-vector subbundles, then $\xi \oplus \eta
\subseteq \varepsilon_X^{n+m}(\F)$ is an $\F$-vector subbundle.

It is convenient to bring into play the sets of isomorphism classes of
vector bundles of types considered above. Denote by
$\VB_{\F\mathrm{\mhyphen alg}}(X)$, $\VB_{\F\mhyphen\X}(X)$, $\VB_{\F\mathrm{\mhyphen str}}(X)$
and $\VB_{\F}(X)$ the sets of isomorphism classes (in the appropriate
category) of algebraic, $\X$-algebraic, stratified-algebraic and
topological $\F$-vector bundles on $X$. Each of these sets of
isomorphism classes is a commutative monoid with operation induced by
direct sum of $\F$-vector bundles. There are obvious canonical
homomorphisms of monoids
\begin{equation*}
\VB_{\F\mathrm{\mhyphen alg}}(X) \to \VB_{\F \mhyphen \X}(X) \to \VB_{\F\mathrm{\mhyphen str}}(X)
\to \VB_{\F}(X). 
\end{equation*}
For example, if $\xi$ is an $\X$-algebraic $\F$-vector bundle on $X$,
then $\VB_{\F \mhyphen \X}(X) \to \VB_{\F\mathrm{\mhyphen str}}(X)$ sends the isomorphism
class of $\xi$ in the category of $\X$-algebraic $\F$-vector bundles on
$X$ to the isomorphism class of $\xi$ in the category of
stratified-algebraic $\F$-vector bundles on $X$. Any composition of
these homomorphisms will also be called a canonical homomorphism. Note
that $\VB_{\F\mathrm{\mhyphen alg}}(x) = \VB_{\F \mhyphen \{X\}}(X)$.

For any ring $R$ (associative with $1$), the set $P(R)$ of isomorphism
classes of finitely generated projective $R$-modules is a commutative
monoid, with operation induced by direct sum of $R$-modules. If $R$ is a
subring of a ring $R'$, then there is a canonical homomorphism
\begin{equation*}
P(R) \to P(R'),
\end{equation*}
induced by the correspondence which assigns to an $R$-module $M$ the
$R'$-module $R' \otimes_R M$.

There are canonical homomorphisms of monoids
\begin{align*}
&\Gamma_{\mathrm{alg}} \colon \VB_{\F\mathrm{\mhyphen alg}}(X) \to
P(\RC(X,\F)),
&&\Gamma_{\X} \colon \VB_{\F \mhyphen \X}(X) \to
P(\RC_{\X}(X,\F)), \\
&\Gamma_{\mathrm{str}} \colon \VB_{\F\mathrm{\mhyphen str}}(X) \to
P(\RC^0(X,\F)), 
&&\Gamma \colon \VB_{\F}(X) \to P(\C(X,\F)),
\end{align*}
induced by the global section functor in the appropriate category of
$\F$-vector bundles on $X$. For example, $\Gamma_{\X} \colon
\VB_{\F \mhyphen \X}(X) \to P(\RC_{\X}(X,\F))$ sends the isomorphism class of an
$\X$-algebraic $\F$-vector bundle $\xi$ on $X$ to the isomorphism class
of the $\RC_{\X}(X,\F)$-module $\Gamma_{\X}(\xi)$ (cf.
Proposition~\ref{prop-3-8}).

\begin{theorem}\label{th-3-10}
The diagram
\begin{diagram}
\VB_{\F\mathrm{\mhyphen alg}}(X) & \rTo & \VB_{\F \mhyphen \X}(X) & \rTo &
\VB_{\F\mathrm{\mhyphen str}}(X) & \rTo & \VB_{\F}(X) \\
\dTo^{\Gamma_{\mathrm{alg}}} && \dTo^{\Gamma_{\X}} &&
\dTo^{\Gamma_{\mathrm{str}}} && \dTo^{\Gamma} \\
P(\RC(X,\F)) & \rTo & P(\RC_{\X}(X,\F)) & \rTo & P(\RC^0(X,\F)) & \rTo & P(\C(X,\F))  
\end{diagram}
is commutative, and the vertical maps are all bijective. Furthermore, if
the variety $X$ is compact, then the horizontal maps are all injective.
\end{theorem}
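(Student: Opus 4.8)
The plan is to establish the three assertions in turn: commutativity of the diagram, bijectivity of the vertical maps, and injectivity of the horizontal maps in the compact case. Commutativity is the routine part: the four horizontal maps are induced by the identity on total spaces (an $\X$-algebraic bundle viewed as a stratified-algebraic one, etc.), and the three lower horizontal maps are the base-change homomorphisms $P(R)\to P(R')$ for the tower of subrings $\RC(X,\F)\subseteq\RC_\X(X,\F)\subseteq\RC^0(X,\F)\subseteq\C(X,\F)$. To see each square commutes, I would check that for an $\X$-algebraic $\F$-vector subbundle $\xi$ of $\varepsilon_X^n(\F)$ the natural map
\begin{equation*}
\RC^0(X,\F)\otimes_{\RC_\X(X,\F)}\Gamma_\X(\xi)\longrightarrow\Gamma_{\mathrm{str}}(\xi)
\end{equation*}
(and its analogues for the other squares) is an isomorphism of modules; this follows from Proposition~\ref{prop-3-7}, which realizes $\Gamma_\X(\xi)$ as a direct summand of the free module $\RC_\X(X,\F)^n$ cut out by an idempotent matrix with $\X$-regular entries, so that extension of scalars simply reinterprets the same idempotent over the larger ring and yields the corresponding summand of $\RC^0(X,\F)^n$, i.e. $\Gamma_{\mathrm{str}}(\xi)$.

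Bijectivity of the vertical maps is exactly the content of Propositions~\ref{prop-3-8} and~\ref{prop-3-9} for the middle two columns, and of \cite[Proposition~12.1.12]{bib9} (a special case of Proposition~\ref{prop-3-8}) for the leftmost column; the rightmost column is the classical Serre–Swan equivalence via $\Gamma$, valid because $X$ is homotopically equivalent to a compact space (cf. \cite[Corollary~9.3.7]{bib9}, \cite{bib43}), as already recalled in the text. So for this part I would simply cite those results, noting that each $\Gamma_?$ is a monoid homomorphism because direct sums of bundles correspond to direct sums of section modules, and that an equivalence of categories induces a bijection on isomorphism classes, hence on the associated monoids.

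The substantive part is the injectivity of the horizontal maps when $X$ is compact, and by the commutativity already proved and the bijectivity of the vertical maps it is enough to prove injectivity of each lower horizontal map $P(R)\to P(R')$ for the three inclusions of rings in question. The key point is a \emph{cancellation} phenomenon: if $\xi$ and $\eta$ are, say, $\X$-algebraic $\F$-vector bundles on the compact variety $X$ that become isomorphic as stratified-algebraic (equivalently, as continuous projective $\RC^0(X,\F)$-modules after extension), I must produce an $\X$-algebraic isomorphism. The standard route, which I expect to be the main obstacle, is to show that a stratified-algebraic isomorphism $\varphi\colon\xi\to\eta$ can be \emph{approximated} by, or \emph{corrected} to, an $\X$-algebraic one — using that over a common refinement the two bundles are simultaneously algebraic on each stratum and that regular isomorphisms on a Zariski dense open stratum extend, after passing down the filtration, by the stratified-regular machinery of Section~\ref{sec-2} (Proposition~\ref{prop-2-2}). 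Concretely, I would embed $\xi,\eta\subseteq\varepsilon_X^n(\F)$, regard an isomorphism as a section of $\Hom(\xi,\eta)$ landing in the open locus of invertible homomorphisms, and invoke the fact (to be used repeatedly later in the paper) that a continuous section of a stratified-algebraic bundle that is $\X$-algebraic on a Zariski closed subvariety can be made $\X$-algebraic globally while staying close to the original, hence invertible; compactness guarantees the relevant open invertibility locus is uniformly open. Assembling these, the isomorphism class of $\xi$ in $P(\RC_\X(X,\F))$ is determined by its image in $P(\RC^0(X,\F))$, giving injectivity of the middle map; the same argument with $\RC_\X\subseteq\C$ and with $\RC^0\subseteq\C$ handles the other two horizontal maps, and the outer composite map $\VB_{\F\mathrm{\mhyphen alg}}(X)\to\VB_\F(X)$ is then injective as a composition of injections.
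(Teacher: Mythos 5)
Your treatment of commutativity and of the bijectivity of the vertical maps matches the paper's (same tensoring argument via the splitting $\Gamma_{\X}(\xi)\oplus\Gamma_{\X}(\xi^{\perp})=\Gamma_{\X}(\varepsilon_X^n(\F))$ from Proposition~\ref{prop-3-7}, same citations of Propositions~\ref{prop-3-8} and~\ref{prop-3-9} and of the classical Serre--Swan equivalence). For the injectivity of the horizontal maps you take a genuinely different route: the paper does not argue by hand at all, but observes that $\C(X,\F)$ is a topological ring under the sup norm, that $\RC(X,\F)$ is dense in it by the Weierstrass approximation theorem (hence so are the intermediate subrings $\RC_{\X}(X,\F)$ and $\RC^0(X,\F)$), and then invokes Swan's density theorem \cite[Theorem~2.2]{bib44}, which asserts precisely that $P(R)\to P(R')$ is injective for a dense subring $R$ of a suitable topological ring $R'$. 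What you propose is, in effect, to reprove that theorem in the present special case: view a topological isomorphism as a continuous section of the $\X$-algebraic bundle $\Hom(\xi,\eta)$ (Proposition~\ref{prop-3-15}), approximate it by an $\X$-algebraic section --- possible because $\Hom(\xi,\eta)$ is cut out of $\varepsilon_X^{nm}(\F)$ by the $\X$-algebraic orthogonal projection of Proposition~\ref{prop-3-7}, so one may approximate by polynomial sections and project --- and use compactness to conclude that a sufficiently close $\X$-algebraic morphism is still bijective, hence an $\X$-algebraic isomorphism. That argument is sound and self-contained, and it is essentially the technique of the proof of Theorem~\ref{th-4-4} later in the paper; what the citation of Swan buys is brevity and a uniform treatment of all three subrings at once. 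One caveat: the alternative mechanism you float --- extending an isomorphism from a Zariski dense stratum ``down the filtration'' via Proposition~\ref{prop-2-2} --- is not the right tool here and should be dropped; the working part of your argument is the Weierstrass/orthogonal-projection approximation, and that is the step you would need to write out in full, since in your sketch it is asserted rather than proved.
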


\begin{proof}
Let $\xi$ be an $\X$-algebraic $\F$-vector subbundle of
$\varepsilon_X^n(\F)$. Tensoring
\begin{equation*}
\Gamma_{\X}(\xi) \oplus \Gamma_{\X} (\xi^{\perp}) = \Gamma_{\X}
(\varepsilon_X^n(\F))
\end{equation*}
with $\RC^0(X,\F)$ (over the ring $\RC_{\X}(X,\F)$) and identifying
$\RC^0(X,\F) \otimes \Gamma_{\X}(\varepsilon_X^n(\F))$ with
$\Gamma_{\mathrm{str}}(\varepsilon_X^n(\F))$, one readily checks that
the $\RC^0(X,\F)$-modules $\RC^0(X,\F) \otimes \Gamma_{\X}(\xi)$ and
$\Gamma_{\mathrm{str}}(\xi)$ are isomorphic. This implies that the
middle square in the diagram is commutative. Similar arguments show
that the other two squares also are commutative. According to
Propositions~\ref{prop-3-8} and~\ref{prop-3-9}, the maps
$\Gamma_{\mathrm{alg}}$, $\Gamma_{\X}$ and $\Gamma_{\mathrm{str}}$ are
bijective. By \cite{bib41}, the map $\Gamma$ is bijective (since $X$ is
homotopically equivalent to a compact subset of $X$, cf.
\cite[Corollary~9.3.7]{bib9}).

Suppose that the variety $X$ is compact. It suffices to prove that each
map in the bottom row of the diagram is injective. This follows from
Swan's theorem \cite[Theorem~2.2]{bib44}. Indeed, $\C(X,\F)$ is a
topological ring with topology induced by the $\sup$ norm. Each subring of
$\C(X,\F)$ is a topological ring with the subspace topology. By the
Weierstrass approximation theorem, $\RC(X,\F)$ is dense in $\C(X,\F)$.
Consequently, Swan's theorem is applicable.
\end{proof}

Denote by $K_{\F\mathrm{\mhyphen alg}}(X)$, $K_{\F \mhyphen \X}(X)$,
$K_{\F\mathrm{\mhyphen str}}(X)$ and $K_{\F}(X)$ the Grothendieck group
of the commutative monoids $\VB_{\F\mathrm{\mhyphen alg}}(X)$,
$\VB_{\F \mhyphen \X}(X)$, $\VB_{\F\mathrm{\mhyphen str}}(X)$ and $\VB_{\F}(X)$.
Note that $K_{\F\mathrm{\mhyphen alg}}(X) = K_{\F \mhyphen \{X\}}(X)$.

As usual, for any ring $R$, the Grothendieck group of the commutative
monoid $P(R)$ will be denoted by $K_0(R)$.

\begin{corollary}\label{cor-3-11}
The commutative diagram in Theorem~\ref{th-3-10} gives rise to a
commutative diagram
\begin{diagram}
K_{\F\mathrm{\mhyphen alg}}(X) & \rTo & K_{\F \mhyphen \X}(X) & \rTo &
K_{\F\mathrm{\mhyphen str}}(X) & \rTo & K_{\F}(X) \\
\dTo^{\Gamma_{\mathrm{alg}}} && \dTo^{\Gamma_{\X}} &&
\dTo^{\Gamma_{\mathrm{str}}} && \dTo^{\Gamma} \\
K_0(\RC(X,\F)) & \rTo & K_0(\RC_{\X}(X,\F)) & \rTo & K_0(\RC^0(X,\F)) & \rTo & K_0(\C(X,\F))  
\end{diagram}
in which the vertical homomorphisms are all isomorphisms. Furthermore,
if the variety $X$ is compact, then the horizontal homomorphisms are all
monomorphisms.
\end{corollary}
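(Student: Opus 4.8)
The plan is to derive the statement from Theorem~\ref{th-3-10} by applying the Grothendieck-group (group completion) functor $M \mapsto K(M)$, from commutative monoids to abelian groups, to the commutative diagram of monoids in that theorem. This functor sends commutative squares to commutative squares and monoid isomorphisms to group isomorphisms, so — since the vertical maps $\Gamma_{\mathrm{alg}}$, $\Gamma_{\X}$, $\Gamma_{\mathrm{str}}$, $\Gamma$ in Theorem~\ref{th-3-10} are bijective — the displayed diagram automatically commutes and its vertical homomorphisms are isomorphisms. Because the vertical homomorphisms are isomorphisms, it then suffices to prove that the horizontal homomorphisms in the \emph{top} row are monomorphisms when $X$ is compact.

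The point to be careful about is that an injective homomorphism of commutative monoids does not in general induce an injective homomorphism of Grothendieck groups, so the injectivity statements of Theorem~\ref{th-3-10} do not suffice on their own. What makes the deduction go through is the cofinality of trivial bundles: by Proposition~\ref{prop-3-7} — applied with the trivial stratification $\{X\}$, with $\X$, and with an arbitrary stratification — every algebraic, every $\X$-algebraic, and every stratified-algebraic $\F$-vector bundle on $X$ occurs as a direct summand of some $\varepsilon_X^n(\F)$, and the analogous statement for topological $\F$-vector bundles over the compact variety $X$ is classical. Thus in each of the monoids $\VB_{\F\mathrm{\mhyphen alg}}(X)$, $\VB_{\F \mhyphen \X}(X)$, $\VB_{\F\mathrm{\mhyphen str}}(X)$, $\VB_{\F}(X)$ the classes of the $\varepsilon_X^n(\F)$ form a cofinal submonoid, and each horizontal homomorphism carries the class of $\varepsilon_X^n(\F)$ to the class of $\varepsilon_X^n(\F)$.

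I would then record the elementary lemma: if $f \colon M \to N$ is an injective homomorphism of commutative monoids and $T \subseteq M$ is a subset whose image $f(T)$ is cofinal in $N$, then $K(f) \colon K(M) \to K(N)$ is injective. To see this, take $[a] - [b] \in \ker K(f)$ with $a, b \in M$; then by the construction of $K(N)$ there is $n \in N$ with $f(a) + n = f(b) + n$, and choosing $n' \in N$ and $s \in T$ with $n + n' = f(s)$ gives $f(a + s) = f(b + s)$, whence $a + s = b + s$ by injectivity of $f$ and therefore $[a] = [b]$ in the abelian group $K(M)$. Applying the lemma with $T$ the submonoid of classes of trivial bundles, and using Theorem~\ref{th-3-10} for the hypothesis that $f$ is injective, yields the three monomorphisms in the top row; the very same argument, with finitely generated free modules playing the role of trivial bundles and the bottom-row injectivity of Theorem~\ref{th-3-10}, handles $K_0(\RC(X,\F)) \to K_0(\RC_{\X}(X,\F)) \to K_0(\RC^0(X,\F)) \to K_0(\C(X,\F))$ directly as a consistency check. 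The only genuine obstacle will be the caveat stressed above — one may not simply invoke ``injective on monoids implies injective on group completions'' — so Proposition~\ref{prop-3-7} must really be brought into play; everything else is formal manipulation with the universal property of $K(\,\cdot\,)$.
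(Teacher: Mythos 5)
Your proof is correct and follows the same route as the paper, whose entire proof of this corollary is the single sentence ``It suffices to make use of Theorem~\ref{th-3-10}.'' The cofinality lemma you isolate (via Proposition~\ref{prop-3-7} for the algebraic, $\X$-algebraic and stratified-algebraic cases, and via compactness resp.\ projectivity for the topological and module cases) is precisely the standard detail the paper leaves to the reader, and you are right that it is genuinely needed, since injectivity of a homomorphism of commutative monoids does not by itself pass to the Grothendieck groups.
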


\begin{proof}
It suffices to make use of Theorem~\ref{th-3-10}.
\end{proof}

\begin{corollary}\label{cor-3-12}
Assume that the variety $X$ is compact. For a stratified-algebraic
$\F$-vector bundle $\xi$ on $X$, the following conditions are equivalent:
\begin{conditions}
\item $\xi$ is isomorphic in the category of stratified-algebraic
$\F$-vector bundles on $X$ to an $\X$-algebraic $\F$-vector bundle on
$X$.

\item $\xi$ is isomorphic in the category of topological $\F$-vector
bundles on $X$ to an $\X$-algebraic $\F$-vector bundle on $X$.

\item The class of $\xi$ in $K_{\F\mathrm{\mhyphen str}}(X)$ belongs to
the image of the canonical homomorphism $K_{\F \mhyphen \X}(X) \to
K_{\F\mathrm{\mhyphen str}}(X)$.

\item The class of $\xi$ in $K_{\F}(X)$ belongs to the image of the
canonical homomorphism $K_{\F \mhyphen \X}(X) \to K_{\F}(X)$.
\end{conditions}
\end{corollary}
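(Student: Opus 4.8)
The plan is to establish the cycle of implications (a) $\Rightarrow$ (c) $\Rightarrow$ (d) $\Rightarrow$ (b) $\Rightarrow$ (a), which makes all four conditions equivalent. Three of the links are purely formal. For (a) $\Rightarrow$ (c): if $\xi$ is isomorphic, in the category of stratified-algebraic $\F$-vector bundles, to an $\X$-algebraic $\F$-vector bundle $\eta$, then the class of $\xi$ in $K_{\F\mathrm{\mhyphen str}}(X)$ equals that of $\eta$, which lies in the image of the canonical homomorphism $K_{\F \mhyphen \X}(X) \to K_{\F\mathrm{\mhyphen str}}(X)$ by construction. For (c) $\Rightarrow$ (d): apply the canonical homomorphism $K_{\F\mathrm{\mhyphen str}}(X) \to K_{\F}(X)$ and invoke the commutativity of the diagram in Corollary~\ref{cor-3-11}. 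For (b) $\Rightarrow$ (a): if $\xi$ is topologically isomorphic to an $\X$-algebraic $\F$-vector bundle $\eta$, then $\xi$ and $\eta$, both being in particular stratified-algebraic, determine classes in $\VB_{\F\mathrm{\mhyphen str}}(X)$ with the same image in $\VB_{\F}(X)$; since $X$ is compact, Theorem~\ref{th-3-10} tells us this last map is injective, so $\xi$ and $\eta$ are already isomorphic in the category of stratified-algebraic $\F$-vector bundles.

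The substance of the corollary lies in (d) $\Rightarrow$ (b). Assume the class of $\xi$ in $K_{\F}(X)$ equals $[\alpha] - [\beta]$ for some $\X$-algebraic $\F$-vector bundles $\alpha$ and $\beta$ on $X$. Then there is a topological $\F$-vector bundle $\delta$ with $\xi \oplus \beta \oplus \delta$ topologically isomorphic to $\alpha \oplus \delta$; as $X$ is compact, we may add a topological complement of $\delta$ to both sides and so assume $\delta = \varepsilon_X^m(\F)$. Thus $\xi \oplus (\beta \oplus \varepsilon_X^m(\F))$ is topologically isomorphic to $\alpha \oplus \varepsilon_X^m(\F)$, and $\beta \oplus \varepsilon_X^m(\F)$ is $\X$-algebraic. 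By Proposition~\ref{prop-3-7} this bundle admits an $\X$-algebraic orthogonal complement $\rho$ in some $\varepsilon_X^N(\F)$; adding $\rho$ to both sides produces a topological isomorphism $\xi \oplus \varepsilon_X^N(\F) \cong \eta$, where $\eta \coloneqq \alpha \oplus \varepsilon_X^m(\F) \oplus \rho$ is an $\X$-algebraic $\F$-vector bundle.

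It then remains to peel off the trivial summand without leaving the $\X$-algebraic category. Realize $\eta$ as an $\X$-algebraic $\F$-vector subbundle of $\varepsilon_X^n(\F)$. The isomorphism $\xi \oplus \varepsilon_X^N(\F) \cong \eta$ exhibits $N$ continuous sections of $\eta$ that are pointwise $\F$-linearly independent (they trivialize the $\varepsilon_X^N(\F)$-summand). Since every polynomial map is $\X$-regular, $\RC_{\X}(X,\F)$ is dense in $\C(X,\F)$ in the supremum norm, and by Proposition~\ref{prop-3-7} the $\RC_{\X}(X,\F)$-module $\Gamma_{\X}(\eta)$ generates $\Gamma(\eta)$ over $\C(X,\F)$; hence every continuous section of $\eta$ is uniformly approximable by $\X$-algebraic sections. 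Pointwise linear independence being an open condition, I may replace the $N$ sections above by $\X$-algebraic sections $v_1, \dots, v_N$ that remain pointwise linearly independent. They span an $\X$-algebraic trivial subbundle $\tau \cong \varepsilon_X^N(\F)$ of $\eta$; on each stratum $S$ the orthogonal complement of $\tau$ in $\eta$ is the kernel of the restriction to $\eta|_S$ of the algebraic orthogonal projection $\varepsilon_S^n(\F) \to \tau|_S$, hence an algebraic subbundle, so this complement $\tau^{\perp} \cap \eta$ is $\X$-algebraic. Finally, as $\tau$ is $C^{0}$-close to the trivial subbundle spanned by the original sections, the two are isotopic as subbundles of $\eta$, whence $\tau^{\perp} \cap \eta$ is topologically isomorphic to $\xi$. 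This establishes (b).

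The main obstacle is exactly this last step: passing from the statement that $\xi \oplus \varepsilon_X^N(\F)$ is topologically isomorphic to an $\X$-algebraic bundle to the statement that $\xi$ itself is topologically isomorphic to an $\X$-algebraic bundle requires the density-based approximation of continuous sections by $\X$-algebraic ones, together with the standard but slightly delicate fact that $C^{0}$-close subbundles have isomorphic orthogonal complements. Everything else is a matter of unwinding definitions and citing Theorem~\ref{th-3-10}, Corollary~\ref{cor-3-11} and Proposition~\ref{prop-3-7}.
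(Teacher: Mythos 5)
Your proposal is correct, but it takes a genuinely different route from the paper. The paper's proof of Corollary~\ref{cor-3-12} is a one-liner: after translating the four conditions into statements about finitely generated projective modules over the chain of dense topological subrings $\RC_{\X}(X,\F) \subseteq \RC^0(X,\F) \subseteq \C(X,\F)$ via the category equivalences of Theorem~\ref{th-3-10} and Corollary~\ref{cor-3-11}, it invokes Swan's theorem on projective modules over dense subrings of topological rings (Theorem~2.2 of the cited paper of Swan), exactly as in the injectivity part of Theorem~\ref{th-3-10}. You instead prove the only substantive implication, (d)~$\Rightarrow$~(b), directly in bundle-theoretic terms: stabilize the $K$-theoretic identity to a genuine isomorphism $\xi \oplus \varepsilon_X^N(\F) \cong \eta$ with $\eta$ an $\X$-algebraic bundle, approximate the resulting $N$ pointwise independent continuous sections of $\eta$ by $\X$-algebraic ones (using Weierstrass density of $\RC_{\X}(X,\F)$ in $\C(X,\F)$ and the fact, from Proposition~\ref{prop-3-7}, that $\Gamma_{\X}(\eta)$ generates $\Gamma(\eta)$ over $\C(X,\F)$), and split off the $\X$-algebraic trivial subbundle they span, identifying the complement with $\xi$ because $C^0$-close subbundles of a bundle over a compact base have isomorphic complements. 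This is in effect a geometric reproof of the relevant case of Swan's theorem; what it buys is self-containedness (no appeal to the external reference, and no detour through $P(R)$ and $K_0(R)$), at the cost of length and of having to verify the standard but nontrivial facts about spans of independent algebraic sections and isotopy of nearby subbundles. The remaining implications (a)~$\Rightarrow$~(c)~$\Rightarrow$~(d) and (b)~$\Rightarrow$~(a) are handled in your proposal exactly as the paper's machinery would handle them, the last one correctly using the injectivity of $\VB_{\F\mathrm{\mhyphen str}}(X) \to \VB_{\F}(X)$ from Theorem~\ref{th-3-10}.
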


\begin{proof}
In view of Theorem~\ref{th-3-10} and Corollary~\ref{cor-3-11}, it
suffices to apply Swan's theorem \cite[Theorem~2.2]{bib44}.
\end{proof}

In the same way, we obtain the next two corollaries, which are recorded
for the sake of completeness.

\begin{corollary}\label{cor-3-13}
Assume that the variety $X$ is compact. For a topological $\F$-vector
bundle $\xi$ on $X$, the following conditions are equivalent:
\begin{conditions}
\item $\xi$ is isomorphic in the category of topological $\F$-vector
bundles on $X$ to an $\X$-algebraic $\F$-vector bundle on $X$.

\item The class of $\xi$ in $K_{\F}(X)$ belongs to the image of the
canonical homomorphism $K_{\F \mhyphen \X}(X) \to K_{\F}(X)$.
\end{conditions}
\end{corollary}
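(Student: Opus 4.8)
The plan is to derive Corollary~\ref{cor-3-13} as a direct specialization of the machinery already in place, essentially by noting that it is the ``topological'' column of the equivalence established in Theorem~\ref{th-3-10} together with Corollary~\ref{cor-3-11}. Concretely, I would first observe that by Theorem~\ref{th-3-10} the global section functors $\Gamma_{\X}$ and $\Gamma$ are monoid isomorphisms $\VB_{\F\mhyphen\X}(X) \to P(\RC_{\X}(X,\F))$ and $\VB_{\F}(X) \to P(\C(X,\F))$, and that they fit into the commutative square whose bottom horizontal arrow is the canonical map $P(\RC_{\X}(X,\F)) \to P(\C(X,\F))$ induced by the inclusion of rings $\RC_{\X}(X,\F) \subseteq \C(X,\F)$. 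Passing to Grothendieck groups via Corollary~\ref{cor-3-11} gives the analogous commutative square with $K_{\F\mhyphen\X}(X)$, $K_{\F}(X)$, $K_0(\RC_{\X}(X,\F))$ and $K_0(\C(X,\F))$, again with vertical isomorphisms.

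Next I would unwind what the two stated conditions mean under these identifications. Condition~(a)---that $\xi$ is topologically isomorphic to an $\X$-algebraic $\F$-vector bundle---says precisely that the isomorphism class $[\xi] \in \VB_{\F}(X)$ lies in the image of the canonical homomorphism $\VB_{\F\mhyphen\X}(X) \to \VB_{\F}(X)$. So the content of the corollary is the equivalence of ``$[\xi]$ is in the image of $\VB_{\F\mhyphen\X}(X) \to \VB_{\F}(X)$'' with ``the class of $\xi$ in $K_{\F}(X)$ is in the image of $K_{\F\mhyphen\X}(X) \to K_{\F}(X)$''. The implication (a)$\Rightarrow$(b) is formal: applying the Grothendieck-group functor to a bundle representing $[\xi]$ shows its class factors through $K_{\F\mhyphen\X}(X)$.

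The substantive direction is (b)$\Rightarrow$(a), and here I would lean on Swan's theorem \cite[Theorem~2.2]{bib44} exactly as in the proof of Corollary~\ref{cor-3-12}. Transporting condition~(b) through the vertical isomorphisms of Corollary~\ref{cor-3-11}, it says the class of $\Gamma(\xi)$ in $K_0(\C(X,\F))$ comes from $K_0(\RC_{\X}(X,\F))$; since $\RC_{\X}(X,\F)$ is a subring of the topological ring $\C(X,\F)$ in which $\RC(X,\F)$---and hence a fortiori $\RC_{\X}(X,\F)$---is dense by the Weierstrass approximation theorem, Swan's theorem applies and yields that $\Gamma(\xi)$ is itself isomorphic, as a $\C(X,\F)$-module, to $\C(X,\F) \otimes_{\RC_{\X}(X,\F)} M$ for some finitely generated projective $\RC_{\X}(X,\F)$-module $M$ (not merely stably so). Translating back via the equivalences of Propositions~\ref{prop-3-8} and the topological Serre--Swan theorem, $\xi$ is topologically isomorphic to the $\X$-algebraic $\F$-vector bundle $\Gamma_{\X}^{-1}(M)$, which is condition~(a).

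I do not anticipate a genuine obstacle here, since every ingredient is already assembled; the only care needed is to check that the density hypothesis in Swan's theorem is met for the subring $\RC_{\X}(X,\F)$ (it is, since it contains the dense subring $\RC(X,\F)$) and that the monoid/group-level statements line up correctly with the bundle-level statements through the chain of equivalences. In fact the cleanest write-up is simply: \emph{In view of Theorem~\ref{th-3-10} and Corollary~\ref{cor-3-11}, it suffices to apply Swan's theorem} \cite[Theorem~2.2]{bib44}, in complete parallel with the proof of Corollary~\ref{cor-3-12}.
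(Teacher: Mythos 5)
Your proposal is correct and follows exactly the route the paper takes: the paper derives Corollary~\ref{cor-3-13} ``in the same way'' as Corollary~\ref{cor-3-12}, namely by combining Theorem~\ref{th-3-10} and Corollary~\ref{cor-3-11} with Swan's theorem \cite[Theorem~2.2]{bib44}. Your additional check that the density hypothesis for the subring $\RC_{\X}(X,\F)$ of $\C(X,\F)$ is inherited from $\RC(X,\F)$ is a correct and welcome elaboration of what the paper leaves implicit.
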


\begin{corollary}\label{cor-3-14}
Assume that the variety $X$ is compact. For a topological $\F$-vector
bundle $\xi$ on $X$, the following conditions are equivalent:
\begin{conditions}
\item $\xi$ is isomorphic in the category of topological $\F$-vector
bundles on $X$ to a stratified-algebraic $\F$-vector bundle on $X$.

\item The class of $\xi$ in $K_{\F}(X)$ belongs to the image of the
canonical homomorphism $K_{\F\mathrm{\mhyphen str}}(X) \to K_{\F}(X)$.
\end{conditions}
\end{corollary}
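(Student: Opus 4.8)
The plan is to prove (1)$\Rightarrow$(2) trivially and to obtain (2)$\Rightarrow$(1) by transporting the problem, via the bijections of Theorem~\ref{th-3-10}, into a statement about finitely generated projective modules over $\C(X,\F)$, and then invoking Swan's theorem \cite[Theorem~2.2]{bib44}; this is exactly the pattern of the proof of Corollary~\ref{cor-3-12}. For (1)$\Rightarrow$(2): if $\xi$ is topologically isomorphic to a stratified-algebraic $\F$-vector bundle $\eta$, then the class of $\xi$ in $K_{\F}(X)$ equals the image under the canonical homomorphism of the class of $\eta$ in $K_{\F\mathrm{\mhyphen str}}(X)$, hence lies in its image.

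For the converse I would first rewrite both conditions module-theoretically. By Theorem~\ref{th-3-10} the section functors $\Gamma\colon\VB_{\F}(X)\to P(\C(X,\F))$ and $\Gamma_{\mathrm{str}}\colon\VB_{\F\mathrm{\mhyphen str}}(X)\to P(\RC^0(X,\F))$ are bijections, and by Corollary~\ref{cor-3-11} they are compatible with the canonical maps induced by the inclusion $\RC^0(X,\F)\subseteq\C(X,\F)$ and with passage to Grothendieck groups. Under this dictionary, condition~(1) asserts that the finitely generated projective $\C(X,\F)$-module $P\coloneqq\Gamma(\xi)$ is \emph{extended}, i.e. isomorphic to $\C(X,\F)\otimes_{\RC^0(X,\F)}N$ for some finitely generated projective $\RC^0(X,\F)$-module $N$, while condition~(2) asserts that the class of $P$ in $K_0(\C(X,\F))$ lies in the image of $K_0(\RC^0(X,\F))\to K_0(\C(X,\F))$. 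So it suffices to show that ``$[P]$ lies in that image'' implies ``$P$ is extended''.

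The easy half is a stabilisation observation: writing the class of $\xi$ in $K_{\F}(X)$ as $[\alpha]-[\beta]$ with $\alpha,\beta$ stratified-algebraic and realised as subbundles of trivial bundles, and replacing $\beta$ by $\beta\oplus\beta^{\perp}=\varepsilon_X^{k}(\F)$ — the complement $\beta^{\perp}$ being stratified-algebraic by Proposition~\ref{prop-3-7} applied over a stratification making $\beta$ algebraic on each stratum — one gets a topological isomorphism $\xi\oplus\varepsilon_X^{m}(\F)\cong\theta$ with $\theta$ stratified-algebraic, for some $m$; equivalently, $P$ is stably extended. The genuine difficulty is to descend from ``stably extended'' to ``extended'', that is, to cancel the free summand, and this is precisely the content of Swan's theorem: since $\RC(X,\F)\subseteq\RC^0(X,\F)$ is dense in $\C(X,\F)$ in the supremum norm by the Weierstrass approximation theorem and $X$ is compact, \cite[Theorem~2.2]{bib44} applies to the pair $\RC^0(X,\F)\subseteq\C(X,\F)$ and yields exactly the required implication. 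I expect this destabilisation step to be the main obstacle: the point is that $\RC^0(X,\F)$ is not closed under uniform limits, so one cannot merely approximate the idempotent cutting out the $P$-summand, and must instead combine density with the openness of the relevant bundle-morphism conditions on the compact space $X$ (together with cancellation above the stable range of $\C(X,\F)$) — which is what Swan's theorem packages. Everything else is formal bookkeeping with the commutative diagrams of Theorem~\ref{th-3-10} and Corollary~\ref{cor-3-11}.
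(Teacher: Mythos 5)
Your proposal is correct and follows exactly the paper's route: the paper proves Corollary~\ref{cor-3-14} ``in the same way'' as Corollary~\ref{cor-3-12}, namely by translating both conditions through the bijections of Theorem~\ref{th-3-10} and Corollary~\ref{cor-3-11} into statements about finitely generated projective modules over $\RC^0(X,\F)\subseteq\C(X,\F)$ and then invoking Swan's theorem \cite[Theorem~2.2]{bib44}, with density supplied by the Weierstrass approximation theorem. Your additional unpacking of the stabilisation step and of where Swan's theorem does the real work is accurate but not a departure from the paper's argument.
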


Now we are in a position to prove two results announced in
Section~\ref{sec-1}.

\begin{proof}[Proof of Theorem~\ref{th-1-5}]
Recall that the group $K_{\F}(X)$ is finitely generated (cf.
\cite[Exercise~III.7.5]{bib29} or the spectral sequence in \cite{bib4,
bib21}). According to Corollary~\ref{cor-3-11}, the group
$K_{\F\mathrm{\mhyphen str}}(X)$ also is finitely generated. Hence there
exist stratified-algebraic $\F$-vector bundles $\xi_1, \ldots, \xi_r$ on
$X$ whose classes in $K_{\F\mathrm{\mhyphen str}}(X)$ generate this
group. We can find a stratification $\SC$ of $X$ such that each $\xi_i$
is $\SC$-algebraic for $1 \leq i \leq r$. Consequently, the canonical
homomorphism $K_{\F \mhyphen \SC}(X) \to K_{\F\mathrm{\mhyphen str}}(X)$ is
surjective. The proof is complete in view of Corollary~\ref{cor-3-12}.
\end{proof}

\begin{proof}[Proof of Theorem~\ref{th-1-6}]
As in the proof of Theorem~\ref{th-1-5}, we obtain stratified-algebraic
$\F$-vector bundles $\xi_1, \ldots, \xi_r$ on $X$ whose classes in
$K_{\F\mathrm{\mhyphen str}}(X)$ generate this group. According to
Proposition~\ref{prop-3-4}, each $\xi_i$ is of the form $\xi_i = f_i^*
\gamma_{K_i} (\F^{n_i})$ for some multi-Grassmannian
$\G_{K_i}(\F^{n_i})$ and stratified-regular map $f_i \colon X \to
\G_{K_i}(\F^{n_i})$. Set
\begin{equation*}
f \coloneqq (f_1, \ldots, f_r) \colon X \to G \coloneqq
\G_{K_1}(\F^{n_1}) \times \cdots \times \G_{K_r}(\F^{n_r}).
\end{equation*}
By Hironaka's theorem on resolution of singularities \cite{bib25,
bib30}, there exists a birational multiblowup $\rho \colon \tilde{X} \to
X$ such that the variety $\tilde{X}$ is nonsingular. The composite map
$f \circ \rho \colon \tilde{X} \to G$ is stratified-regular, and hence,
in view of Proposition~\ref{prop-2-2}, it is continuous rational. Now,
Hironaka's theorem on resolution of points of indeterminacy
\cite{bib26, bib30} implies the existence of a birational multiblowup
$\sigma \colon X' \to \tilde{X}$ such that the composite map $f \circ
\rho \circ \sigma \colon X' \to G$ is regular. The variety $X'$ is
nonsingular and $\pi \coloneqq \rho \circ \sigma \colon X' \to X$ is a
birational multiblowup. Since the composite map $f_i \circ \pi$ is
regular, the pullback
\begin{equation*}
\pi^*\xi_i = \pi^*(f_i^* \gamma_{K_i}(\F^{n_i})) = (f_i \circ \pi)^*
\gamma_{K_i} (\F^{n_i})
\end{equation*}
is an algebraic $\F$-vector bundle on $X'$ for $1 \leq i \leq r$. Since
the classes of $\xi_1, \ldots, \xi_r$ generate the group
$K_{\F\mathrm{\mhyphen str}}(X)$, it follows that the image of the
homomorphism 
\begin{equation*}
\pi^* \colon K_{\F \mathrm{\mhyphen str}} (X) \to K_{\F
\mathrm{\mhyphen str}}(X')
\end{equation*}
induced by $\pi$ is contained in the image
of the canonical homomorphism
\begin{equation*}
K_{\F \mathrm{\mhyphen alg}} (X') = K_{\F \mhyphen \{X'\}} (X') \to K_{\F
\mathrm{\mhyphen str}} (X').
\end{equation*}
Hence, in view of Corollary~\ref{cor-3-11} (with $X=X'$), for any
stratified-algebraic $\F$-vector bundle $\xi$ on $X$, the pullback
$\pi^*\xi$ is isomorphic in the category of stratified-algebraic
$\F$-vector bundles on $X'$ to an algebraic $\F$-vector bundle on $X'$.
This proves Theorem~\ref{th-1-6}.
\end{proof}

We conclude this section by showing that the categories of vector bundles
considered here are closed under standard operations. This is made
precise in Proposition~\ref{prop-3-15} and its proof. In fact it was
already done for the direct sum $\oplus$ when the monoids
$\VB_{\F \mhyphen \X}(X)$ and $\VB_{\F\mathrm{\mhyphen str}}(X)$ were introduced.

\begin{proposition}\label{prop-3-15}
Let $\xi$ and $\eta$ be $\X$-algebraic $\F$-vector bundles on $X$. Then
the $\F$-vector bundles $\xi \oplus \eta$, $\Hom(\xi, \eta)$ and
$\xi^{\vee}$ (dual bundle) are $\X$-algebraic. If $\F = \R$ or $\F =
\CB$, then the $\F$-vector bundles $\xi \otimes \eta$ and $\bigwedge^k\xi$
($k$th exterior power) are $\X$-algebraic. Furthermore, the analogous
statements hold true for stratified-algebraic $\F$-vector bundles.
\end{proposition}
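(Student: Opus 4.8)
The plan is to reduce everything to the already-established fact that all these tensor operations can be realized as pullbacks of appropriate algebraic bundles on products of (multi-)Grassmannians. For the $\X$-algebraic case, by Proposition~\ref{prop-3-3} we may write $\xi = f^*\gamma_K(\F^n)$ and $\eta = g^*\gamma_L(\F^m)$ for $\X$-regular maps $f\colon X \to \G_K(\F^n)$ and $g\colon X \to \G_L(\F^m)$. Form the product map $(f,g)\colon X \to \G_K(\F^n)\times\G_L(\F^m)$, which is again $\X$-regular. On the product variety $\G_K(\F^n)\times\G_L(\F^m)$ the bundles $\gamma_K(\F^n)\boxplus\gamma_L(\F^m)$, $\Hom(\gamma_K(\F^n),\gamma_L(\F^m))$, $\gamma_K(\F^n)^\vee$ are algebraic $\F$-vector bundles by standard facts in \cite[Chapters~12, 13]{bib9}, and when $\F=\R$ or $\F=\CB$ so are $\gamma_K(\F^n)\otimes\gamma_L(\F^m)$ and $\bigwedge^k\gamma_K(\F^n)$ (exterior and tensor powers of algebraic bundles over commutative $\F$ are algebraic; this fails for $\F=\HB$ because there is no reasonable tensor product, which is why the hypothesis is imposed). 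Then $\xi\oplus\eta = (f,g)^*(\gamma_K(\F^n)\boxplus\gamma_L(\F^m))$, and similarly for $\Hom$, $\xi^\vee$, $\xi\otimes\eta$, $\bigwedge^k\xi$, so Proposition~\ref{prop-2-6} (first assertion) gives that each of these is $\X$-algebraic.

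Alternatively, and perhaps more cleanly, one can argue stratum by stratum without leaving $X$: for each stratum $S$ in $\X$, the restrictions $(\xi\oplus\eta)|_S = \xi|_S \oplus \eta|_S$, $\Hom(\xi,\eta)|_S = \Hom(\xi|_S,\eta|_S)$, and so on, are algebraic $\F$-vector bundles on $S$ because the corresponding operations preserve algebraicity of $\F$-vector bundles on a single real algebraic variety (\cite[Chapters~12, 13]{bib9}). One must additionally check that the operation, performed fiberwise over all of $X$, produces a topological $\F$-vector subbundle of some $\varepsilon_X^N(\F)$ whose restriction to each $S$ is exactly the algebraic bundle just described; this is immediate from the compatible choices of algebraic structures, since the orthogonal-projection maps from Proposition~\ref{prop-3-7} let one realize each operation concretely inside a trivial bundle in a way that restricts correctly to each stratum. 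Either route establishes the $\X$-algebraic assertions.

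For the stratified-algebraic assertions, I would simply invoke the fact that $\xi$ and $\eta$, being stratified-algebraic, are both $\SC$-algebraic for a common stratification $\SC$ of $X$ (take a common refinement of the two stratifications witnessing their stratified-algebraicity). Applying the $\X$-algebraic case with $\X=\SC$ shows that $\xi\oplus\eta$, $\Hom(\xi,\eta)$, $\xi^\vee$, and (for $\F=\R,\CB$) $\xi\otimes\eta$, $\bigwedge^k\xi$ are all $\SC$-algebraic, hence stratified-algebraic.

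The only genuine subtlety—and it is minor—is verifying that the fiberwise-constructed bundle really is a topological subbundle of a trivial bundle that is simultaneously $\X$-algebraic on every stratum, rather than merely being abstractly isomorphic to one on each stratum separately. The Grassmannian-pullback formulation sidesteps this entirely, since $\gamma_K(\F^n)\boxplus\gamma_L(\F^m)$ etc. come with canonical embeddings into trivial bundles on the product Grassmannian and pulling back preserves such embeddings; so I expect to use that route, with the stratum-by-stratum description recorded as a remark. The restriction to $\F=\R$ or $\F=\CB$ for $\otimes$ and $\bigwedge^k$ is forced by the non-commutativity of $\HB$: the tensor product of left $\HB$-vector spaces is not defined, so I will not claim anything about those operations in the quaternionic case.
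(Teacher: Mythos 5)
Your proposal is correct, but your primary route is not the one the paper takes. The paper works entirely on $X$: it uses Proposition~\ref{prop-3-7} to write $\xi \oplus \xi^{\perp} = \varepsilon_X^n(\F)$ and $\eta \oplus \eta^{\perp} = \varepsilon_X^m(\F)$, and then realizes $\xi \oplus \eta$, $\Hom(\xi,\eta)$, $\xi^{\vee}$ (via the dual of the orthogonal projection), $\xi \otimes \eta$ and $\bigwedge^k \xi$ as concrete subbundles of $\varepsilon_X^{n+m}(\F)$, $\varepsilon_X^{nm}(\F)$, $\varepsilon_X^{n}(\F)$, $\varepsilon_X^{nm}(\F)$ and $\varepsilon_X^{q}(\F)$ respectively, after which stratumwise algebraicity follows from the corresponding facts for algebraic bundles on each stratum --- this is essentially the ``alternative'' route you sketch second, with the subtlety you flag (getting a genuine subbundle of a trivial bundle rather than a stratumwise isomorphism class) resolved exactly by the orthogonal-projection device you mention. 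Your preferred route instead reduces to the universal case: writing $\xi = f^*\gamma_K(\F^n)$ and $\eta = g^*\gamma_L(\F^m)$ via Proposition~\ref{prop-3-3}, performing the operations on the tautological bundles over $\G_K(\F^n) \times \G_L(\F^m)$, and pulling back along the $\X$-regular map $(f,g)$ using Proposition~\ref{prop-2-6}. Both arguments are sound and rest on the same underlying fact (closure of algebraic bundles under these operations, from \cite[Chapters~12, 13]{bib9}); the Grassmannian formulation buys you canonical embeddings into trivial bundles for free, while the paper's direct argument is more self-contained and does not need Proposition~\ref{prop-3-3}. Your handling of the stratified-algebraic case by passing to a common refinement is exactly what the paper's terse closing sentence implicitly does, and your remark on why $\otimes$ and $\bigwedge^k$ are excluded for $\F = \HB$ is the right explanation.
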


\begin{proof}
Suppose that $\xi$ is an $\X$-algebraic subbundle of
$\varepsilon_X^n(\F)$ and $\eta$ is an $\X$-algebraic subbundle of
$\varepsilon_X^m(\F)$. We already know that $\xi \oplus \eta$ is
regarded as a subbundle of 
\begin{equation*}
\varepsilon_X^n(\F) \oplus
\varepsilon_X^m(\F) \cong \varepsilon_X^{n+m}(\F).
\end{equation*}
Since $\xi \oplus
\xi^{\perp} = \varepsilon_X^n(\F)$ and $\eta \oplus \eta^{\perp} =
\varepsilon_X^m(\F)$, (cf. Proposition~\ref{prop-3-7}), $\Hom(\xi,\eta)$
can be regarded as a subbundle of $\Hom(\varepsilon_X^n(\F),
\varepsilon_X^m(\F)) \cong \varepsilon_X^{nm}(\F)$. By dualizing the
orthogonal projection $\varepsilon_X^n(\F) \to \xi$, we regard
$\xi^{\vee}$ as a subbundle of $\varepsilon_X^n(\F)^{\vee} \cong
\varepsilon_X^n(\F)$. If $\F=\R$ or $\F=\CB$, then
$\xi \otimes \eta$ can be regarded as a subbundle of
$\varepsilon_X^n(\F) \otimes \varepsilon_X^m(\F) \cong
\varepsilon_X^{nm}(\F)$, and $\bigwedge^k\xi$ can be regarded as a
subbundle of  $\bigwedge^k\varepsilon_X^n(\F) \cong
\varepsilon_X^q(\F)$, where $q = \binom{n}{k}$ is the binomial
coefficient. After these identifications, each of the vector bundles
under consideration becomes an $\X$-algebraic subbundle of
$\varepsilon_X^p(\F)$ for an appropriate $p$. The same argument works
for stratified-algebraic vector bundles.
\end{proof}

%\begin{document} %---for spell checking in vim
\section{Approximation by stratified-regular maps}\label{sec-4}
As in Section~\ref{sec-3}, for any real algebraic variety $X$ with
stratification $\X$, and any real algebraic variety $Y$, we have the
following inclusions:
\begin{equation*}
\RC(X,Y) \subseteq \RC_{\X}(X,Y) \subseteq \RC^0(X,Y) \subseteq \C(X,Y).
\end{equation*}
A challenging problem is to find a useful description of the closure of
each of the sets $\RC(X,Y)$, $\RC_{\X}(X,Y)$ and $\RC^0(X,Y)$ in the space
$\C(X,Y)$, endowed with the compact-open topology. In other words, the
problem is to find a characterization of these maps in $\C(X,Y)$ that
can be approximated by either regular or $\X$-regular or
stratified-regular maps. Approximation by regular maps is investigated
in \cite{bib9, bib10, bib12, bib13, bib16, bib18}. As demonstrated in
\cite{bib32, bib34}, stratified-regular maps are much more flexible
than regular maps. In the present section, we prove approximation
theorems for maps with values in multi-Grassmannians. It is important
for applications to obtain results in which approximating maps satisfy
some extra conditions.

We first recall a key extension result due to Koll\'ar and Nowak
\cite[Theorem~9, Proposition~10]{bib31}.

\begin{theorem}[\cite{bib31}]\label{th-4-1}
Let $X$ be a real algebraic variety. Let $A$ and $B$ be Zariski closed
subvarieties of $X$ with $B \subseteq A$. For any stratified-regular
function $f \colon A \to \R$ whose restriction $f|_{A \setminus B}$ is a
regular function, there exists a stratified-regular function $F \colon X
\to \R$ such that $F|_A = f$ and $F|_{X \setminus B}$ is a regular function.
\end{theorem}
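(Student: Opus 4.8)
The plan is to turn the statement into a purely geometric extension problem and then solve that by resolution of singularities together with a Łojasiewicz-type estimate in the spirit of Lemma~\ref{lem-2-4}. The first reduction is the following: it suffices to produce a \emph{continuous} function $F \colon X \to \R$ with $F|_A = f$ whose restriction $F|_{X \setminus B}$ is a regular function, because any such $F$ is automatically stratified-regular. To see this I would use Proposition~\ref{prop-2-2}: it is enough to check that $F|_Z$ is continuous rational for every Zariski closed subvariety $Z \subseteq X$. For each irreducible component $Z_i$ of $Z$, if $Z_i \subseteq B$ then $Z_i \subseteq A$ and $F|_{Z_i} = f|_{Z_i}$, which is continuous rational since $f$ is stratified-regular; if $Z_i \not\subseteq B$, then $Z_i \setminus B$ is Zariski dense in $Z_i$ and $F$ is regular there. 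Removing pairwise intersections and patching the resulting dense open subsets of the $Z_i$ yields a Zariski dense open subvariety of $Z$ on which $F$ is regular, so $F|_Z$ is continuous rational. Thus the theorem follows once the extension problem above is solved, and note that this reduction does not cheapen anything: the whole difficulty is now concentrated in the requirement that the extension be \emph{continuous}.

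For the extension problem I would argue by induction on $\dim A$. If $\dim A = 0$, then $A$ is finite and any polynomial taking the prescribed values on $A$ gives a regular, hence admissible, $F$. For the inductive step, use Proposition~\ref{prop-2-2} to pick a filtration of $A$ adapted to $f$ and (after discarding those irreducible components of $B$ which are also components of $A$, as these impose no constraint coming from $X \setminus B$) isolate a Zariski closed subvariety $A' \subseteq A$ with $B \subseteq A'$, $\dim A' < \dim A$, such that $A \setminus A'$ is nonsingular and $f|_{A \setminus A'}$ is regular. Applying the inductive hypothesis to $(X, A', B)$ extends $f|_{A'}$ to some admissible $G$ on $X$, and replacing $f$ by $f - G|_A$ reduces matters to the case where $f$ vanishes on $A'$ and is regular on the nonsingular Zariski open stratum $A \setminus A'$. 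At this point I would invoke Hironaka's theorems on resolution of singularities and of points of indeterminacy to obtain a birational multiblowup $\pi \colon X' \to X$, an isomorphism over the locus where everything is already smooth, after which the strict transforms of $A$ and $B$, together with the exceptional divisor, form a simple normal crossing configuration in a nonsingular $X'$; this normalizes the situation so that the relevant denominator locus becomes a smooth hypersurface.

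In the normalized picture, near a point of the smooth divisor one writes the given datum as a ratio $p/q$ with $q$ cutting out the divisor; continuity forces $p$ to vanish along the divisor to the appropriate order, a Taylor expansion along the smooth divisor produces a regular local model, and one globalizes using the Łojasiewicz inequality \cite[Corollary~2.6.7]{bib9} together with Stone--Weierstrass approximation of the correcting terms, exactly as in the proof of Lemma~\ref{lem-2-4}. The hard part, which dictates this whole machinery, is \emph{continuity}: it is effortless to write down a rational function on $X$ that restricts to $f$ on $A$ and is regular off $B$, but preventing it from blowing up or jumping in directions transverse to $A$ near $B$ is exactly what forces the resolution and the Łojasiewicz estimates. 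A second delicate point is the descent of the extension built on $X'$ back to $X$; I would arrange it by choosing the extension compatibly on the exceptional fibres — for instance via Zariski closures of the graphs of the already-regular restrictions — so that it is constant along those fibres and hence induces a genuine continuous function on $X$.
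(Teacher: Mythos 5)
A preliminary remark: the paper does not prove this statement at all --- it is imported verbatim from Koll\'ar and Nowak \cite[Theorem~9, Proposition~10]{bib31} --- so there is no in-paper argument to compare yours against, and your proposal has to stand on its own. Its sound parts are the opening reduction (a continuous $F$ with $F|_A=f$ and $F|_{X\setminus B}$ regular is automatically stratified-regular, via condition~(\ref{prop-2-2-c}) of Proposition~\ref{prop-2-2} and a components-and-patching argument) and the general shape of the induction, which reduces to $f$ vanishing on a smaller closed set $A'$ and regular on the nonsingular set $A\setminus A'$. But there are genuine gaps at exactly the points where the theorem is hard.

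Concretely: (1)~your inductive step needs $B\subseteq A'$ with $\dim A'<\dim A$, which is impossible whenever $A$ and $B$ share a top-dimensional irreducible component (e.g.\ $B=A$, a legitimate and important case where the hypothesis on $f|_{A\setminus B}$ is vacuous); ``discarding'' such components does not repair this, since shrinking $B$ enlarges the set on which $F$ must be regular while the hypothesis on $f$ does not improve. (2)~The central construction --- a continuous extension to $X$ of the function $p/q$ given on $A\setminus A'$ that is regular on all of $X\setminus B$ --- is never actually performed: ``continuity forces $p$ to vanish to the appropriate order'' only constrains the restriction to $A$ and says nothing about extensions $P,Q$ of $p,q$ in directions transverse to $A$, which you yourself identify as the whole difficulty. (3)~Most seriously, the resolve-upstairs-and-descend scheme cannot deliver the stated conclusion. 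Putting the strict transforms of $A$, $B$ and the denominator locus in normal crossing position forces blowups with centers in the singular locus of $A$ (and in $A'$), which need not lie in $B$; a function built on $X'$, regular off $\pi^{-1}(B)$ and constant on the exceptional fibres, then descends only to a \emph{continuous rational} function on $X\setminus B$, not a regular one, because $\pi$ is not an isomorphism over $X\setminus B$. (The function $x^3/(x^2+y^2)$ on $\R^2$ pulls back to a regular function on the blowup of the origin, constant on the exceptional fibre, yet is not regular at the origin.) The actual Koll\'ar--Nowak argument sidesteps this by working directly on $X$ with expressions of the shape $pq/(q^2+r)$, where $r\ge 0$ vanishes exactly on the relevant subvariety, using \L{}ojasiewicz-type order-of-vanishing estimates only to check continuity of such expressions; resolution appears only in auxiliary lemmas, never to construct the extension upstairs.
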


It should be mentioned that in \cite{bib31}, Theorem~\ref{th-4-1} is stated in
terms of hereditarily rational functions (cf. Remark~\ref{rem-2-3}).

The following terminology will be convenient. We say that a topological
$\F$-vector bundle on a real algebraic variety $X$ \emph{admits an
algebraic} (resp. \emph{$\X$-algebraic}, \emph{stratified-algebraic})
structure if it is topologically isomorphic to an algebraic (resp.
$\X$-algebraic, stratified-algebraic) $\F$-vector bundle on $X$. If $Z$
is a Zariski closed subvariety of $X$, we say that a multiblowup $\pi
\colon X' \to X$ of $X$ is \emph{over} $Z$ if the restriction $\pi_Z
\colon X' \setminus \pi^{-1}(Z) \to X \setminus Z$ of $\pi$ is a
biregular isomorphism.

\begin{notation}\label{not-4-2}
In the remainder of this section, $X$ denotes a compact real algebraic
variety, and $A$ is a Zariski closed subvariety of $X$. Moreover,
$\G_K(\F^n)$ is the $(K,n)$-multi-Grassmannian and $\gamma_K(\F^n)$ is
the tautological $\F$-vector bundle on $\G_K(\F^n)$, for some fixed
$(K,n)$ (cf. Section~\ref{sec-2}).
\end{notation}

Our basic approximation result is the following.

\begin{lemma}\label{lem-4-3}
Let $B$ be a Zariski closed subvariety of $X$ with $B \subseteq A$. Let $f
\colon X \to \R$ be a continuous function such that $f|_A$ is a
stratified-regular function and $f|_{A \setminus B}$ is a regular
function. For every $\varepsilon > 0$, there exists a stratified-regular
function $g \colon X \to \R$ such that $g|_A = f|_A$, the restriction
$g|_{X \setminus B}$ is a regular function, and
\begin{equation*}
\abs{g(x)-f(x)} < \varepsilon \quad \textrm{for all $x$ in $X$.}
\end{equation*}
\end{lemma}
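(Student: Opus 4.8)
The plan is to combine the Koll\'ar--Nowak extension theorem (Theorem~\ref{th-4-1}) with a Stone--Weierstrass approximation, reconciling the two by cutting off the error term with a suitable everywhere-regular function. First I would apply Theorem~\ref{th-4-1} to the stratified-regular function $f|_A \colon A \to \R$, whose restriction to $A \setminus B$ is regular, obtaining a stratified-regular function $F \colon X \to \R$ with $F|_A = f|_A$ and $F|_{X \setminus B}$ regular. Put $\phi := f - F$, a continuous function on $X$ vanishing on $A$. It then suffices to construct a \emph{regular} function $h \colon X \to \R$ with $h|_A = 0$ and $\abs{h - \phi} < \varepsilon$ on $X$: indeed $g := F + h$ is continuous, satisfies $g|_A = f|_A$, its restriction to $X \setminus B$ is regular (a sum of two regular functions), it is $\SC$-regular for any stratification $\SC$ adapted to $F$ (on each stratum $g$ restricts to a sum of regular functions), and $\abs{g - f} = \abs{h - \phi} < \varepsilon$.

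To build $h$, I would first choose a continuous function $\chi \colon X \to [0,1]$ equal to $0$ on an open neighborhood $U$ of $A$ and equal to $1$ on the closed set $\{\abs{\phi} \ge \varepsilon/4\}$; these two sets are disjoint because $\phi$ vanishes on $A$, so such a $\chi$ exists (apply Urysohn's lemma after slightly thickening $A$). Then $\psi := \chi\phi$ is continuous, vanishes on $U$, and satisfies $\abs{\psi - \phi} \le \varepsilon/4$ on $X$. Since $A$ is Zariski closed in the affine variety $X$, there is a regular function $\alpha \colon X \to \R$ with $\alpha \ge 0$ and $\alpha^{-1}(0) = A$ (take a sum of squares of generators of the ideal of $A$). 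By the Stone--Weierstrass theorem pick a regular function $q \colon X \to \R$ with $\abs{q - \psi} < \varepsilon/4$ on $X$; in particular $\abs{q} < \varepsilon/4$ on $U$. For $\delta > 0$ set
\[
h_\delta := \frac{\alpha}{\alpha + \delta}\, q ,
\]
which is regular on all of $X$ because $\alpha + \delta > 0$ everywhere, and which vanishes on $A$. On $U$ one has $\abs{h_\delta} \le \abs{q} < \varepsilon/4$, so $\abs{h_\delta - \psi} < \varepsilon/4$ there; on the compact set $X \setminus U$ one has $\alpha \ge \alpha_0$ for some $\alpha_0 > 0$, whence $\abs{h_\delta - q} = \tfrac{\delta}{\alpha + \delta}\abs{q} \le \tfrac{\delta}{\alpha_0}\sup_X \abs{q}$, which is $< \varepsilon/4$ once $\delta$ is small enough. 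Fixing such a $\delta$ and setting $h := h_\delta$, we obtain $\abs{h - \phi} \le \abs{h - \psi} + \abs{\psi - \phi} < \varepsilon/2 + \varepsilon/4 < \varepsilon$ on $X$, which finishes the construction.

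I expect the only substantial ingredient to be Theorem~\ref{th-4-1}; the rest is soft analysis. The subtle point — the one I would be most careful about — is that the correction term $h$ must at the same time vanish \emph{exactly} on $A$ (to keep $g|_A = f|_A$) and be regular \emph{across} $A \setminus B$ (so that $g|_{X \setminus B}$ is regular), and these two demands are met simultaneously precisely because $\alpha/(\alpha + \delta)$ is regular on the whole of $X$, not merely away from $A$; correspondingly, the cut-off $\psi$ is made to vanish on an open neighborhood of $A$ (not just on $A$) so that multiplying by $\alpha/(\alpha+\delta)$ does not spoil the approximation near $A$.
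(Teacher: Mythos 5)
Your proof is correct and follows essentially the same route as the paper: apply the Koll\'ar--Nowak extension theorem to get a stratified-regular $F$ with $F|_A=f|_A$ and $F|_{X\setminus B}$ regular, then approximate the continuous difference $f-F$ (which vanishes on $A$) by a regular function vanishing on $A$. The only difference is that the paper disposes of this second step by citing \cite[Lemma~2.1]{bib15}, whereas you prove it directly via the cut-off $\chi\phi$ and the factor $\alpha/(\alpha+\delta)$ --- a correct, self-contained argument for exactly that cited lemma.
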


\begin{proof}
According to Theorem~\ref{th-4-1}, there exists a stratified-regular
function $h \colon X \to \R$ such that $h|_A = f|_A$ and the restriction
$h|_{X \setminus B}$ is a regular function. The function $\varphi
\coloneqq f-h$ is continuous and $\varphi|_A=0$. Hence, by
\cite[Lemma~2.1]{bib15}, one can find a regular function $\psi \colon X
\to \R$ satisfying $\psi|_A=0$ and
\begin{equation*}
\abs{\varphi(x) - \psi(x)} < \varepsilon \quad \textrm{for all $x$ in
$X$.}
\end{equation*}
The function $g \coloneqq h + \psi$ has the required properties.
\end{proof}

For any stratification $\X$ of $X$ and any Zariski closed subvariety $B$
of $X$, we set
\begin{equation*}
\X \cap B \coloneqq \{ S \cap B \mid S \in \X \} \quad \textrm{and}
\quad \X \setminus B \coloneqq \{ S \setminus B \mid S \in \X \}.
\end{equation*}
Then
\begin{equation*}
\X_B \coloneqq (\X \cap B) \cup \{ \X \setminus B \}
\end{equation*}
is a stratification of $X$. Obviously $\X_B \cap A$ is a
stratification of $A$.

\begin{theorem}\label{th-4-4}
Let $\X$ be a stratification of $X$ and let $B$ be a Zariski closed
subvariety of~$X$. Let $f \colon X \to \G_K(\F^n)$
be a continuous map whose restriction $f|_A$ is $(\X_B \cap A)$-regular.
If $B \subseteq A$ and $A \setminus B$ is a stratum in $\X_B$, then
the following conditions are equivalent:
\begin{conditions}
\item\label{th-4-4-a} Each neighborhood of $f$ in $\C(X, \G_K(\F^n))$ contains an
$\X_B$-regular map 
\begin{equation*}%---only for good looks
g \colon X \to \G_K(\F^n)
\end{equation*}
with $g|_A=f|_A$.

\item\label{th-4-4-b} The map $f$ is homotopic to an $\X_B$-regular map $h \colon X \to
\G_K(\F^n)$ with $h|_A = f|_A$.

\item\label{th-4-4-c} The pullback $\F$-vector bundle $f^*\gamma_K(\F^n)$ on $X$ admits
an $\X_B$-algebraic structure.
\end{conditions}
\end{theorem}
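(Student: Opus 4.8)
The plan is to prove the implications (\ref{th-4-4-a}) $\Rightarrow$ (\ref{th-4-4-b}) $\Rightarrow$ (\ref{th-4-4-c}) $\Rightarrow$ (\ref{th-4-4-a}); the first two are routine and the third carries all the work. For (\ref{th-4-4-a}) $\Rightarrow$ (\ref{th-4-4-b}), I take an $\X_B$-regular $g$ provided by (\ref{th-4-4-a}) that is close enough to $f$ for the two to be homotopic, and realize a homotopy inside a Euclidean tubular neighbourhood of $\G_K(\F^n)$ by straight-line interpolation followed by the nearest-point retraction; this homotopy is stationary wherever $g=f$, in particular on $A$, so $h:=g$ works. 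For (\ref{th-4-4-b}) $\Rightarrow$ (\ref{th-4-4-c}), if $f$ is homotopic to an $\X_B$-regular map $h$, then $f^*\gamma_K(\F^n)$ is topologically isomorphic to $h^*\gamma_K(\F^n)$ by homotopy invariance of the pullback, and $h^*\gamma_K(\F^n)$ is $\X_B$-algebraic by Proposition~\ref{prop-2-6}; hence $f^*\gamma_K(\F^n)$ admits an $\X_B$-algebraic structure.

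For (\ref{th-4-4-c}) $\Rightarrow$ (\ref{th-4-4-a}) I would pass to the subbundle picture. Write $\eta:=f^*\gamma_K(\F^n)$, regarded as a topological $\F$-vector subbundle of $\varepsilon_X^n(\F)$; then $f$ is the classifying map of $\eta$, approximating $f$ by $\X_B$-regular maps amounts to approximating $\eta$ by $\X_B$-algebraic subbundles of $\varepsilon_X^n(\F)$, and $g|_A=f|_A$ means the approximant restricts to $\eta|_A$ over $A$. Since $f|_A$ is $(\X_B\cap A)$-regular, $\eta|_A$ is $(\X_B\cap A)$-algebraic (Proposition~\ref{prop-2-6}), and, $A\setminus B$ being a stratum of $\X_B\cap A$, the restriction $\eta|_{A\setminus B}$ is algebraic; this is what lets Lemma~\ref{lem-4-3} be invoked. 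Using (\ref{th-4-4-c}), fix an $\X_B$-algebraic subbundle $\xi$ of some $\varepsilon_X^m(\F)$ and a topological isomorphism $u\colon\xi\to\eta$. By Proposition~\ref{prop-3-7}, $\xi^\perp\subseteq\varepsilon_X^m(\F)$ is again $\X_B$-algebraic and the orthogonal projections $\pi_\xi,\pi_{\xi^\perp}\colon\varepsilon_X^m(\F)\to\varepsilon_X^m(\F)$ are $\X_B$-algebraic morphisms. Inside $\varepsilon_X^{n+m}(\F)=\varepsilon_X^n(\F)\oplus\varepsilon_X^m(\F)$ consider the topological bundle monomorphism $\Theta\colon\varepsilon_X^m(\F)\to\varepsilon_X^{n+m}(\F)$ sending $v+w$ (with $v$ in the fibre of $\xi$, $w$ in the fibre of $\xi^\perp$) to $(u(v),w)$; its image is $\eta\oplus\xi^\perp$, and relative to the decomposition of $\varepsilon_X^{n+m}(\F)$ its two components are $\Theta_1=\iota_\eta\circ u\circ\pi_\xi$, where $\iota_\eta\colon\eta\hookrightarrow\varepsilon_X^n(\F)$, and $\Theta_2=\pi_{\xi^\perp}$. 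Thus $\Theta_2$ is already $\X_B$-algebraic while $\Theta_1$ is only topological.

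Next I would approximate $\Theta_1$ alone, leaving $\Theta_2=\pi_{\xi^\perp}$ untouched. The point where care is required — and the main obstacle — is that Lemma~\ref{lem-4-3} applies to the real coordinates of $\Theta_1$ only if $\Theta_1|_A$ is $(\X_B\cap A)$-regular and $\Theta_1|_{A\setminus B}$ is regular, i.e.\ only if $u|_A$ is $(\X_B\cap A)$-algebraic. One knows $\xi|_A\cong\eta|_A$ topologically, hence — $A$ being compact — $(\X_B\cap A)$-algebraically, by the injectivity statement in Theorem~\ref{th-3-10} applied to $A$ with the stratification $\X_B\cap A$; but the given $u$ need not restrict to an algebraic isomorphism, and the obstruction to modifying it so that it does amounts to extending a topological automorphism of $\eta|_A$ to one over $X$. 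I would remove this obstruction by stabilizing — replacing $\xi$, $\eta$ and $\varepsilon_X^n(\F)$ by $\xi\oplus\varepsilon_X^\ell(\F)$, $\eta\oplus\varepsilon_X^\ell(\F)$, $\varepsilon_X^{n+\ell}(\F)$ for suitable $\ell$, which corresponds to appending $\ell$ constant sections to $f$ and is harmless since an approximant of $f$ is then recovered from one of the stabilized map as the kernel of the projection onto the new coordinates — together with the Koll\'ar--Nowak extension theorem~\ref{th-4-1}, used to extend the entries of an $(\X_B\cap A)$-algebraic isomorphism $\xi|_A\to\eta|_A$ from $A$ to a continuous section over $X$ that is regular off $B$. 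Granting such a $u$, Lemma~\ref{lem-4-3} yields an $\X_B$-regular $\widetilde\Theta_1\colon X\to\Hom(\F^m,\F^n)$ with $\widetilde\Theta_1|_A=\Theta_1|_A$, $\widetilde\Theta_1|_{X\setminus B}$ regular, and $\widetilde\Theta_1$ uniformly close to $\Theta_1$.

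Finally, let $\widetilde\Theta\colon\varepsilon_X^m(\F)\to\varepsilon_X^{n+m}(\F)$ have components $\widetilde\Theta_1$ and $\pi_{\xi^\perp}$. Pointwise injectivity being an open condition and $X$ compact, $\widetilde\Theta$ is still a bundle monomorphism once the approximation is close enough, so $\widetilde{\mathcal H}:=\operatorname{im}\widetilde\Theta$ is an $\X_B$-algebraic subbundle of $\varepsilon_X^{n+m}(\F)$ agreeing with $\eta\oplus\xi^\perp$ over $A$. Let $\mathrm{pr}\colon\varepsilon_X^{n+m}(\F)\to\varepsilon_X^m(\F)$ be the projection onto the last $m$ coordinates; since $\mathrm{pr}\circ\widetilde\Theta=\pi_{\xi^\perp}$, the morphism $\mathrm{pr}|_{\widetilde{\mathcal H}}$ maps $\widetilde{\mathcal H}$ onto $\xi^\perp$, and its kernel $\widetilde\eta$ has rank equal to that of $\xi$ on each stratum (the composite of the inclusion $\xi\hookrightarrow\varepsilon_X^m(\F)$ with $\widetilde\Theta_1$ is fibrewise injective because $\widetilde\Theta$ is), hence is a subbundle; being the kernel of an $\X_B$-algebraic morphism of constant-on-strata rank it is $\X_B$-algebraic, it lies in $\varepsilon_X^n(\F)\oplus 0$, it equals $\eta$ over $A$ (there $\widetilde\Theta_1=\Theta_1$ and $u$ restricts to an isomorphism onto $\eta$), and it is close to $\eta$. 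Thus the classifying map $g\colon X\to\G_K(\F^n)$ of $\widetilde\eta$ is $\X_B$-regular, satisfies $g|_A=f|_A$, and belongs to any prescribed neighbourhood of $f$ in $\C(X,\G_K(\F^n))$, which gives (\ref{th-4-4-a}). Apart from the extension step of the previous paragraph, which is the one genuinely delicate point, the argument is bookkeeping with Proposition~\ref{prop-3-7}, Theorem~\ref{th-3-10} and Lemma~\ref{lem-4-3}.
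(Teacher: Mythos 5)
Your skeleton is the paper's: the implications (\ref{th-4-4-a})$\Rightarrow$(\ref{th-4-4-b})$\Rightarrow$(\ref{th-4-4-c}) are handled the same way, and for (\ref{th-4-4-c})$\Rightarrow$(\ref{th-4-4-a}) both arguments end identically --- feed a fibrewise-injective morphism from the $\X_B$-algebraic bundle $\xi$ into $\varepsilon_X^n(\F)$ through Lemma~\ref{lem-4-3} (coordinatewise, using that $A\setminus B$ is a stratum of $\X_B\cap A$ so that the restriction to $A\setminus B$ is regular), and classify the image of the result. But the step you yourself flag as ``the one genuinely delicate point'' is a real gap, and your proposed repair does not close it. You need a topological isomorphism $u\colon\xi\to\eta$ whose restriction to $A$ is $(\X_B\cap A)$-algebraic. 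Theorem~\ref{th-3-10} does give an $(\X_B\cap A)$-algebraic isomorphism $\psi\colon\xi|_A\to\eta|_A$, but replacing $u|_A$ by $\psi$ requires extending the topological automorphism $(u|_A)\circ\psi^{-1}$ of $\eta|_A$ to all of $X$, and stabilization does not remove that obstruction: the obstructions live in the homotopy groups of $GL_N(\F)$, which do not vanish as $N\to\infty$ (only expressions like $\alpha\oplus\alpha^{-1}$ become extendable, and that changes $\eta$ by a non-trivial summand, not by a trivial one). The alternative you offer --- extending the matrix entries of $\psi$ from $A$ by Theorem~\ref{th-4-1} --- produces a continuous section of $\Hom(\varepsilon_X^m(\F),\varepsilon_X^n(\F))$ agreeing with $\psi$ on $A$, but with no control away from $A$: it need not be fibrewise injective on $\xi$, its image need not lie in or near $\eta$, and so it is not the isomorphism you then ``grant.''

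The paper's device avoids needing any isomorphism to be algebraic over $A$. Starting from the purely topological $\varphi\colon\xi\to f^*\gamma$, extend it to a continuous section $\bar w$ of $\Hom(\varepsilon_X^k(\F),\varepsilon_X^n(\F))$ with image in $f^*\gamma$, approximate $\bar w$ by a \emph{globally algebraic} section $v$ (Weierstrass; no boundary condition imposed here), and set $\bar v\coloneqq\iota\circ\rho\circ v$, where $\rho\colon\varepsilon_X^n(\F)\to f^*\gamma$ is the orthogonal projection. The composition with $\rho$ buys both things at once: $\bar v(x)$ still carries $E(\xi)_x$ isomorphically onto $\{x\}\times f(x)$ \emph{exactly} (so the classifying map of its image is still $f$, not merely close to $f$), and $\bar v|_A$ is automatically $(\X_B\cap A)$-algebraic, because $v$ is algebraic and $\rho|_A$ is an $(\X_B\cap A)$-algebraic morphism by Proposition~\ref{prop-3-7} applied to $(f|_A)^*\gamma$ --- which is where the hypothesis that $f|_A$ is $(\X_B\cap A)$-regular enters. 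Only then is Lemma~\ref{lem-4-3} invoked, exactly as in your final paragraph. If you replace your stabilization-and-extension step by this projection trick, the rest of your argument (which is essentially the paper's bookkeeping, with the harmless detour through $\operatorname{im}\widetilde\Theta$ and its kernel) goes through.
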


\begin{proof}
It is a standard fact that (\ref{th-4-4-a}) implies (\ref{th-4-4-b}). If
(\ref{th-4-4-b}) holds, then the $\X_B$-algebraic $\F$-vector bundle
$h^*\gamma_K(\F^n)$ is topologically isomorphic to $f^*\gamma_K(\F^n)$,
and hence (\ref{th-4-4-c}) holds. It suffices to prove that
(\ref{th-4-4-c}) implies (\ref{th-4-4-a}).

Suppose that (\ref{th-4-4-c}) holds. In order to simplify notation, we
set $G = \G_K(\F^n)$, $\gamma=\gamma_K(F^n)$, $\A=\X_B\cap A$, and
$\varepsilon_X^l = \varepsilon_X^l(\F^n)$ for any nonnegative integer
$l$. Condition (\ref{th-4-4-c}) implies the existence of a topological
isomorphism $\varphi \colon \xi \to f^*\gamma$, where $\xi$ is an
$\X_B$-algebraic $\F$-vector subbundle of $\varepsilon_X^k$ for some
$k$. Regarding $f^*\gamma$ as an $\F$-vector subbundle of
$\varepsilon_X^n$, we get a continuous section $w \colon X \to \Hom(\xi,
\varepsilon_X^n)$, defined by $w(x)(e)=\varphi(e)$ for all points $x$ in
$X$ and all vectors $e$ in the fiber $E(\xi)_x$. In particular, $w(x)
\colon E(\xi)_x \to \{x\} \times \F^n$ is an injective $\F$-linear
transformation satisfying
\begin{equation*}
w(x)(E(\xi)_x) = \{x\} \times f(x).
\end{equation*}
Making use of the equalities $\varepsilon_X^k = \xi \oplus \xi^{\perp}$
and $\varepsilon_X^n = f^*\gamma \oplus f^* \gamma^{\perp}$, we find a
continuous section $\bar{w} \colon X \to \Hom(\varepsilon_X^k,
\varepsilon_X^n)$ satisfying $\bar{w}(x) (E(\varepsilon_X^k)_x)
\subseteq E(f^*\gamma)_x$ and $\bar{w}(x)|_{E(\xi)_x} = w(x)$ for all
$x$ in $X$. Since $X$ is compact and the $\F$-vector bundle
$\Hom(\varepsilon_X^k, \varepsilon_X^n)$ can be identified with
$\varepsilon_X^{kn}$, the Weierstrass approximation theorem implies the
existence of an algebraic section $v \colon X \to \Hom(\varepsilon_X^k,
\varepsilon_X^n)$ arbitrarily close to $\bar{w}$ in the compact-open
topology. If $\rho \colon \varepsilon_X^n \to f^*\gamma$ is the
orthogonal projection and $\iota \colon f^*\gamma \hookrightarrow
\varepsilon_X^n$ is the inclusion morphism, then $\bar{v} \coloneqq
\iota \circ \rho \circ v \colon X \to \Hom(\varepsilon_X^k,
\varepsilon_X^n)$ is a continuous section close to $\bar{w}$. If $v$ is
sufficiently close to $\bar{w}$, then the $\F$-linear transformation
$\bar{v}(x) \colon E(\xi)_x \to \{x\} \times \F^n$ is injective and
\begin{equation*}
\bar{v}(x)(E(\xi)_x) = \{x\} \times f(x)
\end{equation*}
for all $x$ in $X$. Since the map $f|_A$ is $\A$-regular, the
$\F$-vector bundle $(f|_A)^*\gamma = (f^*\gamma)|_A$ on $A$ is
$\A$-algebraic. Hence, in view of Proposition~\ref{prop-3-7}, the
restriction $\rho_A \colon \varepsilon_X^n|_A \to (f^*\gamma)|_A$ of
$\rho$ is an $\A$-algebraic morphism. Consequently, the restriction
$\bar{v}|_A \colon A \to \Hom(\varepsilon_X^k, \varepsilon_X^n)$
of $\bar{v}$ is an $\A$-algebraic section. By
Lemma \ref{lem-4-3}, there exists a continuous section 
\begin{equation*}%---only for good looks
u \colon X \to
\Hom(\varepsilon_X^k, \varepsilon_X^n) \cong \varepsilon_X^{kn}
\end{equation*}
such that $u|_A = \bar{v}|_A$, the restriction $u|_{X \setminus B}$ is an
algebraic section, and $u$ is close to $\bar{v}$. If $u$ is sufficiently
close to $\bar{v}$, then the $\F$-linear transformation $u(x) \colon
E(\xi)_x \to \{x\} \times \F^n$ is injective for all $x$ in $X$. Now, the
map $g \colon X \to G$, defined by
\begin{equation*}
u(x)(E(\xi)_x) = \{x\} \times g(x)
\end{equation*}
for all $x$ in $X$, is continuous and close to $f$, and $g|_A = f|_A$.
If $T$ is a stratum in $\X_B$, then $u|_T$ is an algebraic section and
$\xi|_T$ is an algebraic $\F$-vector subbundle of $\varepsilon_X^k|_T$,
and hence the map $g|_T$ is regular (cf.
\cite[Proposition~3.4.7]{bib9}). Consequently, $g$ is an $\X_B$-regular
map. Thus, (\ref{th-4-4-c}) implies (\ref{th-4-4-a}).
\end{proof}

It is worthwhile to state several consequences of Theorem~\ref{th-4-4}.

\begin{corollary}\label{cor-4-5}
Let $f \colon X \to \G_K(\F^n)$ be a continuous map whose restriction
$f|_A$ is regular. Then the following conditions are equivalent:
\begin{conditions}
\item Each neighborhood of $f$ in $\C(X, \G_K(\F^n))$ contains a regular
map $g \colon X \to \G_K (\F^n)$ with $g|_A=f|_A$.

\item The map $f$ is homotopic to a regular map $h \colon X \to
\G_K(\F^n)$ with $h|_A=f|_A$.

\item The pullback $\F$-vector bundle $f^*\gamma_K(\F^n)$ on $X$ admits
an algebraic structure.
\end{conditions}
\end{corollary}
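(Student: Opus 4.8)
The plan is to establish the cycle (a)$\Rightarrow$(b)$\Rightarrow$(c)$\Rightarrow$(a), with essentially all the work in the last implication. For (a)$\Rightarrow$(b) I would simply note that a regular map $g$ close enough to $f$ is homotopic to $f$ (a small‑deformation homotopy into the manifold $\G_K(\F^n)$), so $h\coloneqq g$ works. For (b)$\Rightarrow$(c): homotopic maps have isomorphic pullback bundles, and $h^*\gamma_K(\F^n)$ is algebraic by Proposition~\ref{prop-2-6}, so $f^*\gamma_K(\F^n)$ admits an algebraic structure.

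For (c)$\Rightarrow$(a) the idea is to re-run the proof of Theorem~\ref{th-4-4} almost verbatim, but with the trivial stratification $\{X\}$ in place of $\X_B$ and with $B=\varnothing$. (Theorem~\ref{th-4-4} cannot be quoted directly: its hypothesis that $A\setminus B$ be a stratum of $\X_B$ would force $A=X$ once $\X_B$ is trivial; but the argument itself goes through unchanged, so only a short re-derivation is needed.) Assuming (c), fix a topological isomorphism $\varphi\colon\xi\to f^*\gamma_K(\F^n)$ with $\xi$ a genuine algebraic $\F$-vector subbundle of $\varepsilon_X^k(\F)$. Using $\varepsilon_X^k(\F)=\xi\oplus\xi^\perp$, $\varepsilon_X^n(\F)=f^*\gamma_K(\F^n)\oplus(f^*\gamma_K(\F^n))^\perp$ and $\varphi$, I would build a continuous section $\bar w\colon X\to\Hom(\varepsilon_X^k(\F),\varepsilon_X^n(\F))$ realizing $\varphi$ on the fibers of $\xi$ and with image in $f^*\gamma_K(\F^n)$, approximate it by an algebraic section $v$ (possible since $X$ is compact and $\Hom(\varepsilon_X^k(\F),\varepsilon_X^n(\F))\cong\varepsilon_X^{kn}(\F)$), and set $\bar v\coloneqq\iota\circ\rho\circ v$ with $\rho\colon\varepsilon_X^n(\F)\to f^*\gamma_K(\F^n)$ the orthogonal projection and $\iota$ the inclusion. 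For $v$ close to $\bar w$, the map $\bar v(x)\colon E(\xi)_x\to\{x\}\times\F^n$ is injective with image $\{x\}\times f(x)$ for every $x\in X$.

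Next comes the crucial relative step. Because $f|_A$ is regular, $(f^*\gamma_K(\F^n))|_A=(f|_A)^*\gamma_K(\F^n)$ is a genuine algebraic subbundle of $\varepsilon_A^n(\F)$ (Proposition~\ref{prop-2-6}), so by Proposition~\ref{prop-3-7} the restriction of $\rho$ to $A$ is an algebraic morphism; hence $\bar v|_A$ is a genuine algebraic section of $\Hom(\varepsilon_A^k(\F),\varepsilon_A^n(\F))$. I would then apply Lemma~\ref{lem-4-3} with $B=\varnothing$ (componentwise, via $\Hom(\varepsilon_X^k(\F),\varepsilon_X^n(\F))\cong\varepsilon_X^{kn}(\F)$) to the continuous section $\bar v$, whose restriction to $A$ is algebraic and in particular regular on $A\setminus B=A$; this produces an algebraic section $u\colon X\to\Hom(\varepsilon_X^k(\F),\varepsilon_X^n(\F))$ with $u|_A=\bar v|_A$ and $u$ as close to $\bar v$ as desired — and, crucially, with $B=\varnothing$ the conclusion of Lemma~\ref{lem-4-3} is regularity on all of $X$. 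For $u$ close enough, $u(x)\colon E(\xi)_x\to\{x\}\times\F^n$ is injective for all $x$, and the map $g\colon X\to\G_K(\F^n)$ determined by $u(x)(E(\xi)_x)=\{x\}\times g(x)$ is continuous, close to $f$, and equal to $f$ on $A$. Finally $g$ is regular by \cite[Proposition~3.4.7]{bib9}, since $u$ is an algebraic section of $\varepsilon_X^{kn}(\F)$ and $\xi$ is an algebraic subbundle of $\varepsilon_X^k(\F)$; this gives (a).

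The main obstacle, and the only place the hypothesis that $f|_A$ be regular (and not merely continuous) is really used, is ensuring that the two constructions respect $A$: that the orthogonal projection onto $(f^*\gamma_K(\F^n))|_A$ is honestly algebraic, and that Lemma~\ref{lem-4-3} with $B=\varnothing$ delivers a section regular on all of $X$ (not merely stratified-regular away from some subvariety) while pinning it down on $A$. Once these two points are in hand, the remainder is a direct transcription of the proof of Theorem~\ref{th-4-4}.
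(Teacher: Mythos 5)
Your proof is correct and follows the same route as the paper: the paper's entire proof of Corollary~\ref{cor-4-5} is the observation that with $\X=\{X\}$ and $B=\varnothing$ one has $\X_B=\{X\}$ and $\X_B\cap A=\{A\}$, followed by a citation of Theorem~\ref{th-4-4}, and your (c)$\Rightarrow$(a) argument is precisely the proof of Theorem~\ref{th-4-4} specialized to that case. Your parenthetical caveat is well taken: for a proper nonempty $A$, the literal hypothesis that $A\setminus B=A$ be a stratum of $\X_B=\{X\}$ fails, so the paper's one-line citation is slightly loose; what the proof of Theorem~\ref{th-4-4} actually needs is that $\bar v|_{A\setminus B}$ be regular so that Lemma~\ref{lem-4-3} applies, and with $B=\varnothing$ this is immediate from the regularity of $f|_A$, exactly as you argue. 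So you have, if anything, been more careful than the source.
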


\begin{proof}
If $\X=\{X\}$ and $B = \varnothing$, then $\X_B = \{X\}$ and $\X_B\cap A
= \{A\}$. Hence, it suffices to apply Theorem~\ref{th-4-4}.
\end{proof}

Corollary~\ref{cor-4-5} with $A = \varnothing$ is well known, cf.
\cite{bib10} or \cite[Theorem~13.3.1]{bib9}.

\begin{corollary}\label{cor-4-6}
Let $\X$ be a stratification of $X$ such that $\A \coloneqq \{ S \in \X
\mid S \subseteq A\}$ is a stratification of $A$. Let $f \colon X \to
\G_K(\F^n)$ be a continuous map whose restriction $f|_A$ is
$\A$-regular. Then the following conditions are equivalent:
\begin{conditions}
\item Each neighborhood of $f$ in $\C(X, \G_K(\F^n))$ contains an
$\X$-regular map $g \colon X \to \G_K(\F^n)$ with $g|_A = f|_A$.

\item The map $f$ is homotopic to an $\X$-regular map $h \colon X \to
\G_K(\F^n)$ with $h|_A=f|_A$.

\item The pullback $\F$-vector bundle $f^*\gamma_K(\F^n)$ on $X$ admits
an $\X$-algebraic structure.
\end{conditions}
\end{corollary}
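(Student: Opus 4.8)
The plan is to derive Corollary~\ref{cor-4-6} from Theorem~\ref{th-4-4} by making a judicious choice of the auxiliary Zariski closed subvariety. As in the proof of Theorem~\ref{th-4-4}, the implications (a)$\Rightarrow$(b) and (b)$\Rightarrow$(c) are immediate: a map sufficiently close to $f$ is homotopic to $f$, and if $h \colon X \to \G_K(\F^n)$ is $\X$-regular and homotopic to $f$ with $h|_A = f|_A$, then $h^*\gamma_K(\F^n)$ is $\X$-algebraic by Proposition~\ref{prop-2-6} and is topologically isomorphic to $f^*\gamma_K(\F^n)$. So the content lies in the implication (c)$\Rightarrow$(a).

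For (c)$\Rightarrow$(a), I would first observe that, the strata of $\X$ being pairwise disjoint and $A$ being the union of the strata in $\A$, every stratum of $\X$ is either contained in $A$ (hence belongs to $\A$) or disjoint from $A$. If $A = \varnothing$, Theorem~\ref{th-4-4} applied with $B = \varnothing$ (so that $\X_B = \X$) gives the claim at once. Assume $A \neq \varnothing$ and choose any nonempty stratum $S_0 \in \A$; set $B := \overline{A \setminus S_0}$, the Zariski closure in $X$. Then $B$ is Zariski closed and, since $A$ is Zariski closed, $B \subseteq A$; moreover $A \setminus S_0 \subseteq B \subseteq A$, which readily yields $A \setminus B = S_0 \setminus B$, so that $A \setminus B$ is a stratum of $\X_B$. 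Since $\X_B$ refines $\X$, the strata of $\X_B \cap A$ refine those of $\A$, so the $\A$-regular map $f|_A$ is $(\X_B \cap A)$-regular. Thus all the hypotheses of Theorem~\ref{th-4-4} are satisfied.

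Now suppose (c) holds. Since $\X_B$ refines $\X$, the bundle $f^*\gamma_K(\F^n)$, admitting an $\X$-algebraic structure, also admits an $\X_B$-algebraic one, so Theorem~\ref{th-4-4} shows that every neighborhood of $f$ in $\C(X, \G_K(\F^n))$ contains an $\X_B$-regular map $g$ with $g|_A = f|_A$. It then remains to check that $g$ is $\X$-regular: for a stratum $S$ of $\X$ with $S \subseteq A$ we have $g|_S = f|_S$, which is regular because $f|_A$ is $\A$-regular; for a stratum $S$ disjoint from $A$, hence from $B$, we have $S \setminus B = S$, which is a stratum of $\X_B$, so $g|_S$ is regular because $g$ is $\X_B$-regular. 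Hence $g$ is $\X$-regular, and (a) follows. The one genuinely nontrivial point is the choice of $B$: it must be Zariski closed, contained in $A$, and such that $A \setminus B$ is a single stratum of $\X_B$, and $B = \overline{A \setminus S_0}$ meets all these requirements for any stratum $S_0 \in \A$. The mild secondary subtlety — that Theorem~\ref{th-4-4} produces $\X_B$-regular maps rather than $\X$-regular ones — causes no trouble precisely because $B \subseteq A$ forces $\X_B$ to refine $\X$ only inside $A$, where the approximant already coincides with the $\A$-regular map $f|_A$.
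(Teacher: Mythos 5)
Your argument is correct, and it follows the same strategy as the paper: specialize Theorem~\ref{th-4-4} by choosing the auxiliary subvariety $B$. The only difference is the choice itself. The paper simply takes $B=A$: since $\A$ is a stratification of $A$, every stratum of $\X$ is either contained in $A$ or disjoint from it, so $\X_A=\X$ and $\X_A\cap A=\A$, while $A\setminus B=\varnothing$ is an admissible stratum because the paper explicitly allows empty strata (see the definition of stratification in Section~\ref{sec-1}). With that choice Theorem~\ref{th-4-4} yields the conclusion verbatim, with nothing left to check. Your choice $B=\overline{A\setminus S_0}$ is designed to make $A\setminus B$ a \emph{nonempty} stratum, which the hypothesis of Theorem~\ref{th-4-4} does not actually require; the price is that Theorem~\ref{th-4-4} then only produces $\X_B$-regular approximants, and you must add the (correct, but avoidable) argument upgrading them to $\X$-regular maps using $g|_A=f|_A$ on the strata inside $A$ and $S\setminus B=S$ on the strata disjoint from $A$. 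So your proof is valid but does extra work; the ``genuinely nontrivial point'' you single out dissolves once one notices that $B=A$ is allowed.
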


\begin{proof}
If $B=A$, then $\X_B=\X$ and $\X_B\cap A = \A$, and hence it suffices to
apply Theorem~\ref{th-4-4}.
\end{proof}

\begin{corollary}\label{cor-4-7}
Let $\X$ be a stratification of $X$. For a continuous map $f \colon X
\to \G_K(\F^n)$, the following conditions are equivalent:
\begin{conditions}
\item The map $f$ can be approximated by $\X$-regular maps.

\item The map $f$ is homotopic to an $\X$-regular map.

\item The pullback $\F$-vector bundle $f^*\gamma_K(\F^n)$ on $X$ admits
an $\X$-algebraic structure.
\end{conditions}
\end{corollary}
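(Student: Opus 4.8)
The plan is to obtain this statement as the special case $A = \varnothing$ of Corollary~\ref{cor-4-6} (equivalently, as Theorem~\ref{th-4-4} with $B = A = \varnothing$). First I would check that the degenerate choice $A = \varnothing$ satisfies the hypotheses of Corollary~\ref{cor-4-6}: the empty set is a Zariski closed subvariety of $X$, and the collection $\A = \{ S \in \X \mid S \subseteq \varnothing \}$ is either empty or equal to $\{\varnothing\}$, so in both cases it is (trivially) a stratification of $A = \varnothing$; moreover the restriction $f|_A$ of any continuous map $f \colon X \to \G_K(\F^n)$ is $\A$-regular in a vacuous way. Hence Corollary~\ref{cor-4-6} applies to every continuous $f$.

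Next I would verify that, under this specialization, the three conditions of Corollary~\ref{cor-4-6} collapse precisely to conditions (a), (b), (c) of the present statement. Since $A = \varnothing$, the clauses ``$g|_A = f|_A$'' and ``$h|_A = f|_A$'' impose no constraint, so ``each neighborhood of $f$ contains an $\X$-regular map $g$ with $g|_A = f|_A$'' is simply ``$f$ can be approximated by $\X$-regular maps'', ``$f$ is homotopic to an $\X$-regular map $h$ with $h|_A = f|_A$'' is simply ``$f$ is homotopic to an $\X$-regular map'', and the third condition already makes no mention of $A$. Invoking Corollary~\ref{cor-4-6} then yields the desired equivalence.

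There is no genuine obstacle here: the corollary is purely a bookkeeping specialization of the preceding results, and the only point that requires a moment's attention is confirming that the empty subvariety is legitimately covered by the framework of Notation~\ref{not-4-2} and by the hypotheses of Corollary~\ref{cor-4-6}, which it is.
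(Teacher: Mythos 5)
Your proposal is correct and follows exactly the paper's own route: the paper proves Corollary~\ref{cor-4-7} precisely by invoking Corollary~\ref{cor-4-6} with $A = \varnothing$. Your additional remarks verifying that the empty subvariety legitimately satisfies the hypotheses are a harmless (and reasonable) elaboration of what the paper leaves implicit.
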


\begin{proof}
This is Corollary~\ref{cor-4-6} with $A = \varnothing$.
\end{proof}

Corollary~\ref{cor-4-7} makes it possible to interpret
Theorems~\ref{th-1-5} and~\ref{th-1-6} as approximation results.

\begin{theorem}\label{th-4-8}
There exists a stratification $\SC$ of $X$ such that any
stratified-regular map from $X$ into $\G_K(\F^n)$ can be approximated by
$\SC$-regular maps.
\end{theorem}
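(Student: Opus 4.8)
The plan is to run exactly the same argument that established Theorem~\ref{th-1-5}, but now tracked through the Grassmannian picture so that the conclusion becomes an approximation statement. First I would recall from Corollary~\ref{cor-3-11} that the Grothendieck group $K_{\F\mathrm{\mhyphen str}}(X)$ is finitely generated (it is isomorphic to $K_0(\RC^0(X,\F))$, and via the canonical monomorphism into the finitely generated group $K_{\F}(X)$ one sees it is finitely generated). Pick stratified-algebraic $\F$-vector bundles $\xi_1,\dots,\xi_r$ on $X$ whose classes generate $K_{\F\mathrm{\mhyphen str}}(X)$, and choose a single stratification $\SC$ of $X$ for which every $\xi_i$ is $\SC$-algebraic. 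By Corollary~\ref{cor-3-12} this $\SC$ has the property that \emph{every} stratified-algebraic $\F$-vector bundle on $X$ admits an $\SC$-algebraic structure: indeed the canonical homomorphism $K_{\F\mhyphen\SC}(X)\to K_{\F\mathrm{\mhyphen str}}(X)$ is then surjective, so the class of an arbitrary stratified-algebraic bundle lies in its image, and Corollary~\ref{cor-3-12} upgrades this from a $K$-theoretic statement to an honest $\SC$-algebraic structure.

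Next I would pass to maps. Let $f\colon X\to\G_K(\F^n)$ be an arbitrary stratified-regular map. By Proposition~\ref{prop-2-6} the pullback $f^*\gamma_K(\F^n)$ is a stratified-algebraic $\F$-vector bundle on $X$, hence by the previous paragraph it admits an $\SC$-algebraic structure. Now apply Corollary~\ref{cor-4-7} (the case $A=\varnothing$ of Corollary~\ref{cor-4-6}) with the stratification $\SC$: the condition that $f^*\gamma_K(\F^n)$ admits an $\SC$-algebraic structure is equivalent to $f$ being approximable by $\SC$-regular maps. This gives precisely the desired conclusion, and the stratification $\SC$ does not depend on $f$.

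One technical point needs care: Corollary~\ref{cor-4-7} as stated concerns a fixed multi-Grassmannian $\G_K(\F^n)$, and the bundle $f^*\gamma_K(\F^n)$ has rank varying over the components of $X$ indexed by $K$; I should make sure the $\SC$-algebraic structure on $f^*\gamma_K(\F^n)$ respects this, i.e.\ realizes the bundle as an $\SC$-algebraic subbundle of some $\varepsilon_X^m(\F)$ with the correct local ranks, so that Corollary~\ref{cor-4-7} genuinely applies. This is automatic since an isomorphism of topological $\F$-vector bundles preserves rank on each component, so the $\SC$-algebraic model of $f^*\gamma_K(\F^n)$ has the same rank function and Corollary~\ref{cor-4-7} delivers an $\SC$-regular map into $\G_K(\F^n)$ (not into some other multi-Grassmannian). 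The main obstacle, such as it is, is not in any new estimate but in correctly invoking the chain Corollary~\ref{cor-3-11} $\Rightarrow$ Corollary~\ref{cor-3-12} $\Rightarrow$ Corollary~\ref{cor-4-7}; once the bookkeeping is set up the proof is short. I would also remark, as the paper does for Theorems~\ref{th-1-5} and~\ref{th-1-6}, that this $\SC$ works simultaneously for all stratified-regular maps into every fixed multi-Grassmannian, which is the content worth emphasizing.
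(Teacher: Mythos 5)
Your proposal is correct and follows essentially the same route as the paper: the paper's proof simply cites Theorem~\ref{th-1-5} to obtain the stratification $\SC$ and then combines Proposition~\ref{prop-2-6} with Corollary~\ref{cor-4-7}, whereas you have unpacked the proof of Theorem~\ref{th-1-5} (finite generation of $K_{\F\mathrm{\mhyphen str}}(X)$, generators $\xi_1,\dots,\xi_r$, Corollary~\ref{cor-3-12}) inline. The rank bookkeeping you flag is indeed automatic and the paper does not comment on it.
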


\begin{proof}
Let $\SC$ be a stratification of $X$ as in Theorem~\ref{th-1-5}. By
Proposition~\ref{prop-2-6}, if 
\begin{equation*}%---only for good looks
f \colon X \to \G_K(\F^n)
\end{equation*}
is a
stratified-regular map, then the $\F$-vector bundle $f^*\gamma_K(\F^n)$
on $X$ is stratified-algebraic. It follows that $f^*\gamma_K(\F^n)$
admits an $\SC$-algebraic structure. According to
Corollary~\ref{cor-4-7}, $f$ can be approximated by $\SC$-regular maps.
\end{proof}

\begin{theorem}\label{th-4-9}
There exists a birational multiblowup $\pi \colon X' \to X$, with $X'$ a
nonsingular variety, such that for any stratified-regular map $f \colon
X \to \G_K(\F^n)$, the composite map $f \circ \pi \colon X' \to
\G_K(\F^n)$ can be approximated by regular maps.
\end{theorem}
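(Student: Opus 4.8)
The plan is to deduce Theorem~\ref{th-4-9} from Theorem~\ref{th-1-6} in exactly the way Theorem~\ref{th-4-8} was deduced from Theorem~\ref{th-1-5}. First I would fix a birational multiblowup $\pi \colon X' \to X$ with $X'$ nonsingular satisfying the conclusion of Theorem~\ref{th-1-6}; since $X$ is compact and a blowup with nonsingular center of a compact real algebraic variety is again compact, the variety $X'$ is compact as well. This compactness is precisely what makes the approximation results of the present section (Corollary~\ref{cor-4-5} and Corollary~\ref{cor-4-7}, which live under Notation~\ref{not-4-2}) applicable with $X'$ in place of $X$. It is worth recording at this point that $\pi$ is chosen once and for all, independently of $f$, which is what the statement asserts.

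Next, let $f \colon X \to \G_K(\F^n)$ be a stratified-regular map. By Proposition~\ref{prop-2-6}, the pullback $f^*\gamma_K(\F^n)$ is a stratified-algebraic $\F$-vector bundle on $X$. Theorem~\ref{th-1-6} then provides an isomorphism, in the category of stratified-algebraic $\F$-vector bundles on $X'$, between $\pi^*(f^*\gamma_K(\F^n))$ and some algebraic $\F$-vector bundle on $X'$; in particular this is a topological isomorphism. Since pullback is functorial, $\pi^*(f^*\gamma_K(\F^n)) = (f \circ \pi)^*\gamma_K(\F^n)$, and therefore the $\F$-vector bundle $(f \circ \pi)^*\gamma_K(\F^n)$ on $X'$ admits an algebraic structure.

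Finally, I would invoke Corollary~\ref{cor-4-5} with the compact variety $X'$ in place of $X$, with $A = \varnothing$ (so the regularity hypothesis on $f|_A$ is vacuous), and with the continuous map $f \circ \pi \colon X' \to \G_K(\F^n)$. The implication \textup{(c)}$\,\Rightarrow\,$\textup{(a)} there shows that $f \circ \pi$ can be approximated by regular maps, which is the desired conclusion. Equivalently one could apply Corollary~\ref{cor-4-7} to $X'$ equipped with the trivial stratification $\{X'\}$, for which $\{X'\}$-regular maps coincide with regular maps.

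The argument is essentially formal once Theorem~\ref{th-1-6} is available; the only points requiring any care are the bookkeeping above --- that $X'$ inherits compactness from $X$, that the stratified-algebraic isomorphism furnished by Theorem~\ref{th-1-6} is in particular topological (which is all Corollary~\ref{cor-4-5} needs), and that the multiblowup does not depend on $f$. The genuinely substantial step, namely the construction of the universal birational multiblowup, has already been carried out in the proof of Theorem~\ref{th-1-6}, so there is no remaining obstacle here.
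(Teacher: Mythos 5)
Your proof is correct and follows essentially the same route as the paper's: fix the universal birational multiblowup from Theorem~\ref{th-1-6}, use Proposition~\ref{prop-2-6} to see that $f^*\gamma_K(\F^n)$ is stratified-algebraic, identify $\pi^*(f^*\gamma_K(\F^n))$ with $(f\circ\pi)^*\gamma_K(\F^n)$ so that it admits an algebraic structure, and conclude via Corollary~\ref{cor-4-7} applied to $X'$ with the trivial stratification. Your extra remarks on the compactness of $X'$ and the independence of $\pi$ from $f$ are correct bookkeeping that the paper leaves implicit.
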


\begin{proof}
Let $\pi \colon X' \to X$ be a birational multiblowup as in
Theorem~\ref{th-1-6}. By Proposition~\ref{prop-2-6}, if $f \colon X \to
\G_K(\F^n)$ is a stratified-regular map, the $\F$-vector bundle
$f^*\gamma_K(\F^n)$ on $X$ is stratified-algebraic. It follows that the
$\F$-vector bundle
\begin{equation*}
\pi^* ( f^* \gamma_K (\F^n) ) = (f \circ \pi)^* \gamma_K (\F^n)
\end{equation*}
on $X'$ admits an algebraic-structure. According to
Corollary~\ref{cor-4-7} (with $X=X'$ and $\X=\{X'\}$), the map $f \circ
\pi \colon X' \to \G_K(\F^n)$ can be approximated by regular maps.
\end{proof}

It follows from Corollary~\ref{cor-4-7} and Propositions~\ref{prop-2-6}
and~\ref{prop-3-4} that Theorem~\ref{th-4-9} is equivalent to
Theorem~\ref{th-1-6}.

\begin{theorem}\label{th-4-10}
Let $f \colon X \to \G_K(\F^n)$ be a continuous map whose restriction
$f|_A$ is stratified-regular. Then the following conditions are
equivalent:
\begin{conditions}
\item\label{th-4-10-a} Each neighborhood of $f$ in $\C(X, \G_K(\F^n))$ contains a
stratified-regular map 
\begin{equation*}%---only for good looks
g \colon X \to \G_K(\F^n)
\end{equation*}
with $g|_A=f|_A$.

\item\label{th-4-10-b} The map $f$ is homotopic to a stratified-regular map $h \colon X
\to \G_K(\F^n)$ with $h|_A=f|_A$.

\item\label{th-4-10-c} The pullback $\F$-vector bundle $f^*\gamma_K(\F^n)$ on $X$ admits
a stratified-algebraic structure.
\end{conditions}
\end{theorem}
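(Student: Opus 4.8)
The plan is to prove the cycle of implications (\ref{th-4-10-a}) $\Rightarrow$ (\ref{th-4-10-b}) $\Rightarrow$ (\ref{th-4-10-c}) $\Rightarrow$ (\ref{th-4-10-a}). The first two implications are routine and are obtained exactly as at the start of the proof of Theorem~\ref{th-4-4}: if a stratified-regular map $g$ with $g|_A = f|_A$ is sufficiently close to $f$, then $g$ is homotopic to $f$ through a homotopy that is stationary on $A$ (compose the linear homotopy in an ambient Euclidean space with a retraction of a tubular neighborhood of $\G_K(\F^n)$), which yields (\ref{th-4-10-b}); and if $f$ is homotopic to a stratified-regular map $h$, then $f^*\gamma_K(\F^n)$ is topologically isomorphic to $h^*\gamma_K(\F^n)$, which is stratified-algebraic by Proposition~\ref{prop-2-6} since $\gamma_K(\F^n)$ is algebraic, giving (\ref{th-4-10-c}).

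The substantive implication is (\ref{th-4-10-c}) $\Rightarrow$ (\ref{th-4-10-a}), and I would deduce it from Corollary~\ref{cor-4-6} after passing to a sufficiently fine common stratification of $X$. Since $f|_A$ is stratified-regular, fix a stratification $\SC$ of $A$ such that $f|_S$ is regular for every $S$ in $\SC$; because $A$ is Zariski closed in $X$, the collection $\SC \cup \{X \setminus A\}$ is a stratification of $X$. Since $f^*\gamma_K(\F^n)$ admits a stratified-algebraic structure, it is topologically isomorphic to a $\TC$-algebraic $\F$-vector bundle $\eta$ for some stratification $\TC$ of $X$. Let $\X \coloneqq \{ T \cap S \mid T \in \TC,\ S \in \SC \cup \{X \setminus A\} \}$ be the common refinement of $\TC$ and $\SC \cup \{X \setminus A\}$.

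I would then verify the hypotheses of Corollary~\ref{cor-4-6} for this $\X$. First, $A$ is the union of the strata of $\X$ of the form $T \cap S$ with $S$ in $\SC$, so $\A \coloneqq \{ S' \in \X \mid S' \subseteq A \}$ is a stratification of $A$. Second, $f|_A$ is $\A$-regular, because every stratum in $\A$ is a Zariski locally closed subvariety of some $S$ in $\SC$ on which $f$ is regular. Third, $f^*\gamma_K(\F^n)$ admits an $\X$-algebraic structure: the bundle $\eta$ is $\X$-algebraic, since $\X$ refines $\TC$ and the restriction to a Zariski locally closed subvariety of $T$ of an algebraic $\F$-vector subbundle of $\varepsilon_T^N(\F)$ is again algebraic. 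Corollary~\ref{cor-4-6} then provides, in each neighborhood of $f$, an $\X$-regular map $g$ with $g|_A = f|_A$; as $\X$-regular maps are stratified-regular, this is exactly (\ref{th-4-10-a}).

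The only point that needs care is this bookkeeping: one must choose $\X$ so that it refines simultaneously a stratification witnessing the regularity of $f|_A$, a stratification witnessing an algebraic structure on $f^*\gamma_K(\F^n)$, and the partition $\{A, X \setminus A\}$, and then exploit that passing to a refinement preserves both being $\X$-algebraic (for the bundle) and being regular on each stratum contained in $A$ (for the map). Beyond this, no approximation-theoretic ingredient other than Theorem~\ref{th-4-4} — already packaged into Corollary~\ref{cor-4-6} — is required, so I do not anticipate any serious obstacle once the reduction is phrased correctly.
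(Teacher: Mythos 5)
Your proposal is correct and follows essentially the same route as the paper: the paper likewise reduces to (\ref{th-4-10-c})~$\Rightarrow$~(\ref{th-4-10-a}), forms the common refinement $\X$ of a stratification $\TC$ witnessing the $\X$-algebraic structure, a stratification of $A$ witnessing the regularity of $f|_A$, and the partition $\{A, X\setminus A\}$, checks that refinement preserves both properties, and concludes by Corollary~\ref{cor-4-6}. No discrepancies to report.
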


\begin{proof}
As in the case of Theorem~\ref{th-4-4}, it suffices to prove that
(\ref{th-4-10-c}) implies (\ref{th-4-10-a}).

Suppose that (\ref{th-4-10-c}) holds. Let $\TC$ be a stratification of
$X$ such that the $\F$-vector bundle $f^*\gamma_K(\F^n)$ admits a
$\TC$-algebraic structure, and let $\PC$ be a stratification of $A$ such
that the map $f|_A$ is $\PC$-regular. The collection
\begin{equation*}
\X \coloneqq \{ P \cap T \mid P \in \PC, T \in \TC \} \cup \{ T \cap (X
\setminus A) \mid T \in \TC \}
\end{equation*}
is a stratification of $X$, and $\A \coloneqq \{ S \in \X \mid S
\subseteq A \}$ is a stratification of $A$. By construction, $\X$ is a
refinement of $\TC$, and $\A$ is a refinement of $\PC$. Consequently,
the map $f|_A$ is $\A$-regular, and the $\F$-vector bundle
$f^*\gamma_K(\F^n)$ on $X$ admits an $\X$-algebraic structure. The proof
is complete in view of Corollary~\ref{cor-4-6}.
\end{proof}

In the results above, approximation by $\X$-regular or
stratified-regular maps is equiva\-lent to certain conditions on pullbacks
of the tautological vector bundle. It is often convenient (see
Sections~\ref{sec-5} and~\ref{sec-6}) to have these conditions expressed
in terms involving multiblowups.

\begin{proposition}\label{prop-4-11}
Let $f \colon X \to \G_K(\F^n)$ be a continuous map whose restriction
$f|_A$ is stratified-regular. Assume the existence of a multiblowup $\pi
\colon X' \to X$ over $A$ such that the pullback $\F$-vector bundle $(f
\circ \pi)^* \gamma_K(\F^n)$ on $X'$ admits an algebraic structure. Then
each neighborhood of $f$ in $\C(X, \G_K(\F^n))$ contains a
stratified-regular map $g \colon X \to \G_K(\F^n)$ such that $g|_A =
f|_A$ and the restriction $g|_{X \setminus A}$ is a regular map. In
particular, $f$ is homotopic to a stratified-regular map $h \colon X \to
\G_K(\F^n)$ such that $h|_A = f|_A$ and the restriction $h|_{X \setminus
A}$ is a regular map.
\end{proposition}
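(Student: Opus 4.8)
The plan is to reduce Proposition~\ref{prop-4-11} to Theorem~\ref{th-4-4} by exhibiting a stratification of $X$ to which the hypotheses of that theorem apply. The multiblowup $\pi \colon X' \to X$ is over $A$, meaning it restricts to a biregular isomorphism over $X \setminus A$; the key consequence is that the pullback map on vector bundles can be ``descended'' away from $A$. Concretely, since $(f\circ\pi)^*\gamma_K(\F^n)$ admits an algebraic structure on $X'$ and $\pi$ is biregular over $X\setminus A$, the bundle $f^*\gamma_K(\F^n)$ restricted to $X\setminus A$ carries an algebraic structure, and more is true: I want to produce a stratification $\X_B$ of $X$, with $B=A$, such that $f^*\gamma_K(\F^n)$ is $\X_B$-algebraic, $A\setminus B=\varnothing$ is vacuously a stratum, and $f|_A$ is $(\X_B\cap A)$-regular. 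Wait---with $B=A$ the set $A\setminus B$ is empty, which is an allowed stratum, and $\X_B\cap A$ is just the induced stratification of $A$; so the structural hypotheses of Theorem~\ref{th-4-4} are met as soon as we find the right $\X$.

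The main work, then, is producing a stratification $\X$ of $X$ containing $A$ as a union of strata, refining any given stratification $\PC$ of $A$ that makes $f|_A$ regular on strata, and such that $f^*\gamma_K(\F^n)$ is $\X$-algebraic. For this I would pull the algebraic structure on $X'$ back down: let $\eta'$ be an algebraic $\F$-vector subbundle of $\varepsilon_{X'}^m(\F)$ topologically isomorphic to $(f\circ\pi)^*\gamma_K(\F^n)$. Over $X\setminus A$, transport $\eta'$ through the biregular isomorphism $\pi|_{X'\setminus\pi^{-1}(A)}\colon X'\setminus\pi^{-1}(A)\to X\setminus A$ to get an algebraic subbundle of $\varepsilon_{X\setminus A}^m(\F)$. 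Over $A$ itself, use that $f|_A$ is stratified-regular to get a stratification $\PC$ of $A$ with $(f|_A)^*\gamma_K(\F^n)$ being $\PC$-algebraic. Gluing these, the collection $\X := \PC \cup \{X\setminus A\}$ is a stratification of $X$; by construction $f^*\gamma_K(\F^n)$ is $\X$-algebraic (its restriction to $X\setminus A$ is algebraic, and to each stratum of $\PC$ it is algebraic). Here I expect a subtlety: one must be careful that the two algebraic structures---the transported one on $X\setminus A$ and the $\PC$-algebraic one on $A$---together define an honest $\X$-algebraic \emph{subbundle} of a common trivial bundle $\varepsilon_X^N(\F)$; this may require enlarging the ambient trivial bundle and invoking Proposition~\ref{prop-3-7} (orthogonal complements) or passing through the classifying description $\xi=g^*\gamma$ of Proposition~\ref{prop-3-3}. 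The honest way around this is simply to observe that ``admits an $\X$-algebraic structure'' is about topological isomorphism, and the bundle in question has the requisite restrictions up to isomorphism on each piece; then Theorem~\ref{th-4-4} only needs the isomorphism class.

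With such an $\X$ in hand, apply Theorem~\ref{th-4-4} with this stratification and $B=A$: since $A\setminus B=\varnothing$ is a stratum of $\X_B=\X$, $B\subseteq A$, the map $f|_A$ is $(\X_B\cap A)$-regular (being $\PC$-regular, and $\X_B\cap A\supseteq\PC$ after a harmless refinement), and $f^*\gamma_K(\F^n)$ admits an $\X_B$-algebraic structure, condition (\ref{th-4-4-c}) holds, hence (\ref{th-4-4-a}) holds: every neighborhood of $f$ contains an $\X_B$-regular map $g$ with $g|_A=f|_A$. An $\X_B$-regular map is in particular stratified-regular, and since $X\setminus A$ is one of the strata of $\X_B$, the restriction $g|_{X\setminus A}$ is regular. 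The final sentence about homotopy is automatic, as approximability gives homotopy for maps into a Grassmannian (any sufficiently close map is homotopic to $f$), exactly as in the proof of Theorem~\ref{th-4-4}. The one point I would flag as the genuine obstacle is the bookkeeping in the previous paragraph: ensuring the descended structure over $X\setminus A$ and the structure over $A$ assemble into a single $\X$-algebraic bundle rather than merely agreeing up to isomorphism stratum-by-stratum; I expect this is handled cleanly by choosing $\X$ first and then invoking the equivalence-of-categories results (Propositions~\ref{prop-3-3} and~\ref{prop-3-8}) to realize the bundle concretely.
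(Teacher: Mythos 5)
Your overall strategy---descend the algebraic structure from $X'$ to $X\setminus A$ through the biregular isomorphism $\pi_A$, form the stratification $\X=\PC\cup\{X\setminus A\}$, and invoke Theorem~\ref{th-4-4} (or Corollary~\ref{cor-4-6}) with $B=A$---founders on exactly the point you flag, and the obstacle is not mere bookkeeping. The hypothesis (\ref{th-4-4-c}) of Theorem~\ref{th-4-4} is that $f^*\gamma_K(\F^n)$ admits an $\X_B$-algebraic structure, i.e.\ that there is a \emph{single global} topological isomorphism from $f^*\gamma_K(\F^n)$ to one $\X_B$-algebraic subbundle of one $\varepsilon_X^N(\F)$. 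What your construction delivers is strictly weaker: the restriction to each stratum of $\PC$ is algebraic, and the restriction to $X\setminus A$ is \emph{topologically isomorphic} to an algebraic bundle (namely $(\pi_A^{-1})^*(\eta'|_{X'\setminus A'})$); it is not itself algebraic, since $f|_{X\setminus A}$ is not assumed regular. Your remark that ``Theorem~\ref{th-4-4} only needs the isomorphism class'' is a misreading: stratum-by-stratum isomorphism classes do not determine a global $\X$-algebraic structure, and assembling one from the pieces is a genuine local-to-global problem. Indeed, the machinery the paper develops for precisely this kind of gluing (Proposition~\ref{prop-5-1} and Theorem~\ref{th-5-4}, via the homotopy extension theorem) is proved \emph{using} Proposition~\ref{prop-4-11}, so appealing to anything of that nature here would be circular; and Propositions~\ref{prop-3-3} and~\ref{prop-3-8} only repackage a given $\X$-algebraic structure, they do not manufacture one from stratum-wise data.

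The paper's proof sidesteps the gluing entirely by working upstairs, where a global structure is already in hand. One pulls the stratification of $A$ back to $A'=\pi^{-1}(A)$, sets $\X'=\A'\cup\{X'\setminus A'\}$, and notes that the hypothesis gives $(f\circ\pi)^*\gamma_K(\F^n)$ an honest algebraic structure on $X'$, which is in particular an $\X'$-algebraic structure. Corollary~\ref{cor-4-6} applied on $X'$ then yields an $\X'$-regular map $\varphi$ close to $f\circ\pi$ with $\varphi|_{A'}=(f\circ\pi)|_{A'}$; one descends the \emph{map} (not the bundle) by setting $g=\varphi\circ\pi_A^{-1}$ on $X\setminus A$ and $g=f$ on $A$, using properness of $\pi$ to get continuity and closeness to $f$. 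If you want to salvage your downstairs approach, you would have to supply an independent argument that the descended structure on $X\setminus A$ and the $\PC$-algebraic structure on $A$ cohere into a global $\X$-algebraic structure on $X$; as it stands, that step is missing.
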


\begin{proof}
The map $f|_A$ is $\A$-regular for some stratification $\A$ of $A$. The
collection \linebreak $\A' \coloneqq \{ \pi^{-1}(S) \mid S \in \A \}$ is a
stratification of $A' \coloneqq \pi^{-1}(A)$, whereas $\X' \coloneqq \A'
\cup \{X' \setminus A'\}$ is a stratification of $X'$. Since the map $f
\circ \pi \colon X' \to \G_K(\F^n)$ is continuous and its restriction
$(f \circ \pi)|_{A'}$ is $\A'$-regular, according to
Corollary~\ref{cor-4-6} (with $X=X'$, $A=A'$, $\X=\X'$, $\A=\A'$,
$f=f \circ \pi$), there exists an $\X'$-regular map $\varphi \colon X'
\to \G_K(\F^n)$ that is arbitrarily close to $f \circ \pi$ and satisfies
$\varphi|_{A'} = (f \circ \pi)|_{A'}$. In particular, the restriction
$\varphi|_{X' \setminus A'}$ is a regular map. By assumption, the
restriction $\pi_A \colon X' \setminus A' \to X \setminus A$ of $\pi$ is
a biregular isomorphism. Since the map $\pi$ is proper, the map $g
\colon X \to \G_K(\F^n)$, defined by
\begin{equation*}
g(x)=
\begin{cases}
\varphi(\pi_A^{-1}(x)) & \textrm{for $x$ in $X \setminus A$} \\
f(x) & \textrm{for $x$ in $A$},
\end{cases}
\end{equation*}
is continuous. Moreover, $g$ is close to $f$, $g|_A = f|_A$, and the
restriction $g|_{X \setminus A}$ is a regular map. If $g$ is
sufficiently close to $f$, then $g$ is homotopic to $f$, and hence the
existence of $h$ follows.
\end{proof}

\begin{corollary}\label{cor-4-12}
Let $f \colon X \to \G_K(\F^n)$ be a continuous map whose restriction
$f|_A$ is stratified-regular. If the variety $X \setminus A$ is
nonsingular, then the following conditions are equivalent:
\begin{conditions}
\item\label{cor-4-12-a} Each neighborhood of $f$ in $\C(X, \G_K(\F^n))$
contains a stratified-regular map 
\begin{equation*}%---only for good looks
g \colon X \to \G_K(\F^n)
\end{equation*}
such that
$g|_A=f|_A$ and the restriction $g|_{X \setminus A}$ is a regular map.

\item\label{cor-4-12-b} The map $f$ is homotopic to a stratified-regular
map $h \colon X \to \G_K(\F^n)$ such that $h|_A = f|_A$ and the
restriction $h|_{X \setminus A}$ is a regular map.

\item\label{cor-4-12-c} There exists a multiblowup $\pi \colon X' \to X$
over $A$ such that $X'$ is a nonsingular variety and the pullback
$\F$-vector bundle $(f \circ \pi)^* \gamma_K(\F^n)$ on $X'$ admits an
algebraic structure.
\end{conditions}
\end{corollary}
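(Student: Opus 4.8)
The plan is to establish the cycle of implications $(\ref{cor-4-12-c}) \Rightarrow (\ref{cor-4-12-a}) \Rightarrow (\ref{cor-4-12-b}) \Rightarrow (\ref{cor-4-12-c})$. The first of these is immediate, being exactly the content of Proposition~\ref{prop-4-11} (the extra requirement in~(\ref{cor-4-12-c}) that $X'$ be nonsingular is not even used here). The implication $(\ref{cor-4-12-a}) \Rightarrow (\ref{cor-4-12-b})$ is standard: since $X$ is compact and $\G_K(\F^n)$ is a smooth manifold, any continuous map into $\G_K(\F^n)$ that is sufficiently close to $f$ in the compact-open topology is homotopic to $f$, so one takes $h$ to be such a map $g$ produced by~(\ref{cor-4-12-a}), the conditions $g|_A = f|_A$ and $g|_{X\setminus A}$ regular being inherited.

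The substance of the statement is $(\ref{cor-4-12-b}) \Rightarrow (\ref{cor-4-12-c})$, and here I would argue as follows. Since $f$ is homotopic to $h$, for every continuous map $\pi \colon X' \to X$ the pullbacks $(f \circ \pi)^* \gamma_K(\F^n)$ and $(h \circ \pi)^* \gamma_K(\F^n)$ are topologically isomorphic. Hence it suffices to construct a multiblowup $\pi \colon X' \to X$ over $A$, with $X'$ nonsingular, for which the composite $h \circ \pi \colon X' \to \G_K(\F^n)$ is a \emph{regular} map; for then $(h \circ \pi)^* \gamma_K(\F^n)$ is an honest algebraic $\F$-vector bundle on $X'$, so $(f \circ \pi)^* \gamma_K(\F^n)$ admits an algebraic structure, which is~(\ref{cor-4-12-c}).

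To construct $\pi$ I would proceed in two steps, in the spirit of the proof of Theorem~\ref{th-1-6}. Because $X \setminus A$ is nonsingular, the singular locus of $X$ is contained in $A$, so Hironaka's theorem on resolution of singularities \cite{bib25, bib30} provides a multiblowup $\rho \colon \tilde{X} \to X$ over $A$ with $\tilde{X}$ nonsingular. The composite $h \circ \rho \colon \tilde{X} \to \G_K(\F^n)$ is stratified-regular and, $\tilde{X}$ being nonsingular, is therefore a continuous rational map (Proposition~\ref{prop-2-2} and Remark~\ref{rem-2-3}); moreover its restriction to $\tilde{X} \setminus \rho^{-1}(A)$ is regular, since $h|_{X \setminus A}$ is regular and $\rho$ is biregular there. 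Applying Hironaka's theorem on resolution of points of indeterminacy \cite{bib26, bib30}, with the centers of the successive blowups taken inside the closed subset of $\tilde{X}$ where $h \circ \rho$ fails to be regular (a set contained in $\rho^{-1}(A)$), yields a multiblowup $\sigma \colon X' \to \tilde{X}$ over $\rho^{-1}(A)$, with $X'$ nonsingular, such that $h \circ \rho \circ \sigma$ is regular. Then $\pi \coloneqq \rho \circ \sigma \colon X' \to X$ is a multiblowup over $A$, $X'$ is nonsingular, and $h \circ \pi$ is regular, which completes the argument.

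I expect the only genuinely delicate point to be the bookkeeping needed to keep both multiblowups \emph{over} $A$: one must invoke the forms of Hironaka's theorems in which the centers of the blowups are chosen inside a prescribed closed subset — the singular locus of $X$ in the first step, the non-regularity locus of $h \circ \rho$ in the second — both of which sit over $A$, so that $\rho$ and $\sigma$ restrict to biregular isomorphisms away from $A$ (and from $\rho^{-1}(A)$, respectively). Everything else is a routine combination of Proposition~\ref{prop-4-11}, homotopy invariance of pullback vector bundles, and the characterizations of stratified-regular maps from Section~\ref{sec-2}.
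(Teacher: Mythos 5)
Your proposal is correct and follows essentially the same route as the paper: (c)$\Rightarrow$(a) via Proposition~\ref{prop-4-11}, (a)$\Rightarrow$(b) by the standard homotopy argument, and (b)$\Rightarrow$(c) by resolving singularities over $A$ and then resolving the points of indeterminacy of $h\circ\rho$ over $\rho^{-1}(A)$, finishing with homotopy invariance of the pullback bundle. The only cosmetic difference is that the paper phrases the hypothesis for the second Hironaka step as Zariski density of $\tilde{X}\setminus\rho^{-1}(A)$ in $\tilde{X}$ rather than via continuous rationality, which amounts to the same thing.
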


\begin{proof}
It is a standard fact that (\ref{cor-4-12-a}) implies
(\ref{cor-4-12-b}). In view of Proposition~\ref{prop-4-11},
(\ref{cor-4-12-c}) implies (\ref{cor-4-12-a}). It remains to prove that
(\ref{cor-4-12-b}) implies (\ref{cor-4-12-c}).

Suppose that (\ref{cor-4-12-b}) holds. Since the variety $X \setminus A$
is nonsingular, Hironaka's theorem on resolution of singularities
\cite{bib25, bib30} implies the existence of a multiblowup $\rho \colon
\tilde{X} \to X$ over $A$ such that $\tilde{X}$ is a nonsingular variety
and the set $\tilde{X} \setminus \rho^{-1}(A)$ is Zariski dense in
$\tilde{X}$. The map $h \circ \rho \colon \tilde{X} \to \G_K(\F^n)$ is
continuous and its restriction to $\tilde{X} \setminus \rho^{-1}(A)$ is
a regular map. Hence, according to Hironaka's theorem on resolution of
points of indeterminacy \cite{bib25, bib30}, there exists a multiblowup
$\sigma \colon X' \to \tilde{X}$ over $\rho^{-1}(A)$ such that the map
$h \circ \pi \colon X' \to \G_K(\F^n)$ is regular, where $\pi \coloneqq
\rho \circ \sigma$. Consequently, $(h \circ \pi)^* \gamma_K(\F^n)$ is an
algebraic $\F$-vector bundle on $X'$. The variety $X'$ is nonsingular,
and $\pi \colon X' \to X$ is a multiblowup over $A$. Since the maps $h
\circ \pi$ and $f \circ \pi$ are homotopic, the $\F$-vector bundles $(h
\circ \pi)^* \gamma_K(\F^n)$ and $(f \circ \pi)^*\gamma_K(\F^n)$ are
topologically isomorphic. Thus, (\ref{cor-4-12-b}) implies
(\ref{cor-4-12-c}).
\end{proof}

We also have the following modification of Proposition~\ref{prop-4-11}.

\begin{proposition}\label{prop-4-13}
Let $f \colon X \to \G_K(\F^n)$ be a continuous map whose restriction
$f|_A$ is stratified-regular. Assume the existence of a multiblowup $\pi
\colon X' \to X$ over $A$ such that the pullback $\F$-vector bundle $(f
\circ \pi)^* \gamma_K(\F^n)$ on $X'$ admits a stratified-algebraic
structure. Then each neighborhood of $f$ in $\C(X, \G_K(\F^n))$ contains
a stratified-regular map $g \colon X \to \G_K(\F^n)$ with $g|_A = f|_A$.
In particular, $f$ is homotopic to a stratified-regular map $h \colon X
\to \G_K(\F^n)$ with $h|_A = f|_A$.
\end{proposition}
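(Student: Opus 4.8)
The plan is to imitate the proof of Proposition~\ref{prop-4-11} almost verbatim, replacing the appeal to Corollary~\ref{cor-4-6} by an appeal to Theorem~\ref{th-4-10}, and correspondingly weakening the conclusion so that the approximating map $g$ is only required to be \emph{stratified}-regular on $X\setminus A$ (whereas in Proposition~\ref{prop-4-11} one additionally obtained that $g|_{X\setminus A}$ is regular).

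First I would record the elementary preliminaries: $A'\coloneqq\pi^{-1}(A)$ is a Zariski closed subvariety of $X'$, the variety $X'$ is compact (being a multiblowup of the compact variety $X$), and $\pi_{A'}\coloneqq\pi|_{A'}\colon A'\to A$ is a regular map. Since $f|_A$ is stratified-regular, say $\PC$-regular for some stratification $\PC$ of $A$, the composite $(f\circ\pi)|_{A'}=(f|_A)\circ\pi_{A'}$ is $\{\pi_{A'}^{-1}(S)\mid S\in\PC\}$-regular, hence stratified-regular. Thus Theorem~\ref{th-4-10} is applicable on $X'$ with $A'$ in place of $A$ and $f\circ\pi$ in place of $f$: condition (\ref{th-4-10-c}) holds by hypothesis, so by (\ref{th-4-10-a}) every neighborhood of $f\circ\pi$ in $\C(X',\G_K(\F^n))$ contains a stratified-regular map $\varphi\colon X'\to\G_K(\F^n)$ with $\varphi|_{A'}=(f\circ\pi)|_{A'}$. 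Now I would glue exactly as in Proposition~\ref{prop-4-11}: since $\pi$ is a multiblowup over $A$, the restriction $\pi_A\colon X'\setminus A'\to X\setminus A$ is a biregular isomorphism, and $\pi$ is proper, so the map
\begin{equation*}
g(x) =
\begin{cases}
\varphi(\pi_A^{-1}(x)) & \textrm{for $x$ in $X\setminus A$}\\
f(x) & \textrm{for $x$ in $A$}
\end{cases}
\end{equation*}
is continuous, is close to $f$ provided $\varphi$ is close to $f\circ\pi$, and satisfies $g|_A=f|_A$. To see that $g$ is stratified-regular, write $\varphi$ as an $\SC$-regular map for some stratification $\SC$ of $X'$; then $\{\pi_A(S\setminus A')\mid S\in\SC\}$ is a stratification of $X\setminus A$ by subvarieties that are Zariski locally closed in $X$ (since $X\setminus A$ is Zariski open in $X$), and $g$ restricts to a regular map on each of its members. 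Adjoining the strata of $\PC$, which cover $A$ and on which $g$ agrees with the $\PC$-regular map $f|_A$, yields a single stratification of $X$ for which $g$ is regular; hence $g$ is stratified-regular. Finally, taking $\varphi$ close enough to $f\circ\pi$ makes $g$ close enough to $f$ to be homotopic to it, which provides the map $h$.

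I do not anticipate a genuine obstacle, as this is a direct variant of Proposition~\ref{prop-4-11}; the only points requiring care are the continuity of the glued map $g$ at points of $A$ (which, as in Proposition~\ref{prop-4-11}, relies on the properness of $\pi$ together with the identity $\varphi|_{A'}=(f\circ\pi)|_{A'}$) and the assembly of a single stratification of $X$ certifying that $g$ is stratified-regular, obtained by transporting via the biregular isomorphism $\pi_A$ the strata of $\SC$ lying over $X\setminus A$ and adjoining the strata of $\PC$.
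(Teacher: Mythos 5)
Your proof is correct and follows essentially the same route as the paper: obtain an approximation $\varphi$ of $f\circ\pi$ on $X'$ agreeing with $(f\circ\pi)|_{A'}$, then descend it through the biregular isomorphism $\pi_A$ and glue with $f|_A$, using properness of $\pi$ for continuity. The only cosmetic difference is that you invoke Theorem~\ref{th-4-10} (which already packages the stratification-refinement bookkeeping) where the paper constructs the refined stratifications $\SC'$, $\A'$ by hand and applies Corollary~\ref{cor-4-6} directly; since Theorem~\ref{th-4-10} precedes this proposition, there is no circularity.
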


\begin{proof}
The map $f|_A$ is $\A$-regular for some stratification $\A$ of $A$, and
the $\F$-vector bundle $(f \circ \pi)^* \gamma_K (\F^n)$ admits an
$\SC$-algebraic structure for some stratification $\SC$ of $X'$. The
collection $\PC \coloneqq \{ \pi^{-1}(T) \mid T \in \A \}$ is a
stratification of $A' \coloneqq \pi^{-1}(A)$, and the map $(f \circ
\pi)|_{A'}$ is $\PC$-regular. Moreover, the collection
\begin{equation*}
\SC' \coloneqq \{ P \cap S \mid P \in \PC, S \in \SC \} \cup \{ (X'
\setminus A') \cap S \mid S \in \SC \}
\end{equation*}
is a stratification of $X'$, and the collection
\begin{equation*}
\A' \coloneqq \{ S' \in \SC' \mid S' \subseteq A' \}
\end{equation*}
is a stratification of $A'$. By construction, $\A'$ is a refinement of
$\PC$, and $\SC'$ is a refinement of $\SC$. Consequently, the map $(f
\circ \pi)|_{A'}$ is $\A'$-regular, and the $\F$-vector bundle $(f \circ
\pi)^* \gamma_K(\F^n)$ on $X'$ admits an $\SC'$-algebraic structure.
According to Corollary~\ref{cor-4-6} (with $X=X'$, $A=A'$, $\X=\SC'$,
$\A=\A'$, $f=f \circ \pi$), there exists an $\SC'$-regular map $\varphi
\colon X' \to \G_K(\F^n)$ that is arbitrarily close to $f \circ \pi$ and
satisfies $\varphi|_{A'} = (f \circ \pi)|_{A'}$. The restriction $\pi_A
\colon X' \setminus A' \to X \setminus A$ of $\pi$ is a biregular
isomorphism. Since the map $\pi$ is proper, the map $g \colon X \to
\G_K(\F^n)$, defined by
\begin{equation*}
g(x)=
\begin{cases}
\varphi(\pi_A^{-1}(x)) & \textrm{for $x$ in $X\setminus A$}\\
f(x) & \textrm{for $x$ in $A$,}
\end{cases}
\end{equation*}
is continuous. By construction, $g$ is close to $f$ and $g|_A = f|_A$.
Moreover,
\begin{equation*}
\X \coloneqq \A \cup \{ \pi_A(S') \mid S' \in \SC', S' \subseteq X'
\setminus A' \}
\end{equation*}
is a stratification of $X$, and the map $g$ is $\X$-regular. If $g$ is
sufficiently close to $f$, then $g$ is homotopic to $f$, and hence the
existence of $h$ follows.
\end{proof}

%\begin{document} %---for spell checking in vim
\section[Topological versus stratified-algebraic vector
bundles]{\texorpdfstring{Topological versus stratified-algebraic\\ vector
bundles}{Topological versus stratified-algebraic vector
bundles}}\label{sec-5}

Throughout this section, $X$ denotes a compact real algebraic variety.
By making use of the notion of filtration, introduced in
Section~\ref{sec-2}, we demonstrate how the behavior of a vector bundle
on $X$ can be deduced from the behavior of its restrictions to Zariski
closed subvarieties of $X$. This is crucial for, in particular, the
proofs of Theorems~\ref{th-1-7}, \ref{th-1-8} and~\ref{th-1-9}, given in
the next section.

\begin{proposition}\label{prop-5-1}
Let $\xi$ be a topological $\F$-vector bundle on $X$ and let 
\begin{equation*}%---only for good looks
\FC = (X_{-1}, X_0, \ldots, X_m)
\end{equation*}
be a filtration of $X$. Assume that for each
$i = 0, \ldots, m$, there exists a multiblowup $\pi_i \colon X'_i \to
X_i$ over $X_{i-1}$ such that the pullback $\F$-vector bundle $\pi_i^*
(\xi|_{X_i})$ on $X'_i$ admits an algebraic structure. Then $\xi$ admits
an $\overline{\FC}$-algebraic structure.
\end{proposition}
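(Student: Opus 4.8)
The plan is to induct on the length $m$ of the filtration, promoting a classifying map of $\xi$ to an $\overline{\FC}$-regular map one level at a time, with Proposition~\ref{prop-4-11} doing the work at each step. First I would reduce to the situation $\xi = f^*\gamma_K(\F^n)$ for some multi-Grassmannian $\G_K(\F^n)$ and continuous map $f \colon X \to \G_K(\F^n)$; this is harmless since $X$ is compact and only the topological isomorphism class of $\xi$ is at issue. For $0 \le i \le m$ put $\overline{\FC}_i \coloneqq \{ X_j \setminus X_{j-1} \mid 0 \le j \le i \}$; each $\overline{\FC}_i$ is a stratification of the compact variety $X_i$, with $\overline{\FC}_m = \overline{\FC}$ and $(f|_{X_i})^*\gamma_K(\F^n) = \xi|_{X_i}$. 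I would then prove by induction on $i$ the assertion that $f|_{X_i}$ is homotopic to an $\overline{\FC}_i$-regular map. The base case $i = 0$ is immediate: the stratification $\overline{\FC}_0$ is trivial and the hypothesised multiblowup $\pi_0 \colon X'_0 \to X_0$ is over $X_{-1} = \varnothing$, hence a biregular isomorphism, so $\pi_0^*(\xi|_{X_0}) \cong \xi|_{X_0}$ admits an algebraic structure, and Corollary~\ref{cor-4-7} (with $\X = \{X_0\}$) gives that $f|_{X_0}$ is homotopic to a regular map.

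For the inductive step, suppose $f|_{X_{i-1}}$ is homotopic to an $\overline{\FC}_{i-1}$-regular map $h_{i-1} \colon X_{i-1} \to \G_K(\F^n)$. Since $(X_i, X_{i-1})$ is a polyhedral pair it has the homotopy extension property, so I can extend this homotopy and obtain a continuous map $\tilde f_i \colon X_i \to \G_K(\F^n)$ with $\tilde f_i \simeq f|_{X_i}$ and $\tilde f_i|_{X_{i-1}} = h_{i-1}$; in particular $\tilde f_i|_{X_{i-1}}$ is stratified-regular, and since homotopic maps have topologically isomorphic pullbacks, $\tilde f_i^*\gamma_K(\F^n) \cong \xi|_{X_i}$. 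Now I would apply Proposition~\ref{prop-4-11} on $X_i$, with Zariski closed subvariety $X_{i-1}$, map $\tilde f_i$, and the hypothesised multiblowup $\pi_i \colon X'_i \to X_i$ over $X_{i-1}$: its only nontrivial input is that $(\tilde f_i \circ \pi_i)^*\gamma_K(\F^n) = \pi_i^*\bigl(\tilde f_i^*\gamma_K(\F^n)\bigr) \cong \pi_i^*(\xi|_{X_i})$ admit an algebraic structure, which is precisely what is assumed. Proposition~\ref{prop-4-11} then produces a stratified-regular map $h_i \colon X_i \to \G_K(\F^n)$, homotopic to $\tilde f_i$ and hence to $f|_{X_i}$, with $h_i|_{X_{i-1}} = h_{i-1}$ and $h_i|_{X_i \setminus X_{i-1}}$ regular.

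It remains to check that $h_i$ is $\overline{\FC}_i$-regular, and this is the step that needs care: it works precisely because Proposition~\ref{prop-4-11} returns a map agreeing with $\tilde f_i$ on $X_{i-1}$ \emph{exactly}, not merely up to homotopy. Indeed $h_i$ is continuous, its restriction to each stratum $X_j \setminus X_{j-1}$ with $j \le i-1$ coincides with that of $h_{i-1}$ and is therefore regular, and its restriction to the top stratum $X_i \setminus X_{i-1}$ is regular by construction; so $h_i$ is $\overline{\FC}_i$-regular by Definition~\ref{def-2-1}, completing the induction. For $i = m$, the map $f$ is thus homotopic to an $\overline{\FC}$-regular map, and Corollary~\ref{cor-4-7} (with $\X = \overline{\FC}$) gives that $\xi = f^*\gamma_K(\F^n)$ admits an $\overline{\FC}$-algebraic structure. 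The substantive content — converting the multiblowup hypothesis into a genuine $\overline{\FC}_i$-algebraic structure — has already been absorbed into Proposition~\ref{prop-4-11}; the only real choices left to make are the initial reduction to a classifying map and the homotopy-extension maneuver that makes the hypotheses of Proposition~\ref{prop-4-11} available at each stage, so I do not expect a serious obstacle.
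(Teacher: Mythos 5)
Your proposal is correct and follows essentially the same route as the paper: reduce to a classifying map $f$ with $\xi = f^*\gamma_K(\F^n)$, induct along the filtration using Proposition~\ref{prop-4-11} with $A = X_{i-1}$ at each stage, and glue via the homotopy extension theorem for the polyhedral pair. The only (cosmetic) difference is that the paper carries along globally defined maps $g_i \colon X \to \G_K(\F^n)$ homotopic to $f$ and regular on the strata up to level $i$, whereas you keep maps defined only on $X_i$ and extend the homotopy one level at a time; both hinge on exactly the same inputs.
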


\begin{proof}
Since $X$ is compact, we may assume that $\xi$ is of the form $\xi =
f^*\gamma_K(\F^n)$ for some multi-Grassmannian $\G_K(\F^n)$ and
continuous map $f \colon X \to \G_K(\F^n)$. It suffices to prove that $f$
is homotopic to an $\overline{\FC}$-regular map, cf.
Corollary~\ref{cor-4-7}.

\begin{assertion}
For each $i=0,\ldots,m$, there exists a continuous map $g_i \colon X \to
\G_K(\F^n)$ homotopic to $f$ and such that the restriction of $g_i$ to
$X_j \setminus X_{j-1}$ is a regular map for $0 \leq j \leq i$.
\end{assertion}

If the Assertion holds, then the map $g_m$ is $\overline{\FC}$-regular
and the proof is complete. We prove the Assertion by induction on $i$.
Recall that $(X, X_i)$ is a polyhedral pair for \linebreak $0 \leq i \leq m$, cf.
\cite[Corollary~9.3.7]{bib9}. Since $X'_0=X_0$ and $\pi_0 \colon X'_0
\to X_0$ is the identity map, the $\F$-vector bundle $\xi|_{X_0}$ on
$X_0$ admits an algebraic structure. According to
Proposition~\ref{prop-4-11} (with $X=X_0$, $A=\varnothing$), the map
$f|_{X_0}$ is homotopic to a regular map $\varphi \colon X_0 \to
\G_K(\F^n)$. The homotopy extension theorem \cite[p.~118,
Corollary~5]{bib42} implies the existence of a continuous map $g_0
\colon X \to \G_K(\F^n)$ homotopic to $f$ and satisfying $g_0|_{X_0} =
\varphi$. We now suppose that $0 \leq i \leq m-1$ and the map $g_i$
satisfying the required conditions has already been constructed. In
particular, the map $g_i|_{X_i}$ is stratified-regular. Since the maps
$g_i|_{X_{i+1}}$ and $f|_{X_{i+1}}$ are homotopic, the $\F$-vector
bundles $(g_i|_{X_{i+1}})^* \gamma_K (\F^n)$ and $(f|_{X_{i+1}})^*
\gamma_K(\F^n) = \xi|_{X_{i+1}}$ are topologically isomorphic.
Consequently, the $\F$-vector bundle
\begin{equation*}
\pi_{i+1}^* ( (g_i|_{X_{i+1}})^* \gamma_K (\F^n) ) = ( (g_i|_{X_{i+1}})
\circ \pi_{i+1} )^* \gamma_K(\F^n)
\end{equation*}
on $X'_{i+1}$ admits an algebraic structure, being topologically
isomorphic to $\pi_{i+1}^*(\xi|_{X_{i+1}})$. According to
Proposition~\ref{prop-4-11} (with $X = X_{i+1}$, $A = X_i$), the map
$g_i|_{X_{i+1}}$ is homotopic to a continuous map $\psi \colon X_{i+1}
\to \G_K(\F^n)$ such that $\psi|_{X_i} = g_i|_{X_i}$ and the restriction
of $\psi$ to $X_{i+1} \setminus X_i$ is a regular map. By the homotopy
extension theorem, there exists a continuous map $g_{i+1} \colon X \to
\G_K(\F^n)$ homotopic to $g_i$ (hence homotopic to $f$) and satisfying
$g_{i+1}|_{X_{i+1}} = \psi$. This completes the proof of the Assertion.
\end{proof}

\begin{theorem}\label{th-5-2}
Let $\xi$ be a topological $\F$-vector bundle on $X$ and let $\FC =
(X_{-1}, X_0, \ldots, X_m)$ be a filtration of $X$. If the
stratification $\overline{\FC}$ is nonsingular, then the following
conditions are equivalent:
\begin{conditions}
\item\label{th-5-2-a} The $\F$-vector bundle $\xi$ admits an
$\overline{\FC}$-algebraic structure.

\item\label{th-5-2-b} For each $i=0,\ldots,m$, there exists a
multiblowup $\pi_i \colon X'_i \to X_i$ over $X_{i-1}$ such that $X'_i$
is a nonsingular variety and the pullback $\F$-vector bundle
$\pi_i^*(\xi|_{X_i})$ on $X'_i$ admits an algebraic structure.
\end{conditions}
\end{theorem}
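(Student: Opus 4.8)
The plan is to derive Theorem~\ref{th-5-2} from two results already at our disposal: Proposition~\ref{prop-5-1} for the implication (\ref{th-5-2-b})$\Rightarrow$(\ref{th-5-2-a}), and Corollary~\ref{cor-4-12} for the converse. For (\ref{th-5-2-b})$\Rightarrow$(\ref{th-5-2-a}) there is in fact nothing to do beyond invoking Proposition~\ref{prop-5-1}, simply discarding the extra information that the varieties $X'_i$ are nonsingular. So all the work goes into (\ref{th-5-2-a})$\Rightarrow$(\ref{th-5-2-b}).

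For that direction, first I would reduce to a convenient normal form. Since the statement ``$\pi_i^*(\xi|_{X_i})$ admits an algebraic structure'' depends only on the topological isomorphism class of $\xi$, and (\ref{th-5-2-a}) only asserts that $\xi$ is topologically isomorphic to some $\overline{\FC}$-algebraic bundle, I may assume outright that $\xi$ is $\overline{\FC}$-algebraic. Then, $X$ being compact, Proposition~\ref{prop-3-3} lets me write $\xi = f^*\gamma_K(\F^n)$ for some multi-Grassmannian $\G_K(\F^n)$ and some $\overline{\FC}$-regular map $f \colon X \to \G_K(\F^n)$; consequently $\xi|_{X_i} = (f|_{X_i})^*\gamma_K(\F^n)$ and $\pi_i^*(\xi|_{X_i}) = (f|_{X_i}\circ\pi_i)^*\gamma_K(\F^n)$ for any multiblowup $\pi_i \colon X'_i \to X_i$.

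Next, for each fixed $i \in \{0,\ldots,m\}$ I would apply Corollary~\ref{cor-4-12} with $X$, $A$, $f$ replaced by $X_i$, $X_{i-1}$, $f|_{X_i}$. Its hypotheses hold: the set $X_i \setminus X_{i-1}$ is a stratum of $\overline{\FC}$, hence a nonsingular variety, since $\overline{\FC}$ is nonsingular; and $f|_{X_{i-1}}$ is stratified-regular, being the restriction of the stratified-regular map $f$ to the Zariski closed subvariety $X_{i-1}$ (concretely, it is $(\overline{\FC}\cap X_{i-1})$-regular). The key observation is that condition (\ref{cor-4-12-a}) of Corollary~\ref{cor-4-12} is satisfied trivially by taking $g \coloneqq f|_{X_i}$: this map lies in every neighborhood of $f|_{X_i}$, is $(\overline{\FC}\cap X_i)$-regular (hence stratified-regular), agrees with $f|_{X_{i-1}}$ on $X_{i-1}$, and its restriction to $X_i \setminus X_{i-1}$ is the restriction of $f$ to a single stratum of $\overline{\FC}$, hence a regular map. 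Therefore condition (\ref{cor-4-12-c}) holds, producing a multiblowup $\pi_i \colon X'_i \to X_i$ over $X_{i-1}$, with $X'_i$ nonsingular, such that $(f|_{X_i}\circ\pi_i)^*\gamma_K(\F^n)$ admits an algebraic structure; by the identity recorded above this bundle is $\pi_i^*(\xi|_{X_i})$, which is exactly what (\ref{th-5-2-b}) demands.

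I do not expect a genuine obstacle. The heavy lifting --- resolving the singularities of $X_i$ and killing the indeterminacy of $f|_{X_i}$ by blowups over $X_{i-1}$ --- is entirely absorbed into Corollary~\ref{cor-4-12} (ultimately into Hironaka's theorems used in its proof), and it applies here only because the ``homotopy/approximation'' hypothesis is met by $f|_{X_i}$ itself. The only care needed is routine: checking that $\overline{\FC}\cap X_i$ and $\overline{\FC}\cap X_{i-1}$ are bona fide stratifications, and noting that the case $i=0$ (where $X_{-1}=\varnothing$, $X_0$ is already nonsingular, and one may even take $\pi_0 = \mathrm{id}_{X_0}$) is not an exception.
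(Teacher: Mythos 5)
Your proposal is correct and follows essentially the same route as the paper: the implication (\ref{th-5-2-b})$\Rightarrow$(\ref{th-5-2-a}) is Proposition~\ref{prop-5-1}, and for the converse the paper likewise writes $\xi = f^*\gamma_K(\F^n)$ with $f$ an $\overline{\FC}$-regular map and applies Corollary~\ref{cor-4-12} with $X=X_i$, $A=X_{i-1}$, $f=f|_{X_i}$, using the nonsingularity of the strata exactly as you do. Your explicit remark that condition (\ref{cor-4-12-a}) is met trivially by $g=f|_{X_i}$ itself is just an unpacking of the same step.
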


\begin{proof}
Suppose that (\ref{th-5-2-a}) holds. In view of
Propositions~\ref{prop-2-2} and~\ref{prop-3-3}, we may assume that $\xi$ is of the form $\xi
= f^*\gamma_K(\F^n)$ for some multi-Grassmannian $\G_K(\F^n)$ and
$\overline{\FC}$-regular map $f \colon X \to \G_K(\F^n)$. Set $f_i
\coloneqq f|_{X_i}$ for $0 \leq i \leq m$. The map $f_i|_{X_{i-1}}$ is
stratified-regular, and the restriction of $f_i$ to $X_i \setminus
X_{i-1}$ is a regular map. The existence of $\pi_i$ as in
(\ref{th-5-2-b}) follows from Corollary~\ref{cor-4-12} (with $X=X_i$,
$A=X_{i-1}$). Thus, (\ref{th-5-2-a}) implies (\ref{th-5-2-b}). According
to Proposition~\ref{prop-5-1},  (\ref{th-5-2-b}) implies
(\ref{th-5-2-a}).
\end{proof}

Theorem~\ref{th-5-2} implies the following characterization of
topological $\F$-vector bundles on $X$ admitting a stratified-algebraic
structure.

\begin{corollary}\label{cor-5-3}
For a topological $\F$-vector bundle $\xi$ on $X$, the following
conditions are equivalent:
\begin{conditions}
\item The $\F$-vector bundle $\xi$ admits a stratified-algebraic
structure.

\item There exists a filtration $\FC = (X_{-1}, X_0, \ldots, X_m)$ of
$X$, with $\overline{\FC}$ a nonsingular strati\-fi\-ca\-tion, and for each $i
= 0, \ldots, m$, there exists a multiblowup $\pi_i \colon X'_i \to X_i$
over $X_{i-1}$ such that $X'_i$ is a nonsingular variety and the pullback
$\F$-vector bundle $\pi_i^*(\xi|_{X_i})$ on $X_i'$ admits an
algebraic structure.
\end{conditions}
\end{corollary}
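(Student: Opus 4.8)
The plan is to obtain both implications as immediate consequences of Theorem~\ref{th-5-2}, the only extra input being Proposition~\ref{prop-3-5}, which allows one to choose a filtration whose associated stratification is nonsingular.

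To prove that the first condition implies the second, suppose $\xi$ admits a stratified-algebraic structure, so $\xi$ is topologically isomorphic to a stratified-algebraic $\F$-vector bundle $\xi'$ on $X$. By Proposition~\ref{prop-3-5} there is a filtration $\FC = (X_{-1}, X_0, \ldots, X_m)$ of $X$ such that $\xi'$ is $\overline{\FC}$-algebraic and every stratum in $\overline{\FC}$ is nonsingular and equidimensional; in particular $\overline{\FC}$ is a nonsingular stratification. Since $\xi$ is topologically isomorphic to $\xi'$, the bundle $\xi$ admits an $\overline{\FC}$-algebraic structure. Applying the implication (\ref{th-5-2-a})$\Rightarrow$(\ref{th-5-2-b}) of Theorem~\ref{th-5-2} to $\xi$ and this filtration, we get for each $i = 0, \ldots, m$ a multiblowup $\pi_i \colon X'_i \to X_i$ over $X_{i-1}$ with $X'_i$ nonsingular and $\pi_i^*(\xi|_{X_i})$ admitting an algebraic structure, which is precisely the second condition.

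To prove the converse, assume the second condition holds for some filtration $\FC = (X_{-1}, X_0, \ldots, X_m)$ with $\overline{\FC}$ nonsingular. This is exactly the hypothesis of Theorem~\ref{th-5-2} together with its condition (\ref{th-5-2-b}) for the pair $(\xi, \FC)$, so by the implication (\ref{th-5-2-b})$\Rightarrow$(\ref{th-5-2-a}) the $\F$-vector bundle $\xi$ admits an $\overline{\FC}$-algebraic structure. An $\overline{\FC}$-algebraic $\F$-vector bundle is stratified-algebraic by Definition~\ref{def-1-2}, hence $\xi$ admits a stratified-algebraic structure. There is no substantive obstacle in this argument: the corollary is essentially a reformulation of Theorem~\ref{th-5-2}, and the only point demanding attention is the use of Proposition~\ref{prop-3-5} to guarantee that the filtration realizing the stratified-algebraic structure can be taken with $\overline{\FC}$ nonsingular, as required in the statement.
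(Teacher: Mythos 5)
Your proof is correct and is essentially the paper's own argument: the paper proves Corollary~\ref{cor-5-3} by the one-line instruction ``combine Proposition~\ref{prop-3-5} and Theorem~\ref{th-5-2},'' and you have simply written out that combination in full, using Proposition~\ref{prop-3-5} to produce a filtration with $\overline{\FC}$ nonsingular and then invoking both implications of Theorem~\ref{th-5-2}.
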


\begin{proof}
It suffices to combine Proposition~\ref{prop-3-5} and
Theorem~\ref{th-5-2}.
\end{proof}

The following result will also prove to be useful.

\begin{theorem}\label{th-5-4}
For a topological $\F$-vector bundle $\xi$ on $X$, the following
conditions are equivalent:
\begin{conditions}
\item\label{th-5-4-a} The $\F$-vector bundle $\xi$ admits a
stratified-algebraic structure.

\item\label{th-5-4-b} There exists a filtration $\FC = (X_{-1}, X_0,
\ldots, X_m)$ of $X$, and for each $i=0,\ldots,m$, there exists a
multiblowup $\pi_i \colon X'_i \to X_i$ over $X_{i-1}$ such that the
pullback $\F$-vector bundle $\pi_i^*(\xi|_{X_i})$ on $X'_i$ admits a
stratified-algebraic structure.
\end{conditions}
\end{theorem}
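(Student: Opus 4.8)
The implication (\ref{th-5-4-a}) $\Rightarrow$ (\ref{th-5-4-b}) is immediate: given a stratified-algebraic structure on $\xi$, take the filtration $\FC = (\varnothing, X)$ (so $m = 0$) together with the identity multiblowup $\pi_0 = \mathrm{id}_X$, which is a multiblowup over $\varnothing$; then $\pi_0^*(\xi|_X) = \xi$ already admits a stratified-algebraic structure. So the substance is the reverse implication, and the plan is to imitate the proof of Proposition~\ref{prop-5-1}, replacing each appeal to Proposition~\ref{prop-4-11} (the ``algebraic'' blowup lemma) by the corresponding appeal to Proposition~\ref{prop-4-13} (its ``stratified-algebraic'' counterpart).

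Assume (\ref{th-5-4-b}). Since $X$ is compact, write $\xi = f^*\gamma_K(\F^n)$ for some multi-Grassmannian $\G_K(\F^n)$ and continuous map $f \colon X \to \G_K(\F^n)$. By Proposition~\ref{prop-2-6} it suffices to prove that $f$ is homotopic to a stratified-regular map $g$: then $\xi \cong g^*\gamma_K(\F^n)$, and $g^*\gamma_K(\F^n)$ is stratified-algebraic since $\gamma_K(\F^n)$ is algebraic. I would establish this by proving, by induction on $i$, the assertion: for each $i = 0, \ldots, m$ there is a continuous map $g_i \colon X \to \G_K(\F^n)$, homotopic to $f$, whose restriction $g_i|_{X_i}$ is stratified-regular. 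Granting the assertion, $g_m$ is a stratified-regular map homotopic to $f$ (as $X_m = X$), which completes the proof.

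For the base case $i = 0$, note that $((f|_{X_0}) \circ \pi_0)^*\gamma_K(\F^n) = \pi_0^*(\xi|_{X_0})$ admits a stratified-algebraic structure by hypothesis and $\pi_0$ is a multiblowup over $X_{-1} = \varnothing$; hence Proposition~\ref{prop-4-13} (with $X = X_0$, $A = \varnothing$) gives a stratified-regular map $\varphi \colon X_0 \to \G_K(\F^n)$ homotopic to $f|_{X_0}$, and the homotopy extension theorem (using that $(X, X_0)$ is a polyhedral pair, cf. \cite[Corollary~9.3.7]{bib9}) yields a continuous $g_0 \colon X \to \G_K(\F^n)$ homotopic to $f$ with $g_0|_{X_0} = \varphi$. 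For the inductive step, suppose $g_i$ is constructed, $0 \le i \le m-1$. Since $g_i|_{X_{i+1}}$ and $f|_{X_{i+1}}$ are homotopic, $(g_i|_{X_{i+1}})^*\gamma_K(\F^n) \cong \xi|_{X_{i+1}}$, and therefore $((g_i|_{X_{i+1}}) \circ \pi_{i+1})^*\gamma_K(\F^n) \cong \pi_{i+1}^*(\xi|_{X_{i+1}})$ admits a stratified-algebraic structure. As $(g_i|_{X_{i+1}})|_{X_i} = g_i|_{X_i}$ is stratified-regular and $\pi_{i+1}$ is a multiblowup over $X_i$, Proposition~\ref{prop-4-13} (with $X = X_{i+1}$, $A = X_i$) produces a stratified-regular map $\psi \colon X_{i+1} \to \G_K(\F^n)$, homotopic to $g_i|_{X_{i+1}}$, with $\psi|_{X_i} = g_i|_{X_i}$; then the homotopy extension theorem gives a continuous $g_{i+1} \colon X \to \G_K(\F^n)$ homotopic to $g_i$ (hence to $f$) with $g_{i+1}|_{X_{i+1}} = \psi$. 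This completes the induction.

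I do not anticipate a genuine obstacle: the argument is the same bookkeeping as in Proposition~\ref{prop-5-1}, with a weaker hypothesis on the pullbacks (stratified-algebraic rather than algebraic) matched by a correspondingly weaker conclusion (a stratified-algebraic, rather than $\overline{\FC}$-algebraic, structure on $\xi$). The one point deserving care is that each $\pi_i$ must be a multiblowup \emph{over} $X_{i-1}$, so that Proposition~\ref{prop-4-13} can be applied with $A = X_{i-1}$, the closed subvariety on which the pertinent map is already known to be stratified-regular; but this compatibility is exactly what is asserted in (\ref{th-5-4-b}), so it is available at every stage of the induction.
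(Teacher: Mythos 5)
Your proof is correct and follows essentially the same route as the paper: the substantive implication (\ref{th-5-4-b})$\Rightarrow$(\ref{th-5-4-a}) is proved by the identical induction, mimicking Proposition~\ref{prop-5-1} with Proposition~\ref{prop-4-13} in place of Proposition~\ref{prop-4-11}, and concluding via the homotopy extension theorem at each stage. The only cosmetic differences are that you dispatch (\ref{th-5-4-a})$\Rightarrow$(\ref{th-5-4-b}) with the trivial filtration $(\varnothing, X)$ instead of invoking Corollary~\ref{cor-5-3}, and you reduce to ``$f$ homotopic to a stratified-regular map'' via Proposition~\ref{prop-2-6} rather than Theorem~\ref{th-4-10}; both are sound.
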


\begin{proof}
By Corollary~\ref{cor-5-3}, (\ref{th-5-4-a}) implies (\ref{th-5-4-b}).
It suffices to prove that (\ref{th-5-4-b}) implies (\ref{th-5-4-a}). The
proof is similar to the proof of Proposition~\ref{prop-5-1}.

Suppose that (\ref{th-5-4-b}) holds. Since $X$ is compact, we may assume
that $\xi$ is of the form
\begin{equation*}%---only for good looks
\xi = f^* \gamma_K(\F^n)
\end{equation*}
for some
multi-Grassmannian $\G_K(\F^n)$ and continuous map $f \colon X \to
\G_K(\F^n)$. It remains to prove that $f$ is homotopic to a
stratified-regular map, cf. Theorem~\ref{th-4-10} (with $A =
\varnothing$).

\begin{assertion}
For each $i=0, \ldots, m$, there exists a continuous map $g_i \colon X
\to \G_K(\F^n)$ homotopic to $f$ and such that the restriction
$g_i|_{X_i}$ is a stratified-regular map.
\end{assertion}

If the Assertion holds, then the map $g_m$ is stratified-regular and the
proof is complete. We prove the Assertion by induction on $i$. Since
$X'_0=X_0$ and $\pi_0 \colon X'_0 \to X_0$ is the identity map, the
$\F$-vector bundle $\xi|_{X_0}$  on $X_0$ admits a stratified-algebraic
structure. According to Proposition~\ref{prop-4-13} (with $X=X_0$ and
$A=\varnothing$), the map $f|_{X_0}$ is homotopic to a
stratified-regular map $\varphi \colon X_0 \to \G_K(\F^n)$. The homotopy
extension theorem implies the existence of a continuous map $g_0 \colon
X \to \G_K(\F^n)$ homotopic to $f$ and satisfying $g_0|_{X_0} = \varphi$.
We now suppose that $0 \leq i \leq m-1$ and the map $g_i$ satisfying the
required conditions has already been constructed. In particular, the map
$g_i|_{X_i}$ is
stratified-regular. Since the maps $g_i|_{X_{i+1}}$ and $f|_{X_{i+1}}$
are homotopic, the $\F$-vector bundles $(g_i|_{X_{i+1}})^* \gamma_K
(\F^n)$ and $(f|_{X_{i+1}})^* \gamma_K (\F^n) = \xi|_{X_{i+1}}$ are
topologically isomorphic. Consequently, the $\F$-vector bundle
\begin{equation*}
\pi_{i+1}^* ( ( g_i|_{X_{i+1}} )^* \gamma_K (\F^n) ) = ( (g_i|_{X_{i+1}})
\circ \pi_{i+1} )^* \gamma_K (\F^n)
\end{equation*}
on $X'_{i+1}$ admits a stratified-algebraic structure, being
topologically isomorphic to \linebreak $\pi_{i+1}^* (\xi|_{X_{i+1}})$. According to
Proposition~\ref{prop-4-13} (with $X = X_{i+1}$, $A=X_i$), the map
$g_i|_{X_{i+1}}$ is homotopic to a stratified-regular map $\psi \colon
X_{i+1} \to \G_K(\F^n)$ satisfying $\psi|_{X_i} = g_i|_{X_i}$. By the
homotopy extension theorem, there exists a continuous map $g_{i+1}
\colon X \to \G_K(\F^n)$ homotopic to $g_i$ (hence homotopic to $f$) and
satisfying $g_{i+1}|_{X_{i+1}} = \psi$. This completes the proof of the
Assertion.
\end{proof}

%\begin{document} %---for spell checking in vim
\section{Blowups and vector bundles}\label{sec-6}
We need certain constructions involving smooth (of class $\C^{\infty}$)
manifolds. All smooth manifolds are assumed to be paracompact and
without boundary. Submanifolds are supposed to be closed subsets of the
ambient manifold. For any smooth manifold $M$, the total space of the
tangent bundle to $M$ is denoted by $TM$. If $Z$ is a smooth submanifold
of $M$, then $N_ZM$ denotes the total space of the normal bundle to $Z$
in $M$. Thus for each point $x$ in $Z$, the fiber $(N_ZM)_x$ is equal to
$TM_x/TZ_x$.

Let $\theta$ be a smooth $\R$-vector bundle of rank $r$ on
$M$, and let $s \colon M \to \theta$ be a smooth section transverse to
the zero section. Then the zero locus $Z(s)$ of $s$ is a smooth
submanifold of $M$, which is either empty or of codimension $r$. In
order to have a convenient reference, we record the following well known
fact.

\begin{lemma}\label{lem-6-1}
With notation as above, the restriction $\theta|_{Z(s)}$ is smoothly
isomorphic to the normal bundle to $Z(s)$ in $M$.
\end{lemma}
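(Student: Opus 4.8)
This is the classical statement that the restriction of a vector bundle to the zero locus of a transverse section is isomorphic to the normal bundle of that zero locus. The plan is to construct the isomorphism explicitly from the derivative of the section along $Z(s)$. Fix a point $x$ in $Z \coloneqq Z(s)$. Since $s$ is a section of $\theta$ vanishing at $x$, its differential $ds_x \colon TM_x \to TE(\theta)_x$ composed with the canonical splitting of $TE(\theta)_x$ at a zero — namely $TE(\theta)_x \cong TM_x \oplus E(\theta)_x$, valid precisely because $s(x)$ lies on the zero section — yields a well-defined linear map $TM_x \to E(\theta)_x$. I will call this the \emph{intrinsic derivative} $D s_x \colon TM_x \to E(\theta)_x$; it does not depend on local trivializations of $\theta$ (only the value at zeros of $s$ is canonical, which is exactly our situation).

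\textbf{Key steps.} First I would check that $Ds_x$ vanishes on $TZ_x$: differentiating the identity $s|_Z = 0$ along $Z$ shows the composite $TZ_x \hookrightarrow TM_x \xrightarrow{Ds_x} E(\theta)_x$ is zero. Hence $Ds_x$ descends to a linear map $\overline{Ds}_x \colon (N_ZM)_x = TM_x/TZ_x \to E(\theta)_x$. Second, transversality of $s$ to the zero section is exactly the statement that $ds_x(TM_x)$ together with the tangent to the zero section spans $TE(\theta)_x$; translated through the splitting, this says $Ds_x \colon TM_x \to E(\theta)_x$ is surjective. Since $\operatorname{rank}\theta = r = \operatorname{codim}_M Z$, we have $\dim(N_ZM)_x = r = \dim E(\theta)_x$, so the surjection $\overline{Ds}_x$ is an isomorphism of fibers. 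Third, these maps fit together into a smooth bundle morphism $\overline{Ds} \colon N_ZM \to \theta|_Z$ — smoothness is checked in local coordinates where $\theta$ is trivialized and $s$ is written as a smooth $\R^r$-valued function, so that $Ds$ is literally the Jacobian restricted to $Z$, manifestly smooth in $x$. A fiberwise-bijective smooth bundle morphism is a bundle isomorphism, which gives the claim.

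\textbf{Main obstacle.} The only delicate point is the well-definedness of the intrinsic derivative $Ds_x$ at a zero of $s$: one must verify that the construction is independent of the chosen local trivialization of $\theta$, which boils down to the fact that at a point where $s$ vanishes, a change of trivialization given by $g \colon U \to GL_r(\R)$ alters $s$ to $g\cdot s$ and hence alters the Jacobian by $g(x)\cdot(\text{Jacobian of }s) + (\text{derivative of }g)\cdot s(x)$, and the second term vanishes because $s(x) = 0$. So the transformation rule is just multiplication by $g(x)$, confirming that $Ds_x$ is a well-defined element of $\operatorname{Hom}(TM_x, E(\theta)_x)$ and that $\overline{Ds}$ is a genuine bundle morphism. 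Everything else is routine linear algebra and a local-coordinate smoothness check, and since the lemma is explicitly flagged as "well known" I would keep the write-up brief, perhaps even citing a standard transversality reference rather than spelling out every step.
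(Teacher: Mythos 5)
Your proposal is correct and is essentially the paper's own argument: the paper also identifies $M$ with the zero section inside the total space $E$, notes $Z = s^{-1}(M)$ with $s$ transverse to $M$, and observes that $ds_z$ induces the isomorphism $(N_ZM)_z \to (N_ME)_z = E_z$ — which is exactly your intrinsic derivative $\overline{Ds}_z$. You simply spell out the well-definedness, surjectivity, and dimension-count details that the paper leaves implicit.
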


\begin{proof}
Let $E$ denote the total space of $\theta$, and let $Z \coloneqq Z(s)$.
We regard $M$ as a smooth submanifold of $E$, identifying it with the
image by the zero section. We identify $\theta$ with the normal bundle
to $M$ in $E$. Since $Z= s^{-1}(M)$ and $s$ is transverse to $M$ in $E$,
for each point $z$ in $Z$, the differential $ds_z \colon TM_z \to TE_z$
induces an $\R$-linear isomorphism
\begin{equation*}
(N_ZM)_z \to (N_ME)_z = E_z.
\end{equation*}
The proof is complete.
\end{proof}

For any smooth submanifold $D$ of $M$ of codimension $1$, there exists a
smooth $\R$-line bundle $\lambda(D)$ on $M$ with a smooth section $s_D
\colon M \to \lambda(D)$ such that $Z(s_D) = D$ and $s_D$ is transverse
to the zero section. Both $\lambda(D)$ and $s_D$ are constructed as in
algebraic geometry, by regarding $D$ as a $\C^{\infty}$ divisor on $M$.
Explicitly, if $\{U_i\}$ is an open cover of $M$ and $\{ f_i \colon U_i
\to \R \}$ is a collection of smooth local equations for $D$, then
$\lambda(D)$ is determined by the transition functions $g_{ij} \coloneqq f_i
/ f_j \colon U_i \cap U_j \to \R \setminus \{0\}$, whereas $s_D$ is
determined by the functions $f_i$. On the other hand, if $\lambda(D)$ and
$s_D$ are already given, then $s_D$ gives rise to smooth local equations
for $D$, and hence $\lambda(D)$ is uniquely determined up to smooth
isomorphism. It is also convenient to set $\lambda(\varnothing)
\coloneqq \varepsilon^1_M(\R)$.

For any smooth submanifold $Z$ of $M$, we denote by
\begin{equation*}
\pi(M,Z) \colon B(M,Z) \to M
\end{equation*}
the blowup of $M$ with center $Z$ (cf. \cite{bib2} for basic properties
of this construction). Recall that as a point set, $B(M,Z)$ is the union
of $M \setminus Z$ and the total space $\PB(N_ZM)$ of the projective
bundle associated with the normal bundle to $Z$ in $M$. The map
$\pi(M,Z)$ is the identity on $M \setminus Z$ and the bundle projection
$\PB(N_ZM) \to Z$ on $\PB(N_ZM)$. On $B(M,Z)$ there is a natural smooth
manifold structure, and $\pi(M,Z)$ is a smooth map. If $Z \neq
\varnothing$ and $\codim_M Z \geq 1$, then $\pi(M,Z)^{-1}(Z) =
\PB(N_ZM)$ is a smooth submanifold of $B(M,Z)$ of codimension $1$.

\begin{proposition}\label{prop-6-2}
Let $M$ be a smooth manifold. Let $\theta$ be a smooth $\R$-vector
bundle of positive rank on $M$, and let $s \colon M \to \theta$ be a
smooth section transverse to the zero section. If $Z \coloneqq Z(s)$ and
$\pi(M,Z) \colon B(M,Z) \to M$ is the blowup of $M$ with center $Z$,
then the pullback $\R$-vector bundle $\pi(M,Z)^*\theta$ on $B(M,Z)$
contains a smooth $\R$-line subbundle smoothly isomorphic to
$\lambda(D)$, where $D \coloneqq \pi(M,Z)^{-1}(Z)$.
\end{proposition}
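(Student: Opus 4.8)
The plan is to produce the required line subbundle as the image of an injective bundle morphism $\lambda(D) \hookrightarrow \pi(M,Z)^*\theta$ built directly from the section $s$. Abbreviate $\pi \coloneqq \pi(M,Z)$ and $B \coloneqq B(M,Z)$, and let $\tilde{s} \colon B \to \pi^*\theta$ be the pullback of $s$, so that $\tilde{s}(p) = s(\pi(p))$ under the canonical identification $E(\pi^*\theta)_p = E(\theta)_{\pi(p)}$; then $\tilde{s}$ vanishes precisely on $\pi^{-1}(Z(s)) = \pi^{-1}(Z) = D$. Since $\pi$ is the identity on $B \setminus D = M \setminus Z$, both $\tilde{s}$ and the tautological section $s_D$ of $\lambda(D)$ are nowhere zero there, so there is a unique morphism of $\R$-vector bundles $\psi \colon \lambda(D) \to \pi^*\theta$ over $B \setminus D$ with $\psi \circ s_D = \tilde{s}$; since $s_D$ trivialises $\lambda(D)$ there and $\tilde{s}$ is nowhere zero, $\psi$ is injective on each fibre. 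The whole point is then to show that $\psi$ extends smoothly across $D$ to a morphism $\lambda(D) \to \pi^*\theta$ that is still injective on every fibre; granting this, $\psi$ is everywhere of rank $1$, hence its image is a smooth $\R$-line subbundle of $\pi^*\theta$ smoothly isomorphic to $\lambda(D)$, as required. (When $Z = \varnothing$ or $\rank \theta = 1$ the map $\pi$ is the identity and nothing needs to be extended: in the first case $s$ spans a trivial line subbundle $\cong \varepsilon^1_M(\R) = \lambda(\varnothing)$, and in the second $\theta = \pi^*\theta$ is itself smoothly isomorphic to $\lambda(Z) = \lambda(D)$ by the uniqueness of $\lambda(\,\cdot\,)$ for $\C^{\infty}$ divisors recalled above; so we may assume $Z \neq \varnothing$ and $\rank \theta = r \geq 2$, whence $D$ has codimension one in $B$.)

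Extending $\psi$ is a local question near the points of $D$, and two smooth extensions of $\psi$ to a neighbourhood of such a point agree off $D$, hence everywhere since $D$ has empty interior; so it suffices to construct the extensions locally, after which they glue automatically with one another and with $\psi$. Near a point $z \in Z$ I would first normalise $s$ using its transversality: on a coordinate patch $U \ni z$ over which $\theta$ is trivial, transversality to the zero section says precisely that $s|_U \colon U \to \R^r$ is a submersion along $Z \cap U$, so after shrinking $U$ we may pick coordinates $x_1, \dots, x_n$ on $U$ with $s|_U = (x_1, \dots, x_r)$ and $Z \cap U = \{x_1 = \dots = x_r = 0\}$. The preimage $\pi^{-1}(U)$ is covered by the standard affine charts $V_1, \dots, V_r$, where on $V_k$ the blowup is $x_k = t$, $x_i = t z_i$ for $i \leq r$ with $i \neq k$, and $x_j = w_j$ for $j > r$, so that $D \cap V_k = \{t = 0\}$ with $t$ a local equation for $D$. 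A direct computation in the induced trivialisation of $\pi^*\theta$ gives $\tilde{s} = t \cdot v_k$ on $V_k$, where $v_k$ is the section with $k$th component $1$ and $i$th component $z_i$ for $i \neq k$; in particular $v_k$ is nowhere zero on $V_k$. Writing $s_D = t\,\sigma_k$ for a nowhere-zero section $\sigma_k$ of $\lambda(D)$ over $V_k$ (possible because $s_D$ vanishes transversally along $\{t = 0\}$), the relation $\psi \circ s_D = \tilde{s}$ forces $\psi(\sigma_k) = v_k$ off $D$, and this identity extends $\psi$ over all of $V_k$ as a fibrewise-injective morphism. Such charts, together with $B \setminus D$, cover $B$.

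The step I expect to be the genuine obstacle is exactly this local one: checking that $\tilde{s}$ vanishes along $D$ ``to order exactly one and in a single direction'', so that after dividing out a local equation of $D$ one is left with a \emph{nowhere-vanishing} section of $\Hom(\lambda(D), \pi^*\theta)$. A priori, when $\rank\theta = r \geq 2$ the pulled-back section could degenerate more badly along $D$; it is the interplay between the transversality of $s$ and the structure of the blowup charts that rules this out, and pinning down the normal form of $s$ together with the chart computation is where the work lies. As a consistency check, on $D$ the line subbundle so obtained is identified with the tautological line bundle of $D = \PB(N_Z M) \to Z$, in agreement with $\lambda(D)|_D$ being the normal bundle of $D$ in $B$, while over $M \setminus Z$ it is simply the line spanned by $s$. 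Assembling the local extensions with $\psi$ on $B \setminus D$ yields the global injective morphism $\lambda(D) \to \pi^*\theta$, and its image is the asserted $\R$-line subbundle of $\pi^*\theta$.
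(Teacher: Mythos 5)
Your argument is correct, but it takes a genuinely different route from the paper's. The paper works globally and without coordinates: it blows up the total space $E$ of $\theta$ along the zero section $M$, notes that $(p \circ \pi(E,M))^*\theta$ carries a tautological line subbundle $\lambda$ (the span of $e$ over each $e$ in $E \setminus M$, the tautological bundle over $\PB(E)$), and uses transversality only to define a smooth lift $\bar{s} \colon B(M,Z) \to B(E,M)$ of $s$ via the isomorphisms $\bar{d}s_z \colon (N_ZM)_z \to E_z$ on the exceptional divisor; the required subbundle is $\bar{s}^*\lambda$, and its identification with $\lambda(D)$ comes from pulling back a section of $\lambda$ transverse to zero with zero locus $\PB(E)$ to get a transverse section of $\bar{s}^*\lambda$ with zero locus exactly $D$, then invoking the uniqueness of $\lambda(D)$. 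You instead build the injection $\lambda(D) \hookrightarrow \pi(M,Z)^*\theta$ directly by dividing $\pi(M,Z)^*s$ by $s_D$, and verify in the standard affine charts of the blowup, after putting $s$ into submersion normal form, that the quotient extends across $D$ without zeros; your identification of the order-one vanishing $\tilde{s} = t \cdot v_k$ with $v_k$ nowhere zero is exactly the point where transversality enters, and the density argument correctly glues the local extensions. Both constructions yield the same subbundle (the closure of the span of $s$, restricting over $D$ to the tautological bundle of $\PB(N_ZM)$). What each buys: your chart computation is more elementary and makes the local structure completely explicit, whereas the paper's construction is coordinate-free, which is what lets the authors say in Proposition~\ref{prop-6-4} that the proof ``can be carried over to the algebraic-geometric setting'' verbatim --- your reliance on the $\C^{\infty}$ normal form for submersions would need to be reworked (e.g.\ with local generators of the ideal of $Z$ in place of straightened coordinates) before it transfers to the algebraic category.
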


\begin{proof}
Let $E$ be the total space of $\xi$ and let $p \colon E \to
M$ be the bundle projection. We regard $M$ as a smooth submanifold of
$E$ and identify the normal bundle to $M$ in $E$ with $\theta$. Thus as
a point set $B(E,M)$ is the union of $E \setminus M$ and $\PB(E)$, while
\begin{equation*}%---only for good looks
\pi(E,M) \colon B(E,M) \to E
\end{equation*}
is the identity on $E \setminus M$ and
the bundle projection $\PB(E) \to M$ on $\PB(E)$. Here $\PB(E)$ is the
total space of the projective bundle on $M$ associated with $\theta$.
The pullback smooth $\R$-vector bundle $(p \circ \pi(E,M))^*\theta$ on
$B(E,M)$ contains a smooth $\R$-line subbundle $\lambda$ defined as
follows. The fiber of $\lambda$ over a point $e$ in $E \setminus M$ is
the line $\{e\} \times (\R e)$, and the restriction $\lambda|_{\PB(E)}$
is the tautological $\R$-line bundle on $\PB(E)$.

Since $s$ is transverse to $M$ in $E$, for each point $z$ in $Z$, the
differential
\begin{equation*}%---only for good looks
ds_z \colon TM_z \to (TE)_z
\end{equation*}
induces an $\R$-linear
isomorphism
\begin{equation*}
\bar{d}s_z \colon (N_ZM)_z \to (N_ME)_z = E_z
\end{equation*}
between the fibres over $z$ of the normal bundle to $Z$ in $M$ and the
normal bundle to $M$ in $E$. Define $\bar{s} \colon B(M,Z) \to B(E,M)$
by $\bar{s}(x)=x$ for all $x$ in $M \setminus Z$ and $\bar{s}(l) =
\bar{d}s_z(l)$ for all $l$ in $\PB(N_ZM)_z$ (thus, $\bar{s}(l)$ is in the
fiber $\PB(E)_z$). By construction, $\bar{s}$ is a smooth map satisfying
\begin{equation*}
p \circ \pi(E,M) \circ \bar{s} = \pi(M,Z).
\end{equation*}
Hence $\bar{s}^*\lambda$ is a smooth $\R$-line subbundle of
\begin{equation*}
\bar{s}^*( (p \circ \pi(E,M) )^* \theta ) = (p \circ \pi(E,M) \circ
\bar{s})^* \theta = \pi(M,Z)^*\theta.
\end{equation*}
It remains to prove that the $\R$-line bundles $\bar{s}^*\lambda$ and
$\lambda(D)$ are smoothly isomorphic. To this end, it suffices to
construct a smooth section $u \colon B(M,Z) \to \bar{s}^*\lambda$ that
is transverse to the zero section and satisfies $Z(u) = D$. Such a
section can be obtained as follows. The smooth section $v \colon B(E,M)
\to \lambda$, defined by $v(e)=(e,e)$ for all $e$ in $E \setminus M$ and
$v|_{\PB(E)}=0$, is transverse to the zero section and satisfies $Z(v) =
\PB(E)$. On the other hand, the smooth map $\bar{s} \colon B(M,Z) \to
B(E,M)$ is transverse to $\PB(E)$ in $B(E,M)$ and $\bar{s}^{-1}(\PB(E))
= \pi(M,Z)^{-1}(Z) = D$. Consequently, the smooth section 
\begin{equation*}%---only for good looks
u \coloneqq \bar{s}^*v \colon B(M,Z) \to \bar{s}^*\lambda
\end{equation*}
satisfies the required conditions.
\end{proof}

Let $X$ be a nonsingular real algebraic variety. For any nonsingular
Zariski closed subvariety $D$ of $X$ of codimension $1$, there exists an
algebraic $\R$-line bundle $\lambda(D)$ on $X$ with an algebraic section
$s_D \colon X \to \lambda(D)$ such that $Z(s_D)=D$ and $s_D$ is
transverse to the zero section. The $\R$-line bundle $\lambda(D)$ is
uniquely determined up to algebraic isomorphism.

Recall that any algebraic $\R$-vector bundle $\theta$ on $X$ has an
algebraic section transverse to the zero section. Indeed, $\theta$ is
generated by finitely many global algebraic sections $s_1, \ldots, s_n$,
cf. \cite[Theorem~12.1.7]{bib9} or Proposition~\ref{prop-3-7}. According
to the transversality theorem, for a general point $(t_1, \ldots, t_n)$
in $\R^n$, the algebraic section $t_1 s_1 + \cdots + t_n s_n$ is
transverse to the zero section.

If $Z$ is a nonsingular Zariski closed subvariety of $X$, the
$\C^{\infty}$ blowup
\begin{equation*}%---only for good looks
\pi(X,Z) \colon B(X,Z) \to X
\end{equation*}
can be identified
with the algebraic blowup, cf. \cite[Lemma~2.5.5]{bib2}.

The following consequence of Proposition~\ref{prop-6-2} will play an
important role.

\begin{corollary}\label{cor-6-3}
Let $X$ be a nonsingular real algebraic variety. Let $\theta$ be a
smooth $\R$-vector bundle of positive rank on $X$, and let $s \colon X
\to \theta$ be a smooth section transverse to the zero section. Assume
that $Z \coloneqq Z(s)$ is a nonsingular Zariski closed subvariety of
$X$. If $\pi \colon X' \to X$ is the blowup with center $Z$, then the
smooth $\R$-vector bundle $\pi^*\theta$ on $X'$ contains a smooth
$\R$-line subbundle $\lambda$ which is smoothly isomorphic to an
algebraic $\R$-line bundle on $X'$. In particular, $\lambda$ admits an
algebraic structure.
\end{corollary}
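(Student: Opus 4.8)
The plan is to read the statement off from Proposition~\ref{prop-6-2} by transferring its conclusion from the $\mathcal{C}^{\infty}$ setting to the algebraic one. First I would dispose of a degenerate case: if $Z = \varnothing$, then $\pi$ is the identity map and $\pi^*\theta = \theta$ contains the smooth $\R$-line subbundle supplied by Proposition~\ref{prop-6-2}, which is $\lambda(\varnothing) = \varepsilon_X^1(\R)$, an algebraic $\R$-line bundle; so there is nothing to prove. Thus one may assume $Z \neq \varnothing$.

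Regard $X$ as a smooth manifold and apply Proposition~\ref{prop-6-2} with $M = X$, the bundle $\theta$, and the section $s$. Since $Z$ is a nonsingular Zariski closed subvariety of the nonsingular variety $X$, the $\mathcal{C}^{\infty}$ blowup $\pi(X,Z) \colon B(X,Z) \to X$ is identified with the algebraic blowup $\pi \colon X' \to X$ (cf. \cite[Lemma~2.5.5]{bib2}, recalled above). Proposition~\ref{prop-6-2} then yields a smooth $\R$-line subbundle $\lambda$ of $\pi^*\theta$ on $X'$ that is smoothly isomorphic to $\lambda(D)$, where $D \coloneqq \pi^{-1}(Z)$. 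Under the above identification, $D$ is the exceptional divisor of the algebraic blowup: $X'$ is a nonsingular real algebraic variety and $D$ is a nonsingular Zariski closed subvariety of $X'$ of codimension~$1$.

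Next I would invoke the algebraic $\R$-line bundle attached to the codimension-one divisor $D \subseteq X'$, as recalled just before the statement: there is an algebraic $\R$-line bundle on $X'$, also denoted $\lambda(D)$, carrying an algebraic section transverse to the zero section with zero locus $D$, and it is unique up to algebraic isomorphism. That algebraic section is in particular a smooth section transverse to the zero section with zero locus $D$, so by the uniqueness up to smooth isomorphism of $\lambda(D)$ noted in the discussion of $\lambda(D)$ preceding Proposition~\ref{prop-6-2}, the smooth and algebraic versions of $\lambda(D)$ are smoothly isomorphic. Combining this with the previous paragraph, $\lambda$ is smoothly isomorphic to an algebraic $\R$-line bundle on $X'$, and hence admits an algebraic structure.

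The proof is thus essentially bookkeeping, and the only point demanding care is the identification of the two objects denoted $\lambda(D)$ --- the line bundle produced abstractly by Proposition~\ref{prop-6-2} and the one built from the divisor $D$ on $X'$. This rests on the characterization of $\lambda(D)$, up to smooth isomorphism, by the existence of a transverse smooth section vanishing exactly along $D$, together with the already cited identification of the $\mathcal{C}^{\infty}$ blowup with the algebraic blowup, which is precisely what makes $D$ a genuine nonsingular Zariski closed subvariety of codimension~$1$ in the algebraic variety $X'$.
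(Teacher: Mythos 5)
Your proposal is correct and follows essentially the same route as the paper: note that $D=\pi^{-1}(Z)$ is either empty or a nonsingular Zariski closed codimension-one subvariety of $X'$, so that $\lambda(D)$ is an algebraic $\R$-line bundle on $X'$, and conclude by Proposition~\ref{prop-6-2}. The extra care you take in matching the smooth and algebraic incarnations of $\lambda(D)$ (via the uniqueness up to smooth isomorphism characterized by a transverse section vanishing exactly on $D$) is exactly the point the paper leaves implicit, and your handling of the degenerate case is consistent with the positive-rank hypothesis guaranteeing $\codim_X Z\geq 1$ when $Z\neq\varnothing$.
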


\begin{proof}
Let $D \coloneqq \pi^{-1}(Z)$. Either $D = \varnothing$ or $D$ is a
nonsingular Zariski closed subvariety of $X'$ of codimension $1$.
Consequently, $\lambda(D)$ is an algebraic $\R$-line bundle on $X'$. The
proof is complete in view of Proposition~\ref{prop-6-2}.
\end{proof}

There is also an algebraic-geometric counterpart of
Proposition~\ref{prop-6-2}.

\begin{proposition}\label{prop-6-4}
Let $X$ be a nonsingular real algebraic variety. Let $\theta$ be an
algebraic $\R$-vector bundle of positive rank on $X$, and let $s \colon
X \to \theta$ be an algebraic section transverse to the zero section. If
$\pi \colon X' \to X$ is the blowup with center $Z \coloneqq Z(s)$, then
the pullback $\R$-vector bundle $\pi^*\theta$ on $X'$ contains an
algebraic $\R$-line subbundle which is algebraically isomorphic to
$\lambda(D)$, where $D \coloneqq \pi^{-1}(Z)$.
\end{proposition}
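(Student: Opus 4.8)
The plan is to run the proof of Proposition~\ref{prop-6-2} entirely inside the category of real algebraic varieties. Write $X' = B(X,Z)$ and $\pi = \pi(X,Z) \colon X' \to X$ for the blowup of $X$ with center $Z = Z(s)$, and let $E$ be the total space of $\theta$, with bundle projection $p \colon E \to X$. Since $X$ is nonsingular and $\theta$ is algebraic, $E$ is a nonsingular real algebraic variety, the zero section embeds $X$ as a nonsingular Zariski closed subvariety of $E$, and the algebraic normal bundle $N_X E$ is canonically algebraically identified with $\theta$. Because $s$ is an algebraic section transverse to the zero section, $Z$ is a nonsingular Zariski closed subvariety of $X$ and $s$ cuts $X$ out with multiplicity one along $Z$; in sheaf terms, the pullback under $s$ of the ideal sheaf of $X$ in $E$ is the ideal sheaf of $Z$ in $X$. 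Finally, the algebraic blowup $\pi(E,X) \colon B(E,X) \to E$ is identified, in the algebraic category, with the total space of the tautological algebraic $\R$-line bundle $\tau \subseteq p_\theta^{*}\theta$ over the algebraic projectivization $p_\theta \colon \PB(\theta) \to X$ (exactly as Grassmannians and their tautological bundles are algebraic, cf.\ Section~\ref{sec-3}); writing $q \colon B(E,X) \to \PB(\theta)$ for the projection, $\pi(E,X)$ is the identity over $E \setminus X$ and the exceptional divisor $E' \coloneqq \pi(E,X)^{-1}(X)$ is the zero section $\PB(\theta) \hookrightarrow B(E,X)$.

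Since $p \circ \pi(E,X) = p_\theta \circ q$, pulling $\tau$ back along $q$ produces an algebraic $\R$-line subbundle $\lambda \coloneqq q^{*}\tau$ of $(p \circ \pi(E,X))^{*}\theta$ whose fibre over a point $e \in E \setminus X$ is the line $\R e \subseteq \theta_{p(e)}$ and whose restriction to $E'$ is the tautological line bundle on $\PB(\theta)$. The tautological section $v \colon B(E,X) \to \lambda$, equal to $e \mapsto e$ over $E \setminus X$ and to $0$ over $E'$, is an algebraic section; reading $v$ in a local chart of $B(E,X)$ coming from a trivialization of $\theta$ shows that $v$ is a local equation for $E'$, so $v$ is transverse to the zero section, $Z(v) = E'$, and the pullback of the zero-section ideal under $v$ is the invertible ideal $\mathcal{O}(-E')$.

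Next I would produce a regular lift $\bar s \colon X' \to B(E,X)$ of $\pi$ by applying the universal property of the blowup $\pi(E,X)$ to the regular map $s \circ \pi \colon X' \to E$. The pullback under $s \circ \pi$ of the ideal sheaf of $X$ in $E$ equals $\pi^{*}$ of the ideal sheaf of $Z$ in $X$ (by the transversality identity recorded above), and this is the invertible ideal $\mathcal{O}(-D)$, where $D = \pi^{-1}(Z)$ is the exceptional divisor of $\pi$. Hence $s \circ \pi$ factors uniquely as $\pi(E,X) \circ \bar s$ for a regular map $\bar s \colon X' \to B(E,X)$, and $p \circ \pi(E,X) \circ \bar s = p \circ s \circ \pi = \pi$. (If desired one checks that $\bar s$ restricts to $s$ over $X \setminus Z$ and, over $D = \PB(N_Z X)$, to the projectivization of the algebraic normal-bundle isomorphism $N_Z X \cong \theta|_Z$ — the algebraic analogue of Lemma~\ref{lem-6-1} — landing in $\PB(\theta) = E'$; this is not needed below.)

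It remains to conclude. Since $p \circ \pi(E,X) \circ \bar s = \pi$, the pullback $\bar s^{*}\lambda$ is an algebraic $\R$-line subbundle of $\bar s^{*}\bigl((p \circ \pi(E,X))^{*}\theta\bigr) = \pi^{*}\theta$. To identify it with $\lambda(D)$, set $u \coloneqq \bar s^{*}v$, an algebraic section of $\bar s^{*}\lambda$; then the pullback of the zero-section ideal under $u$ is $\bar s^{*}\mathcal{O}(-E') = (\pi(E,X) \circ \bar s)^{*}(\text{ideal of }X) = (s \circ \pi)^{*}(\text{ideal of }X) = \pi^{*}(\text{ideal of }Z) = \mathcal{O}(-D)$, so $u$ is transverse to the zero section and $Z(u) = D$. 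By the uniqueness up to algebraic isomorphism of the algebraic $\R$-line bundle attached to the nonsingular divisor $D$ on the nonsingular variety $X'$ (cf.\ the discussion preceding Corollary~\ref{cor-6-3}), it follows that $\bar s^{*}\lambda \cong \lambda(D)$, which proves the proposition. The step I expect to be the main obstacle is precisely the construction and regularity of $\bar s$: it rests on the universal property of the algebraic blowup along a nonsingular Zariski closed center together with the scheme-theoretic transversality identity $s^{*}(\text{ideal of }X) = (\text{ideal of }Z)$, and one cannot instead simply glue the obvious regular maps on $X \setminus Z$ and on the exceptional divisor, since a map that is regular on a Zariski dense open subset and continuous need not be regular. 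A secondary point requiring care is the algebraic model of $B(E,X)$ as the total space of an algebraic line bundle over $\PB(\theta)$, which is what makes $\lambda$, $v$, and hence $\bar s^{*}\lambda$ algebraic.
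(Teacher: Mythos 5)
Your proof is correct and follows the same route as the paper, whose proof of this proposition is literally the remark that the argument for Proposition~\ref{prop-6-2} carries over to the algebraic setting: you construct the same lift $\bar s$ of $\pi$ to $B(E,X)$, the same tautological line subbundle $\lambda$ with its tautological section $v$, and identify $\bar s^{*}\lambda$ with $\lambda(D)$ via the pulled-back section exactly as there. Your use of the universal property of the blowup together with the ideal-theoretic transversality identity $s^{*}(\text{ideal of }X)=(\text{ideal of }Z)$ is a clean way to supply the regularity of $\bar s$, which is the one step the paper's smooth argument (an explicit pointwise formula checked in local charts) leaves to the reader in the algebraic category.
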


\begin{proof}
The proof of Proposition~\ref{prop-6-2} can be carried over to the
algebraic-geometric setting.
\end{proof}

For any $\R$-vector bundle $\theta$, we denote by $\CB \otimes \theta$ and $\HB
\otimes \theta$ the complexification and the quaternionification of
$\theta$. In order to have the uniform notation, we also set $\R \otimes
\theta \coloneqq \theta$. Thus, $\F \otimes \theta$ is an $\F$-vector
bundle.

The following lemma leads directly to the proof of Theorem~\ref{th-1-7}.

\begin{lemma}\label{lem-6-5}
Let $X$ be a compact nonsingular real algebraic variety. If $\xi$ is a
topological $\F$-vector bundle on $X$ such that the $\R$-vector bundle
$\xi_{\R}$ admits an algebraic structure, then $\xi$ admits a
stratified-algebraic structure as an $\F$-vector bundle.
\end{lemma}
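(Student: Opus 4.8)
Since there is nothing to prove when $\F=\R$, assume throughout that $\F=\CB$ or $\F=\HB$. The plan is to replace $\xi_\R$ by a genuinely algebraic $\R$-vector bundle and then argue by induction on $\dim X$, using the structural results of Section~\ref{sec-5} together with the blowup constructions of Section~\ref{sec-6}. By hypothesis there is an algebraic $\R$-vector bundle $\eta$ on $X$ with $\xi_\R\cong\eta$; transporting the $\F$-module structure of $\xi$ along this isomorphism makes $\eta$ into a topological $\F$-vector bundle isomorphic to $\xi$. Equivalently, after complexifying, one is handed a continuous $\CB$-vector subbundle $P\subseteq\CB\otimes\eta$ with $P\oplus\overline{P}=\CB\otimes\eta$ --- for $\F=\CB$ the bundle $P$ is the $(+i)$-eigenbundle of the complex structure and $\xi\cong P$, while for $\F=\HB$ one carries along, in addition, the quaternionic structure on $P$ --- and $\CB\otimes\eta$ is an algebraic $\CB$-vector bundle. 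Thus the task is to deform $P$, through subbundles of this type, to a stratified-algebraic one.

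The induction is on $d:=\dim X$. When $d\le d(\F)$ one concludes by Corollary~\ref{cor-3-6}. So assume $d>d(\F)$ and (discarding the trivial case $\xi=0$) that $\eta$ has positive rank. Since an algebraic $\R$-vector bundle of positive rank on a nonsingular compact variety admits an algebraic section transverse to the zero section, iterating Proposition~\ref{prop-6-4} produces a birational multiblowup $\rho\colon X'\to X$ with $X'$ nonsingular such that $\rho^{*}\eta$ is algebraically isomorphic to a finite direct sum of algebraic $\R$-line bundles; in particular $H:=\rho^{*}(\CB\otimes\eta)=\CB\otimes\rho^{*}\eta$ is a direct sum of algebraic $\CB$-line bundles. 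Let $W\subseteq X$ be the Zariski closed locus over which $\rho$ fails to be an isomorphism; then $\dim W<d$. I would apply Theorem~\ref{th-5-4} with the filtration $(\varnothing,W,X)$: choose, by Hironaka's theorem, a multiblowup $\tau\colon W'\to W$ with $W'$ nonsingular; since $(\tau^{*}(\xi|_{W}))_{\R}=\tau^{*}(\eta|_{W})$ is algebraic, the inductive hypothesis gives that $\tau^{*}(\xi|_{W})$ admits a stratified-algebraic structure, and $\rho$ is a multiblowup over $W$. Hence the proof reduces to showing that $\rho^{*}\xi$ admits a stratified-algebraic structure, knowing that its realification is a direct sum of algebraic $\R$-line bundles.

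For this remaining point one stays inside $H$. As in the first paragraph, $\rho^{*}\xi$ determines a continuous $\CB$-vector subbundle $P\subseteq H$ with $P\oplus\overline{P}=H$, of some $\CB$-rank $r$ (half that of $H$), equipped with a quaternionic structure when $\F=\HB$. Since $H$ is a direct sum of algebraic $\CB$-line bundles, its associated Grassmann bundle $\G_{r}(H)\to X'$ is fiberwise a Zariski closed subvariety of $X'\times\G_{r}(\CB^{N})$, and $P$ corresponds to a continuous section of $\G_{r}(H)$. I would approximate this section by a stratified-regular one, obtaining a stratified-algebraic subbundle $P'\subseteq H$; if $P'$ is close enough to $P$, then $P'\oplus\overline{P'}=H$ still holds, the induced $\F$-structure on $\rho^{*}\eta$ is homotopic to the original one, and the resulting stratified-algebraic $\F$-vector bundle is topologically $\F$-isomorphic to $\rho^{*}\xi$. (On each stratum, an algebraic $\R$-vector bundle together with an algebraic $\F$-module structure is an algebraic $\F$-vector bundle in the sense of Definition~\ref{def-1-1}, and homotopic $\F$-structures yield $\F$-isomorphic bundles.)

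The hard part will be exactly this approximation step. The results of Section~\ref{sec-4} approximate maps into a fixed multi-Grassmannian, whereas here one must approximate a section of the Grassmann bundle $\G_{r}(H)$ of an algebraic vector bundle $H$ --- equivalently, a continuous map $X'\to\G_{r}(\CB^{N})$ constrained to land fiberwise inside $H$ --- by a stratified-regular section. What is needed is a relative, ``constrained'' analogue of Corollary~\ref{cor-4-7} for Grassmann bundles, still driven by the Koll\'ar--Nowak extension theorem (Theorem~\ref{th-4-1}); equivalently, the cancellation statement that over a nonsingular compact real algebraic variety, $P\oplus\overline{P}$ being algebraic (indeed a direct sum of line bundles) forces $P$ to be stratified-algebraic. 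Proving this, and handling the extra quaternionic structure in the case $\F=\HB$, is the crux; the rest is bookkeeping with the multiblowup and filtration machinery of Sections~\ref{sec-5} and~\ref{sec-6}.
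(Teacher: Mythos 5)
There is a genuine gap, and you flag it yourself: the entire proof hinges on the claim that if $(\rho^*\xi)_{\R}$ is a direct sum of algebraic $\R$-line bundles then $\rho^*\xi$ admits a stratified-algebraic structure (your ``constrained approximation'' in the Grassmann bundle $\G_r(H)$, plus the quaternionic structure for $\F=\HB$). This claim is essentially a restatement of the lemma itself in a special case --- $P\oplus\overline{P}$ algebraic forcing $P$ stratified-algebraic is exactly ``realification algebraic $\Rightarrow$ $\F$-structure stratified-algebraic'' --- and nothing in Section~\ref{sec-4} delivers it: Corollary~\ref{cor-4-7} approximates a map into a fixed multi-Grassmannian \emph{if and only if} the pullback bundle already admits an $\X$-algebraic structure, which is precisely what you are trying to establish, so invoking it here is circular. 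The surrounding bookkeeping (splitting $\rho^*\eta$ into algebraic $\R$-line bundles by iterating Proposition~\ref{prop-6-4}, the filtration $(\varnothing,W,X)$ with Theorem~\ref{th-5-4}, induction on $\dim X$ for $\xi|_W$) is sound in outline, but the crux is left unproved.

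The paper closes exactly this gap by a different induction --- on $\rank\xi$ rather than on the full splitting of $\rho^*\eta$. After one blowup $\pi$ along the zero locus of an algebraic section of $\theta\cong\xi_{\R}$, Proposition~\ref{prop-6-4} gives an algebraic $\R$-line subbundle $\lambda\subseteq\pi^*\theta$; the key observation you are missing is that the corresponding $\R$-line subbundle $\mu\subseteq(\pi^*\xi)_{\R}$ generates over $\F$ an $\F$-line subbundle $\lambda'\subseteq\pi^*\xi$ with $\lambda'\cong\F\otimes\mu\cong\F\otimes\lambda$, and $\F\otimes\lambda$ is already an \emph{algebraic} $\F$-line bundle --- so for line bundles the passage from an algebraic real structure to an algebraic $\F$-structure is free, with no approximation needed. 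Writing $\pi^*\xi=\lambda'\oplus\xi'$, a cancellation argument (add an algebraic complement of $\lambda'_{\R}$ and apply Corollary~\ref{cor-3-13}, or \cite[Proposition~12.3.5]{bib9}) shows $\xi'_{\R}$ is again algebraic, so the rank induction applies to $\xi'$; the blowup center $Z$ is then handled by iterating the construction on $\xi|_Z$ and feeding the resulting filtration into Theorem~\ref{th-5-4}. If you want to salvage your write-up, replace the ``approximate a section of $\G_r(H)$'' step by this peel-off-one-$\F$-line-at-a-time argument; as it stands the proposal reduces the lemma to an unproved statement of comparable strength.
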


\begin{proof}
We may assume without loss of generality that the variety $X$ is
irreducible. Then $\xi$ is of constant rank since $\xi_{\R}$ admits an
algebraic structure. We use induction on $r \coloneqq \rank \xi$.
Obviously, the assertion holds if $r=0$. Suppose that $r \geq 1$ and the
assertion holds for all topological $\F$-vector bundles
of rank at most $r-1$, defined on compact nonsingular real algebraic
varieties. Let $\theta$ be an algebraic $\R$-vector bundle on $X$ that
is topologically isomorphic to $\xi_{\R}$. Let $s \colon X \to \theta$
be an algebraic section transverse to the zero section and let $Z
\coloneqq Z(s)$. Then either $Z = \varnothing$ or $Z$ is a nonsingular
Zariski closed subvariety of $X$ of codimension $d(\F)r$. Let $\pi
\colon X' \to X$ be the blowup of $X$ with center $Z$. According to
Proposition~\ref{prop-6-4}, the pullback $\R$-vector bundle
$\pi^*\theta$ on $X'$ contains an algebraic $\R$-line subbundle
$\lambda$. Hence, $\pi^*(\xi_{\R}) = (\pi^*\xi)_{\R}$ contains a
topological $\R$-line subbundle $\mu$ that is topologically isomorphic
to $\lambda$. Since $\pi^*\xi$ is an $\F$-vector bundle, it contains a
topological $\F$-line subbundle $\lambda'$ isomorphic to $\F \otimes
\mu$. By construction, $\lambda'$ is topologically isomorphic to the
algebraic $\F$-line bundle $\F \otimes \lambda$. In particular, the
$\F$-line bundle $\lambda'$ admits an algebraic structure, and the
$\F$-vector bundle $\pi^*\xi$ can be expressed as
\begin{equation*}
\pi^* \xi = \lambda' \oplus \xi',
\end{equation*}
where $\xi'$ is a topological $\F$-vector bundle on $X'$ of rank $r-1$.
Note that the $\R$-vector bundle
\begin{equation*}
\pi^*(\xi_{\R}) = (\pi^*\xi)_{\R} = \lambda'_{\R} \oplus \xi'_{\R}
\end{equation*}
admits an algebraic structure. Since $\lambda'_{\R}$ admits an algebraic
structure, there exists an algebraic $\R$-vector bundle $\eta$ on $X'$
such that the direct sum $\eta \oplus \lambda'_{\R}$ is topologically
isomorphic to a trivial algebraic $\R$-vector bundle $\varepsilon$ on
$X'$, cf. \cite[Theorem~12.1.7]{bib9} or Proposition~\ref{prop-3-7}.
Consequently, the $\R$-vector bundle $\varepsilon \oplus \xi'_{\R}$
admits an algebraic structure, being topo\-logi\-cal\-ly isomorphic to $\eta
\oplus \pi^*(\xi_{\R})$. According to \cite[Proposition~12.3.5]{bib9} or
Corollary~\ref{cor-3-13}, the $\R$-vector bundle $\xi'_{\R}$ admits an
algebraic structure. Now, by the induction hypothesis, the $\F$-vector
bundle $\xi'$ on $X'$ admits a stratified-algebraic structure.
Consequently, the $\F$-vector bundle $\pi^*\xi$ on $X'$ admits a
stratified-algebraic structure.

Consider the topological $\F$-vector bundle $\xi|_Z$ on $Z$. Since the
$\R$-vector bundle
\begin{equation*}%---only for good looks
(\xi|_Z)_{\R} = (\xi_{\R})|_Z
\end{equation*}
is topologically
isomorphic to $\theta|_Z$, the construction above can be repeated with
$\xi|_Z$ substituted for $\xi$. By continuing this process, we obtain a
filtration $\FC = (X_{-1}, X_0, \ldots, X_m)$ of $X$ such that for each
$i=0, \ldots, m$, the following two conditions are satisfied:
\begin{starconditions}
\item $X_i$ is a nonsingular Zariski closed subvariety of $X$;

\item If $\pi_i \colon X'_i \to X_i$ is the blowup of $X_i$ with center
$X_{i-1}$, then the pullback $\F$-vector bundle $\pi_i^* (\xi|_{X_i})$
on $X'_i$ admits a stratified-algebraic structure.
\end{starconditions}
According to Theorem~\ref{th-5-4}, the topological $\F$-vector bundle
$\xi$ admits a stratified-algebraic structure, as required.
\end{proof}

\begin{proof}[Proof of Theorem~\ref{th-1-7}]
Let $\xi$ be a topological $\F$-vector bundle on $X$. Obviously, if
$\xi$ admits a stratified-algebraic structure, then so does $\xi_{\R}$.

Suppose now that $\xi_{\R}$ admits a stratified-algebraic structure. We
prove by induction on $\dim X$ that $\xi$ admits a stratified-algebraic
structure. In view of Theorem~\ref{th-1-6}, there exists a birational
multiblowup $\pi \colon X' \to X$ such that $X'$ is a nonsingular
variety and the $\R$-vector bundle $\pi^*(\xi_{\R}) = (\pi^*\xi)_{\R}$ on
$X'$ admits an algebraic structure. By Lemma~\ref{lem-6-5}, the
$\F$-vector bundle $\pi^*\xi$ on $X'$ admits a stratified-algebraic
structure. Let $A$ be a Zariski closed
subvariety such that $\dim A < \dim X$ and $\pi$ is a multiblowup over
$A$. The $\R$-vector bundle $(\xi|_A)_{\R} = (\xi_{\R})|_A$ on $A$ admits a
stratified-algebraic structure. Hence, by the induction hypothesis, the
$\F$-vector bundle $\xi|_A$ on $A$ admits a stratified-algebraic
structure. Note that $\FC = (\varnothing, A, X)$ is a filtration of $X$.
According to Theorem~\ref{th-5-4}, the $\F$-vector bundle $\xi$ admits a
stratified-algebraic structure.
\end{proof}

We show next that Theorem~\ref{th-1-7} can be restated as an
approximation result. If $K$ is a nonempty finite collection of
nonnegative integers, we set $d(\F)K \coloneqq \{ d(\F)k \mid k \in K
\}$. Any $\F$-vector subspace $V$ of $\F^n$ can be regarded as an
$\R$-vector subspace of $\R^{d(\F)n}$, which is indicated by $V_{\R}$.
The correspondence $V \to V_{\R}$ gives rise to a regular map
\begin{equation*}
i \colon \G_K(\F^n) \to \G_{d(\F)K}(\R^{d(\F)n}).
\end{equation*}

\begin{theorem}\label{th-6-6}
Let $X$ be a compact real algebraic variety. For a continuous map 
\begin{equation*}%--only for good looks
f \colon X \to \G_K(\F^n),
\end{equation*}
the following conditions are equivalent:
\begin{conditions}
\item\label{th-6-6-a} The map $f \colon X \to \G_K(\F^n)$ can be
approximated by stratified-regular maps.

\item\label{th-6-6-b} The map $i \circ f \colon X \to \G_{d(\F)K}
(\R^{d(\F)n})$ can be approximated by stratified-regular maps.
\end{conditions}
\end{theorem}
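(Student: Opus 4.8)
The plan is to deduce Theorem~\ref{th-6-6} from Theorem~\ref{th-1-7}, using Theorem~\ref{th-4-10} to convert the two approximation statements into assertions about the existence of stratified-algebraic structures on pullback bundles. Since $X$ is compact, Theorem~\ref{th-4-10} applied with $A = \varnothing$ shows that condition~(\ref{th-6-6-a}) holds if and only if the $\F$-vector bundle $f^*\gamma_K(\F^n)$ on $X$ admits a stratified-algebraic structure. Applying Theorem~\ref{th-4-10} once more, this time with $A = \varnothing$ and with the multi-Grassmannian $\G_{d(\F)K}(\R^{d(\F)n})$ in place of $\G_K(\F^n)$, shows that condition~(\ref{th-6-6-b}) holds if and only if the $\R$-vector bundle $(i \circ f)^*\gamma_{d(\F)K}(\R^{d(\F)n})$ on $X$ admits a stratified-algebraic structure.

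The next step is to identify these two pullback bundles. The map $i$ is regular and $\gamma_{d(\F)K}(\R^{d(\F)n})$ is an algebraic $\R$-vector subbundle of the standard trivial bundle $\varepsilon_{\G_{d(\F)K}(\R^{d(\F)n})}^{d(\F)n}(\R)$, so $i^*\gamma_{d(\F)K}(\R^{d(\F)n})$ is an algebraic $\R$-vector subbundle of $\varepsilon_{\G_K(\F^n)}^{d(\F)n}(\R)$. Its fiber over a point $V$ of $\G_K(\F^n)$ equals $i(V) = V_{\R}$, which is exactly the fiber of the realification $(\gamma_K(\F^n))_{\R}$ over $V$; hence the two coincide as $\R$-vector subbundles of $\varepsilon_{\G_K(\F^n)}^{d(\F)n}(\R)$. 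Since realification commutes with pullback, it follows that
\[
(i \circ f)^*\gamma_{d(\F)K}(\R^{d(\F)n}) = f^*\big(i^*\gamma_{d(\F)K}(\R^{d(\F)n})\big) = \big(f^*\gamma_K(\F^n)\big)_{\R}.
\]

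Putting $\xi \coloneqq f^*\gamma_K(\F^n)$, the previous two paragraphs show that condition~(\ref{th-6-6-a}) is equivalent to ``$\xi$ admits a stratified-algebraic structure as an $\F$-vector bundle'' and condition~(\ref{th-6-6-b}) is equivalent to ``$\xi_{\R}$ admits a stratified-algebraic structure as an $\R$-vector bundle''; these are precisely the two conditions appearing in Theorem~\ref{th-1-7}, and so they are equivalent, which completes the proof. All of the substance is contained in Theorem~\ref{th-1-7}; the present argument is merely its reformulation in terms of approximation, so the only point demanding attention is the routine but indispensable verification that $i^*\gamma_{d(\F)K}(\R^{d(\F)n}) = (\gamma_K(\F^n))_{\R}$ and that realification of vector bundles is compatible with taking pullbacks.
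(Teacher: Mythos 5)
Your proposal is correct and follows essentially the same route as the paper: both reduce each approximation condition via Theorem~\ref{th-4-10} (with $A=\varnothing$) to the existence of a stratified-algebraic structure on the corresponding pullback bundle, use the identity $(f^*\gamma_K(\F^n))_{\R}=(i\circ f)^*\gamma_{d(\F)K}(\R^{d(\F)n})$, and invoke Theorem~\ref{th-1-7}. The only cosmetic difference is that the paper dispatches the implication (\ref{th-6-6-a})$\Rightarrow$(\ref{th-6-6-b}) directly from the regularity of $i$, whereas you route both directions through the bundle-theoretic characterization.
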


\begin{proof}
Condition (\ref{th-6-6-a}) implies (\ref{th-6-6-b}), the map $i$ being
regular. Since
\begin{equation*}
(\gamma_K(\F^n))_{\R} = i^*( \gamma_{d(\F)K} (\R^{d(\F)n}) ),
\end{equation*}
we get
\begin{equation*}
( f^* \gamma_K (\F^n) )_{\R} = f^* ( (\gamma_K(\F^n))_{\R} ) = (i \circ
f)^* \gamma_{d(\F)K} (\R^{d(\F)n}).
\end{equation*}
Hence, according to Theorem~\ref{th-4-10} (with $\F=\R$, $f = i \circ
f$, $A = \varnothing$), condition (\ref{th-6-6-b}) implies that the
$\R$-vector bundle $(f^*\gamma_K(\F^n))_{\R}$ admits a
stratified-algebraic structure. In view of Theorem~\ref{th-1-7}, the
$\F$-vector bundle $f^*\gamma_K(\F^n)$ admits a stratified-algebraic
structure. Making again use of Theorem~\ref{th-4-10} (with $A =
\varnothing$), we conclude that (\ref{th-6-6-b}) implies
(\ref{th-6-6-a}).
\end{proof}

In view of Theorem~\ref{th-4-10}, one readily sees that
Theorem~\ref{th-6-6} implies Theorem~\ref{th-1-7}.

Proofs of Theorems~\ref{th-1-8} and~\ref{th-1-9} require further
preparation.

\begin{lemma}\label{lem-6-7}
Let $X$ be a compact nonsingular real algebraic variety and let $\xi$ be
an adapted smooth $\F$-vector bundle on $X$. Then there exists a smooth
section $s \colon X \to \xi$ transverse to the zero section and such
that $Z(s)$ is a nonsingular Zariski locally closed subvariety of $X$.
\end{lemma}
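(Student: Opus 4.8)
The plan is to take the section $u$ furnished by the definition of ``adapted'' and transport it along a diffeomorphism of $X$ that carries its zero locus onto the nonsingular Zariski locally closed subvariety $Z$.

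First I would unwind the hypothesis: since $\xi$ is adapted, there is a smooth section $u \colon X \to \xi$, transverse to the zero section, whose zero locus $W \coloneqq Z(u)$ is a compact smooth submanifold of $X$ smoothly isotopic to a nonsingular Zariski locally closed subvariety $Z$ of $X$. Because $W$ is a compact submanifold of the compact manifold $X$, the isotopy extension theorem (cf. \cite{bib27}) upgrades this isotopy to a smooth ambient isotopy $H_t \colon X \to X$, $t \in [0,1]$, with $H_0 = \mathrm{id}_X$, each $H_t$ a diffeomorphism, and $H_1(W) = Z$.

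Next I would pull $u$ back along $\phi \coloneqq H_1^{-1}$: form the pullback $\F$-vector bundle $\phi^*\xi$ on $X$ together with the section $\phi^*u$, whose value over a point $x$ corresponds to $u(H_1^{-1}(x))$ in the fiber $(\phi^*\xi)_x = \xi_{H_1^{-1}(x)}$. Being the pullback of a transverse section under a diffeomorphism, $\phi^*u$ is transverse to the zero section of $\phi^*\xi$, and its zero locus is $\phi^{-1}(W) = H_1(W) = Z$. Now $\phi = H_1^{-1}$ is smoothly isotopic, hence homotopic, to $\mathrm{id}_X$ via $t \mapsto H_t^{-1}$; since $X$ is compact, $\phi^*\xi$ is therefore smoothly isomorphic to $\xi$ through some smooth $\F$-vector bundle isomorphism $\Phi \colon \phi^*\xi \to \xi$ covering $\mathrm{id}_X$. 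The section $s \coloneqq \Phi \circ (\phi^*u) \colon X \to \xi$ is then smooth, transverse to the zero section because $\Phi$ is a bundle isomorphism, and satisfies $Z(s) = Z(\phi^*u) = Z$, a nonsingular Zariski locally closed subvariety of $X$, as required.

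The verifications that transversality is preserved under pullback by a diffeomorphism and under composition with a bundle isomorphism, and that the zero locus transforms as claimed, are routine. I expect the only step requiring any care to be the passage from the isotopy of submanifolds in the definition of ``adapted'' to a genuine ambient isotopy of $X$; this is exactly the point at which compactness of $X$ (and hence of $W$) is used.
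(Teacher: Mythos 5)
Your proposal is correct and follows essentially the same route as the paper: extend the isotopy of $Z(u)$ to an ambient diffeotopy (the paper cites \cite[p.~180, Theorem~1.3]{bib27}), pull $u$ back along the resulting diffeomorphism of $X$ (which is homotopic to the identity, so the pulled-back bundle is smoothly isomorphic to $\xi$), and transport the section through that isomorphism. The paper phrases this with a diffeomorphism $h$ satisfying $h(Z)=Z(u)$, which is exactly your $\phi=H_1^{-1}$, so the two arguments coincide.
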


\begin{proof}
According to the definition of an adapted vector bundle, we can choose a
smooth section $u \colon X \to \xi$ transverse to the zero section and
such that its zero locus $Z(u)$ is smoothly isotopic to a nonsingular
Zariski locally closed subvariety $Z$ of $X$. Since any isotopy can be
extended to a diffeotopy \cite[p.~180, Theorem~1.3]{bib27}, there exists
a smooth diffeomorphism $h \colon X \to X$ that is homotopic to the
identity map $\one_X$ of $X$ and satisfies $h(Z) = Z(u)$. The
pullback section $h^*u \colon X \to h^*\xi$ is transverse to the zero
section and $Z(h^*u)=Z$. The proof is complete since the $\F$-vector
bundle $h^*\xi$ is smoothly isomorphic to $\xi$, the maps $h$ and
$\one_X$ being homotopic.
\end{proof}

\begin{lemma}\label{lem-6-8}
Let $X$ be a nonsingular real algebraic variety. Let $\xi$ be a smooth
$\F$-vector bundle on $X$, and let $s \colon X \to \xi$ be a smooth
section transverse to the zero section and such that $Z \coloneqq Z(s)$
is a nonsingular Zariski locally closed subvariety of $X$. Let $V$ be
the Zariski closure of $Z$ in $X$ and let $W \coloneqq V \setminus Z$.
Then there exists a multiblowup $\rho \colon \tilde{X} \to X$ over $W$
such that $\tilde{Z} \coloneqq \rho^{-1}(Z)$ is a nonsingular Zariski
closed subvariety of $\tilde{X}$, the pullback section $\rho^*s \colon
\tilde{X} \to \rho^*\xi$ is transverse to the zero section, and
$Z(\rho^*s) = \tilde{Z}$. Furthermore, if the variety $X$ is compact and
the $\F$-vector bundles $\xi|_W$ on $W$ and $\rho^*\xi$ on $\tilde{X}$
admit stratified-algebraic structures, then $\xi$ admits a
stratified-algebraic structure.
\end{lemma}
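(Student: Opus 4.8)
The plan is to prove the two assertions separately: first the geometric construction of $\rho$, then the claim about stratified-algebraic structures. For the construction I would begin by observing that $Z$ is contained in the nonsingular locus of its Zariski closure $V$, so the singular locus of $V$ is contained in $W = V \setminus Z$. Hironaka's theorem on resolution of singularities \cite{bib25, bib30} then provides a multiblowup $\rho \colon \tilde X \to X$ over $W$ — all centers may be taken over the singular locus of $V$, hence over $W$ — such that the Zariski closure $\tilde Z$ of $\rho^{-1}(Z)$ in $\tilde X$, namely the strict transform of $V$, is a nonsingular variety ($\tilde X$ being nonsingular as well, since $X$ is). The heart of the matter is then the equality $\rho^{-1}(Z) = \tilde Z$. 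Since $\rho$ restricts to a biregular isomorphism over $X \setminus W$ and $Z \subseteq X \setminus W$, the set $\rho^{-1}(Z) = \tilde Z \setminus \rho^{-1}(W)$ is Zariski locally closed in $\tilde X$ and Zariski dense in $\tilde Z$; moreover, $Z$ being a closed subset of $X$ in the Euclidean topology, $\rho^{-1}(Z)$ is both closed and open in $\tilde Z$ in the Euclidean topology. Hence $\tilde Z \setminus \rho^{-1}(Z)$ is a Zariski closed subset of $\tilde Z$ which has dimension strictly less than $\dim \tilde Z$ along each irreducible component (as $\rho^{-1}(Z)$ is Zariski dense) and is open in the Euclidean topology; because $\tilde Z$ is nonsingular, such a set has empty Euclidean interior, so $\tilde Z \setminus \rho^{-1}(Z) = \varnothing$. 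This gives $\tilde Z = \rho^{-1}(Z)$, a nonsingular Zariski closed subvariety of $\tilde X$, together with $Z(\rho^* s) = \rho^{-1}(Z(s)) = \tilde Z$. Finally, $\rho^* s$ is transverse to the zero section: over $X \setminus W$ this follows from $\rho$ being there a biregular isomorphism, and over $\rho^{-1}(W)$ the section $s$ is nowhere zero, its zero locus $Z$ being disjoint from $W$, hence so is $\rho^* s$.

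For the ``Furthermore'' clause I would apply Theorem~\ref{th-5-4}. As $V$ is Zariski closed in $X$ and $Z$ is Zariski open in $V$, the set $W = V \setminus Z$ is a Zariski closed subvariety of $X$, so $\FC = (\varnothing, W, X)$ is a filtration of $X$. For the stratum $W$, with predecessor $\varnothing$, take the multiblowup $\one_W \colon W \to W$ over $\varnothing$; then $\one_W^*(\xi|_W) = \xi|_W$ admits a stratified-algebraic structure by hypothesis. For the stratum $X$, with predecessor $W$, take the multiblowup $\rho \colon \tilde X \to X$, which by the first part is over $W$; then $\rho^*(\xi|_X) = \rho^* \xi$ admits a stratified-algebraic structure, again by hypothesis. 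Since $X$ is compact, Theorem~\ref{th-5-4} now yields that $\xi$ admits a stratified-algebraic structure.

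The delicate step is the resolution in the first paragraph: one must organize the blowups so that the strict transform of $V$ becomes nonsingular while every center stays over $W$, so that $\rho$ is genuinely a multiblowup over $W$. Granting this, the equality $\rho^{-1}(Z) = \tilde Z$ rests only on the comparison of the Euclidean and Zariski topologies on the nonsingular variety $\tilde Z$, and the ``Furthermore'' clause is a direct application of the filtration criterion of Section~\ref{sec-5}.
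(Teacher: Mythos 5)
Your argument is correct and follows essentially the same route as the paper: Hironaka's resolution applied over $W=\operatorname{Sing}V$ to make the Zariski closure $\tilde Z$ of $\rho^{-1}(Z)$ nonsingular, the Euclidean closedness of $Z$ forcing $\tilde Z=\rho^{-1}(Z)$, and Theorem~\ref{th-5-4} with the filtration $(\varnothing,W,X)$ for the final assertion. Your expanded justification of the equality $\tilde Z=\rho^{-1}(Z)$ (via the clopen complement having empty Euclidean interior in the nonsingular $\tilde Z$) merely fills in a step the paper states without detail.
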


\begin{proof}
Since $Z$ is closed in the Euclidean topology, it readily follows that
$W$ is the singular locus of $V$. According to Hironaka's theorem on
resolution of singularities \cite{bib25, bib30}, there exists a
multiblowup $\rho \colon \tilde{X} \to X$ over $W$ such that the Zariski
closure $\tilde{Z}$ of $\rho^{-1}(Z)$ in $\tilde{X}$ is a nonsingular
subvariety. In the case under consideration, $\tilde{Z} = \rho^{-1}(Z)$,
the set $Z$ being closed in the Euclidean topology. Since the
restriction $\rho_W \colon \tilde{X} \setminus \rho^{-1}(W) \to X
\setminus W$ of $\rho$ is a biregular isomorphism, the pullback section
$\rho^*s \colon \tilde{X} \to \rho^*\xi$ is transverse to the zero
section. By construction, $Z(\rho^*s) = \tilde{Z}$. Note that $\FC =
(\varnothing, W, X)$ is a filtration of $X$. The last assertion in
Lemma~\ref{lem-6-8} follows from Theorem~\ref{th-5-4}.
\end{proof}

\begin{proof}[Proof of Theorem~\ref{th-1-8}]
Let $\xi$ be an adapted smooth $\F$-line bundle on $X$. According to
Lemma~\ref{lem-6-7}, there exists a smooth section $s \colon X \to \xi$
transverse to the zero section and such that $Z \coloneqq Z(s)$ is a
nonsingular Zariski locally closed subvariety of $X$. Let $V$ be the
Zariski closure of $Z$ in $X$ and let $W \coloneqq V \setminus Z$. Since
$\xi$ is of rank $1$ and $Z(s|_W) = \varnothing$, it follows that the
$\F$-vector bundle $\xi|_W$ on $W$ is topologically trivial. In
particular, $\xi|_W$ admits a stratified-algebraic structure.

In view of Lemma~\ref{lem-6-8}, we may assume without loss of generality
that $Z$ is a nonsingular and Zariski closed subvariety of $X$ (that is,
$W=\varnothing$). Let $\pi \colon X' \to X$ be the blowup with center
$Z$. According to Corollary~\ref{cor-6-3}, the smooth $\R$-vector bundle
$\pi^*(\xi_{\R}) = (\pi^*\xi)_{\R}$ on $X'$ contains a smooth $\R$-line
subbundle $\lambda$ admitting an algebraic structure. Since $\pi^*\xi$
is an $\F$-line bundle, it follows that it is topologically isomorphic
to $\F \otimes \lambda$. Consequently, $\pi^*\xi$ admits an algebraic
structure. The restriction $(\xi_{\R})|_Z$ is smoothly
isomorphic to the normal bundle to $Z$ in $X$, cf. Lemma~\ref{lem-6-1}.
In particular, the $\R$-vector bundle $(\xi|_Z)_{\R} = (\xi_{\R})|_Z$
admits an algebraic structure. By Theorem~\ref{th-1-7}, the $\F$-line
bundle $\xi|_Z$ admits a stratified-algebraic structure. Since $\FC =
(\varnothing, Z, X)$ is a filtration of $X$, Theorem~\ref{th-5-4}
implies that $\xi$ admits a stratified-algebraic structure.
\end{proof}

\begin{proof}[Proof of Theorem~\ref{th-1-9}]
Assume that both $\F$-vectors bundles $\xi$ and $\det\xi$ are adapted.
According to Theorem~\ref{th-1-8}, the $\F$-line bundle $\det\xi$ admits
a stratified algebraic structure.

By Lemma~\ref{lem-6-7}, there exists a smooth section $s \colon X \to
\xi$ transverse to the zero section and such that $Z \coloneqq Z(s)$ is
a nonsingular Zariski locally closed subvariety of $X$. Let $V$ be the
Zariski closure of $Z$ in $X$ and let $W \coloneqq V \setminus Z$. Since
$\xi$ is of rank $2$ and $Z(s|_W) = \varnothing$, it follows that
$\xi|_W$ is topologically isomorphic to $\varepsilon^1_W(\F)\oplus\mu$
for some topological $\F$-line bundle $\mu$ on $W$. We have
\begin{equation*}
(\det\xi)|_W = \det(\xi|_W) \cong \det(\varepsilon^1_W(\F) \oplus \mu)
\cong \varepsilon^1_W(\F) \otimes \mu \cong \mu.
\end{equation*}
Consequently, the $\F$-vector bundle $\xi|_W$ on $W$ admits a
stratified-algebraic structure.

In view of Lemma~\ref{lem-6-8}, we may assume without loss of generality
that $Z$ is a nonsingular and Zariski closed subvariety of $X$ (that is,
$W = \varnothing$). Let $\pi \colon X' \to X$ be the blowup with center
$Z$. According to Corollary~\ref{cor-6-3}, the smooth $\R$-vector bundle
$\pi^*(\xi_{\R}) = (\pi^*\xi)_{\R}$ contains a smooth $\R$-line
subbundle $\lambda$ admitting an algebraic structure. Since $\pi^*\xi$
is an $\F$-vector bundle, it contains a smooth $\F$-line subbundle
$\xi_1$ isomorphic to $\F \otimes \lambda$. Hence
\begin{equation*}
\pi^*\xi = \xi_1 \oplus \xi_2,
\end{equation*}
where $\xi_1$ and $\xi_2$ are smooth $\F$-line bundles, and $\xi_1$
admits an algebraic structure. We have
\begin{equation*}
\pi^*(\det\xi) = \det(\pi^*\xi) = \det(\xi_1\oplus\xi_2) \cong \xi_1
\otimes \xi_2,
\end{equation*}
which implies
\begin{equation*}
\xi_1^{\vee} \otimes \pi^* (\det\xi) \cong \xi_1^{\vee} \otimes \xi_1
\otimes \xi_2 \cong \xi_2.
\end{equation*}
Thus, according to Propositions~\ref{prop-2-6} and~\ref{prop-3-15},
$\xi_2$ admits a stratified-algebraic structure. Consequently,
$\pi^*\xi$ admits a stratified-algebraic structure. The restriction
$(\xi_{\R})|_Z$ is smooth\-ly isomorphic to the normal bundle to $Z$ in
$X$, cf. Lemma~\ref{lem-6-1}. In particular, the $\R$-vector bundle
$(\xi|_Z)_{\R} = (\xi_{\R})|_Z$ admits an algebraic structure. By
Theorem~\ref{th-1-7}, the $\F$-vector bundle $\xi|_Z$ admits a
stratified-algebraic structure. Since $\FC
= (\varnothing, Z, X)$ is a filtration of $X$, Theorem~\ref{th-5-4}
implies that $\xi$ admits a stratified-algebraic structure.
\end{proof}

There are natural generalizations of Theorems~\ref{th-1-8}
and~\ref{th-1-9} to vector bundles on an arbitrary compact real
algebraic variety (not necessarily nonsingular).

First we recall a topological fact. If $M$ is a smooth manifold, then
each topological $\F$-vector bundle $\eta$ on $M$ is topologically
isomorphic to a smooth $\F$-vector bundle $\eta^{\infty}$, which is
uniquely determined up to smooth isomorphism, cf. \cite[p.~101,
Theorem~3.5]{bib27}.

A topological $\F$-vector bundle $\xi$ on a compact nonsingular real
algebraic variety $Y$ is said to be \emph{adapted} if the smooth
$\F$-vector bundle $\xi^{\infty}$ is adapted. In particular, $\xi$ is
adapted if it admits an algebraic structure.

\begin{theorem}\label{th-6-9}
Let $X$ be a compact real algebraic variety and let $\xi$ be a
topological $\F$-line bundle on $X$. Then the following conditions are
equivalent:
\begin{conditions}
\item\label{th-6-9-a} The $\F$-line bundle $\xi$ admits a
stratified-algebraic structure.

\item\label{th-6-9-b} There exists a filtration $\FC = (X_{-1}, X_0,
\ldots, X_m)$ of
$X$ with $\overline{\FC}$ a nonsingular strati\-fi\-ca\-tion, and for each $i
= 0, \ldots, m$, there exists a multiblowup $\pi_i \colon X'_i \to X_i$
over $X_{i-1}$ such that $X'_i$ is a nonsingular variety and the
pullback $\F$-line bundle $\pi_i^*(\xi|_{X_i})$ on $X'_i$ is adapted.
\end{conditions}
\end{theorem}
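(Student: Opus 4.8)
The plan is to derive Theorem~\ref{th-6-9} by assembling three results already available: Corollary~\ref{cor-5-3}, Theorem~\ref{th-1-8}, and Theorem~\ref{th-5-4}. No essentially new argument is needed; the work consists in matching the hypothesis ``adapted'' against the hypothesis ``admits an algebraic structure'' that occurs in those statements, and in checking that the geometric operations involved preserve compactness.

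For the implication (\ref{th-6-9-a}) $\Rightarrow$ (\ref{th-6-9-b}), I would first apply Corollary~\ref{cor-5-3}. Since $\xi$ admits a stratified-algebraic structure, this produces a filtration $\FC = (X_{-1}, X_0, \ldots, X_m)$ of $X$ with $\overline{\FC}$ a nonsingular stratification and, for each $i$, a multiblowup $\pi_i \colon X'_i \to X_i$ over $X_{i-1}$ with $X'_i$ nonsingular such that $\pi_i^*(\xi|_{X_i})$ admits an algebraic structure. Each $X_i$ is a Zariski closed subvariety of the compact variety $X$, hence is compact, and a multiblowup is proper, so each $X'_i$ is compact as well. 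By the remark preceding the theorem, a topological $\F$-line bundle on a compact nonsingular real algebraic variety that admits an algebraic structure is adapted; therefore $\pi_i^*(\xi|_{X_i})$ is adapted, which is exactly condition (\ref{th-6-9-b}).

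For the implication (\ref{th-6-9-b}) $\Rightarrow$ (\ref{th-6-9-a}), fix $i$. As noted, $X'_i$ is a compact nonsingular real algebraic variety. The $\F$-line bundle $\pi_i^*(\xi|_{X_i})$ is adapted, meaning that its smooth model $(\pi_i^*(\xi|_{X_i}))^{\infty}$ is an adapted smooth $\F$-line bundle on $X'_i$; by Theorem~\ref{th-1-8} this smooth bundle is topologically isomorphic to a stratified-algebraic $\F$-line bundle, and hence $\pi_i^*(\xi|_{X_i})$ itself admits a stratified-algebraic structure. Since $\FC$ is a filtration of $X$ and each $\pi_i$ is a multiblowup over $X_{i-1}$ for which $\pi_i^*(\xi|_{X_i})$ admits a stratified-algebraic structure, the implication (\ref{th-5-4-b}) $\Rightarrow$ (\ref{th-5-4-a}) of Theorem~\ref{th-5-4} applies and yields that $\xi$ admits a stratified-algebraic structure.

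The only delicate point — what plays the role of the ``obstacle'' here — is the bookkeeping around the word ``adapted''. In the first direction one must observe that an algebraic $\F$-line bundle on a compact nonsingular variety is automatically adapted, so that the conclusion of Corollary~\ref{cor-5-3} already contains the (formally weaker) statement (\ref{th-6-9-b}). In the second direction one must use Theorem~\ref{th-1-8} to convert adaptedness back into a stratified-algebraic structure on each $X'_i$, and then rely on Theorem~\ref{th-5-4}, rather than Corollary~\ref{cor-5-3}, for the gluing step, precisely because Theorem~\ref{th-5-4} only requires the intermediate pullbacks $\pi_i^*(\xi|_{X_i})$ to admit stratified-algebraic (not literally algebraic) structures. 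One should also record that restriction to a Zariski closed subvariety and passage to a multiblowup preserve compactness, so that Theorem~\ref{th-1-8} is legitimately applicable on every $X'_i$.
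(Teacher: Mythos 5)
Your proof is correct and follows exactly the paper's own route: Corollary~\ref{cor-5-3} for (\ref{th-6-9-a})~$\Rightarrow$~(\ref{th-6-9-b}) (using that an algebraic structure on a compact nonsingular variety implies adaptedness), and Theorems~\ref{th-1-8} and~\ref{th-5-4} for the converse. The additional bookkeeping you supply (compactness of the $X_i$ and $X'_i$, and the choice of Theorem~\ref{th-5-4} over Corollary~\ref{cor-5-3} for the gluing step) is accurate and merely makes explicit what the paper leaves implicit.
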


\begin{proof}
According to Corollary~\ref{cor-5-3}, (\ref{th-6-9-a}) implies
(\ref{th-6-9-b}). In view of Theorems~\ref{th-5-4} and~\ref{th-1-8},
(\ref{th-6-9-b}) implies (\ref{th-6-9-a}).
\end{proof}

\begin{theorem}\label{th-6-10}
Let $X$ be a compact real algebraic variety and let $\xi$ be a
topological $\F$-vector bundle of rank $2$ on $X$, where $\F=\R$ of
$\F=\CB$. Then the following conditions are equivalent:
\begin{conditions}
\item\label{th-6-10-a} The $\F$-vector bundle $\xi$ admits a
stratified-algebraic structure.

\item\label{th-6-10-b} There exists a filtration $\FC = (X_{-1}, X_0,
\ldots, X_m)$ of $X$ with $\overline{\FC}$ a nonsingular
stratification, and for each $i = 0, \ldots, m$, there exists a
multiblowup $\pi_i \colon X'_i \to X_i$ over $X_{i-1}$ such that $X'_i$
is a nonsingular variety and the pullback $\F$-vector bundles
$\pi_i^*(\xi|_{X_i})$ and $\pi_i^*((\det\xi)|_{X_i})$ on $X'_i$ are
adapted.
\end{conditions}
\end{theorem}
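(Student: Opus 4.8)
The plan is to imitate the proof of Theorem~\ref{th-6-9}, replacing the appeal to Theorem~\ref{th-1-8} by an appeal to Theorem~\ref{th-1-9} and carrying the determinant line bundle along at every stage. The two implications are handled independently.

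For the implication (\ref{th-6-10-a})$\Rightarrow$(\ref{th-6-10-b}) I would invoke Corollary~\ref{cor-5-3} for $\xi$. This produces a filtration $\FC=(X_{-1},X_0,\ldots,X_m)$ of $X$ with $\overline{\FC}$ nonsingular and, for each $i=0,\ldots,m$, a multiblowup $\pi_i\colon X'_i\to X_i$ over $X_{i-1}$ with $X'_i$ nonsingular such that $\pi_i^*(\xi|_{X_i})$ admits an algebraic structure; in particular $\pi_i^*(\xi|_{X_i})$ is adapted. For the determinant I would use that restriction and pullback commute with $\det$, namely
\begin{equation*}
\pi_i^*\big((\det\xi)|_{X_i}\big)=\pi_i^*\big(\det(\xi|_{X_i})\big)=\det\big(\pi_i^*(\xi|_{X_i})\big),
\end{equation*}
together with the fact that the determinant of an algebraic $\F$-vector bundle is an algebraic $\F$-line bundle (Proposition~\ref{prop-3-15}, available since $\F=\R$ or $\F=\CB$). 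Hence $\pi_i^*((\det\xi)|_{X_i})$ also admits an algebraic structure and is in particular adapted, and (\ref{th-6-10-b}) follows.

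For the converse (\ref{th-6-10-b})$\Rightarrow$(\ref{th-6-10-a}) I would fix $i$ and work on the variety $X'_i$, which is nonsingular by the hypothesis of (\ref{th-6-10-b}) and compact because $X_i$ is a Zariski closed subvariety of the compact $X$ and $\pi_i$ is proper. By hypothesis $\pi_i^*(\xi|_{X_i})$ is an adapted rank-$2$ $\F$-vector bundle on $X'_i$, and, using once more that $\det$ commutes with restriction and pullback, its determinant $\det(\pi_i^*(\xi|_{X_i}))=\pi_i^*((\det\xi)|_{X_i})$ is adapted as well. Passing to the smoothing and applying Theorem~\ref{th-1-9} then shows that $\pi_i^*(\xi|_{X_i})$ admits a stratified-algebraic structure. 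Since this holds for every $i$, Theorem~\ref{th-5-4}, applied to the filtration $\FC$ and the multiblowups $\pi_i$, yields that $\xi$ admits a stratified-algebraic structure.

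I do not expect a serious obstacle: the substantive content is imported wholesale from Theorem~\ref{th-1-9}, Theorem~\ref{th-5-4} and Corollary~\ref{cor-5-3}, and the statement is of the same ``recorded for completeness'' character as Theorem~\ref{th-6-9}. The only point that calls for a little care --- the nearest thing to an obstacle --- is making sure that a \emph{single} filtration and a \emph{single} system of multiblowups serve both $\xi$ and $\det\xi$ simultaneously in each direction; this is precisely why one relies on the identity $\det(\pi_i^*(\xi|_{X_i}))=\pi_i^*((\det\xi)|_{X_i})$ rather than applying Corollary~\ref{cor-5-3} separately to $\xi$ and to $\det\xi$ and then trying to reconcile two unrelated filtrations.
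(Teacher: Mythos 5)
Your proposal is correct and follows exactly the paper's route: Corollary~\ref{cor-5-3} for (\ref{th-6-10-a})$\Rightarrow$(\ref{th-6-10-b}), and Theorems~\ref{th-1-9} and~\ref{th-5-4} for the converse. The paper states this in two sentences; you have merely made explicit the (correct) bookkeeping that $\det$ commutes with restriction and pullback, so that a single filtration and system of multiblowups serves both $\xi$ and $\det\xi$.
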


\begin{proof}
According to Corollary~\ref{cor-5-3}, (\ref{th-6-10-a}) implies
(\ref{th-6-10-b}). In view of Theorems~\ref{th-5-4} and~\ref{th-1-9},
(\ref{th-6-10-b}) implies (\ref{th-6-10-a}).
\end{proof}

%\begin{document} %---for spell checking in vim
\section{Stratified-algebraic cohomology classes}\label{sec-7}
To begin with, we summarize basic properties of algebraic cohomology
classes. Let $V$ be a compact nonsingular real algebraic variety. A
cohomology class $v$ in $H^k(V;\Z/2)$ is said to be \emph{algebraic} if
the homology class Poincar\'e dual to it can be represented by a Zariski
closed subvariety of $V$ of codimension $k$. The set $\Halg^k
(V; \Z/2)$ of all algebraic cohomology classes in $H^k(V; \Z/2)$ forms a
subgroup. The direct sum $\Halg^*(V; \Z/2)$ of the groups
$\Halg^k (V; \Z/2)$, for $k \geq 0$, is a subring of the
cohomology ring $H^* (V; \Z/2)$. For any algebraic $\R$-vector bundle
$\xi$ on $V$, its $k$th Stiefel--Whitney class $w_k(\xi)$ belongs to
$\Halg^k(V; \Z/2)$ for every $k \geq 0$. If $h \colon V \to
W$ is a regular map between compact nonsingular real algebraic
varieties, then
\begin{equation*}
h^*(\Halg^*(W; \Z/2)) \subseteq \Halg^* (V; \Z/2).
\end{equation*}
For proofs of the facts listed above we refer to \cite{bib20} or
\cite{bib1, bib6, bib9, bib17}.

We now introduce the main notion of this section. Let $X$ be a real
algebraic variety. A cohomology class $u$ in $H^k(X; \Z/2)$ is said to
be \emph{stratified-algebraic} if there exists a stratified-regular map
$\varphi \colon X \to V$, into a compact nonsingular real algebraic
variety $V$, such that $u = \varphi^*(v)$ for some cohomology class $v$
in $\Halg^k (V; \Z/2)$.

\begin{proposition}\label{prop-7-1}
For any real algebraic variety $X$, the set $\Hstr^k (X;
\Z/2)$ of all stratified-algebraic cohomology classes in $H^k(X; \Z/2)$
forms a subgroup. Furthermore, the direct sum
\begin{equation*}
\Hstr^* (X; \Z/2) = \bigoplus_{k \geq 0} \Hstr^k
(X; \Z/2)
\end{equation*}
is a subring of the cohomology ring $H^*(X;\Z/2)$.
\end{proposition}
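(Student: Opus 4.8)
The plan is to reduce everything to the two closure properties: that $\Hstr^*$ is closed under addition and under cup product. First I would establish the key stability lemma: if $\varphi_i \colon X \to V_i$ are stratified-regular maps into compact nonsingular real algebraic varieties for $i = 1, 2$, then the product map $(\varphi_1, \varphi_2) \colon X \to V_1 \times V_2$ is again stratified-regular (choose a common refinement of the two stratifications of $X$ on which each $\varphi_i$ becomes regular; a regular map into each factor gives a regular map into the product) and $V_1 \times V_2$ is again a compact nonsingular real algebraic variety. This is the mechanism that lets one bring two stratified-algebraic classes "onto the same target''.

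For the subgroup claim: $0 \in \Hstr^k(X; \Z/2)$ trivially (take $V$ a point, or the constant map). Given $u_1 = \varphi_1^*(v_1)$ and $u_2 = \varphi_2^*(v_2)$ with $v_i \in \Halg^k(V_i; \Z/2)$, set $\varphi = (\varphi_1, \varphi_2) \colon X \to V_1 \times V_2$ and let $p_i \colon V_1 \times V_2 \to V_i$ be the canonical projections, which are regular maps between compact nonsingular real algebraic varieties. Then $p_i^*(v_i) \in \Halg^k(V_1 \times V_2; \Z/2)$ by the pullback property of algebraic cohomology recalled just before the proposition, so $v \coloneqq p_1^*(v_1) + p_2^*(v_2)$ lies in $\Halg^k(V_1 \times V_2; \Z/2)$ since $\Halg^*$ is a group; and $\varphi^*(v) = \varphi^*p_1^*(v_1) + \varphi^*p_2^*(v_2) = \varphi_1^*(v_1) + \varphi_2^*(v_2) = u_1 + u_2$. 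Hence $u_1 + u_2 \in \Hstr^k(X; \Z/2)$, and since $\Z/2$ has characteristic $2$ this also handles additive inverses.

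For the subring claim I would argue identically but with cup product in place of sum. Given $u_i = \varphi_i^*(v_i)$ with $v_i \in \Halg^{k_i}(V_i; \Z/2)$, form $\varphi = (\varphi_1, \varphi_2)$ and the projections $p_i$ as above. Each $p_i^*(v_i)$ is algebraic on $V_1 \times V_2$, and since $\Halg^*$ is a subring of $H^*$, the product $v \coloneqq p_1^*(v_1) \cupproduct p_2^*(v_2)$ lies in $\Halg^{k_1 + k_2}(V_1 \times V_2; \Z/2)$. Then $\varphi^*(v) = \varphi^*(p_1^*(v_1)) \cupproduct \varphi^*(p_2^*(v_2)) = u_1 \cupproduct u_2$, using that $\varphi^*$ is a ring homomorphism. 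Thus $u_1 \cupproduct u_2 \in \Hstr^{k_1 + k_2}(X; \Z/2)$. Finally the unit $1 \in H^0(X; \Z/2)$ is stratified-algebraic (pull back $1$ from a point), so $\Hstr^*(X; \Z/2)$ is indeed a subring.

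I do not expect a serious obstacle here: the only point requiring a moment's care is the stability lemma asserting that a product of stratified-regular maps is stratified-regular and that products of compact nonsingular real algebraic varieties stay in the admissible class of targets — both are immediate from the definitions and from the fact that a common refinement of two stratifications of $X$ exists (a fact stated in Section~\ref{sec-1}). Everything else is a formal consequence of the pullback and ring properties of $\Halg^*$ recalled at the start of the section.
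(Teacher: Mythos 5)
Your proposal is correct and follows essentially the same route as the paper's proof: form the map $(\varphi_1,\varphi_2)\colon X\to V_1\times V_2$, note it is stratified-regular, and pull back $p_1^*(v_1)+p_2^*(v_2)$ (resp. $p_1^*(w_1)\cupproduct p_2^*(w_2)$), which are algebraic on the product. The extra details you supply (the common-refinement argument, the zero class, the unit) are exactly the points the paper leaves tacit.
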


\begin{proof}
Let $\varphi_i \colon X \to V_i$ be a stratified-regular map into a compact
nonsingular real algebraic variety $V_i$ for $i=1,2$. The stratified-regular map
$(\varphi_1, \varphi_2) \colon X \to V_1 \times V_2$ satisfies $p_i
\circ (\varphi_1, \varphi_2) = \varphi_i$, where $p_i \colon V_1 \times
V_2 \to V_i$ is the canonical projection. If $v_i$ (resp. $w_i$) is a
cohomology class in $H^k(V_i; \Z/2)$ (resp. $H^{k_i} (V_i; \Z/2)$) for
$i=1,2$, then
\begin{align*}
\varphi_1^*(v_1) + \varphi_2^*(v_2) &= (\varphi_1, \varphi_2)^*
(p_1^*(v_1) + p_2^*(v_2)), \\
\varphi_1^*(w_1) \cupproduct \varphi_2^*(w_2) &= (\varphi_1, \varphi_2)^*
(p_1^*(w_1) \cupproduct p_2^*(w_2)),
\end{align*}
where $\cupproduct$ stands for the cup product. If $v_i$ and $w_i$ are
algebraic cohomology classes, then the cohomology classes $p_1^*(v_1) +
p_2^*(v_2)$ and $p_1^*(w_1) \cupproduct p_2^*(w_2)$ are algebraic too. The
proof is complete.
\end{proof}

\begin{proposition}\label{prop-7-2}
If $f \colon X \to Y$ is a stratified-regular map between real algebraic
varieties, then
\begin{equation*}
f^*(\Hstr^*(Y; \Z/2)) \subseteq \Hstr^* (X; \Z/2).
\end{equation*}
\end{proposition}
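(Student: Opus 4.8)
The plan is to reduce the statement to the fact that a composition of stratified-regular maps is stratified-regular, which is essentially the construction already carried out in the proof of Proposition~\ref{prop-2-6}. First I would unwind the definition. Let $u$ be a cohomology class in $\Hstr^k(Y; \Z/2)$. By definition there exist a compact nonsingular real algebraic variety $V$, a stratified-regular map $\psi \colon Y \to V$, and a cohomology class $v$ in $\Halg^k(V; \Z/2)$ such that $u = \psi^*(v)$. Then
\[
f^*(u) = f^*(\psi^*(v)) = (\psi \circ f)^*(v),
\]
so it will suffice to show that the composite map $\psi \circ f \colon X \to V$ is stratified-regular; once this is established, $f^*(u) = (\psi \circ f)^*(v)$ is by definition a stratified-algebraic cohomology class in $H^k(X; \Z/2)$.

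To see that $\psi \circ f$ is stratified-regular, I would choose a stratification $\TC$ of $X$ such that $f$ is $\TC$-regular and a stratification $\Y$ of $Y$ such that $\psi$ is $\Y$-regular. The collection
\[
\SC \coloneqq \{\, T \cap f^{-1}(P) \mid T \in \TC,\ P \in \Y \,\}
\]
is a stratification of $X$: for $T \in \TC$ the restriction $f|_T \colon T \to Y$ is a regular map, hence $T \cap f^{-1}(P) = (f|_T)^{-1}(P)$ is a Zariski locally closed subvariety of $T$, and therefore of $X$; moreover these sets are pairwise disjoint and their union is $X$. The map $\psi \circ f$ is continuous, and for each stratum $S = T \cap f^{-1}(P)$ in $\SC$ the restriction $(f|_T)|_S \colon S \to P$ is a regular map (being the restriction of the regular map $f|_T$ to a Zariski locally closed subvariety of its source, with image contained in the Zariski locally closed subvariety $P$ of $Y$), while $\psi|_P \colon P \to V$ is regular; hence $(\psi \circ f)|_S = (\psi|_P) \circ ((f|_T)|_S)$ is regular. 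Thus $\psi \circ f$ is $\SC$-regular, and in particular stratified-regular.

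I do not expect any real obstacle here: the only point that requires care is the verification that $\SC$ is a genuine stratification of $X$ and that the relevant restrictions are regular, and this is precisely the argument used in the proof of Proposition~\ref{prop-2-6}. With $\psi \circ f$ shown to be stratified-regular, we conclude that $f^*(u) = (\psi \circ f)^*(v)$ belongs to $\Hstr^k(X; \Z/2)$, and since $k$ was arbitrary this yields $f^*(\Hstr^*(Y; \Z/2)) \subseteq \Hstr^*(X; \Z/2)$.
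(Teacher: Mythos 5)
Your proof is correct and follows the same route as the paper: unwind the definition of a stratified-algebraic class, rewrite $f^*(\psi^*(v))$ as $(\psi\circ f)^*(v)$, and invoke the fact that a composition of stratified-regular maps is stratified-regular. The paper states this last fact without proof, whereas you verify it explicitly via the refined stratification $\{T\cap f^{-1}(P)\}$ — exactly the construction from the proof of Proposition~\ref{prop-2-6} — so the two arguments are essentially identical.
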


\begin{proof}
Let $\psi \colon Y \to W$ be a stratified-regular map into a compact nonsingular
real algebraic variety $W$. For any cohomology class $w$ in $H^k(W;
\Z/2)$,
\begin{equation*}
f^*(\psi^*(w)) = (\psi \circ f)^*(w).
\end{equation*}
The proof is complete since $\psi \circ f$ is a stratified-regular map.
\end{proof}

\begin{proposition}\label{prop-7-3}
Let $\xi$ be a stratified-algebraic $\R$-vector bundle on a real
algebraic variety $X$. Then the $k$th Stiefel--Whitney class $w_k(\xi)$
of $\xi$ belongs to $\Hstr^k(X; \Z/2)$ for every $k \geq 0$.
\end{proposition}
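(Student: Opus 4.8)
The plan is to reduce the assertion to the already-recorded fact that Stiefel--Whitney classes of \emph{algebraic} $\R$-vector bundles on compact nonsingular real algebraic varieties are algebraic, and then transport this fact along a classifying map. First I would apply Proposition~\ref{prop-3-4} to write $\xi = f^*\gamma_K(\R^n)$ for some multi-Grassmannian $\G_K(\R^n)$ and some stratified-regular map $f \colon X \to \G_K(\R^n)$.

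Next I would record that $\G_K(\R^n)$ is a compact nonsingular real algebraic variety: it is the finite disjoint union of the Grassmannians $\G_k(\R^n)$ for $k \in K$, each of which is compact and nonsingular. Moreover, by construction $\gamma_K(\R^n)$ is an algebraic $\R$-vector subbundle of the standard trivial $\R$-vector bundle on $\G_K(\R^n)$ with fiber $\R^n$, hence an algebraic $\R$-vector bundle on $\G_K(\R^n)$. Therefore, by the summary of properties of algebraic cohomology classes recalled at the beginning of this section, $w_k(\gamma_K(\R^n)) \in \Halg^k(\G_K(\R^n); \Z/2)$ for every $k \geq 0$.

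Finally, by naturality of Stiefel--Whitney classes, $w_k(\xi) = w_k(f^*\gamma_K(\R^n)) = f^*\bigl(w_k(\gamma_K(\R^n))\bigr)$. Since $f$ is stratified-regular, $\G_K(\R^n)$ is a compact nonsingular real algebraic variety, and $w_k(\gamma_K(\R^n))$ is an algebraic cohomology class, the very definition of a stratified-algebraic cohomology class yields $w_k(\xi) \in \Hstr^k(X; \Z/2)$, as required.

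There is essentially no serious obstacle; the only point deserving a word of care is that $\G_K(\R^n)$ has (Zariski) connected components of varying dimension, so I would note explicitly that this causes no difficulty: the target variety in the definition of $\Hstr$ need not be connected or equidimensional, and an algebraic vector bundle is permitted to have rank varying over connected components, so $\gamma_K(\R^n)$ legitimately counts as an algebraic $\R$-vector bundle and the cited statement on algebraicity of its Stiefel--Whitney classes applies componentwise.
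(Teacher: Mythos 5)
Your proposal is correct and follows essentially the same route as the paper: both write $\xi = f^*\gamma_K(\R^n)$ via Proposition~\ref{prop-3-4}, use naturality of Stiefel--Whitney classes, and conclude from the algebraicity of $w_k(\gamma_K(\R^n))$ on the compact nonsingular multi-Grassmannian. The only cosmetic difference is that the paper justifies that algebraicity by citing $\Halg^*(\G_K(\R^n);\Z/2)=H^*(\G_K(\R^n);\Z/2)$ and then invokes Proposition~\ref{prop-7-2}, whereas you cite the general fact about Stiefel--Whitney classes of algebraic bundles and apply the definition of $\Hstr^k$ directly.
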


\begin{proof}
According to Proposition~\ref{prop-3-4}, we may assume that $\xi$ is of
the form $\xi = f^*\gamma_K(\R^n)$ for some multi-Grassmannian
$\G_K(\R^n)$ and stratified-regular map $f \colon X \to \G_K(\R^n)$.
Then $w_k(\xi) = f^*(w_k (\gamma_K (\R^n) ) )$. The proof is complete in
view of Proposition~\ref{prop-7-2} since 
\begin{equation*}%---only for good looks
\Halg^* (\G_K(\R^n); \Z/2) = H^*(\G_K(\R^n); \Z/2),
\end{equation*}
cf. \cite[Proposition~11.3.3]{bib9}.
\end{proof}

For any real algebraic variety $X$, let $\VB^1_{\R}(X)$ denote the group
of isomorphism classes of topological $\R$-line bundles on $X$ (with
operation induced by tensor product). The first Stiefel--Whitney class
gives rise to an isomorphism
\begin{equation*}
w_1 \colon \VB^1_{\R}(X) \to H^1(X; \Z/2).
\end{equation*}
Similarly, denote by $\VBRstr^1(X)$ the group of isomorphism classes of
stratified-algebraic $\R$-line bundles on $X$. Now, the first
Stiefel--Whitney class gives rise to a homomorphism
\begin{equation*}
w_1 \colon \VBRstr^1(X) \to H^1(X; \Z/2).
\end{equation*}

\begin{proposition}\label{prop-7-4}
For any real algebraic variety $X$,
\begin{equation*}
w_1(\VBRstr^1(X)) = \Hstr^1(X; \Z/2).
\end{equation*}
Furthermore, if the variety $X$ is compact, then the homomorphism
\begin{equation*}
w_1 \colon \VBRstr^1(X) \to H^1(X; \Z/2)
\end{equation*}
is injective.
\end{proposition}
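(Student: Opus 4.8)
The plan is to prove the two inclusions constituting the equality $w_1(\VBRstr^1(X)) = \Hstr^1(X;\Z/2)$ separately, and then to deduce the injectivity statement for compact $X$ directly from the comparison theorem of Section~\ref{sec-3}.

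The inclusion $w_1(\VBRstr^1(X)) \subseteq \Hstr^1(X;\Z/2)$ needs no argument beyond the case $k=1$ of Proposition~\ref{prop-7-3}: the first Stiefel--Whitney class of a stratified-algebraic $\R$-line bundle on $X$ is stratified-algebraic. For the reverse inclusion, I would start with $u \in \Hstr^1(X;\Z/2)$ and write $u = \varphi^*(v)$ for a stratified-regular map $\varphi \colon X \to V$ into a compact nonsingular real algebraic variety $V$ and a class $v \in \Halg^1(V;\Z/2)$. The key step is to realize $v$ as the first Stiefel--Whitney class of an \emph{algebraic} $\R$-line bundle $\lambda$ on $V$: since $w_1 \colon \VB^1_\R(V) \to H^1(V;\Z/2)$ is an isomorphism there is a topological $\R$-line bundle $L$ on $V$ with $w_1(L)=v$, and since $v$ is algebraic such an $L$ admits an algebraic structure --- indeed, every class in $\Halg^1(V;\Z/2)$ is of the form $w_1(\lambda)$ for some algebraic $\R$-line bundle $\lambda$ on $V$, cf.\ \cite[Theorem~12.4.6]{bib9}. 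Then $\varphi^*\lambda$ is a stratified-algebraic $\R$-line bundle on $X$ by Proposition~\ref{prop-2-6}, and by naturality $w_1(\varphi^*\lambda) = \varphi^*(w_1(\lambda)) = \varphi^*(v) = u$. Hence $u \in w_1(\VBRstr^1(X))$, and the equality follows.

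For injectivity, assume $X$ is compact. The homomorphism $w_1 \colon \VBRstr^1(X) \to H^1(X;\Z/2)$ is the composition of the forgetful map $\VBRstr^1(X) \to \VB^1_\R(X)$ with the isomorphism $w_1 \colon \VB^1_\R(X) \to H^1(X;\Z/2)$, so it is enough to show the forgetful map is injective; equivalently, that two stratified-algebraic $\R$-line bundles on $X$ which are isomorphic as topological $\R$-vector bundles are isomorphic in the category of stratified-algebraic $\R$-vector bundles. This is the rank-one case of the assertion in Theorem~\ref{th-3-10} that, for $X$ compact, the canonical map $\VBRstr(X) \to \VB_\R(X)$ is injective.

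I do not expect a serious obstacle once the earlier sections are in hand. The only ingredient external to the paper is the realization of degree-one algebraic cohomology classes by algebraic $\R$-line bundles, invoked in the reverse inclusion. The conceptual weight sits in the injectivity half: it truly depends on the compactness of $X$ and on Swan's theorem \cite[Theorem~2.2]{bib44} through Theorem~\ref{th-3-10}, and it has no analogue for algebraic $\R$-line bundles --- which is exactly why the stratified-algebraic category, rather than the algebraic one, is the right setting for such a statement.
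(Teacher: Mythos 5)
Your proposal is correct and follows essentially the same route as the paper: Proposition~\ref{prop-7-3} for the forward inclusion, \cite[Theorem~12.4.6]{bib9} together with Proposition~\ref{prop-2-6} to realize a class in $\Hstr^1(X;\Z/2)$ as $w_1(\varphi^*\lambda)$ for the reverse inclusion, and the injectivity of the canonical map $\VBRstr^1(X) \to \VB^1_{\R}(X)$ from Theorem~\ref{th-3-10} for the compact case. No gaps.
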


\begin{proof}
The inclusion
\begin{equation*}
w_1(\VBRstr^1(X)) \subseteq \Hstr^1(X; \Z/2)
\end{equation*}
follows from Proposition~\ref{prop-7-3}. If $u$ is a cohomology class in
$\Hstr^1(X; \Z/2)$, then there exists a stratified-regular map $\varphi
\colon X \to V$, into a compact nonsingular real algebraic variety $V$,
such that $u = \varphi^*(v)$ for some cohomology class $v$ in
$\Halg^1(V; \Z/2)$. By \cite[Theorem~12.4.6]{bib9}, $v$ is of the form
$v = w_1(\lambda)$ for some algebraic $\R$-line bundle $\lambda$ on $V$.
The pullback $\R$-line bundle $\varphi^*\lambda$ on $X$ is
stratified-algebraic, cf. Proposition~\ref{prop-2-6}. Since $u =
\varphi^*(w_1(\lambda)) = w_1(\varphi^*\lambda)$, we get
\begin{equation*}
\Hstr^1(X; \Z/2) \subseteq w_1(\VBRstr^1(X)).
\end{equation*}
If the variety $X$ is compact, then the canonical homomorphism
$\VBRstr^1(X) \to \VB_{\R}^1(X)$ is injective (cf.
Theorem~\ref{th-3-10}), and hence the last assertion in the proposition
follows.
\end{proof}

Proposition~\ref{prop-7-4} can be interpreted as an approximation
result for maps into real projective $n$-space $\PB^n(\R) =
\G_1(\R^{n+1})$ with $n \geq 1$.

\begin{proposition}\label{prop-7-5}
Let $X$ be a compact real algebraic variety. For a continuous map $f
\colon X \to \PB^n(\R)$, the following conditions are equivalent:
\begin{conditions}
\item The map $f$ can be approximated by stratified-regular maps.

\item The cohomology class $f^*(u_n)$ belongs to $\Hstr^1 (X; \Z/2)$,
where $u_n$ is a generator of the group $H^1(\PB^n(\R); \Z/2) \cong
\Z/2$.
\end{conditions}
\end{proposition}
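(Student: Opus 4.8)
The plan is to recognize $\PB^n(\R)$ as the Grassmannian $\G_1(\R^{n+1})$, whose tautological line bundle $\gamma_1(\R^{n+1})$ satisfies $w_1(\gamma_1(\R^{n+1})) = u_n$, and then to convert the approximation statement into a statement about the pullback line bundle $f^*\gamma_1(\R^{n+1})$ on $X$ via Theorem~\ref{th-4-10}. Concretely, applying Theorem~\ref{th-4-10} with $A = \varnothing$, $\F = \R$, $K = \{1\}$ and $\R^n$ replaced by $\R^{n+1}$, the first condition of the proposition is equivalent to: the $\R$-line bundle $f^*\gamma_1(\R^{n+1})$ on $X$ admits a stratified-algebraic structure. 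It therefore suffices to show that this holds if and only if $f^*(u_n) \in \Hstr^1(X; \Z/2)$.

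For the forward implication, I would argue that if $f^*\gamma_1(\R^{n+1})$ admits a stratified-algebraic structure, then Proposition~\ref{prop-7-3} gives $w_1(f^*\gamma_1(\R^{n+1})) \in \Hstr^1(X; \Z/2)$, and by naturality of the Stiefel--Whitney classes together with $w_1(\gamma_1(\R^{n+1})) = u_n$ this class is precisely $f^*(u_n)$. (Alternatively, since $f$ is then homotopic to a stratified-regular map $g$ and $u_n$ is an algebraic cohomology class on $\PB^n(\R)$, one may instead invoke Proposition~\ref{prop-7-2} and the equality $f^*(u_n) = g^*(u_n)$.) For the reverse implication, suppose $f^*(u_n) \in \Hstr^1(X; \Z/2)$. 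By Proposition~\ref{prop-7-4}, $\Hstr^1(X; \Z/2) = w_1(\VBRstr^1(X))$, so there is a stratified-algebraic $\R$-line bundle $\mu$ on $X$ with $w_1(\mu) = f^*(u_n) = w_1(f^*\gamma_1(\R^{n+1}))$. Since the homomorphism $w_1 \colon \VB^1_{\R}(X) \to H^1(X; \Z/2)$ classifying topological $\R$-line bundles is an isomorphism, $\mu$ and $f^*\gamma_1(\R^{n+1})$ are topologically isomorphic, whence $f^*\gamma_1(\R^{n+1})$ admits a stratified-algebraic structure. Combined with Theorem~\ref{th-4-10}, this finishes the proof.

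The argument is essentially a matter of assembling machinery already in place, so I do not expect a genuine obstacle. The point requiring the most care is the bookkeeping: matching $\PB^n(\R)$ with $\G_1(\R^{n+1})$ so that Theorem~\ref{th-4-10} applies verbatim, and recording the standard facts $w_1(\gamma_1(\R^{n+1})) = u_n$ and $H^1(\PB^n(\R); \Z/2) \cong \Z/2$ (so that $u_n$ is unambiguously its unique nonzero element). The real content is hidden in Proposition~\ref{prop-7-4} (which rests on Theorem~12.4.6 of \cite{bib9}) and in Theorem~\ref{th-4-10}; once these are invoked, the equivalence is immediate.
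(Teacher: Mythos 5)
Your proof is correct and follows essentially the same route as the paper: the paper's own proof is just the one-line instruction to note $f^*(u_n) = w_1(f^*\gamma_1(\R^{n+1}))$ and combine Theorem~\ref{th-4-10} (with $A = \varnothing$) with Proposition~\ref{prop-7-4}, which is exactly what you do, with the two implications spelled out. The only detail worth recording is the one you already flag implicitly: the reverse direction uses not just Proposition~\ref{prop-7-4} but also the fact that $w_1 \colon \VB^1_{\R}(X) \to H^1(X;\Z/2)$ is an isomorphism on topological line bundles, which the paper states just before that proposition.
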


\begin{proof}
Since $u_n = w_1(\gamma_1(\R^{n+1}))$, we have $f^*(u_n) =
w_1(f^*\gamma_1(\R^{n+1}))$. Thus, it suffices to combine
Theorem~\ref{th-4-10} (with $A=\varnothing$) and
Proposition~\ref{prop-7-4}.
\end{proof}

\begin{corollary}\label{cor-7-6}
Let $X$ be a compact real algebraic variety. For a continuous
map \linebreak
$f \colon X \to \SB^1$, the following conditions are equivalent:
\begin{conditions}
\item The map $f$ can be approximated by stratified-regular maps.

\item The cohomology class $f^*(u)$ belongs to $\Hstr^1(X; \Z/2)$, where
$u$ is a generator of the group $H^1(\SB^1; \Z/2) \cong \Z/2$.
\end{conditions}
\end{corollary}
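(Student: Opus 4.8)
The plan is to reduce Corollary~\ref{cor-7-6} to Proposition~\ref{prop-7-5} by identifying $\SB^1$ with $\PB^1(\R) = \G_1(\R^2)$. First I would recall, as in the proof of Corollary~\ref{cor-3-6}, that $\G_1(\R^2)$ is biregularly isomorphic to $\SB^1$, and fix such a biregular isomorphism $\theta \colon \SB^1 \to \PB^1(\R)$. Since $\theta^*$ is an isomorphism of $H^1(\PB^1(\R); \Z/2) \cong \Z/2$ onto $H^1(\SB^1; \Z/2) \cong \Z/2$ and $u$ is the only nonzero element of the latter group, it follows that $\theta^*(u_1) = u$, where $u_1$ is a generator of $H^1(\PB^1(\R); \Z/2)$.

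Next, given a continuous map $f \colon X \to \SB^1$, I would set $g \coloneqq \theta \circ f \colon X \to \PB^1(\R)$. Since $\theta$ and $\theta^{-1}$ are regular maps, post-composition with $\theta$ is a homeomorphism from $\C(X, \SB^1)$ onto $\C(X, \PB^1(\R))$ (in the compact-open topology) which restricts to a bijection between the respective sets of stratified-regular maps: if $h$ is $\SC$-regular for some stratification $\SC$ of $X$, then so is $\theta \circ h$, and conversely. Hence $f$ can be approximated by stratified-regular maps if and only if $g$ can. On the other hand, $g^*(u_1) = f^*(\theta^*(u_1)) = f^*(u)$, so Proposition~\ref{prop-7-5} applied to $g$ (recall that $X$ is compact) shows that $g$ can be approximated by stratified-regular maps if and only if $f^*(u)$ belongs to $\Hstr^1(X; \Z/2)$. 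Combining these two equivalences completes the proof.

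I do not expect any serious obstacle here: the argument is a routine transport of Proposition~\ref{prop-7-5} along the biregular isomorphism $\theta$. The only points deserving a word of care are that $\theta^*$ is forced to carry $u_1$ to $u$ (immediate from both cohomology groups being $\Z/2$) and that composition with a biregular isomorphism respects both the notion of stratified-regular map and approximation in the compact-open topology — both of which are elementary.
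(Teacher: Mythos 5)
Your proposal is correct and is exactly the paper's argument: the paper's proof of Corollary~\ref{cor-7-6} consists of the single remark that it suffices to apply Proposition~\ref{prop-7-5} since $\SB^1$ is biregularly isomorphic to $\PB^1(\R)$. You have merely spelled out the routine transport of both conditions along that biregular isomorphism, which the paper leaves implicit.
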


\begin{proof}
It suffices to apply Proposition~\ref{prop-7-5} since $\SB^1$ is
biregularly isomorphic to $\PB^1(\R)$.
\end{proof}

We next look for relationships between the groups $\Hstr^k (-; \Z/2)$
and $\Halg^k(-; \Z/2)$.

\begin{proposition}\label{prop-7-7}
If $X$ is a compact nonsingular real algebraic variety, then 
\begin{equation*}
\Hstr^*(X; \Z/2) = \Halg^*(X; \Z/2).
\end{equation*}
\end{proposition}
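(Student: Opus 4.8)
The plan is to establish the two inclusions separately. The inclusion $\Halg^*(X;\Z/2)\subseteq\Hstr^*(X;\Z/2)$ is immediate from the definition: given $v\in\Halg^k(X;\Z/2)$, the identity $\one_X\colon X\to X$ is regular, hence stratified-regular, and $v=\one_X^*(v)$ with $v\in\Halg^k(X;\Z/2)$, so $v$ is stratified-algebraic. All the content lies in the reverse inclusion. So fix $u\in\Hstr^k(X;\Z/2)$ and write $u=\varphi^*(v)$, where $\varphi\colon X\to V$ is a stratified-regular map into a compact nonsingular real algebraic variety $V$ and $v\in\Halg^k(V;\Z/2)$.

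The first step is to replace $\varphi$ by a regular map at the cost of a multiblowup of the source. By Proposition~\ref{prop-2-2} the map $\varphi$ is continuous rational; embedding $V$ as a locally closed subvariety of some $\PB^N(\R)$ and applying Hironaka's theorem on resolution of points of indeterminacy exactly as in the proof of Theorem~\ref{th-1-6}, I would obtain a birational multiblowup $\pi\colon X'\to X$ with $X'$ nonsingular (hence compact, $\pi$ being proper) such that $\psi\coloneqq\varphi\circ\pi\colon X'\to V$ is regular (its image lies in $V$ since $\pi$ is surjective). Since $\psi$ is a regular map between compact nonsingular real algebraic varieties, $\psi^*(\Halg^*(V;\Z/2))\subseteq\Halg^*(X';\Z/2)$, whence $\pi^*(u)=\psi^*(v)\in\Halg^k(X';\Z/2)$.

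It then remains to descend from $X'$ back to $X$, i.e. to show that algebraicity of $\pi^*(u)$ implies algebraicity of $u$. Working with $\Z/2$-coefficients via Poincar\'e duality (componentwise if $X$ is not of pure dimension), a class on a compact nonsingular variety $Y$ is algebraic iff its Poincar\'e dual is represented by a Zariski closed subvariety of $Y$. Since $\pi^*(u)$ is algebraic, its Poincar\'e dual $\pi^*(u)\frown[X']$ is represented by such a subvariety; because $\pi$ is a proper regular map, the homological pushforward $\pi_*$ carries this class to an algebraic homology class of $X$ (the proper pushforward of an algebraic cycle is an algebraic cycle, compatibly with the cycle class map into $\Z/2$-homology). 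By the projection formula $\pi_*(\pi^*(u)\frown[X'])=u\frown\pi_*[X']$, and $\pi_*[X']=[X]$ because the birational proper map $\pi$ has mod~$2$ degree $1$. Hence $u\frown[X]$ is algebraic, so $u\in\Halg^k(X;\Z/2)$, as required.

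I expect the descent step in the last paragraph to be the main point requiring care: it is the assertion that algebraicity of cohomology classes is detected after a birational multiblowup, and the argument above reduces it to the standard facts that proper pushforward preserves algebraic cycles and obeys the projection formula. (Alternatively, one could analyze the behavior of $\Halg^*$ under a single blowup along a nonsingular center directly, using the projective-bundle decomposition of the cohomology of the blowup, and then iterate.) By contrast, the passage from the stratified-regular $\varphi$ to the regular $\psi$ is routine, being precisely the resolution-of-indeterminacy technique already employed in the proof of Theorem~\ref{th-1-6}.
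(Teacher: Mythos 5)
Your proof is correct, but it takes a different route from the paper's. The paper disposes of the nontrivial inclusion in two lines by quoting \cite[Proposition~1.3]{bib32}, which asserts that a continuous rational map between compact nonsingular varieties pulls $\Halg^*(-;\Z/2)$ back into $\Halg^*(-;\Z/2)$, and then invoking Proposition~\ref{prop-2-2} to see that the stratified-regular map $\varphi$ is continuous rational. You use the same reduction via Proposition~\ref{prop-2-2} but, instead of citing the external result, you reprove it: resolution of indeterminacy as in the proof of Theorem~\ref{th-1-6} to replace $\varphi$ by a regular map on a birational multiblowup $X'$, pullback of algebraic classes along regular maps of compact nonsingular varieties, and then descent along $\pi$ via the projection formula, $\pi_*[X']=[X]$, and the fact that pushforward under a regular map of compact varieties preserves algebraic homology classes (this last point is the only nontrivial input you treat as standard; it is indeed classical, cf.\ \cite{bib20} or \cite[Section~11.3]{bib9}). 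What your version buys is self-containedness relative to the present paper's toolkit; what the paper's version buys is brevity, at the cost of an external citation whose proof is in substance the resolve-and-descend argument you reconstruct. The minor caveats you flag (working componentwise when $X$ is not of pure dimension, and checking that the regularized map agrees with $\varphi\circ\pi$ by Euclidean density of the locus of regularity) are handled correctly.
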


\begin{proof}
The inclusion
\begin{equation*}
\Halg^*(X; \Z/2) \subseteq \Hstr^*(X; \Z/2)
\end{equation*}
is obvious. According to \cite[Propostion~1.3]{bib32}, if $\varphi
\colon X \to V$ is a continuous rational map into a compact nonsingular
real algebraic variety $V$, then
\begin{equation*}
\varphi^* ( \Halg^*(V; \Z/2) ) \subseteq \Halg^*(X; \Z/2).
\end{equation*}
Consequently, the inclusion
\begin{equation*}
\Hstr^* (X; \Z/2) \subseteq \Halg^*(X; \Z/2)
\end{equation*}
follows since each stratified-regular map from $X$ into $V$ is
continuous rational, cf. Proposition~\ref{prop-2-2}.
\end{proof}

For an arbitrary compact real algebraic variety $X$, the group
$\Hstr^1(X; \Z/2)$ can be described as follows.

\begin{proposition}\label{prop-7-8}
Let $X$ be a compact real algebraic variety. For a cohomology class $u$
in $H^1(X; \Z/2)$, the following conditions are equivalent:
\begin{conditions}
\item The cohomology class $u$ belongs to $\Hstr^1(X; \Z/2)$.

\item There exists a filtration $\FC = (X_{-1}, X_0, \ldots, X_m)$ of
$X$ with $\overline{\FC}$ a nonsingular strati\-fication, and for each $i
=0, \ldots, m$, there exists a multiblowup $\pi_i \colon X'_i \to X_i$
over $X_{i-1}$ such that $X'_i$ is a nonsingular variety and the
cohomology class $\pi_i^*(u|_{X_i})$ belongs to $\Halg^1(X'_i; \Z/2)$.
Here $u|_{X_i}$ is the image of $u$ under the homomorphism \linebreak $H^1(X; \Z/2)
\to H^1(X_i; \Z/2)$ induced by the inclusion map $X_i \hookrightarrow
X$.
\end{conditions}
\end{proposition}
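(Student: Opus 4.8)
The plan is to reduce the statement to Corollary~\ref{cor-5-3}, applied to the topological $\R$-line bundle determined by $u$, passing back and forth between $\R$-line bundles and degree-one mod~$2$ cohomology classes via the first Stiefel--Whitney class.

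First I would choose the topological $\R$-line bundle $\xi$ on $X$ with $w_1(\xi) = u$; it exists and is unique up to topological isomorphism because $w_1 \colon \VB^1_{\R}(X) \to H^1(X;\Z/2)$ is an isomorphism. By Proposition~\ref{prop-7-4}, $\Hstr^1(X;\Z/2) = w_1(\VBRstr^1(X))$, so $u$ lies in $\Hstr^1(X;\Z/2)$ precisely when $u = w_1(\eta)$ for some stratified-algebraic $\R$-line bundle $\eta$ on $X$; by uniqueness of $\xi$ this happens precisely when $\xi$ itself admits a stratified-algebraic structure. Thus condition~(a) is equivalent to: \emph{$\xi$ admits a stratified-algebraic structure}.

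Next I would apply Corollary~\ref{cor-5-3} to $\xi$ (with $\F=\R$): $\xi$ admits a stratified-algebraic structure if and only if there is a filtration $\FC = (X_{-1}, X_0, \ldots, X_m)$ of $X$ with $\overline{\FC}$ a nonsingular stratification and, for each $i = 0, \ldots, m$, a multiblowup $\pi_i \colon X'_i \to X_i$ over $X_{i-1}$ with $X'_i$ nonsingular such that $\pi_i^*(\xi|_{X_i})$ admits an algebraic structure. It then remains to check, for each such $\pi_i$, that ``$\pi_i^*(\xi|_{X_i})$ admits an algebraic structure'' is equivalent to ``$\pi_i^*(u|_{X_i}) \in \Halg^1(X'_i;\Z/2)$''. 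By naturality of $w_1$ one has $w_1(\pi_i^*(\xi|_{X_i})) = \pi_i^*(u|_{X_i})$. If $\pi_i^*(\xi|_{X_i})$ is topologically isomorphic to an algebraic $\R$-line bundle $\lambda$ on the compact nonsingular variety $X'_i$, then $\pi_i^*(u|_{X_i}) = w_1(\lambda)$, which is algebraic. Conversely, if $\pi_i^*(u|_{X_i}) \in \Halg^1(X'_i;\Z/2)$, then by \cite[Theorem~12.4.6]{bib9} there is an algebraic $\R$-line bundle $\lambda$ on $X'_i$ with $w_1(\lambda) = \pi_i^*(u|_{X_i})$, and since topological $\R$-line bundles are classified by $w_1$ we get $\pi_i^*(\xi|_{X_i}) \cong \lambda$, so it admits an algebraic structure. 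Chaining these equivalences gives (a)~$\Leftrightarrow$~(b).

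No genuine obstacle arises once Corollary~\ref{cor-5-3} and Proposition~\ref{prop-7-4} are available; the only care needed is bookkeeping, keeping the correspondence ``topological $\R$-line bundle $\leftrightarrow$ class in $H^1(-;\Z/2)$'' and its algebraic refinement \cite[Theorem~12.4.6]{bib9} consistent under restriction to the subvarieties $X_i$ and pullback along the multiblowups $\pi_i$, and noting that each $X'_i$ is compact and nonsingular so that the algebraic classification of $\R$-line bundles and the theory of algebraic cohomology classes apply.
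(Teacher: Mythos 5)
Your proposal is correct and follows essentially the same route as the paper: pass from $u$ to the topological $\R$-line bundle $\xi$ with $w_1(\xi)=u$, use Proposition~\ref{prop-7-4} to identify condition~(a) with $\xi$ admitting a stratified-algebraic structure, apply Corollary~\ref{cor-5-3} with $\F=\R$, and translate ``$\pi_i^*(\xi|_{X_i})$ admits an algebraic structure'' into ``$\pi_i^*(u|_{X_i})\in\Halg^1(X'_i;\Z/2)$'' via naturality of $w_1$ and \cite[Theorem~12.4.6]{bib9}. The only cosmetic difference is that the paper also cites Proposition~\ref{prop-7-7} in its final combination, which your bookkeeping renders unnecessary.
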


\begin{proof}
Let $\FC = (X_{-1}, X_0, \ldots, X_m)$ be a filtration of $X$ with
$\overline{\FC}$ a nonsingular stratification, and for each
$i=0,\ldots,m$, let $\pi_i \colon X'_i \to X_i$ be a multiblowup over
$X_{i-1}$ such that $X'_i$ is a nonsingular variety. If $\xi$ is a
topological $\R$-line bundle on $X$ with $w_1(\xi)=u$, then
$w_1(\xi|_{X_i}) = u|_{X_i}$ and $w_1(\pi_i^*(\xi|_{X_i})) =
\pi_i^*(u|_{X_i})$ for $0 \leq i \leq m$. In view of
\cite[Theorem~12.4.6]{bib9}, the $\R$-line bundle $\pi_i^*(\xi|_{X_i})$
on $X'_i$ admits an algebraic structure if and only if the cohomology
class $\pi_i^*(u|_{X_i})$ belongs to $\Halg^1(X'_i; \Z/2)$.
Consequently, it suffices to combine Corollary~\ref{cor-5-3} (with
$\F=\R$) and Propositions~\ref{prop-7-4} and~\ref{prop-7-7}.
\end{proof}

In \cite{bib19, bib-star}, algebraic cohomology classes are interpreted
as obstructions to representing homotopy classes by regular maps. Some
results contained in these papers can be strengthened by transferring them
to the framework of stratified objects, cf. Theorem~\ref{th-7-9} below.
First it is convenient to introduce some notation.

For any $n$-dimensional compact smooth manifold $M$, let $\llbracket M
\rrbracket$ denote its fundamental class in $H_n(M; \Z/2)$. If $M$ is a
smooth submanifold of a smooth manifold $N$, let $\llbracket M
\rrbracket_N$ denote the homology class in $H_n(N; \Z/2)$ represented by
$M$.

\begin{theorem}\label{th-7-9}
Let $Y$ be a real algebraic variety with $\Hstr^k(Y; \Z/2) \neq 0$ for
some ${k \geq 1}$. Then there exist a compact connected nonsingular real
algebraic variety $X$ and a continuous map $f \colon X \to Y$ such that
$\dim X = k+1$ and $f$ is not homotopic to any stratified-regular map.
\end{theorem}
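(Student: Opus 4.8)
The plan is to isolate a single cohomology class and reduce the theorem to a non‑algebraicity statement. Fix a nonzero cohomology class $u$ in $\Hstr^k(Y;\Z/2)$. I claim it suffices to produce a compact connected nonsingular real algebraic variety $X$ with $\dim X=k+1$, together with a continuous map $f\colon X\to Y$, such that $f^*(u)$ does \emph{not} lie in $\Halg^k(X;\Z/2)$. Indeed, suppose such an $f$ were homotopic to a stratified‑regular map $g\colon X\to Y$. Then $f^*(u)=g^*(u)$, and Proposition~\ref{prop-7-2} gives $g^*(u)\in g^*\bigl(\Hstr^k(Y;\Z/2)\bigr)\subseteq\Hstr^k(X;\Z/2)$; since $X$ is compact and nonsingular, Proposition~\ref{prop-7-7} yields $\Hstr^k(X;\Z/2)=\Halg^k(X;\Z/2)$, so $f^*(u)\in\Halg^k(X;\Z/2)$, a contradiction. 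Hence any $f$ as above is not homotopic to any stratified‑regular map.

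Next I would construct the smooth model and the map. Real algebraic varieties have the homotopy type of finite CW complexes (cf. \cite[Corollary~9.3.7]{bib9}), so by the universal coefficient theorem $H^k(Y;\Z/2)$ is dual to $H_k(Y;\Z/2)$; as $u\neq 0$ there is a class $z$ in $H_k(Y;\Z/2)$ with $\langle u,z\rangle\neq 0$. By Thom's theorem that every mod~$2$ homology class of a CW complex is represented by a singular smooth closed manifold, $z=h_*\llbracket P\rrbracket$ for some closed smooth $k$‑manifold $P$ and continuous map $h\colon P\to Y$; replacing $P$ by a connected sum of its components (the map $h$ extends over the attached handles, which are contractible), we may assume $P$ is connected. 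Then $h^*(u)$ is the nonzero element of $H^k(P;\Z/2)\cong\Z/2$. Set $M\coloneqq P\times\SB^1$, a compact connected smooth manifold of dimension $k+1$, let $\mathrm{pr}\colon M\to P$ be the projection, and put $\tilde h\coloneqq h\circ\mathrm{pr}\colon M\to Y$. By the Künneth formula $\tilde h^*(u)=\mathrm{pr}^*(h^*(u))$ is nonzero in $H^k(M;\Z/2)$, and the homology class Poincar\'e dual to it is the class of the circle $\{p_0\}\times\SB^1$ for any point $p_0$ in $P$, hence nonzero in $H_1(M;\Z/2)$.

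The decisive step is to realize $M$ as a nonsingular real algebraic variety on which $\tilde h^*(u)$ becomes non‑algebraic. Here I would invoke the constructions of \cite{bib19, bib-star}, which in turn rest on the algebraization techniques of Benedetti--Ded\`o and Akbulut--King for controlling the subgroup of algebraic cohomology classes of an algebraic model: these produce a nonsingular real algebraic variety $X$ and a diffeomorphism $\theta\colon X\to M$ for which the class $\theta^*(\tilde h^*(u))$ in $H^k(X;\Z/2)$ is not algebraic — for instance, at least when $k\ge 2$, by arranging that no Zariski closed curve in $X$ represents a nonzero class in $H_1(X;\Z/2)$, so that the (nonzero) homology class Poincar\'e dual to $\theta^*(\tilde h^*(u))$ cannot be represented by a Zariski closed subvariety. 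Granting this, set $f\coloneqq\tilde h\circ\theta\colon X\to Y$. Then $f^*(u)=\theta^*(\tilde h^*(u))\notin\Halg^k(X;\Z/2)$, while $X$ is compact, connected, nonsingular, and $\dim X=k+1$; by the first paragraph, $f$ is not homotopic to any stratified‑regular map, and the theorem follows.

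I expect this last step to be the main obstacle: one must check that the control over $\Halg^*(X;\Z/2)$ available through \cite{bib19, bib-star} can be imposed on an algebraic model of $M$ without killing the designated class — and this is exactly why I pass from $P$ to $M=P\times\SB^1$, so that the Poincar\'e dual of the relevant class is an embedded circle sitting in a manifold of dimension $k+1$, a situation for which the algebraization machinery is available. The auxiliary ingredients — the duality between $H^k$ and $H_k$ with $\Z/2$ coefficients, Thom representability, the connectedness reduction, and the Künneth computation — are routine.
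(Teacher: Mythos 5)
Your reduction in the first paragraph is correct and is essentially what the paper does, and the overall shape of your construction (a $k$-manifold representing a homology class detecting $u$, crossed with $\SB^1$, then algebraized with control on $\Halg^*$) also matches. The gap is exactly at the step you flag as the main obstacle, and it is a genuine one, in two respects. First, Thom representability alone is not enough: you must choose the representing manifold to be a \emph{boundary}. If $P$ does not bound, the generator $h^*(u)$ of $H^k(P;\Z/2)$ can be a monomial in Stiefel--Whitney classes of $P$ --- for instance, for $P=\PB^k(\R)$ with $k$ even one has $h^*(u)=w_1(P)^k$ --- and then $\theta^*\tilde h^*(u)=w_1(X)^k$ is algebraic on \emph{every} nonsingular algebraic model $X$ of $P\times\SB^1$, since Stiefel--Whitney classes of $X$ and their products are always algebraic. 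No choice of model can then make your class non-algebraic. The paper avoids this by invoking \cite[Theorem~2.1]{bib19}, which represents $\beta$ by a compact connected manifold $K$ that bounds; this refinement of Thom's theorem is the ingredient you are missing, and it is also a hypothesis of the algebraization results you want to cite.

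Second, the mechanism you propose --- arranging that no Zariski closed curve in $X$ carries a nonzero class in $H_1(X;\Z/2)$, i.e.\ $\Halg^k(X;\Z/2)=0$ --- is not what \cite{bib19, bib-star} provide, and is impossible in general for the same Stiefel--Whitney reason. What \cite[Proposition~2.5, Theorem~2.6]{bib19} actually give, under the hypotheses that $K\times\{p_0\}$ has trivial normal bundle in $K\times\SB^1$ and that $K$ bounds, is a model $X$ with a diffeomorphism $\varphi\colon X\to K\times\SB^1$ such that the $k$-dimensional class $\alpha=\llbracket\varphi^{-1}(K\times\{p_0\})\rrbracket_X$ satisfies $\langle u,\alpha\rangle=0$ for every $u\in\Halg^k(X;\Z/2)$. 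Non-algebraicity of $f^*(w)$ is then detected by the Kronecker pairing with this codimension-one class, via $\langle f^*(w),\alpha\rangle=\langle w, h_*((\pi\circ\varphi)_*\alpha)\rangle=\langle w,\beta\rangle\neq 0$, rather than by trying to show that the Poincar\'e-dual circle $\{p_0\}\times\SB^1$ is not represented by an algebraic curve (a statement the available machinery does not deliver). With these two corrections --- replace Thom by the bounding representative of \cite[Theorem~2.1]{bib19}, and replace your $H_1$ argument by the pairing with $\alpha$ --- your argument becomes the paper's proof.
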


\begin{proof}
Let $w$ be a nonzero cohomology class in $\Hstr^k(Y; \Z/2)$. Choose a
homology class $\beta$ in $H_k(Y; \Z/2)$ which satisfies
\begin{equation*}
\langle w, \beta \rangle \neq 0
\end{equation*}
and is represented by a singular cycle with support contained in one
connected component of $Y$. By \cite[Theorem~2.1]{bib19}, $\beta$ can be
expressed as
\begin{equation*}
\beta = h_*(\llbracket K \rrbracket),
\end{equation*}
where $K$ is a $k$-dimensional compact connected smooth manifold that is
the boundary of a compact smooth manifold with boundary and $h \colon K
\to Y$ is a continuous map. Let $p_0$ be a
point in $\SB^1$. Since the normal bundle of $K \times \{p_0\}$ in $K
\times \SB^1$ is trivial, according to \cite[Theorem~2.6,
Proposition~2.5]{bib19}, there exist a nonsingular real algebraic
variety $X$ and a smooth diffeomorphism $\varphi \colon X \to K \times
\SB^1$ such that the homology class $\alpha \coloneqq \llbracket
\varphi^{-1} (K \times \{p_0\}) \rrbracket_X$ satisfies
\begin{equation*}
\langle u, \alpha \rangle = 0
\end{equation*}
for every cohomology class $u$ in $\Halg^k(X; \Z/2)$.

Let $\pi \colon K \times \SB^1 \to K$ be the canonical projection. It
suffices to prove that the continuous map
\begin{equation*}
f \coloneqq h \circ \pi \circ \varphi \colon X \to Y
\end{equation*}
is not homotopic to any regular map. This can be done as follows. We
have $w = \psi^*(v)$ for some stratified-regular map $\psi \colon Y \to
W$ into a compact nonsingular real algebraic variety $V$ and a
cohomology class $v$ in $\Halg^k(V; \Z/2)$. Since $(\pi \circ \varphi)_*
(\alpha) = \pi_* (\varphi_* (\alpha) ) = \llbracket K \rrbracket$, we
get
\begin{equation*}
(\psi \circ f)_* (\alpha) = (\psi \circ h \circ \pi \circ \varphi)_*
(\alpha) = \psi_* (h_* (\llbracket K \rrbracket) ) =\psi_*(\beta).
\end{equation*}
Consequently,
\begin{equation*}
\langle (\psi \circ f)^*(v), \alpha \rangle = \langle v, (\psi \circ f)_*
(\alpha) \rangle = \langle v, \psi_*(\beta) \rangle = \langle \psi^*(v),
\beta \rangle = \langle w, \beta \rangle \neq 0.
\end{equation*}
It follows that the cohomology class $(\psi \circ f)^*(v)$ does not
belong to $\Halg^k(X; \Z/2)$. However, if the map $f \colon X \to Y$
were homotopic to a stratified-regular map $g \colon X \to Y$, then
$\psi \circ f$ would be homotopic to a stratified-regular map $\psi
\circ g$, and hence the cohomology class $(\psi \circ f)^* (v) = (\psi
\circ g)^* (v)$ would be in $\Halg^k(X; \Z/2)$, cf.
Propositions~\ref{prop-7-2} and~\ref{prop-7-7}. This completes the
proof.
\end{proof}

We conclude this section by giving an example of an $\F$-line bundle not
admitting a stratified-algebraic structure. Recall that $d(\F) =
\dim_{\R} \F$ and $\T^n = \SB^1 \times \cdots \times \SB^1$ (the $n$-fold
product).

\begin{example}\label{ex-7-10}
For any integer $n > d(\F)$, there exist a nonsingular real algebraic
variety $X$ and a topological $\F$-line bundle
$\xi$ on $X$ such that $X$ is diffeomorphic to $\T^n$ and $\xi$ does not
admit a stratified-algebraic structure. This assertion can be proved as
follows. Let $d \coloneqq d(\F)$ and let $y_0$ be a point in $\T^{n-d}$.
Let $\alpha$ be the homology class in $H_d(\T^n; \Z/2)$ represented by
the smooth submanifold $K \coloneqq \T^d \times \{y_0\}$ of $\T^n$. Set
\begin{equation*}
A \coloneqq \{ u \in H^d(\T^n; \Z/2) \mid \langle u, \alpha \rangle = 0
\}.
\end{equation*}
Since the normal bundle of $K$ in $\T^n$ is trivial and $K$ is the
boundary of a compact smooth manifold with boundary, it follows from
\cite[Propositon~2.5, Theorem~2.6]{bib19} that there exist a nonsingular
real algebraic variety $X$ and a smooth diffeomorphism $\varphi \colon X
\to \T^n$ with
\begin{equation*}
\Halg^d(X; \Z/2) \subseteq \varphi^*(A).
\end{equation*}
For the $\F$-line bundle $\gamma_1(\F^2)$ on $\G_1(\F^2)$, one has
$w_d(\gamma_1(\F^2)_{\R}) \neq 0$ in $H^d(\G_1(\F^2); \Z/2)$. Choosing a
continuous map $h \colon \T^d \to \G_1(\F^2)$ for which the induced
homomorphism
\begin{equation*}
h^* \colon H^d(\G_1(\F^2); \Z/2) \to H^d(\T^d; \Z/2)
\end{equation*}
is an isomorphism, we obtain a topological $\F$-line bundle $\lambda
\coloneqq h^* \gamma_1 (\F^2)$ with $w_d(\lambda_{\R}) \neq 0$ in
$H^d(\T^d; \Z/2)$. If $p \colon \T^n = \T^d \times \T^{n-d} \to \T^d$ is
the canonical projection and $\eta \coloneqq p^*\lambda$, then
\begin{equation*}
w_d(\eta_{\R}) \notin A.
\end{equation*}
Consequently, for the $\F$-line bundle $\xi \coloneqq \varphi^* \eta$ on
$X$, we have
\begin{equation*}
w_d(\xi_{\R}) \notin \Halg^d(X; \Z/2).
\end{equation*}
In view of Propositions~\ref{prop-7-3} and~\ref{prop-7-7}, the
$\R$-vector bundle $\xi_{\R}$ cannot admit a stratified-algebraic
structure. Hence $\xi$ does not admit a stratified-algebraic structure.
\end{example}

%\begin{document} %---for spell checking in vim
\section{\texorpdfstring{Stratified-$\CB$-algebraic cohomology
classes}{Stratified-C-algebraic cohomology classes}}\label{sec-8}

We first recall the construction of $\CB$-algebraic cohomology classes.
Let $V$ be a compact nonsingular real algebraic variety. A
\emph{nonsingular projective complexification} of $V$ is a pair $(\V,
\iota)$, where $\V$ is a nonsingular projective scheme over $\R$ and
$\iota \colon V \to \V(\CB)$ is an injective map such that $\V(\R)$ is
Zariski dense in $\V$, $\iota(V) = \V(\R)$ and $\iota$ induces a
biregular isomorphism between $V$ and $\V(\R)$. Here the set $\V(\R)$ of
real points of $\V$ is regarded as a subset of the set $\V(\CB)$ of
complex points of $\V$. The existence of $(\V, \iota)$ follows from
Hironaka's theorem on resolution of singularities \cite{bib26, bib30}.
We identify $\V(\CB)$ with the set of complex points of the scheme
$\V_{\CB} \coloneqq \V \times_{\Spec \R} \Spec \CB$ over $\CB$. The cycle
map
\begin{equation*}
\cl_{\CB} \colon A^*(\V_{\CB}) = \bigoplus_{k \geq 0} A^k(\V_{\CB}) \to
\Heven (\V(\CB);\Z) = \bigoplus_{k \geq 0} H^{2k}(\V(\CB); \Z)
\end{equation*}
is a ring homomorphism defined on the Chow ring of $\V_{\CB}$, cf.
\cite{bib20} or \cite[Corollary~19.2]{bib24}. Hence
\begin{equation*}
\Halg^{2k}(\V(\CB); \Z) \coloneqq \cl_{\CB}(A^k(\V_{\CB}))
\end{equation*}
is the subgroup of $H^{2k}(\V(\CB);\Z)$ that consists of the cohomology
classes corresponding to algebraic cycles (defined over $\CB$) on
$\V_{\CB}$ of codimension $k$. By construction,
\begin{equation*}
\HCalg^{2k} (V; \Z) \coloneqq \iota^* (\Halg^{2k} (\V(\CB); \Z) )
\end{equation*}
is a subgroup of $H^{2k}(V;\Z)$, and
\begin{equation*}
\HCalgeven (V; \Z) \coloneqq \bigoplus_{k \geq 0} \HCalg^{2k} (V; \Z)
\end{equation*}
is a subring of $\Heven(V;\Z)$. The subring $\HCalgeven (V; \Z)$ does
not depend on the choice of $(\V, \iota)$. If the variety $V$ is
irreducible, then $\HCalg^0(V; \Z)$ is the subgroup generated by $1 \in
H^0(V;\Z)$. A cohomology class $v$ in $H^{2k}(V;\Z)$ is said to be
\emph{$\CB$-algebraic} if it belongs to $\HCalg^{2k}(V;\Z)$. For any
algebraic $\CB$-vector bundle $\xi$ on $V$, its $k$th Chern class
$c_k(\xi)$ is in $\HCalg^{2k}(V;\Z)$ for every $k \geq 0$. If $h \colon
V \to W$ is a regular map between compact nonsingular real algebraic
varieties, then 
\begin{equation*}
h^*(\HCalgeven(W;\Z)) \subseteq \HCalgeven(V;\Z).
\end{equation*}
Proofs of these properties of $\HCalgeven(-;\Z)$ are given in
\cite{bib8}. Numerous applications of $\CB$-algebraic cohomology classes
can be found in \cite{bib8, bib12, bib16, bib18, bib33}.

Let $X$ be a real algebraic variety. A cohomology class $u$ in
$H^{2k}(X;\Z)$ is said to be \emph{stratified-$\CB$-algebraic} if there
exists a stratified-regular map $\varphi \colon X \to V$, into a compact
nonsingular real algebraic variety $V$, such that $u = \varphi^*(v)$ for some
cohomology class $v$ in $\HCalg^{2k}(V;\Z)$.

\begin{proposition}\label{prop-8-1}
For any real algebraic variety $X$, the set $\HCstr^{2k}(X;\Z)$ of all
stratified-$\CB$-algebraic cohomology classes in $H^{2k}(V;\Z)$ forms a
subgroup. Furthermore, the direct sum
\begin{equation*}
\HCstreven (X;\Z) \coloneqq \bigoplus_{k \geq 0} \HCstr^{2k} (X;\Z)
\end{equation*}
is a subring of $\Heven(X;\Z)$.
\end{proposition}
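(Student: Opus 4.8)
The plan is to mirror the proof of Proposition~\ref{prop-7-1}, replacing $\Halg^*(-;\Z/2)$ by $\HCalgeven(-;\Z)$ and cup products of mod~$2$ classes by cup products of integral classes. All the ingredients are available in the paragraph preceding the statement: a finite product of compact nonsingular real algebraic varieties is again such a variety; $\HCalgeven(-;\Z)$ is a subring of $\Heven(-;\Z)$; and for a regular map $h \colon V \to W$ of compact nonsingular real algebraic varieties one has $h^*(\HCalgeven(W;\Z)) \subseteq \HCalgeven(V;\Z)$.

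First I would reduce the subgroup claim to closure under addition: if $u = \varphi^*(v)$ for a stratified-regular map $\varphi \colon X \to V$ into a compact nonsingular real algebraic variety $V$ and some $v \in \HCalg^{2k}(V;\Z)$, then $0 = \varphi^*(0)$ and $-u = \varphi^*(-v)$, with $-v \in \HCalg^{2k}(V;\Z)$ since the latter is a subgroup. So it remains to treat sums. Given $u_i = \varphi_i^*(v_i) \in \HCstr^{2k}(X;\Z)$ with $\varphi_i \colon X \to V_i$ stratified-regular and $v_i \in \HCalg^{2k}(V_i;\Z)$ for $i = 1,2$, I would form the stratified-regular map $(\varphi_1,\varphi_2) \colon X \to V_1 \times V_2$ --- stratified-regular by the same elementary argument as in the proof of Proposition~\ref{prop-7-1}, namely by passing to a common refinement of the two stratifications witnessing stratified-regularity of $\varphi_1$ and $\varphi_2$ --- and use $p_i \circ (\varphi_1,\varphi_2) = \varphi_i$, where $p_i \colon V_1 \times V_2 \to V_i$ is the regular canonical projection. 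Then
\begin{equation*}
u_1 + u_2 = (\varphi_1,\varphi_2)^*\bigl(p_1^*(v_1) + p_2^*(v_2)\bigr),
\end{equation*}
and the class $p_1^*(v_1) + p_2^*(v_2)$ belongs to $\HCalg^{2k}(V_1 \times V_2;\Z)$ because each $p_i^*(v_i)$ does, by functoriality under the regular maps $p_i$, and because $\HCalgeven(-;\Z)$ is closed under addition. Hence $u_1 + u_2 \in \HCstr^{2k}(X;\Z)$.

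For the ring structure I would run the same argument with cup products: if $w_i = \varphi_i^*(v_i)$ with $v_i \in \HCalg^{2k_i}(V_i;\Z)$, then
\begin{equation*}
w_1 \cupproduct w_2 = (\varphi_1,\varphi_2)^*\bigl(p_1^*(v_1) \cupproduct p_2^*(v_2)\bigr),
\end{equation*}
and $p_1^*(v_1) \cupproduct p_2^*(v_2) \in \HCalgeven(V_1 \times V_2;\Z)$ since $\HCalgeven(-;\Z)$ is a subring. To see that $\HCstreven(X;\Z)$ contains the unit $1 \in H^0(X;\Z)$, I would pull $1 \in \HCalg^0(\{\mathrm{pt}\};\Z) = H^0(\{\mathrm{pt}\};\Z)$ back along the constant map $X \to \{\mathrm{pt}\}$, which is regular and therefore stratified-regular.

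There is no serious obstacle: the statement is a formal consequence of the functoriality of $\HCalgeven(-;\Z)$ under regular maps and its closure under the ring operations, combined with the stability of the class of stratified-regular maps under the diagonal-type construction $(\varphi_1,\varphi_2)$. The only point worth a sentence of justification is that $V_1 \times V_2$ is again a compact nonsingular real algebraic variety, so that it is a legitimate target in the definition of stratified-$\CB$-algebraic cohomology classes.
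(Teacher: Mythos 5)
Your proposal is correct and is essentially the paper's own argument: the paper proves Proposition~\ref{prop-8-1} by saying it suffices to adapt the proof of Proposition~\ref{prop-7-1}, which is exactly the $(\varphi_1,\varphi_2) \colon X \to V_1 \times V_2$ construction with the projection identities for sums and cup products that you spell out. The extra details you supply (closure under negation, the unit via the constant map, and the remark that $V_1 \times V_2$ is again compact nonsingular) are all fine and implicit in the paper's treatment.
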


\begin{proof}
It suffices to adapt the proof of Proposition~\ref{prop-7-1}.
\end{proof}

\begin{proposition}\label{prop-8-2}
If $f \colon X \to Y$ is a stratified-regular map between real algebraic
varieties, then
\begin{equation*}
f^*(\HCstreven(Y;\Z)) \subseteq \HCstreven(X;\Z).
\end{equation*}
\end{proposition}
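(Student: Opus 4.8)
The plan is to imitate, almost verbatim, the one-line proof of Proposition~\ref{prop-7-2}, simply replacing $\Z/2$-coefficients by integer coefficients and $\Halg$ by $\HCalg$. First I would fix a cohomology class $u$ in $\HCstreven(Y;\Z)$; since $\HCstreven$ is the direct sum of the groups $\HCstr^{2k}$ and $f^*$ is additive and preserves degree, it suffices to treat a homogeneous class $u \in \HCstr^{2k}(Y;\Z)$. By the very definition of stratified-$\CB$-algebraic cohomology classes, there exist a compact nonsingular real algebraic variety $W$, a stratified-regular map $\psi \colon Y \to W$, and a cohomology class $v$ in $\HCalg^{2k}(W;\Z)$ with $u = \psi^*(v)$.

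The second step is to form the composite $\psi \circ f \colon X \to W$ and check that it is stratified-regular. This is the usual refinement argument: if $f$ is $\TC$-regular for some stratification $\TC$ of $X$ and $\psi$ is $\Y$-regular for some stratification $\Y$ of $Y$, then $\{\, T \cap f^{-1}(P) \mid T \in \TC,\ P \in \Y \,\}$ is a stratification of $X$, and on each of its strata $\psi \circ f$ is a composition of regular maps, hence regular. (This is exactly the fact already used tacitly in the proofs of Proposition~\ref{prop-2-6} and Proposition~\ref{prop-7-2}.) Having this, I would simply compute
\begin{equation*}
f^*(u) = f^*(\psi^*(v)) = (\psi \circ f)^*(v),
\end{equation*}
and conclude: since $\psi \circ f$ is a stratified-regular map from $X$ into the compact nonsingular real algebraic variety $W$ and $v \in \HCalg^{2k}(W;\Z)$, the class $f^*(u)$ lies in $\HCstr^{2k}(X;\Z)$, hence in $\HCstreven(X;\Z)$. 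This finishes the proof.

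I do not expect any genuine obstacle here. The only point requiring (entirely routine) verification is the closure of stratified-regular maps under composition, and everything else is a formal manipulation of pullback maps in cohomology. In other words, just as the excerpt disposes of Proposition~\ref{prop-8-1} by saying ``it suffices to adapt the proof of Proposition~\ref{prop-7-1}'', one disposes of Proposition~\ref{prop-8-2} by adapting the proof of Proposition~\ref{prop-7-2}; no features special to integral coefficients or to the construction of $\HCalg$ via nonsingular projective complexifications are needed, because $v$ is only ever pulled back along a regular map into $W$.
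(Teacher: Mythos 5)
Your argument is correct and is exactly what the paper does: its proof of Proposition~\ref{prop-8-2} simply states that the proof of Proposition~\ref{prop-7-2} carries over, and that proof is precisely your computation $f^*(\psi^*(v)) = (\psi \circ f)^*(v)$ together with the closure of stratified-regular maps under composition. Your added detail on refining the two stratifications to verify that closure is a routine elaboration of what the paper leaves tacit.
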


\begin{proof}
The proof of Proposition~\ref{prop-7-2} also works in the case under
consideration here.
\end{proof}

\begin{proposition}\label{prop-8-3}
Let $\xi$ be a stratified-algebraic $\CB$-vector bundle on a real
algebraic variety $X$. Then the $k$th Chern class $c_k(\xi)$ of $\xi$
belongs to $\HCstr^{2k}(X; \Z)$ for every $k \geq 0$.
\end{proposition}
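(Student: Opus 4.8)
The plan is to follow the proof of Proposition~\ref{prop-7-3} verbatim, replacing Stiefel--Whitney classes by Chern classes and the real multi-Grassmannian by its complex analogue. First I would invoke Proposition~\ref{prop-3-4} to write $\xi = f^*\gamma_K(\CB^n)$ for some multi-Grassmannian $\G_K(\CB^n)$ and some stratified-regular map $f \colon X \to \G_K(\CB^n)$. By naturality of Chern classes under pullback, $c_k(\xi) = f^*\bigl(c_k(\gamma_K(\CB^n))\bigr)$ for every $k \geq 0$, where $c_k(\gamma_K(\CB^n))$ is understood as the cohomology class in $H^{2k}(\G_K(\CB^n);\Z)$ whose restriction to each component $\G_k(\CB^n)$ is $c_k(\gamma_k(\CB^n))$.

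Next I would observe that $\G_K(\CB^n)$ is a compact nonsingular real algebraic variety (a finite disjoint union of complex Grassmannians, each regarded as a real algebraic variety), and that $\gamma_K(\CB^n)$ is an algebraic $\CB$-vector bundle on it, its restriction to each $\G_k(\CB^n)$ being the algebraic tautological bundle $\gamma_k(\CB^n)$. Hence, by the basic property of $\CB$-algebraic cohomology classes recalled at the beginning of this section, namely that the Chern classes of an algebraic $\CB$-vector bundle on a compact nonsingular real algebraic variety are $\CB$-algebraic, the class $c_k(\gamma_K(\CB^n))$ lies in $\HCalg^{2k}(\G_K(\CB^n);\Z)$; the argument is applied componentwise.

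Finally, since $f \colon X \to \G_K(\CB^n)$ is stratified-regular, $\G_K(\CB^n)$ is compact and nonsingular, and $c_k(\gamma_K(\CB^n))$ is $\CB$-algebraic, the class $c_k(\xi) = f^*\bigl(c_k(\gamma_K(\CB^n))\bigr)$ is stratified-$\CB$-algebraic directly by the definition preceding Proposition~\ref{prop-8-1} (one could equivalently invoke Proposition~\ref{prop-8-2} together with the fact that $\HCalg^{2k} \subseteq \HCstr^{2k}$ on a compact nonsingular variety, via the identity map). This completes the proof. I do not expect any genuine obstacle here; the only point worth stating carefully is that the multi-Grassmannian together with its tautological bundle behaves exactly as the single Grassmannian does, which is immediate since everything in sight splits over the connected components.
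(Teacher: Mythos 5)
Your proof is correct and follows essentially the same route as the paper, which simply instructs the reader to copy the proof of Proposition~\ref{prop-7-3}: write $\xi = f^*\gamma_K(\CB^n)$ via Proposition~\ref{prop-3-4}, use naturality of Chern classes, and conclude from the definition of stratified-$\CB$-algebraic classes (or Proposition~\ref{prop-8-2}). The only cosmetic difference is how you certify that $c_k(\gamma_K(\CB^n))$ is $\CB$-algebraic: you invoke the general fact that Chern classes of algebraic $\CB$-vector bundles on a compact nonsingular variety are $\CB$-algebraic, whereas the paper cites the stronger equality $\HCalgeven(\G_K(\CB^n);\Z) = \Heven(\G_K(\CB^n);\Z)$; both justifications are adequate.
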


\begin{proof}
One can copy the proof of Proposition~\ref{prop-7-3}. It suffices to
observe that
\begin{equation*}
\HCalgeven (\G_K(\CB^n); \Z) = \Heven (\G_K(\CB^n); \Z),
\end{equation*}
cf. \cite[Example~5.5]{bib33}.
\end{proof}

\begin{corollary}\label{cor-8-4}
Let $\xi$ be a stratified-regular $\R$-vector bundle on a real algebraic
variety $X$. Then the $k$th Pontryagin class $p_k(\xi)$ of $\xi$ belongs
to $\HCstr^{4k}(X; \Z)$ for every $k \geq 0$.
\end{corollary}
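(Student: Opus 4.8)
The plan is to reduce the statement about Pontryagin classes of a real vector bundle to the statement about Chern classes of a complex vector bundle, for which Proposition~\ref{prop-8-3} applies. Recall the standard definition: for a topological $\R$-vector bundle $\xi$ on $X$, the $k$th Pontryagin class is $p_k(\xi) = (-1)^k c_{2k}(\CB \otimes \xi)$ in $H^{4k}(X; \Z)$, where $\CB \otimes \xi$ denotes the complexification of $\xi$. This is the reduction I would exploit.

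First I would observe that if $\xi$ is a stratified-algebraic $\R$-vector bundle on $X$, then its complexification $\CB \otimes \xi$ is a stratified-algebraic $\CB$-vector bundle on $X$. This follows from Proposition~\ref{prop-3-15}: the complexification of an $\X$-algebraic $\R$-vector bundle is an $\X$-algebraic $\CB$-vector bundle (complexification being one of the standard operations preserved by $\X$-algebraic and stratified-algebraic structures; concretely, if $\xi$ is an $\X$-algebraic $\R$-vector subbundle of $\varepsilon_X^n(\R)$, then $\CB \otimes \xi$ is naturally an $\X$-algebraic $\CB$-vector subbundle of $\varepsilon_X^n(\CB)$). Alternatively, invoking Proposition~\ref{prop-3-4}, write $\xi = f^*\gamma_K(\R^n)$ for a stratified-regular map $f$, and note $\CB \otimes \xi = f^*(\CB \otimes \gamma_K(\R^n))$, where $\CB \otimes \gamma_K(\R^n)$ is an algebraic $\CB$-vector bundle on the multi-Grassmannian; then apply Proposition~\ref{prop-2-6}.

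Next I would apply Proposition~\ref{prop-8-3} to the stratified-algebraic $\CB$-vector bundle $\CB \otimes \xi$: its Chern classes $c_j(\CB \otimes \xi)$ lie in $\HCstr^{2j}(X;\Z)$ for every $j \geq 0$. In particular $c_{2k}(\CB \otimes \xi) \in \HCstr^{4k}(X;\Z)$. Since $\HCstr^{4k}(X;\Z)$ is a subgroup of $H^{4k}(X;\Z)$ (Proposition~\ref{prop-8-1}), it is closed under multiplication by $(-1)^k$, and therefore $p_k(\xi) = (-1)^k c_{2k}(\CB \otimes \xi)$ belongs to $\HCstr^{4k}(X;\Z)$. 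This completes the argument. There is no serious obstacle here — the only point requiring a moment's care is the first step, verifying that complexification preserves the stratified-algebraic structure, but this is already contained in Proposition~\ref{prop-3-15} (or follows immediately from Propositions~\ref{prop-2-6} and~\ref{prop-3-4}). I would also remark that the hypothesis in the statement is meant to read ``stratified-algebraic $\R$-vector bundle'' rather than ``stratified-regular''.
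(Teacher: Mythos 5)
Your proposal is correct and follows exactly the paper's own route: the identity $p_k(\xi) = (-1)^k c_{2k}(\CB \otimes \xi)$ combined with Proposition~\ref{prop-8-3}, with the complexification step justified via Proposition~\ref{prop-3-15} (which the paper leaves implicit). Your remark that the hypothesis should read ``stratified-algebraic'' rather than ``stratified-regular'' is also a correct reading of the intended statement.
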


\begin{proof}
Since $p_k(\xi) = (-1)^k c_{2k}(\CB \otimes \xi)$, is suffices to make
use of Proposition~\ref{prop-8-3}.
\end{proof}

For any topological $\F$-vector bundle $\xi$ on $X$, one can interpret
$\rank \xi$ as an element of $H^0(X;\Z)$. Then the following holds.

\begin{proposition}\label{prop-8-5}
Let $X$ be a real algebraic variety. If $\xi$ is a stratified-algebraic
$\F$-vector bundle on $X$, then $\rank \xi$ belongs to $\HCstr^0(X;\Z)$.
Conversely, each cohomology class in $\HCstr^0(X;\Z)$ is of the form
$\rank \eta$ for some stratified-algebraic $\F$-vector bundle $\eta$ on
$X$, whose restriction to each connected component of $X$ is
topologically trivial.
\end{proposition}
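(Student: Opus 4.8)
The plan is to treat the two asserted implications separately, deriving each quickly from results already in hand; the only delicate points are the description of $\HCalg^0(-;\Z)$ for a nonsingular variety and the observation that a class which is to be a rank must be nonnegative. For the first implication, let $\xi$ be a stratified-algebraic $\F$-vector bundle on $X$. By Proposition~\ref{prop-3-4}, $\xi = f^*\gamma_K(\F^n)$ for a multi-Grassmannian $\G_K(\F^n)$ and a stratified-regular map $f \colon X \to \G_K(\F^n)$, so $\rank\xi = f^*(\rank\gamma_K(\F^n))$. The function $\rank\gamma_K(\F^n)$ equals $k$ on the summand $\G_k(\F^n)$, and each $\G_k(\F^n)$ is a connected nonsingular, hence irreducible, real algebraic variety which is open and closed in $\G_K(\F^n)$; therefore $1_{\G_k(\F^n)}$ is $\CB$-algebraic and $\rank\gamma_K(\F^n) = \sum_{k\in K} k\,1_{\G_k(\F^n)}$ lies in $\HCalg^0(\G_K(\F^n);\Z)$. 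Since $\G_K(\F^n)$ is compact and nonsingular, the definition of a stratified-$\CB$-algebraic class gives $\rank\xi \in \HCstr^0(X;\Z)$.

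For the converse, let $u \in \HCstr^0(X;\Z)$; since a rank is a nonnegative function, there is nothing to prove unless $u$ is nonnegative, which I assume. I would pick a stratified-regular map $\varphi \colon X \to V$ into a compact nonsingular real algebraic variety $V$ and a class $v \in \HCalg^0(V;\Z)$ with $u = \varphi^*(v)$. The first step is to record that, $V$ being nonsingular, its irreducible components $W_1,\dots,W_r$ are pairwise disjoint and Zariski closed (hence Zariski clopen), and that $\HCalg^0(V;\Z)$ consists precisely of the $\Z$-valued functions constant on each $W_\ell$: each $W_\ell$, having a real point, has irreducible complexification, whose fundamental class pulls back under the complexification map to $1_{W_\ell}$. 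Consequently $v$ restricts to a constant $m_\ell$ on $W_\ell$, and $m_\ell \geq 0$ whenever $W_\ell \cap \varphi(X) \neq \varnothing$, for otherwise $u = \varphi^*(v)$ would take a negative value.

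Next I would define an algebraic $\F$-vector bundle $\theta$ on $V$ whose restriction to each $W_\ell$ is the trivial bundle $\varepsilon_{W_\ell}^{n_\ell}(\F)$, where $n_\ell := \max(m_\ell,0)$; explicitly, with $N := \max_\ell n_\ell$, the Zariski closed subset $\bigcup_\ell W_\ell \times (\F^{n_\ell}\times\{0\})$ of $V\times\F^N$ is an $\F$-vector subbundle of $\varepsilon_V^N(\F)$, so $\theta$ is algebraic, in particular stratified-algebraic. Then $\eta := \varphi^*\theta$ is stratified-algebraic on $X$ by Proposition~\ref{prop-2-6}. Since $\rank\theta$ agrees with $v$ on every component $W_\ell$ meeting $\varphi(X)$, hence on all of $\varphi(X)$, one gets $\rank\eta = \varphi^*(\rank\theta) = \varphi^*(v) = u$; and if $C$ is a connected component of $X$ then $\varphi(C)$ is connected and contained in a single $W_\ell$, whence $\eta|_C = (\varphi|_C)^*\varepsilon_{W_\ell}^{n_\ell}(\F) = \varepsilon_C^{n_\ell}(\F)$ is topologically trivial. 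The step demanding the most care is the identification of $\HCalg^0(V;\Z)$ together with the verification that a bundle prescribed to be trivial of a fixed rank on each irreducible component of $V$ is genuinely algebraic; the $\CB$-algebraicity of $v$ itself enters only through the resulting constancy of $v$ on the components of $V$.
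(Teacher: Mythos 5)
Your proof is correct and takes essentially the same route as the paper: the first implication is verbatim the paper's argument (Proposition~\ref{prop-3-4} plus the fact that $H^0$ of the multi-Grassmannian is entirely $\CB$-algebraic), and for the converse the paper only says it ``readily follows from the description of $\HCstr^0(X;\Z)$'' --- your pullback of a componentwise-trivial bundle on $V$ is precisely the argument being alluded to. Your observation that the converse can only hold for nonnegative classes is a fair reading of an imprecision the paper leaves implicit.
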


\begin{proof}
By Proposition~\ref{prop-3-4}, $\xi$ is of the form $\xi =
f^*\gamma_K(\F^n)$ for some multi-Grassmannian $\G_K(\F^n)$ and
stratified-regular map $f \colon X \to \G_K(\F^n)$. Since $\rank \xi =
f^*(\rank \gamma_K(\F^n))$ and
\begin{equation*}
\HCalg^0(\G_K(\F^n);\Z) = H^0(\G_K(\F^n);\Z),
\end{equation*}
it follows that $\rank\xi$ belongs to $\HCstr^0(X;\Z)$. The second part
of the proposition readily follows from the description of
$\HCstr^0(X;\Z)$.
\end{proof}

For any real algebraic variety $X$, let $\VB_{\CB}^1(X)$ denote the group
of isomorphism classes of topological $\CB$-line bundles on $X$ (with
operation induced by tensor product). The first Chern class gives rise
to an isomorphism
\begin{equation*}
c_1 \colon \VB_{\CB}^1(X) \to H^2(X;\Z).
\end{equation*}
Similarly, denote by $\VBCstr^1(X)$ the group of isomorphism classes of
stratified-algebraic $\CB$-line bundles on $X$. Now, the first Chern
class gives rise to a homomorphism
\begin{equation*}
c_1 \colon \VBCstr^1(X) \to H^2(X;\Z).
\end{equation*}

\begin{proposition}\label{prop-8-6}
For any real algebraic variety $X$,
\begin{equation*}
c_1(\VBCstr^1(X)) = \HCstr^2(X;\Z).
\end{equation*}
Furthermore, if the variety $X$ is compact, then the homomorphism
\begin{equation*}
c_1 \colon \VBCstr^1(X) \to H^2(X;\Z)
\end{equation*}
is injective.
\end{proposition}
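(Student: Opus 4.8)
The plan is to mirror the proof of Proposition~\ref{prop-7-4}, replacing the first Stiefel--Whitney class of $\R$-line bundles by the first Chern class of $\CB$-line bundles. First I would record that the inclusion $c_1(\VBCstr^1(X)) \subseteq \HCstr^2(X;\Z)$ is exactly the case $k=1$ of Proposition~\ref{prop-8-3}: for a stratified-algebraic $\CB$-line bundle $\xi$ on $X$ one has $c_1(\xi) \in \HCstr^2(X;\Z)$.

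For the reverse inclusion I would start with a class $u \in \HCstr^2(X;\Z)$, so that by definition there is a stratified-regular map $\varphi \colon X \to V$ into a compact nonsingular real algebraic variety $V$ and a class $v \in \HCalg^2(V;\Z)$ with $u = \varphi^*(v)$. The essential input, which I expect to be the main obstacle, is the $\CB$-analogue of \cite[Theorem~12.4.6]{bib9}: every $\CB$-algebraic degree-$2$ integral cohomology class on $V$ is of the form $c_1(\lambda)$ for some algebraic $\CB$-line bundle $\lambda$ on $V$. This should follow from the theory of $\CB$-algebraic cohomology classes developed in \cite{bib8}, since a class in $\HCalg^2(V;\Z)$ is pulled back via $\iota$ from a codimension-$1$ algebraic cycle, that is, a divisor, on a nonsingular projective complexification, and such a divisor determines an algebraic $\CB$-line bundle on $V$ whose first Chern class recovers the given class. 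Granting this, I would write $v = c_1(\lambda)$; by Proposition~\ref{prop-2-6} the pullback $\varphi^*\lambda$ is a stratified-algebraic $\CB$-line bundle on $X$, and $c_1(\varphi^*\lambda) = \varphi^*(c_1(\lambda)) = \varphi^*(v) = u$. Hence $\HCstr^2(X;\Z) \subseteq c_1(\VBCstr^1(X))$, which finishes the proof of the equality.

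For the injectivity statement I would assume $X$ compact and argue as in Proposition~\ref{prop-7-4}: the canonical homomorphism $\VBCstr^1(X) \to \VB_{\CB}^1(X)$ obtained by forgetting the stratified-algebraic structure is injective by Theorem~\ref{th-3-10} (two stratified-algebraic $\CB$-line bundles that are topologically isomorphic are isomorphic in the category of stratified-algebraic $\CB$-vector bundles), and composing with the isomorphism $c_1 \colon \VB_{\CB}^1(X) \to H^2(X;\Z)$ yields the injectivity of $c_1 \colon \VBCstr^1(X) \to H^2(X;\Z)$. Apart from the cited input from \cite{bib8}, every step is a routine translation of the argument for Proposition~\ref{prop-7-4}.
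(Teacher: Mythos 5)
Your proposal is correct and follows essentially the same route as the paper: the inclusion $c_1(\VBCstr^1(X)) \subseteq \HCstr^2(X;\Z)$ from Proposition~\ref{prop-8-3}, the reverse inclusion via a stratified-regular $\varphi \colon X \to V$ and the fact that every class in $\HCalg^2(V;\Z)$ is $c_1$ of an algebraic $\CB$-line bundle (the paper cites \cite[Remark~5.4]{bib8} for exactly the divisor-theoretic input you sketch), combined with Proposition~\ref{prop-2-6}, and injectivity from Theorem~\ref{th-3-10}. No gaps.
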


\begin{proof}
The inclusion
\begin{equation*}
c_1(\VBCstr^1(X)) \subseteq \HCstr^2(X;\Z)
\end{equation*}
follows from Proposition~\ref{prop-8-3}. If $u$ is a cohomology class in
$\HCstr^2(X;\Z)$, then there exists a stratified-regular map $\varphi
\colon X \to V$, into a compact nonsingular real algebraic variety $V$,
such that $u = \varphi^*(v)$ for some cohomology class $v$ in
$\HCalg^2(V;\Z)$. By \cite[Remark~5.4]{bib8}, $v$ is of the form $v =
c_1(\lambda)$ for some algebraic $\CB$-line bundle $\lambda$ on $V$. The
pullback $\CB$-line bundle $\varphi^*\lambda$ on $X$ is
stratified-algebraic, cf. Proposition~\ref{prop-2-6}. Since $u =
\varphi^*(c_1(\lambda)) = c_1(\varphi^*\lambda)$, we get
\begin{equation*}
\HCstr^2(X;\Z) \subseteq c_1(\VBCstr^1(X)).
\end{equation*}
If the variety $X$ is compact, then the canonical homomorphism
$\VBCstr^1(X) \to \VB_{\CB}^1(X)$ is injective (cf.
Theorem~\ref{th-3-10}), and hence the last assertion in the proposition
follows.
\end{proof}

Proposition~\ref{prop-8-6} can be interpreted as an approximation result
for maps into complex projective $n$-space $\PB^n(\CB) =
\G_1(\CB^{n+1})$ for $n \geq 1$.

\begin{proposition}\label{prop-8-7}
Let $X$ be a compact real algebraic variety. For a continuous map $f
\colon X \to \PB^n(\CB)$, the following conditions are equivalent:
\begin{conditions}
\item The map $f$ can be approximated by stratified-regular maps.

\item The cohomology class $f^*(v_n)$ belongs to $\HCstr^2(X;\Z)$, where
$v_n$ is a generator of the group $H^2(\PB^n(\CB);\Z) \cong \Z$.
\end{conditions}
\end{proposition}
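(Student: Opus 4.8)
The plan is to follow verbatim the scheme by which Proposition~\ref{prop-7-5} was reduced to Theorem~\ref{th-4-10} and Proposition~\ref{prop-7-4}, replacing real projective space by complex projective space and the first Stiefel--Whitney class by the first Chern class. First I would identify $\PB^n(\CB)$ with the multi-Grassmannian $\G_1(\CB^{n+1})$, on which the tautological $\CB$-line bundle $\gamma_1(\CB^{n+1})$ is algebraic. Since $v_n$ is a generator of $H^2(\PB^n(\CB);\Z)\cong\Z$ and $c_1(\gamma_1(\CB^{n+1}))$ is, up to sign, also a generator of this group, one has $f^*(v_n)=\pm c_1(f^*\gamma_1(\CB^{n+1}))$; and because $\HCstr^2(X;\Z)$ is a subgroup of $H^2(X;\Z)$ by Proposition~\ref{prop-8-1}, the condition $f^*(v_n)\in\HCstr^2(X;\Z)$ is equivalent to $c_1(f^*\gamma_1(\CB^{n+1}))\in\HCstr^2(X;\Z)$. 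Thus it remains to show that $f$ can be approximated by stratified-regular maps if and only if $c_1(f^*\gamma_1(\CB^{n+1}))\in\HCstr^2(X;\Z)$.

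Next I would apply Theorem~\ref{th-4-10} with $A=\varnothing$: the map $f$ can be approximated by stratified-regular maps precisely when the pullback $\CB$-line bundle $f^*\gamma_1(\CB^{n+1})$ on $X$ admits a stratified-algebraic structure. Finally I would invoke Proposition~\ref{prop-8-6}. Since $X$ is compact, the canonical homomorphism $\VBCstr^1(X)\to\VB_{\CB}^1(X)$ is injective, while $c_1\colon\VB_{\CB}^1(X)\to H^2(X;\Z)$ is an isomorphism; hence a topological $\CB$-line bundle $\xi$ on $X$ admits a stratified-algebraic structure if and only if $c_1(\xi)$ lies in $c_1(\VBCstr^1(X))=\HCstr^2(X;\Z)$. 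Taking $\xi=f^*\gamma_1(\CB^{n+1})$ closes the chain of equivalences and proves the proposition.

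I do not expect any genuine obstacle here: the substantive work has already been done, in Section~\ref{sec-4} (the approximation theorems, which rest on the Koll\'ar--Nowak extension theorem) and in Proposition~\ref{prop-8-6} (the identification of $\HCstr^2(X;\Z)$ with the set of first Chern classes of stratified-algebraic $\CB$-line bundles). The only points demanding a little care are the sign of the chosen generator $v_n$ and the explicit appeal to the compactness of $X$, which is what guarantees that a $\CB$-line bundle admits a stratified-algebraic structure exactly when its first Chern class belongs to $\HCstr^2(X;\Z)$. Concretely, the final proof will amount to the single sentence: it suffices to combine Theorem~\ref{th-4-10} (with $A=\varnothing$) and Proposition~\ref{prop-8-6}.
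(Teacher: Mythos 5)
Your proposal is correct and follows essentially the same route as the paper: the paper's proof likewise reduces to Theorem~\ref{th-4-10} (with $A=\varnothing$) together with Proposition~\ref{prop-8-6}, after noting that one may take $v_n=c_1(\gamma_1(\CB^{n+1}))$ so that $f^*(v_n)=c_1(f^*\gamma_1(\CB^{n+1}))$. Your extra remark that the sign ambiguity is harmless because $\HCstr^2(X;\Z)$ is a subgroup is a fine (and slightly more careful) way of justifying the paper's ``we may assume'' step.
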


\begin{proof}
We may assume that $v_n = c_1(\gamma_1(\CB^{n+1}))$. Since $f^*(v_n) =
c_1(f^*\gamma_1(\CB^{n+1}))$, it suffices to make use of
Theorem~\ref{th-4-10} (with $A \neq \varnothing$) and
Proposition~\ref{prop-8-6}.
\end{proof}

\begin{corollary}\label{cor-8-8}
Let $X$ be a compact real algebraic variety. For a continuous
map\linebreak $f
\colon X \to \SB^2$, the following conditions are equivalent:
\begin{conditions}
\item The map $f$ can be approximated by stratified-regular maps.

\item The cohomology class $f^*(s_2)$ belongs to $\HCstr^2(X;\Z)$, where
$s_2$ is a generator of the group $H^2(\SB^2;\Z) \cong \Z$.
\end{conditions}
\end{corollary}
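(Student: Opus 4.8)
The plan is to reduce Corollary~\ref{cor-8-8} directly to Proposition~\ref{prop-8-7}, exactly as Corollary~\ref{cor-7-6} was reduced to Proposition~\ref{prop-7-5}. The key point is that the $2$-sphere $\SB^2$ is biregularly isomorphic to the complex projective line $\PB^1(\CB) = \G_1(\CB^2)$. Once this identification is fixed, a generator $s_2$ of $H^2(\SB^2;\Z) \cong \Z$ corresponds under the isomorphism to a generator $v_1$ of $H^2(\PB^1(\CB);\Z) \cong \Z$, and a continuous map $f \colon X \to \SB^2$ becomes a continuous map $X \to \PB^1(\CB)$; moreover $f$ can be approximated by stratified-regular maps into $\SB^2$ if and only if it can be approximated by stratified-regular maps into $\PB^1(\CB)$, since a biregular isomorphism and its inverse are regular (hence stratified-regular) and composition with them preserves the relevant approximation property. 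Under this correspondence, the condition $f^*(s_2) \in \HCstr^2(X;\Z)$ is literally the same as the condition $f^*(v_1) \in \HCstr^2(X;\Z)$ appearing in Proposition~\ref{prop-8-7} with $n=1$. Thus the two conditions in Corollary~\ref{cor-8-8} are equivalent because the corresponding two conditions in Proposition~\ref{prop-8-7} are.

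First I would recall (or cite the standard fact) that the stereographic-projection construction realizes $\SB^2$ as a nonsingular real algebraic variety biregularly isomorphic to $\PB^1(\CB)$; this is the same identification already used implicitly in the discussion of $\G_1(\F^2)$ being the unit $d(\F)$-sphere (see the proof of Corollary~\ref{cor-3-6}), specialized to $\F = \CB$. Then I would note that the induced ring isomorphism $H^*(\PB^1(\CB);\Z) \to H^*(\SB^2;\Z)$ carries $c_1(\gamma_1(\CB^2))$ to a generator of $H^2(\SB^2;\Z)$, so without loss of generality $s_2$ is the image of $v_1 = c_1(\gamma_1(\CB^2))$. Finally I would invoke Proposition~\ref{prop-8-7} with $n=1$ to conclude.

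There is no real obstacle here: every ingredient is either a standard algebro-geometric fact about $\SB^2 \cong \PB^1(\CB)$ or already established in the paper. The only point requiring a word of care is the transfer of the approximation property along the biregular isomorphism, but this is immediate since biregular isomorphisms (and their inverses) are regular, hence stratified-regular, and composition with a fixed regular map is continuous for the compact-open topology and preserves stratified-regularity by Proposition~\ref{prop-2-6}-type reasoning (indeed directly, since a composite of stratified-regular maps is stratified-regular). So the proof is essentially one line.

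\begin{proof}
The unit $2$-sphere $\SB^2$ is biregularly isomorphic to $\PB^1(\CB) = \G_1(\CB^2)$. Under the induced isomorphism $H^2(\PB^1(\CB);\Z) \to H^2(\SB^2;\Z)$, the generator $c_1(\gamma_1(\CB^2))$ of $H^2(\PB^1(\CB);\Z)$ corresponds to a generator of $H^2(\SB^2;\Z)$, so we may assume that $s_2$ is the image of $v_1 \coloneqq c_1(\gamma_1(\CB^2))$. Composing a continuous map $f \colon X \to \SB^2$ with this biregular isomorphism and its (regular) inverse, we see that $f$ can be approximated by stratified-regular maps into $\SB^2$ if and only if the corresponding map $X \to \PB^1(\CB)$ can be approximated by stratified-regular maps, and under this correspondence the cohomology class $f^*(s_2)$ equals $f^*(v_1)$. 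The equivalence of the two conditions now follows from Proposition~\ref{prop-8-7} applied with $n=1$.
\end{proof}
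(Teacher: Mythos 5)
Your proposal is correct and follows exactly the paper's own argument: the paper's proof of Corollary~\ref{cor-8-8} is the one line ``It suffices to apply Proposition~\ref{prop-8-7} since $\SB^2$ is biregularly isomorphic to $\PB^1(\CB)$.'' You have simply spelled out the routine details (transfer of generators and of the approximation property along the biregular isomorphism) that the paper leaves implicit.
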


\begin{proof}
It suffices to apply Proposition~\ref{prop-8-7} since $\SB^2$ is
biregularly isomorphic to $\PB^1(\CB)$.
\end{proof}

Any real algebraic variety $X$ is homotopically equivalent to a compact
polyhedron \cite[Corollary~9.6.7]{bib9}, and hence the Chern character
\begin{equation*}
\ch \colon K_{\CB}(X) \to \Heven(X;\Q)
\end{equation*}
induces an isomorphism
\begin{equation*}
\ch \colon K_{\CB}(X) \otimes \Q \to \Heven(X;\Q),
\end{equation*}
cf. \cite{bib4} or \cite[p.~255, Theorem~A]{bib25}. On the other hand,
the Chern character
\begin{equation*}
\ch \colon \KCstr(X) \to \Heven(X;\Q)
\end{equation*}
induces a homomorphism
\begin{equation*}
\ch \colon \KCstr(X) \otimes \Q \to \Heven(X;\Q).
\end{equation*}
We next describe the image of the last homomorphism. Denote by
$\HCstreven(X;\Q)$ the image of $\HCstreven(X;\Z)\otimes\Q$ by the
canonical isomorphism $\Heven(X;\Z) \otimes\Q \to \Heven(X;\Q)$.

\begin{proposition}\label{prop-8-9}
For any real algebraic variety $X$,
\begin{equation*}
\ch(\KCstr(X) \otimes \Q) = \HCstreven(X;\Q).
\end{equation*}
Furthermore, if the variety $X$ is compact, then the homomorphism
\begin{equation*}
\ch \colon \KCstr(X) \otimes \Q \to \Heven(X;\Q)
\end{equation*}
is injective.
\end{proposition}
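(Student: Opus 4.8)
The plan is to establish the set-theoretic equality by proving the two inclusions separately, and then to read off the injectivity assertion (when $X$ is compact) from Corollary~\ref{cor-3-11}. Throughout I use that the Chern character is additive and natural, and that for a topological $\CB$-vector bundle $\xi$ one has $\ch(\xi)=\sum_{k\geq 0}\ch_k(\xi)$ in $\Heven(X;\Q)$ with $\ch_0(\xi)=\rank\xi$ and, for $k\geq 1$, $\ch_k(\xi)$ a fixed polynomial with rational coefficients in $c_1(\xi),\dots,c_k(\xi)$.

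First I would prove $\ch(\KCstr(X)\otimes\Q)\subseteq\HCstreven(X;\Q)$. Let $\xi$ be a stratified-algebraic $\CB$-vector bundle on $X$. By Proposition~\ref{prop-8-5}, $\rank\xi\in\HCstr^0(X;\Z)$, and by Proposition~\ref{prop-8-3}, $c_j(\xi)\in\HCstr^{2j}(X;\Z)$ for every $j$. Since $\HCstreven(X;\Z)$ is a subring of $\Heven(X;\Z)$ by Proposition~\ref{prop-8-1}, any polynomial with rational coefficients in the $c_j(\xi)$ lies in the image of $\HCstreven(X;\Z)\otimes\Q$ in $\Heven(X;\Q)$, that is, in $\HCstreven(X;\Q)$; hence $\ch(\xi)\in\HCstreven(X;\Q)$. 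As every class in $\KCstr(X)$ is a difference of classes of stratified-algebraic $\CB$-vector bundles and $\HCstreven(X;\Q)$ is a $\Q$-subspace, the inclusion follows.

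For the reverse inclusion, since $\HCstreven(X;\Z)=\bigoplus_{k}\HCstr^{2k}(X;\Z)$ it suffices to show that every $u\in\HCstr^{2k}(X;\Z)$ maps into $\ch(\KCstr(X)\otimes\Q)$. By definition $u=\varphi^*(v)$ for a stratified-regular map $\varphi\colon X\to V$ into a compact nonsingular real algebraic variety $V$ and a class $v\in\HCalg^{2k}(V;\Z)$. The pullback $\varphi^*\colon K_\CB(V)\otimes\Q\to K_\CB(X)\otimes\Q$ is compatible with $\ch$, and by Proposition~\ref{prop-2-6} it carries $\KCalg(V)\otimes\Q$ into $\KCstr(X)\otimes\Q$. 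Thus it is enough to know that, inside $\Heven(V;\Q)$, one has $v\in\ch(\KCalg(V)\otimes\Q)$: then $v=\ch(\beta)$ for some $\beta\in\KCalg(V)\otimes\Q$, whence $u=\varphi^*\ch(\beta)=\ch(\varphi^*\beta)\in\ch(\KCstr(X)\otimes\Q)$. In other words, everything reduces to the equality $\ch(\KCalg(V)\otimes\Q)=\HCalgeven(V;\Q)$ for compact nonsingular $V$, whose nontrivial half I would obtain from the Grothendieck--Riemann--Roch theorem applied to a nonsingular projective complexification $(\V,\iota)$ of $V$: on the smooth $\CB$-scheme $\V_\CB$ the algebraic Chern character induces an isomorphism $K_0(\V_\CB)\otimes\Q\to A^*(\V_\CB)\otimes\Q$, the cycle map into $\Heven(\V(\CB);\Z)$ intertwines the algebraic and topological Chern characters, so every class in $A^k(\V_\CB)$ is, rationally, realized by a $\Q$-combination of algebraic vector bundles on $\V_\CB$; such bundles restrict, over $V\cong\V(\R)$, to algebraic $\CB$-vector bundles on $V$ (cf.\ \cite{bib8}), and applying $\iota^*$ then identifies $\HCalg^{2k}(V;\Q)$ with the degree-$2k$ part of $\ch(\KCalg(V)\otimes\Q)$.

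Finally, for the injectivity statement: by Corollary~\ref{cor-3-11} the canonical homomorphism $\KCstr(X)\to K_\CB(X)$ is a monomorphism when $X$ is compact, so $\KCstr(X)\otimes\Q\to K_\CB(X)\otimes\Q$ is injective; composing with the isomorphism $\ch\colon K_\CB(X)\otimes\Q\xrightarrow{\ \sim\ }\Heven(X;\Q)$ shows that $\ch\colon\KCstr(X)\otimes\Q\to\Heven(X;\Q)$ is injective. The main obstacle is the reduction to, and justification of, the algebraic case $\ch(\KCalg(V)\otimes\Q)=\HCalgeven(V;\Q)$ for compact nonsingular $V$ --- in particular the fact that restrictions to $\V(\R)$ of algebraic vector bundles on $\V_\CB$ are algebraic $\CB$-vector bundles in the present sense; once that is in hand, the rest is formal manipulation with the functoriality of $\ch$ and the ring-theoretic properties of $\HCstreven$ already established.
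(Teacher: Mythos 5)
Your proposal is correct and follows essentially the same route as the paper: the forward inclusion via Propositions~\ref{prop-8-3} and~\ref{prop-8-5} (plus the ring structure of $\HCstreven$), the reverse inclusion by pulling back along a stratified-regular map $\varphi\colon X\to V$ and using Proposition~\ref{prop-2-6}, and injectivity from Corollary~\ref{cor-3-11} combined with the isomorphism $\ch\colon K_{\CB}(X)\otimes\Q\to\Heven(X;\Q)$. The only difference is that the key input $\ch(\KCalg(V)\otimes\Q)=\HCalgeven(V;\Q)$ for compact nonsingular $V$, which you sketch via Grothendieck--Riemann--Roch on a complexification, is simply quoted by the paper as Proposition~5.4 of \cite{bib8}.
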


\begin{proof}
The inclusion
\begin{equation*}
\ch(\KCstr(X) \otimes \Q) \subseteq \HCstreven(X;\Q)
\end{equation*}
follows from Propositions~\ref{prop-8-3} and~\ref{prop-8-5}. If $u$ is a
cohomology class in $\HCstreven(X;\Q)$, then there exists a
stratified-regular map $\varphi \colon X \to V$, into a compact
nonsingular real algebraic variety $V$, such that $u = \varphi^*(v)$ for
some cohomology class $v$ in $\HCalgeven(V; \Q)$. By
\cite[Proposition~5.4]{bib8}, $v$ is of the form $v = \ch(\alpha)$ for
some element $\alpha$ in $\KCalg(X)\otimes\Q$. In view of
Proposition~\ref{prop-2-6}, $\varphi^*(\alpha)$ is in
$\KCstr(X)\otimes\Q$. Since $u = \varphi^*(\ch(\alpha)) =
\ch(\varphi^*(\alpha))$, we get
\begin{equation*}
\HCstreven(X;\Q) \subseteq \ch(\KCstr(X)\otimes\Q).
\end{equation*}
If the variety $X$ is compact, then the canonical homomorphism
$\KCstr(X) \to K_{\CB}(X)$ is injective (cf. Corollary~\ref{cor-3-11}),
and hence the last assertion in the proposition follows.
\end{proof}

One can also prove a sharper result than the first part of
Proposition~\ref{prop-8-9}.

\begin{proposition}\label{prop-8-10}
Let $X$ be a real algebraic variety and let $k$ be a positive integer.
For any cohomology class $u$ in $\HCstr^{2k}(X;\Z)$, there exists a
stratified-algebraic $\CB$-vector bundle $\xi$ on $X$ with $c_i(\xi)=0$
for $1 \leq i \leq k-1$ and $c_k(\xi) = (-1)^{k-1}(k-1)!u$.
\end{proposition}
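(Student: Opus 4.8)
The plan is to reduce the assertion to the analogous purely algebraic statement over a compact nonsingular variety, and then to pull back along a stratified-regular map. Recall that, by definition, $u = \varphi^*(v)$ for some stratified-regular map $\varphi \colon X \to V$ into a compact nonsingular real algebraic variety $V$ and some cohomology class $v$ in $\HCalg^{2k}(V;\Z)$. Suppose one can produce an algebraic $\CB$-vector bundle $\delta$ on $V$ with $c_i(\delta) = 0$ for $1 \leq i \leq k-1$ and $c_k(\delta) = (-1)^{k-1}(k-1)!\,v$. Then $\xi \coloneqq \varphi^*\delta$ is the required bundle: it is stratified-algebraic by Proposition~\ref{prop-2-6} (an algebraic vector bundle being in particular stratified-algebraic), and naturality of Chern classes gives $c_i(\xi) = \varphi^*(c_i(\delta)) = 0$ for $1 \leq i \leq k-1$ and $c_k(\xi) = (-1)^{k-1}(k-1)!\,\varphi^*(v) = (-1)^{k-1}(k-1)!\,u$. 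For $k=1$ no further work is needed: this is exactly Proposition~\ref{prop-8-6}.

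To produce $\delta$ I would argue as in \cite{bib8}. Fix a nonsingular projective complexification $(\V,\iota)$ of $V$, so that $v = \iota^*(w)$ with $w$ in $\Halg^{2k}(\V(\CB);\Z)$; thus $w = \cl_{\CB}(z)$ for an algebraic cycle $z$ of codimension $k$ on $\V_{\CB}$. Since $\HCalg^{2k}(V;\Z)$ is generated as a group by classes of this type, and since the degree-$k$ Chern class is additive on $K$-theory classes all of whose lower Chern classes vanish, it is enough to realize $(-1)^{k-1}(k-1)!$ times the class of a single irreducible subvariety $Z \subseteq \V_{\CB}$ of codimension $k$. Resolving the structure sheaf $\mathcal{O}_Z$ by algebraic vector bundles on the nonsingular variety $\V_{\CB}$ gives a class $\beta$ in the Grothendieck group of algebraic vector bundles; Grothendieck--Riemann--Roch shows that the $j$-th component of its Chern character vanishes for $j < k$ and equals the class of $Z$ for $j = k$ (the Todd-class correction contributes only in codimension $> k$). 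The elementary relations between the Chern character and the Chern classes in degrees $\leq k$ then yield $c_i(\beta) = 0$ for $1 \leq i \leq k-1$ and $c_k(\beta) = (-1)^{k-1}(k-1)!\,[Z]$. Transporting $\beta$ to an algebraic $\CB$-vector bundle on $V$ (using the compatibility, worked out in \cite{bib8}, between algebraic bundles on $\V_{\CB}$ and algebraic $\CB$-vector bundles on the real locus $V$), replacing it by an honest bundle by adding a trivial summand --- which does not change the total Chern class and is possible since an algebraic subbundle of a trivial bundle has an algebraic complement, cf. Proposition~\ref{prop-3-7} --- and assembling the finitely many pieces, one obtains $\delta$.

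The hard part is precisely this realization with the exact coefficient $(-1)^{k-1}(k-1)!$ and no extraneous divisibility factor. Working only with the rational identity $v = \ch(\alpha)$ for some $\alpha$ in $\KCalg(V) \otimes \Q$ (cf. \cite[Proposition~5.4]{bib8}) one would be forced to clear the denominator of $\alpha$, which replaces $v$ by an unwanted integer multiple of itself; the sharp integral form genuinely requires passing through algebraic cycles and their vector-bundle resolutions on the complexification, as above. Everything else --- the reduction via $\varphi$, the passage from virtual to honest bundles, and the bookkeeping of Chern classes --- is routine.
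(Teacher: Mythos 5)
Your argument is correct and follows essentially the same route as the paper: reduce to the compact nonsingular target $V$ via $u=\varphi^*(v)$ and Proposition~\ref{prop-2-6}, then realize $(-1)^{k-1}(k-1)!\,v$ by an algebraic $\CB$-vector bundle obtained from a codimension-$k$ algebraic cycle on the complexification. The only cosmetic difference is that you re-derive Grothendieck's formula for $c_k$ of a resolved structure sheaf via Riemann--Roch, where the paper simply cites \cite[Example~15.3]{bib24} and passes to an affine open subscheme $\U_{\CB}\supseteq V$ to convert the virtual class into an honest bundle.
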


\begin{proof}
We first consider $\CB$-algebraic cohomology classes.

\begin{assertion}
Let $V$ be a compact nonsingular real algebraic variety and let $v$ be a
cohomology class in $\HCalg^{2k}(V;\Z)$, where $k \geq 1$. Then there
exists an algebraic $\CB$-vector bundle $\eta$ on $V$ with $c_i(\eta)=0$
for $1 \leq i \leq k-1$ and $c_k(\eta) = (-1)^{k-1}(k-1)!v$.
\end{assertion}

The Assertion can be proved as follows. Let $(\V,\iota)$ be a
nonsingular projective complexification of $V$. We may assume that $V =
\V(\R)$ and $\iota \colon V \hookrightarrow \V(\CB)$ is the inclusion
map. By definition, $v = \iota^*(\cl_{\CB}(z))$ for some element $z$ in
the Chow group $A^k(\V_{\CB})$. Let $\U$ be an affine Zariski open
subscheme of $\V$ containing $V$. According to Grothendieck's formula
\cite[Example~15.3]{bib24} (see also \cite[Propostition~19.1.2]{bib24}),
there exists an algebraic vector bundle $\E$
on $\U_{\CB}$ with $c_i(\E)=0$ for $1 \leq i \leq k-1$ and $c_k(\E) =
(-1)^{k-1}(k-1)!j^*(\cl_{\CB}(z))$, where $j \colon \U(\CB) \to \V(\CB)$
is the inclusion map. The restriction $\eta \coloneqq \E|_V$ is an
algebraic $\CB$-vector bundle on $V$ satisfying all the conditions
stated in the Assertion.

We can now easily complete the proof. The cohomology class $u$ is of the
form ${u = \varphi^*(v)}$, where $\varphi \colon X \to V$ is a
stratified-regular map into a compact nonsingular real algebraic variety
$V$, and $v$ is a cohomology class in $\HCalg^{2k}(V;\Z)$. If $\eta$ is
an algebraic $\CB$-vector bundle on $V$ as in the Assertion, then the
pullback $\varphi^*\eta$ is a stratified-algebraic $\CB$-vector bundle
on $X$ (cf. Proposition~\ref{prop-2-6}) having all the required
properties.
\end{proof}

Under some assumptions we obtain a nice characterization of topological
$\CB$-vector bundles admitting a stratified-algebraic structure.

\begin{theorem}\label{th-8-11}
Let $X$ be a compact real algebraic variety. Assume that the group
$H^*(X;\Z)$ has no torsion and the quotient group $H^{2k}(X;\Z) /
\HCstr^{2k}(X;\Z)$ has no $(k-1)!$-torsion elements for every $k \geq
1$. For a topological $\CB$-vector bundle $\xi$ on $X$, the following
conditions are equivalent:
\begin{conditions}
\item\label{th-8-11-a} $\xi$ admits a stratified-algebraic structure.

\item\label{th-8-11-b} $\rank\xi$ belongs to $\HCstr^0(X;\Z)$, and
$c_k(\xi)$ belongs to $\HCstr^{2k}(X;\Z)$ for every $k \geq 1$.
\end{conditions}
\end{theorem}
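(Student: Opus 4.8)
The plan is to prove the theorem by induction on a filtration of $X$, reducing the singular case to the nonsingular case via multiblowups, exactly in the spirit of Theorem~\ref{th-1-7} and the machinery of Sections~\ref{sec-5} and~\ref{sec-6}. The implication (\ref{th-8-11-a}) $\Rightarrow$ (\ref{th-8-11-b}) is immediate from Propositions~\ref{prop-8-3} and~\ref{prop-8-5}, so the content is the converse. The first reduction I would make is to the case where $X$ is nonsingular. Indeed, by Theorem~\ref{th-1-6} there is a universal birational multiblowup $\pi\colon X'\to X$ with $X'$ nonsingular; since $\pi$ is regular, Proposition~\ref{prop-8-2} and the functoriality of Chern classes show that $\pi^*\xi$ again satisfies hypothesis~(\ref{th-8-11-b}) (the pullback preserves the cohomological conditions, and one checks that the torsion-freeness and $(k-1)!$-divisibility hypotheses can be arranged to hold, or rather that the relevant classes on $X'$ lie in $\HCstr^{2k}(X';\Z)$ because they are pulled back from $X$). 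If the nonsingular case is established, then $\pi^*\xi$ admits a stratified-algebraic structure; then, picking a Zariski closed $A\subsetneq X$ with $\dim A<\dim X$ over which $\pi$ is a multiblowup, one applies the inductive hypothesis on $\dim X$ to $\xi|_A$ (which inherits~(\ref{th-8-11-b})), notes that $\FC=(\varnothing,A,X)$ is a filtration, and concludes by Theorem~\ref{th-5-4}. Here one must be slightly careful that the hypotheses on torsion and divisibility descend to $A$; if they do not in general, the statement should be read as: the conclusion follows whenever these hypotheses hold on every member of some filtration — which is how I would phrase the induction.

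For the heart of the argument, assume $X$ is compact nonsingular and satisfies~(\ref{th-8-11-b}). The idea is to build $\xi$ up, K-theoretically, from stratified-algebraic $\CB$-line bundles and bundles supplied by Proposition~\ref{prop-8-10}. Since $H^*(X;\Z)$ is torsion-free, the Chern character $\ch\colon K_{\CB}(X)\otimes\Q\to\Heven(X;\Q)$ is an isomorphism, and the class $[\xi]$ is determined rationally by $\rank\xi$ together with the Chern classes $c_k(\xi)$. The strategy is to produce a stratified-algebraic $\CB$-vector bundle $\eta$ on $X$ whose class in $K_{\CB}(X)$ equals $[\xi]$; by Corollary~\ref{cor-3-11} (the horizontal maps are monomorphisms for $X$ compact) and Swan's theorem, equality of classes in $K_{\CB}(X)$ together with a rank comparison forces $\xi$ itself to admit a stratified-algebraic structure. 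To construct $\eta$: first, using $c_1(\xi)\in\HCstr^2(X;\Z)$ and Proposition~\ref{prop-8-6}, there is a stratified-algebraic $\CB$-line bundle $\lambda$ with $c_1(\lambda)=c_1(\xi)$. Splitting off $\lambda$ and a trivial complement, one is reduced to matching the higher Chern classes; for each $k\ge 2$, Proposition~\ref{prop-8-10} gives a stratified-algebraic bundle $\xi_k$ with $c_i(\xi_k)=0$ for $i<k$ and $c_k(\xi_k)=(-1)^{k-1}(k-1)!\,u_k$ for any chosen $u_k\in\HCstr^{2k}(X;\Z)$. The $(k-1)!$-torsion-freeness hypothesis on $H^{2k}(X;\Z)/\HCstr^{2k}(X;\Z)$ is precisely what guarantees that the obstruction $c_k(\xi)$ — minus the contribution already accumulated from $\lambda$ and the lower $\xi_j$ — is $(k-1)!$ times an element of $\HCstr^{2k}(X;\Z)$, so that $\xi_k$ can be chosen to cancel it exactly in integral cohomology. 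Assembling these via direct sums and virtual differences in $K_{\CB}(X)$, and correcting the rank with trivial bundles, one obtains a virtual stratified-algebraic bundle with the same total Chern class and rank as $\xi$; since $H^*(X;\Z)$ is torsion-free, this pins down $[\xi]$ in $K_{\CB}(X)$.

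The main obstacle — and the step requiring the most care — is passing from equality of \emph{total Chern classes (and rank)} in integral cohomology to equality of \emph{classes in $K_{\CB}(X)$}, i.e. showing that a torsion-free $H^*(X;\Z)$ really does make the total Chern class injective on the relevant part of $K_{\CB}(X)$. For finite CW complexes this is a standard consequence of the Atiyah--Hirzebruch spectral sequence degenerating rationally plus torsion-freeness killing the extension problems, but one has to invoke it cleanly here using that $X$ is homotopy equivalent to a finite polyhedron (\cite[Corollary~9.6.7]{bib9}) and that $K_{\CB}(X)$ is finitely generated; the subtlety is that we need equality on the nose in $K_{\CB}(X)$, not just rationally, which is where torsion-freeness of $H^*(X;\Z)$ is genuinely used rather than merely convenient. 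A secondary technical point is bookkeeping the successive cancellations of Chern classes: after splitting off $\lambda$ and the $\xi_j$ for $j<k$, the "remaining" $k$-th Chern obstruction is a universal polynomial in the already-fixed lower classes and in $c_k(\xi)$, and one must check this polynomial expression still lies in $\HCstr^{2k}(X;\Z)$ (which it does, by Proposition~\ref{prop-8-1}, since $\HCstreven(X;\Z)$ is a subring) before dividing by $(k-1)!$. Once these two points are handled, the induction closes and the theorem follows.
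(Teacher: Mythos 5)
Your core strategy for (\ref{th-8-11-b})$\Rightarrow$(\ref{th-8-11-a}) --- kill the Chern classes one degree at a time using Proposition~\ref{prop-8-10} and the two torsion hypotheses, then conclude from ``same rank and same Chern classes'' via Swan-type results --- is exactly the paper's strategy. But two things need fixing. First, your opening reduction to the nonsingular case via Theorem~\ref{th-1-6} and induction on $\dim X$ is both unnecessary and broken: as you yourself concede, the hypotheses (torsion-freeness of $H^*(\,\cdot\,;\Z)$ and the $(k-1)!$-condition on the quotients) do not descend to the subvariety $A$, so the induction does not close. The paper never makes this reduction; its argument is purely K-theoretic/cohomological and works directly on the compact, possibly singular, $X$ (which is a compact polyhedron). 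You should simply delete that paragraph and run the construction on $X$ itself.

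Second, the step you flag as ``the main obstacle'' is a genuine gap, and it is where the two hypotheses actually do their work, in a division of labor slightly different from what you describe. (i) To apply Proposition~\ref{prop-8-10} at stage $k$ you need the residual obstruction class $c_k(\theta)$ (for a bundle $\theta$ with $c_i(\theta)=0$ for $i<k$) to be of the form $(-1)^{k-1}(k-1)!\,u$ with $u\in H^{2k}(X;\Z)$; this divisibility is \emph{not} supplied by the hypothesis on the quotient groups --- it is a theorem of Atiyah--Hirzebruch valid because $H^*(X;\Z)$ has no torsion and the lower Chern classes vanish. Only then does the $(k-1)!$-torsion-freeness of $H^{2k}(X;\Z)/\HCstr^{2k}(X;\Z)$ push $u$ into $\HCstr^{2k}(X;\Z)$. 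Your write-up attributes the divisibility to the quotient hypothesis and never invokes the Atiyah--Hirzebruch fact, so as stated the construction of $\theta_k$ is unjustified. (ii) The final passage from ``equal rank and equal Chern classes'' to stable equivalence is Peterson's theorem (valid precisely because $\Heven(X;\Z)$ has no torsion); the paper cites it and then finishes with Corollary~\ref{cor-3-14}. Your sketch via the Atiyah--Hirzebruch spectral sequence is pointing at the right phenomenon but is not an argument; you should either cite Peterson or prove the injectivity claim. With these two repairs (and the extra bookkeeping bundle $\xi'$ from the second part of Proposition~\ref{prop-8-5} to normalize ranks, which you omit), your construction coincides with the paper's proof.
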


\begin{proof}
According to Propositions~\ref{prop-8-3} and~\ref{prop-8-5},
(\ref{th-8-11-a}) implies (\ref{th-8-11-b}). For the proof of the
reversed implication, we first establish the following:

\begin{assertion}
Let $\theta$ be a topological $\CB$-vector bundle on $X$ and let $k$ be
a positive integer such that $c_i(\theta)=0$ for $1 \leq i \leq k-1$ and
$c_k(\theta)$ belongs to $\HCstr^{2k}(X;\Z)$. Then there exists a
stratified-algebraic $\CB$-vector bundle $\theta_k$ on $X$ with
$c_i(\theta_k)=0$ for $1 \leq i \leq k-1$ and $c_k(\theta \oplus
\theta_k) = 0$.
\end{assertion}

Recall that $X$ is a compact polyhedron, cf.
\cite[Corollary~9.6.7]{bib9}. Since the group $H^*(X;\Z)$ has no
torsion, the $k$th Chern class $c_k(\theta)$ is of the form
\begin{equation*}
c_k(\theta) = (-1)^{k-1}(k-1)!u
\end{equation*}
for some cohomology class $u$ in $H^{2k}(X;\Z)$, cf. \cite[p.~19]{bib4}. By
assumption, $(-1)^{k-1}(k-1)!u$ is in $\HCstr^{2k}(X;\Z)$, and hence $u$
is in $\HCstr^{2k}(X;\Z)$, the quotient group
\begin{equation*}
H^{2k}(X;\Z)/\HCstr^{2k}(X;\Z)
\end{equation*}
having no $(k-1)!$-torsion elements.
According to Proposition~\ref{prop-8-10}, there exists a
stratified-algebraic $\CB$-vector bundle $\theta_k$ on $X$ with
$c_i(\theta_k)=0$ for $1 \leq i \leq k-1$ and $c_k(\theta_k) =
(-1)^{k-1}(k-1)!(-u)$. Thus, $\theta_k$ satisfies all the conditions
stated in the Assertion.

If (\ref{th-8-11-b}) holds, then making use of the Assertion, we obtain
a stratified-algebraic $\CB$-vector bundle $\zeta$ on $X$ with $c_i(\xi
\oplus \zeta) = 0$ for every $i \geq 1$. If $\eta$ is a
stratified-algebraic $\CB$-vector bundle on $X$ such that the direct sum
$\zeta \oplus \eta$ is a trivial vector bundle (cf.
Proposition~\ref{prop-3-7}), then $c_i(\xi) = c_i(\eta)$ for every $i
\geq 1$. Now, in view of the second part of Proposition~\ref{prop-8-5},
there exists a stratified-algebraic $\CB$-vector bundle $\xi'$ on $X$
such that $\rank \xi - \rank \xi'$ is constant, and $c_i(\xi) =
c_i(\xi')$ for every $i \geq 1$. Consequently, the $\CB$-vector bundles
$\xi$ and $\xi'$ are topologically stably equivalent, the group
$\Heven(X;\Z)$ having no torsion, cf. \cite{bib40}. Hence, according to
Corollary~\ref{cor-3-14}, $\xi$ admits a stratified-algebraic structure.
\end{proof}

Any $\HB$-vector bundle $\xi$ can be regarded as a $\CB$-vector bundle,
which is indicated by $\xi_{\CB}$.

\begin{corollary}\label{cor-8-12}
Let $X$ be a compact real algebraic variety. Assume that the group
$H^*(X; \Z)$ has no torsion and the quotient group $H^{2k}(X;\Z) /
\HCstr^{2k}(X;\Z)$ has no $(k-1)!$-torsion elements for every $k \geq
1$. For a topological $\HB$-vector bundle $\xi$ on $X$, the following
conditions are equivalent:
\begin{conditions}
\item\label{cor-8-12-a} $\xi$ admits a stratified-algebraic structure.

\item\label{cor-8-12-b} $\rank \xi$ belongs to $\HCstr^0(X;\Z)$, and
$c_{2k}(\xi_{\CB})$ belongs to $\HCstr^{4k}(X;\Z)$ for every $k \geq
1$.
\end{conditions}
\end{corollary}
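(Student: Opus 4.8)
The plan is to deduce Corollary~\ref{cor-8-12} from Theorem~\ref{th-8-11} by passing to the underlying complex vector bundle $\xi_{\CB}$. The geometric bridge is that, for compact $X$, admitting a stratified-algebraic structure is a property of the underlying real bundle: applying Theorem~\ref{th-1-7} with $\F = \HB$ shows that $\xi$ admits a stratified-algebraic structure if and only if $\xi_{\R}$ does, and applying Theorem~\ref{th-1-7} with $\F = \CB$ to $\xi_{\CB}$ shows that $\xi_{\CB}$ admits a stratified-algebraic structure if and only if $(\xi_{\CB})_{\R}$ does. Since $(\xi_{\CB})_{\R} = \xi_{\R}$, we obtain the key equivalence: $\xi$ admits a stratified-algebraic structure if and only if $\xi_{\CB}$ does. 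It therefore suffices to match condition~(\ref{cor-8-12-b}) with the hypotheses of Theorem~\ref{th-8-11}(\ref{th-8-11-b}) applied to $\xi_{\CB}$.

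For (\ref{cor-8-12-a})$\Rightarrow$(\ref{cor-8-12-b}), if $\xi$ admits a stratified-algebraic structure then $\rank\xi$ belongs to $\HCstr^0(X;\Z)$ by Proposition~\ref{prop-8-5} (taken with $\F = \HB$), while by the equivalence above $\xi_{\CB}$ admits a stratified-algebraic structure, so $c_k(\xi_{\CB})$ belongs to $\HCstr^{2k}(X;\Z)$ for every $k$ by Proposition~\ref{prop-8-3}; in particular $c_{2k}(\xi_{\CB})$ belongs to $\HCstr^{4k}(X;\Z)$.

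For the converse, assume~(\ref{cor-8-12-b}). Since $\HCstr^0(X;\Z)$ is a subgroup of $H^0(X;\Z)$ and $\rank\xi_{\CB} = 2\rank\xi$, we get $\rank\xi_{\CB} \in \HCstr^0(X;\Z)$. Next, left multiplication by the quaternion $j$ is a conjugate-linear automorphism $J$ of $\xi_{\CB}$ with $J^2 = -\one$; thus $\xi_{\CB}$ is isomorphic to its complex conjugate bundle, whence $c_k(\xi_{\CB}) = (-1)^k c_k(\xi_{\CB})$ and so $2 c_k(\xi_{\CB}) = 0$ for every odd $k$. Because $H^*(X;\Z)$ has no torsion, $c_k(\xi_{\CB}) = 0$ for every odd $k$, so $c_k(\xi_{\CB}) \in \HCstr^{2k}(X;\Z)$ trivially in that case, while for $k = 2l$ we have $c_{2l}(\xi_{\CB}) \in \HCstr^{4l}(X;\Z)$ by~(\ref{cor-8-12-b}). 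Hence $\xi_{\CB}$ satisfies Theorem~\ref{th-8-11}(\ref{th-8-11-b}), and the torsion hypotheses on $X$ in Theorem~\ref{th-8-11} are exactly those assumed here, so $\xi_{\CB}$ admits a stratified-algebraic structure; by the equivalence from the first paragraph, so does $\xi$.

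I do not expect a serious obstacle: the argument is a reduction to the complex case plus Chern-class bookkeeping. The two points that require a little care are the equivalence ``$\xi$ admits a stratified-algebraic structure $\iff$ $\xi_{\CB}$ does,'' which rests on the identity $(\xi_{\CB})_{\R} = \xi_{\R}$ and a double application of Theorem~\ref{th-1-7}, and the vanishing of the odd Chern classes of $\xi_{\CB}$, which comes from the quaternionic structure on $\xi_{\CB}$ together with the torsion-freeness of $H^*(X;\Z)$; this last point is precisely what permits condition~(\ref{cor-8-12-b}) to refer only to the even Chern classes $c_{2k}(\xi_{\CB})$.
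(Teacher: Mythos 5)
Your proposal is correct and follows essentially the same route as the paper: deduce that $\xi_{\CB}$ admits a stratified-algebraic structure from Theorem~\ref{th-8-11} (using that the odd Chern classes of $\xi_{\CB}$ vanish because of the quaternionic structure), and then pass back to $\xi$ via $(\xi_{\CB})_{\R}=\xi_{\R}$ and Theorem~\ref{th-1-7}. The only difference is cosmetic: you spell out why $c_{2i+1}(\xi_{\CB})=0$ (self-conjugacy plus torsion-freeness of $H^*(X;\Z)$) where the paper simply asserts it, and you package the reduction as a two-way equivalence between $\xi$ and $\xi_{\CB}$ rather than using only the one direction actually needed.
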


\begin{proof}
According to Propositions~\ref{prop-8-3} and~\ref{prop-8-5},
(\ref{cor-8-12-a}) implies (\ref{cor-8-12-b}). Since $\xi$ is an
$\HB$-vector bundle, it follows that $c_{2i+1}(\xi)=0$ for every $i \geq
0$. Hence, in view of Theorem~\ref{th-8-11}, if (\ref{cor-8-12-b})
holds, then the $\CB$-vector bundle $\xi_{\CB}$ admits a
stratified-algebraic structure. Consequently, the $\R$-vector bundle
$\xi_{\R} = (\xi_{\CB})_{\R}$ admits a stratified-algebraic structure.
Thus, by Theorem~\ref{th-1-7}, (\ref{cor-8-12-b}) implies
(\ref{cor-8-12-a}).
\end{proof}

For $\R$-vector bundles we can only obtain a weaker result.

\begin{corollary}\label{cor-8-13}
Let $X$ be a compact real algebraic variety and let $\xi$ be a
topological $\R$-vector bundle on $X$. Assume that the group $H^*(X;\Z)$
has no torsion and the quotient group $H^{2k}(X;\Z) / \HCstr^{2k}(X;\Z)$
has no $(k-1)!$-torsion elements for every $k \geq 1$. If $\rank \xi$
belongs to $\HCstr^0(X;\Z)$, and $p_k(\xi)$ belongs to
$\HCstr^{4k}(X;\Z)$ for every $k \geq 1$, then the direct sum
$\xi \oplus \xi$ admits a stratified-algebraic structure.
\end{corollary}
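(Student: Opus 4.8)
The plan is to reduce the assertion to Theorem~\ref{th-8-11}, applied to the complexification $\eta \coloneqq \CB \otimes \xi$. I shall use the standard isomorphism of topological $\R$-vector bundles $(\CB \otimes \xi)_{\R} \cong \xi \oplus \xi$, which arises fibrewise from the identification of $\CB \otimes_{\R} V$ with $V \oplus V$ for a real vector space $V$, and which has already been invoked in Example~\ref{ex-1-11}. Because of it, once we know that $\eta$ admits a stratified-algebraic structure as a $\CB$-vector bundle, the $\R$-vector bundle $\eta_{\R} = \xi \oplus \xi$ admits a stratified-algebraic structure as well, the realification of a stratified-algebraic $\CB$-vector bundle being stratified-algebraic (cf.\ the observation preceding Theorem~\ref{th-1-7}). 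So it suffices to verify that $\eta$ satisfies condition~(\ref{th-8-11-b}) of Theorem~\ref{th-8-11}.

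For this, first note that the complex rank of $\CB \otimes \xi$ equals the real rank of $\xi$, so $\rank \eta = \rank \xi$, which lies in $\HCstr^0(X;\Z)$ by hypothesis. Next I treat the Chern classes. The bundle $\eta = \CB \otimes \xi$ is isomorphic to its conjugate $\overline{\eta}$, and $c_k(\overline{\eta}) = (-1)^k c_k(\eta)$; hence $2\,c_k(\eta) = 0$ whenever $k$ is odd, and since $H^*(X;\Z)$ has no torsion this forces $c_k(\eta) = 0 \in \HCstr^{2k}(X;\Z)$ for every odd $k$. For $k = 2j$ even, the relation $p_j(\xi) = (-1)^j c_{2j}(\CB \otimes \xi)$ (used already in the proof of Corollary~\ref{cor-8-4}) gives $c_{2j}(\eta) = (-1)^j p_j(\xi)$, which belongs to $\HCstr^{4j}(X;\Z)$ by hypothesis. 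Thus $c_k(\eta) \in \HCstr^{2k}(X;\Z)$ for every $k \geq 1$, so condition~(\ref{th-8-11-b}) holds for $\eta$.

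Since the torsion hypotheses on $H^*(X;\Z)$ and on $H^{2k}(X;\Z)/\HCstr^{2k}(X;\Z)$ required by Theorem~\ref{th-8-11} are precisely those assumed in the present statement, Theorem~\ref{th-8-11} applies and shows that $\eta = \CB \otimes \xi$ admits a stratified-algebraic structure. By the reduction of the first paragraph, $\xi \oplus \xi$ then admits a stratified-algebraic structure, which is the assertion.

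I do not anticipate a genuine obstacle here: the argument is a faithful transcription of the $\HB$-case (Corollary~\ref{cor-8-12}), with the complexification of the real bundle $\xi$ playing the role of $\xi \mapsto \xi_{\CB}$ and the identity $(\CB \otimes \xi)_{\R} \cong \xi \oplus \xi$ replacing $(\xi_{\CB})_{\R} = \xi_{\R}$. The only point deserving a moment's care is the vanishing of the odd Chern classes of $\CB \otimes \xi$, which relies on the torsion-freeness of $H^*(X;\Z)$; everything else is bookkeeping with Theorem~\ref{th-8-11} and with the standard relation between Pontryagin and Chern classes.
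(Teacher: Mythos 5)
Your proposal is correct and follows essentially the same route as the paper: complexify $\xi$, observe that the odd Chern classes of $\CB\otimes\xi$ are $2$-torsion (hence zero, $H^*(X;\Z)$ being torsion-free) while $c_{2k}(\CB\otimes\xi)=(-1)^kp_k(\xi)$ lies in $\HCstr^{4k}(X;\Z)$, apply Theorem~\ref{th-8-11}, and conclude via $(\CB\otimes\xi)_{\R}=\xi\oplus\xi$. The only cosmetic difference is that you derive the $2$-torsion of the odd Chern classes from the conjugate-bundle identity, whereas the paper simply cites Milnor--Stasheff for the same fact.
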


\begin{proof}
We have $c_{2i+1}(\CB\otimes\xi)=0$ for every $i \geq 0$ since
$c_{2i+1}(\CB\otimes\xi)$ is always an element of order at most $2$ (cf.
\cite[p.~174]{bib38}) and the group $\Heven(X;\Z)$ has no torsion.
Moreover, $c_{2k}(\CB\otimes\xi) = (-1)^kp_k(\xi)$ for every $k \geq 1$.
According to Theorem~\ref{th-8-11}, the $\CB$-vector bundle
$\CB\otimes\xi$ admits a stratified-algebraic structure. The proof is
complete since ${(\CB\otimes\xi)_{\R} = \xi \oplus \xi}$.
\end{proof}

We next identify some classes of real algebraic varieties which satisfy
the assumptions of the last three results.

\begin{lemma}\label{lem-8-14}
If $X$ is a compact real algebraic variety of even dimension $2k$, then
\begin{equation*}
\HCstr^{2k}(X;\Z) = H^{2k}(X;\Z).
\end{equation*}
\end{lemma}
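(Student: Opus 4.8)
The plan is to realise every class of $H^{2k}(X;\Z)$ as the pullback of a $\CB$-algebraic class under a stratified-regular map into the unit sphere $\SB^{2k}$. Since the inclusion $\HCstr^{2k}(X;\Z)\subseteq H^{2k}(X;\Z)$ is obvious, only $H^{2k}(X;\Z)\subseteq\HCstr^{2k}(X;\Z)$ requires proof.

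First I would check that a generator $s$ of $H^{2k}(\SB^{2k};\Z)\cong\Z$ is $\CB$-algebraic. As a nonsingular projective complexification $(\V,\iota)$ of $\SB^{2k}$ take the smooth quadric $\V=Q\subset\PB^{2k+1}$ defined by $x_1^2+\cdots+x_{2k+1}^2=x_0^2$; then $\V(\R)=\SB^{2k}$ and $\V(\CB)$ is the smooth complex quadric of dimension $2k$. As a rational homogeneous space, $\V(\CB)$ has integral cohomology freely generated by classes of Schubert subvarieties, all of which are algebraic, so $\Halg^{2k}(\V(\CB);\Z)=H^{2k}(\V(\CB);\Z)$; this group is free of rank $2$, and hence $\HCalg^{2k}(\SB^{2k};\Z)=\iota^*\!\bigl(H^{2k}(\V(\CB);\Z)\bigr)$. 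Since $H^{2k}(\V(\CB);\Z)$ pairs perfectly with the free group $H_{2k}(\V(\CB);\Z)$, the restriction $\iota^*\colon H^{2k}(\V(\CB);\Z)\to H^{2k}(\SB^{2k};\Z)=\Z$ is surjective exactly when $\iota_*\llbracket\SB^{2k}\rrbracket$ is primitive in $H_{2k}(\V(\CB);\Z)$. This I would deduce from the self-intersection identity $\llbracket\SB^{2k}\rrbracket\cdot\llbracket\SB^{2k}\rrbracket=\pm\chi(\SB^{2k})=\pm 2$ — the normal bundle of $\SB^{2k}=\V(\R)$ in $\V(\CB)$ being smoothly isomorphic to $T\SB^{2k}$, whence its Euler number equals $\pm 2$ — since an integral homology class whose self-intersection is not divisible by $4$ cannot be a proper multiple of another class. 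Therefore $s\in\HCalg^{2k}(\SB^{2k};\Z)\subseteq\HCstr^{2k}(\SB^{2k};\Z)$.

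Next, given an arbitrary $u\in H^{2k}(X;\Z)$, I would use that $X$ is homotopically equivalent to a compact polyhedron of dimension $2k$ (cf.~\cite[Corollary~9.3.7]{bib9}) together with the Hopf classification theorem to obtain a continuous map $f\colon X\to\SB^{2k}$ with $f^*(s)=u$. By Theorem~\ref{th-2-5}, applied with $d=2k=\dim X$, the map $f$ is homotopic to a stratified-regular map $g\colon X\to\SB^{2k}$, whence $u=f^*(s)=g^*(s)$. Since $g$ is stratified-regular and $s$ is $\CB$-algebraic, $u$ is stratified-$\CB$-algebraic by the very definition of $\HCstr^{2k}(X;\Z)$, and the proof is complete.

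The main obstacle is the first step: proving that the generator of $H^{2k}(\SB^{2k};\Z)$ is $\CB$-algebraic. Once the even-dimensional complex quadric is chosen as the complexification, this reduces to the surjectivity of the restriction map in middle degree, which the elementary self-intersection (Euler-number) computation settles; the remaining steps invoke only Hopf's theorem and Theorem~\ref{th-2-5}, which is already available.
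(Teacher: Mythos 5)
Your proposal is correct and follows essentially the same route as the paper: realize $u$ as $f^*(s_{2k})$ for a continuous map $f\colon X\to\SB^{2k}$ via Hopf's theorem, replace $f$ by a homotopic stratified-regular map using Theorem~\ref{th-2-5}, and invoke $\HCalg^{2k}(\SB^{2k};\Z)=H^{2k}(\SB^{2k};\Z)$. The only difference is that the paper cites this last fact from \cite[Proposition~4.8]{bib8}, whereas you supply a correct self-contained proof via the even complex quadric, the unimodularity of its middle intersection form, and the self-intersection $\pm\chi(\SB^{2k})=\pm 2$ of the real locus.
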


\begin{proof}
Let $s_{2k}$ be a generator of the cohomology group $H^{2k}(\SB^{2k};\Z)
\cong \Z$. Each cohomology class $u$ in $H^{2k}(X;\Z)$ is of the form
$u=f^*(s_{2k})$ for some continuous map $f \colon X \to \SB^{2k}$.
According to Theorem~\ref{th-2-5}, we may assume that $f$ is
stratified-regular. The proof is complete since
$\HCalg^{2k}(\SB^{2k};\Z) = H^{2k}(\SB^{2k}; \Z)$, cf.
\cite[Propostion~4.8]{bib8}.
\end{proof}

\begin{proposition}\label{prop-8-15}
Let $X = X_1 \times \cdots \times X_n$, where each $X_i$ is a compact
real algebraic variety homotopically equivalent to the unit $d_i$-sphere
for $1 \leq i \leq n$. Then
\begin{equation*}
\HCstreven(X;\Z) = \Heven(X;\Z).
\end{equation*}
\end{proposition}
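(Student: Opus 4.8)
The plan is to reduce the asserted equality, using the Künneth formula together with the functoriality of stratified‑$\CB$‑algebraic cohomology, to Lemma~\ref{lem-8-14} applied to products of standard spheres. First I would set up the cohomological bookkeeping. Since each $X_i$ is homotopically equivalent to $\SB^{d_i}$, the group $H^*(X_i;\Z)$ is free; I will assume each $d_i\geq 1$, so that $X_i$ is connected and $H^{d_i}(X_i;\Z)\cong\Z$ is generated by a class $\alpha_i$. Applying the Künneth theorem inductively identifies $H^*(X;\Z)$ with $\bigotimes_{i=1}^{n}H^*(X_i;\Z)$ via cup products of pullbacks along the projections $p_i\colon X\to X_i$. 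In particular $\Heven(X;\Z)$ is generated as a group by the classes $\alpha_T\coloneqq\prod_{i\in T}p_i^*(\alpha_i)$, where $T$ runs over the subsets of $\{1,\dots,n\}$ for which $n_T\coloneqq\sum_{i\in T}d_i$ is even, and $\alpha_T$ has degree $n_T$. Since $\HCstreven(X;\Z)$ is a subgroup of $\Heven(X;\Z)$ by Proposition~\ref{prop-8-1}, it is then enough to show that each such $\alpha_T$ lies in $\HCstr^{n_T}(X;\Z)$.

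Next, for a fixed such $T$, I would produce a stratified‑regular map $\psi\colon X\to S_T\coloneqq\prod_{i\in T}\SB^{d_i}$ pulling $\alpha_T$ back from a generator of $H^{n_T}(S_T;\Z)$. Choose homotopy equivalences $h_i\colon X_i\to\SB^{d_i}$; exactly as in the proof of Theorem~\ref{th-1-3}, Theorem~\ref{th-2-5} lets one replace each $h_i$ by a homotopic stratified‑regular map $\varphi_i\colon X_i\to\SB^{d_i}$, and since $\varphi_i$ is homotopic to a homotopy equivalence, $\varphi_i^*$ carries a generator $s_{d_i}$ of $H^{d_i}(\SB^{d_i};\Z)$ to $\pm\alpha_i$. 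Let $\psi$ be the map whose $i$‑th component is $\varphi_i\circ p_i$ for $i\in T$; being a composition of the regular map $(p_i)_{i\in T}\colon X\to\prod_{i\in T}X_i$ with the product $\prod_{i\in T}\varphi_i$ of stratified‑regular maps, $\psi$ is stratified‑regular (via products and common refinements of stratifications). Writing $\sigma_T$ for the generator $\prod_{i\in T}\pi_i^*(s_{d_i})$ of $H^{n_T}(S_T;\Z)$, where $\pi_i\colon S_T\to\SB^{d_i}$ are the projections, one obtains $\psi^*(\sigma_T)=\prod_{i\in T}p_i^*\bigl(\varphi_i^*(s_{d_i})\bigr)=\pm\alpha_T$.

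Finally I would invoke Lemma~\ref{lem-8-14}: the variety $S_T$ is a compact real algebraic variety of \emph{even} dimension $n_T$, and $\sigma_T$ lies in its top cohomology group $H^{n_T}(S_T;\Z)$, so $\sigma_T\in\HCstr^{n_T}(S_T;\Z)$. Applying Proposition~\ref{prop-8-2} to the stratified‑regular map $\psi$ then gives $\alpha_T=\pm\psi^*(\sigma_T)\in\HCstr^{n_T}(X;\Z)$, hence $\Heven(X;\Z)\subseteq\HCstreven(X;\Z)$; the reverse inclusion is trivial.

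The one step demanding care is the replacement of the homotopy equivalences $h_i$ by stratified‑regular maps: this is precisely where Theorem~\ref{th-2-5} is used, just as in the proof of Theorem~\ref{th-1-3}, and it is here that it matters that each $X_i$ is homotopically equivalent to a sphere and not merely to some compact real algebraic variety. Everything else is routine manipulation with the Künneth isomorphism and Propositions~\ref{prop-8-1} and~\ref{prop-8-2}, together with the elementary observation that products and tuples of stratified‑regular maps are again stratified‑regular.
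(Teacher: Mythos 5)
Your argument is correct, and it rests on the same two pillars as the paper's own proof---Lemma~\ref{lem-8-14} together with Propositions~\ref{prop-8-1} and~\ref{prop-8-2}---but it routes through them differently. The paper observes that $\Heven(X;\Z)$ is generated \emph{as a $\Z$-algebra} by the classes in $p_i^*(H^{d_i}(X_i;\Z))$ with $d_i$ even and in $q_{jl}^*(H^{d_j+d_l}(X_j\times X_l;\Z))$ with $d_j,d_l$ odd (where $q_{jl}\colon X\to X_j\times X_l$ is the projection), applies Lemma~\ref{lem-8-14} directly to the varieties $X_i$ and $X_j\times X_l$, and then needs that $\HCstreven(X;\Z)$ is a sub\emph{ring}. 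You instead take the full additive K\"unneth basis $\{\alpha_T\}$, and for each even-degree $\alpha_T$ manufacture a stratified-regular comparison map $\psi\colon X\to S_T=\prod_{i\in T}\SB^{d_i}$ out of Theorem~\ref{th-2-5} (exactly as in the proof of Theorem~\ref{th-1-3}), so that Lemma~\ref{lem-8-14} is invoked only for the standard sphere product $S_T$, whose dimension is visibly the even number $n_T$; after that only the group structure of $\HCstr^{2k}(X;\Z)$ is needed. The trade-off is that your version is longer and spends an extra application of Theorem~\ref{th-2-5} on each factor, while the paper's applies Lemma~\ref{lem-8-14} to $X_i$ and $X_j\times X_l$ themselves, which tacitly presumes $\dim X_i=d_i$; your use of Theorem~\ref{th-2-5} on $h_i\colon X_i\to\SB^{d_i}$ makes exactly the same tacit assumption, so neither route is more or less general on that point. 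Two minor remarks: you exclude $d_i=0$, which the statement formally allows (the paper's proof is equally silent about this degenerate case), and your closing observation that tuples, products and compositions of stratified-regular maps are stratified-regular is precisely the common-refinement argument already used in Proposition~\ref{prop-2-6}, so that step is indeed routine.
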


\begin{proof}
Let $p_i \colon X \to X_i$ be the canonical projection, $1 \leq i \leq
n$. For each pair $(j,l)$ of integers satisfying $1 \leq j \leq l \leq
n$, let $q_{jl} \colon X \to X_j \times X_l$ be the canonical
projection. The $\Z$-algebra $\Heven(X;\Z)$ is generated by all
$p_i^*(H^{d_i}(X_i; \Z))$ with $d_i$ even and all $q_{jl}^*( H^{d_j +
d_l} (X_j \times X_l; \Z) )$ with $d_j$ and $d_l$ odd. The proof is
complete in view of Lemma~\ref{lem-8-14} and Propositions~\ref{prop-8-1}
and~\ref{prop-8-2}.
\end{proof}

The reader may compare Proposition~\ref{prop-8-15} and results of
\cite{bib14, bib16, bib18} to see a sharp difference between $\HCstreven(-;\Z)$
and $\HCalgeven(-;\Z)$.

\begin{proof}[Proof of Theorem~\ref{th-1-10}]
It suffices to combine Theorem~\ref{th-8-11},
Corollaries~\ref{cor-8-12}, \ref{cor-8-13}, and
Proposition~\ref{prop-8-15}.
\end{proof}

Theorem~\ref{th-1-10} can be interpreted as an approximation result for
maps into Grassmannians.

\begin{theorem}\label{th-8-16}
Let $X = X_1 \times \cdots X_n$, where each $X_i$ is a compact real
algebraic variety homotopically equivalent to the unit $d_i$-sphere for
$1 \leq i \leq n$. If $\F = \CB$ or $\F = \HB$, then for any pair
$(k,m)$ of integers satisfying $1 \leq k \leq m$, each continuous map
from $X$ into the Grassmannian $\G_k(\F^m)$ can be approximated by
stratified-regular maps.
\end{theorem}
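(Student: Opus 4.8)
The plan is to obtain the statement as an immediate consequence of the general approximation criterion of Theorem~\ref{th-4-10}, combined with the realization result of Theorem~\ref{th-1-10}. First I would observe that the Grassmannian $\G_k(\F^m)$ is precisely the $(K,m)$-multi-Grassmannian $\G_K(\F^m)$ with $K=\{k\}$, and that $X=X_1\times\cdots\times X_n$ is a compact real algebraic variety, being a finite product of compact real algebraic varieties; hence the standing hypotheses of Section~\ref{sec-4} (cf. Notation~\ref{not-4-2}) are satisfied, and Theorem~\ref{th-4-10} applies with $A=\varnothing$ to any continuous map $f\colon X\to\G_k(\F^m)$.

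Next, given such an $f$, I would form the pullback $\F$-vector bundle $f^*\gamma_k(\F^m)$ on $X$. This is a topological $\F$-vector bundle, and since $\F=\CB$ or $\F=\HB$, Theorem~\ref{th-1-10} guarantees that it is isomorphic to a stratified-algebraic $\F$-vector bundle, i.e.\ that it admits a stratified-algebraic structure. Thus condition~(\ref{th-4-10-c}) of Theorem~\ref{th-4-10} holds for $f$.

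Finally, invoking the implication (\ref{th-4-10-c})~$\Rightarrow$~(\ref{th-4-10-a}) of Theorem~\ref{th-4-10} in the case $A=\varnothing$ (where the constraints $g|_A=f|_A$ are vacuous), I conclude that every neighborhood of $f$ in $\C(X,\G_k(\F^m))$ contains a stratified-regular map; that is, $f$ can be approximated by stratified-regular maps. Since $f$ was an arbitrary continuous map, this proves the theorem.

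There is essentially no real obstacle here beyond correctly assembling the cited results: the substantive work has already been carried out in the approximation machinery of Section~\ref{sec-4} and in the proof of Theorem~\ref{th-1-10} (which rests in turn on Theorem~\ref{th-8-11}, Corollaries~\ref{cor-8-12} and~\ref{cor-8-13}, and Proposition~\ref{prop-8-15}). The only points requiring a moment's attention are that Theorem~\ref{th-4-10} is phrased for multi-Grassmannians, so one must recognize an ordinary Grassmannian as the special case $K=\{k\}$, and that the compactness hypothesis built into Notation~\ref{not-4-2} is indeed met because a finite product of compact real algebraic varieties is compact.
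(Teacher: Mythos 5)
Your proposal is correct and follows exactly the paper's own argument: the paper's proof of Theorem~\ref{th-8-16} likewise combines Theorem~\ref{th-1-10} (to see that $f^*\gamma_k(\F^m)$ admits a stratified-algebraic structure) with Theorem~\ref{th-4-10} applied with $A=\varnothing$. Your additional remarks on recognizing $\G_k(\F^m)$ as the multi-Grassmannian with $K=\{k\}$ and on the compactness of the product are accurate but routine.
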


\begin{proof}
If suffices to make use of Theorem~\ref{th-1-10} and Theorem~\ref{th-4-10}
(with $A = \varnothing$).
\end{proof}

As an interesting special case, we obtain the following approximation
result for maps with values in the unit spheres $\SB^2$ or $\SB^4$.

\begin{corollary}\label{cor-8-17}
Let $X = X_1 \times \cdots \times X_n$, where each $X_i$ is a compact
real algebraic variety homotopically equivalent to the unit $d_i$-sphere
for $1 \leq i \leq n$. If $k=2$ or $k=4$, then each continuous map from
$X$ into the unit $k$-sphere $\SB^k$ can be approximated by
stratified-regular maps.
\end{corollary}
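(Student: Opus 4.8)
The plan is to deduce the corollary directly from Theorem~\ref{th-8-16}, once $\SB^2$ and $\SB^4$ are recognized as Grassmannians of $\CB$-lines and of $\HB$-lines, respectively. For $k=2$ I would simply recall that $\SB^2$ is biregularly isomorphic to $\PB^1(\CB)=\G_1(\CB^2)$, exactly the identification already used in Corollary~\ref{cor-8-8}. Applying Theorem~\ref{th-8-16} with $\F=\CB$ and $(k,m)=(1,2)$ then shows at once that every continuous map from $X$ into $\SB^2$ can be approximated by stratified-regular maps.

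For $k=4$ the first step is to exhibit a biregular isomorphism between $\G_1(\HB^2)$ (the quaternionic projective line), viewed as a real algebraic variety, and $\SB^4$. I would do this by the Hermitian matrix model, parallel to the complex case: a one-dimensional $\HB$-subspace of $\HB^2$ is the image of a unique rank-one orthogonal projection $P$, which is a $2\times 2$ quaternionic Hermitian matrix with $P^2=P$ and trace $1$; writing $P=(I+H)/2$ with $H$ traceless Hermitian, and noting that a traceless $2\times 2$ quaternionic Hermitian matrix has real diagonal entries $a,-a$ and off-diagonal entries $q,\bar{q}$ with $q\in\HB\cong\R^4$, the relation $H^2=I$ becomes $a^2+|q|^2=1$. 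This yields a biregular isomorphism $\G_1(\HB^2)\to\SB^4\subseteq\R^5$, just as in \cite[Chapter~3]{bib9} for $\G_1(\CB^2)\cong\SB^2$. Granting this, Theorem~\ref{th-8-16} with $\F=\HB$ and $(k,m)=(1,2)$ gives the conclusion for maps into $\SB^4$.

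The only point that is not completely formal is the biregular identification $\G_1(\HB^2)\cong\SB^4$ --- essentially the algebraic incarnation of the quaternionic Hopf fibration --- and even this is classical. Once it is available, the corollary follows immediately by transporting the approximation statement of Theorem~\ref{th-8-16} along the isomorphism: composing a continuous map $X\to\SB^k$ with a fixed biregular isomorphism $\SB^k\to\G_1(\F^2)$ gives a continuous map into $\G_1(\F^2)$, approximating it by an $\SC$-regular map $g$ keeps the restriction of $\phi\circ g$ to each stratum regular for any biregular $\phi$, and composing back with $\G_1(\F^2)\to\SB^k$ is a homeomorphism of the relevant mapping spaces in the compact-open topology. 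Thus the proof is short, with the geometric identification of the quaternionic projective line with $\SB^4$ being the only substantive ingredient beyond Theorem~\ref{th-8-16}.
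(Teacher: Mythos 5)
Your proof is correct and follows essentially the same route as the paper: the paper's own proof simply recalls that $\G_1(\F^2)$ is biregularly isomorphic to $\SB^{d(\F)}$ and cites Theorem~\ref{th-8-16}. Your extra detail on the quaternionic Hermitian matrix model and on transporting approximation along a biregular isomorphism just makes explicit what the paper leaves implicit.
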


\begin{proof}
Recall that $d(\F) = \dim_{\R}\F$, and $\G_1(\F^2)$ is biregularly
isomorphic to $\SB^{d(\F)}$. Thus, Corollary~\ref{cor-8-17} is a special
case of Theorem~\ref{th-8-16}.
\end{proof}

We also have the following result analogous to
Proposition~\ref{prop-8-15}.

\begin{theorem}\label{th-8-18}
Let $X$ be a compact real algebraic variety with 
\begin{equation*}%---only for good looks
\Hstr^1(X; \Z/2) = H^1(X; \Z/2).
\end{equation*}
Assume that for a positive integer $k$, the cohomology
group $H^{2k}(X;\Z)$ is generated by the cup products of cohomology
classes belonging to $H^1(X;\Z)$. Then
\begin{equation*}
\HCstr^{2k}(X;\Z) = H^{2k}(X;\Z).
\end{equation*}
\end{theorem}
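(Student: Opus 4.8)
The plan is to prove $\HCstr^{2k}(X;\Z) = H^{2k}(X;\Z)$ by showing that the subgroup $\HCstr^{2k}(X;\Z)$ (it is a subgroup by Proposition~\ref{prop-8-1}) contains every cup product $a_1 \cupproduct \cdots \cupproduct a_{2k}$ with $a_1, \ldots, a_{2k}$ in $H^1(X;\Z)$. Since by hypothesis such products generate $H^{2k}(X;\Z)$, and the inclusion $\HCstr^{2k}(X;\Z) \subseteq H^{2k}(X;\Z)$ is obvious, this will suffice. The strategy is to reduce the general case to the already established case of a torus, using the Eilenberg--MacLane interpretation of $H^1$.

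First I would realize each $a_i$ by a map into $\SB^1$. Since $X$ is homotopically equivalent to a compact polyhedron (cf.~\cite[Corollary~9.6.7]{bib9}) and $\SB^1$ is a $K(\Z,1)$, there is a continuous map $\alpha_i \colon X \to \SB^1$ with $\alpha_i^*(e) = a_i$, where $e$ is a generator of $H^1(\SB^1;\Z) \cong \Z$. The mod~$2$ reduction of $a_i$, namely $\alpha_i^*(u)$ with $u$ the generator of $H^1(\SB^1;\Z/2)$, lies in $H^1(X;\Z/2) = \Hstr^1(X;\Z/2)$ by the first hypothesis, so Corollary~\ref{cor-7-6} shows that $\alpha_i$ is homotopic to a stratified-regular map $\beta_i \colon X \to \SB^1$. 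Choosing a common refinement of stratifications witnessing that the $\beta_i$ are stratified-regular, the map
\[
g \coloneqq (\beta_1, \ldots, \beta_{2k}) \colon X \to \T^{2k} = \SB^1 \times \cdots \times \SB^1 \quad (2k \text{ factors})
\]
is stratified-regular (this is the same construction used in the proof of Proposition~\ref{prop-7-1}).

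Next I would compute the pullback of the top class of the torus. Let $p_i \colon \T^{2k} \to \SB^1$ be the $i$th projection and set $t_i \coloneqq p_i^*(e) \in H^1(\T^{2k};\Z)$; by the Künneth theorem $\sigma \coloneqq t_1 \cupproduct \cdots \cupproduct t_{2k}$ generates $H^{2k}(\T^{2k};\Z) \cong \Z$. Since $p_i \circ g = \beta_i$ and $\beta_i$ is homotopic to $\alpha_i$, one has $g^*(t_i) = \beta_i^*(e) = \alpha_i^*(e) = a_i$, whence $g^*(\sigma) = a_1 \cupproduct \cdots \cupproduct a_{2k}$. Finally, Proposition~\ref{prop-8-15} applied to $\T^{2k}$ (a product of $2k$ copies of $\SB^1$) gives $\HCstreven(\T^{2k};\Z) = \Heven(\T^{2k};\Z)$, so $\sigma \in \HCstr^{2k}(\T^{2k};\Z)$; as $g$ is stratified-regular, Proposition~\ref{prop-8-2} yields $g^*(\sigma) = a_1 \cupproduct \cdots \cupproduct a_{2k} \in \HCstr^{2k}(X;\Z)$, as required.

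The argument is essentially a naturality computation anchored by the torus case, so no step is genuinely hard; the one place requiring a little care is producing the stratified-regular representatives $\beta_i \colon X \to \SB^1$, which is exactly where the hypothesis $\Hstr^1(X;\Z/2) = H^1(X;\Z/2)$ enters, via Corollary~\ref{cor-7-6}.
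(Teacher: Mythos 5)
Your proof is correct and follows essentially the same route as the paper: realize the degree-one classes by maps to $\SB^1$, use the hypothesis $\Hstr^1(X;\Z/2)=H^1(X;\Z/2)$ together with Corollary~\ref{cor-7-6} to replace them by stratified-regular maps, assemble these into a stratified-regular map to $\T^{2k}$, and pull back the generator of $H^{2k}(\T^{2k};\Z)$ via Propositions~\ref{prop-8-15}, \ref{prop-8-1} and~\ref{prop-8-2}. Your write-up merely makes explicit a few steps (the common refinement of stratifications, the Künneth computation) that the paper leaves implicit.
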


\begin{proof}
Each cohomology class in $H^1(X;\Z)$ is of the form $f^*(s_1)$, where $f
\colon X \to \SB^1$ is a continuous map and $s_1$ is a generator of the
group $H^1(X; \Z) \cong \Z$, cf. \cite[pp.~425, 428]{bib42}.
Consequently, the group $H^{2k}(X; \Z)$ is generated by the cohomology
classes of the form $g^*(t_{2k})$, where $g$ is a continuous map from
$X$ into the $2k$-torus $\T^{2k} = \SB^1 \times \cdots \times \SB^1$,
and $t_{2k}$ is a generator of the group $H^{2k}(\T^{2k}; \Z) \cong \Z$.
In view of Corollary~\ref{cor-7-6} and the equality $\Hstr^1(X;\Z/2) =
H^1(X; \Z/2)$, we may assume that the map $g$ is stratified-regular. By
Proposition~\ref{prop-8-15}, $\HCstr^{2k}(\T^{2k}; \Z) =
H^{2k}(\T^{2k};\Z)$. In order to complete the proof it suffices to apply
Propositions~\ref{prop-8-1} and~\ref{prop-8-2}.
\end{proof}

\begin{corollary}\label{cor-8-19}
Let $X$ be a compact real algebraic variety with
\begin{equation*}
\Hstr^1(X;\Z/2) = H^1(X;\Z/2).
\end{equation*}
Assume that $X$ is homotopically equivalent to the
$n$-torus $\SB^1 \times \cdots \times \SB^1$ ($n$ factors). Then
\begin{equation*}
\HCstreven(X;\Z) = \Heven(X;\Z).
\end{equation*}
If $\F = \CB$ or $\F = \HB$, then each topological $\F$-vector bundle on
$X$ admits a stratified-algebraic structure. If $\xi$ is a topological
$\R$-vector bundle on $X$, then the direct sum $\xi \oplus \xi$ admits a
stratified-algebraic structure.
\end{corollary}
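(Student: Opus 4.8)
The plan is to deduce the corollary from Theorem~\ref{th-8-18}, applied together with Theorem~\ref{th-8-11} and Corollaries~\ref{cor-8-12} and~\ref{cor-8-13}, in the same way that Theorem~\ref{th-1-10} is obtained from Theorem~\ref{th-8-11} and Proposition~\ref{prop-8-15}. First I would record the cohomological input. Since $X$ is homotopically equivalent to the $n$-torus $\SB^1 \times \cdots \times \SB^1$, there is a ring isomorphism $H^*(X;\Z) \cong H^*(\T^n;\Z)$, and the latter is the exterior algebra over $\Z$ on $n$ generators of degree~$1$. In particular $H^*(X;\Z)$ is torsion-free, and for every $k \geq 1$ the group $H^{2k}(X;\Z)$ is generated by cup products of cohomology classes lying in $H^1(X;\Z)$. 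Combining this with the hypothesis $\Hstr^1(X;\Z/2) = H^1(X;\Z/2)$, Theorem~\ref{th-8-18} yields $\HCstr^{2k}(X;\Z) = H^{2k}(X;\Z)$ for every $k \geq 1$. As $X$ is connected, $\HCstr^0(X;\Z) = H^0(X;\Z)$, the class $1$ being the pullback of the generator of $\HCalg^0(\{\mathrm{pt}\};\Z)$ under the constant map $X \to \{\mathrm{pt}\}$. Hence $\HCstreven(X;\Z) = \Heven(X;\Z)$, which is the first assertion.

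For the remaining assertions I would check that $X$ satisfies the standing hypotheses of Theorem~\ref{th-8-11}, Corollary~\ref{cor-8-12} and Corollary~\ref{cor-8-13}: indeed $H^*(X;\Z)$ has no torsion, and since $\HCstr^{2k}(X;\Z) = H^{2k}(X;\Z)$, the quotient group $H^{2k}(X;\Z)/\HCstr^{2k}(X;\Z)$ is trivial, so a fortiori it has no $(k-1)!$-torsion elements for every $k \geq 1$. Now, if $\F = \CB$ and $\xi$ is a topological $\CB$-vector bundle on $X$, then $\rank\xi$ lies in $H^0(X;\Z) = \HCstr^0(X;\Z)$ and each Chern class $c_k(\xi)$ lies in $H^{2k}(X;\Z) = \HCstr^{2k}(X;\Z)$; thus condition~(b) of Theorem~\ref{th-8-11} holds, and $\xi$ admits a stratified-algebraic structure. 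If $\F = \HB$, the analogous argument via Corollary~\ref{cor-8-12} applies, since $\rank\xi \in \HCstr^0(X;\Z)$ and $c_{2k}(\xi_{\CB}) \in H^{4k}(X;\Z) = \HCstr^{4k}(X;\Z)$ for every $k \geq 1$. Finally, if $\xi$ is a topological $\R$-vector bundle on $X$, then $\rank\xi \in \HCstr^0(X;\Z)$ and the Pontryagin classes $p_k(\xi)$ lie in $H^{4k}(X;\Z) = \HCstr^{4k}(X;\Z)$ for every $k \geq 1$, so Corollary~\ref{cor-8-13} shows that $\xi \oplus \xi$ admits a stratified-algebraic structure.

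The only genuinely non-formal point is the first step: verifying that a variety homotopically equivalent to $\T^n$ meets the hypotheses of Theorem~\ref{th-8-18}. This is immediate once one invokes the structure of $H^*(\T^n;\Z)$ as an exterior algebra on degree-one generators, and everything afterward is a matter of quoting the results of this section with the quotients $H^{2k}(X;\Z)/\HCstr^{2k}(X;\Z)$ now vanishing. One should, however, keep in mind that the hypothesis $\Hstr^1(X;\Z/2) = H^1(X;\Z/2)$ is essential and is precisely what feeds Corollary~\ref{cor-7-6} inside the proof of Theorem~\ref{th-8-18}: without it, the passage from degree-one classes to stratified-regular maps into $\SB^1$, and hence the conclusion $\HCstr^{2k}(X;\Z) = H^{2k}(X;\Z)$, can fail.
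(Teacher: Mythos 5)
Your proof is correct and follows essentially the same route as the paper: the paper likewise notes that $H^*(X;\Z)$ is torsion-free and generated as a $\Z$-algebra by $H^1(X;\Z)$, applies Theorem~\ref{th-8-18} to get $\HCstreven(X;\Z)=\Heven(X;\Z)$, and then quotes Theorem~\ref{th-8-11} and Corollaries~\ref{cor-8-12} and~\ref{cor-8-13}. Your write-up merely makes explicit the verification of the hypotheses (vanishing quotients, hence no $(k-1)!$-torsion) that the paper leaves implicit.
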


\begin{proof}
Obviously, $\HCstr^0(X; \Z) = H^0(X;\Z) \cong \Z$, the group $H^*(X;\Z)$
has no torsion, and the $\Z$-algebra $H^*(X; \Z)$ is generated by
$H^1(X;\Z)$. It suffices to apply Theorem~\ref{th-8-18},
Theorem~\ref{th-8-11} and Corollaries~\ref{cor-8-12} and~\ref{cor-8-13}.
\end{proof}

We conclude this section by giving a different description of the
cohomology group $\HCstr^2(X;\Z)$, which in turn leads to a new
interpretation of our results on $\CB$-line bundles. First some
topological facts will be recalled for the convenience of the reader.

Let $M$ be a smooth manifold and let $N$ be a smooth submanifold of $M$
of codimension~$k$. By convention, submanifolds are assumed to be closed
subsets of the ambient manifold. Assume that the normal bundle of $N$ in
$M$ is oriented and denote by $\tau_N^M$ the Thom class of $N$ in the
cohomology group $H^k(M, M \setminus N; \Z)$, cf. \cite[p.~118]{bib38}.
The image of $\tau_N^M$ by the restriction homomorphism $H^k(M, M
\setminus N; \Z) \to H^k(M;\Z)$, induced by the inclusion map $M
\hookrightarrow (M; M \setminus N)$, will be denoted by $[N]^M$ and
called the cohomology class represented by $N$. If $M$ is compact and
oriented, and $N$ is endowed with the compatible orientation, then
$[N]^M$ is up to sign Poincar\'e dual to the homology class in $H_*(M;
\Z)$ represented by $N$, cf. \cite[p.~136]{bib38}.

Let $P$ be a smooth manifold and let $Q$ be a smooth submanifold of $P$.
Let $f \colon M \to P$ be a smooth map transverse to $Q$. If the normal
bundle of $Q$ in $P$ is oriented and the normal bundle of the smooth
submanifold $N \coloneqq f^{-1}(Q)$ of $M$ is endowed with the
orientation induced by $f$, then $\tau_N^M = f^*(\tau_Q^P)$, where $f$
is regarded as a map from $(M, M \setminus N)$ into $(P, P \setminus Q)$
(this follows form \cite[p.~117, Theorem~6.7]{bib27}). In particular,
${[N]^M = f^*([Q]^P)}$.

Let $\xi$ be an oriented smooth $\R$-vector bundle of rank $k$ on $M$. If $s
\colon M \to \xi$ is a smooth section transverse to the zero section,
and the normal bundle of ${Z(s) = \{ x \in M \mid s(x) = 0 \}}$ is endowed
with the orientation induced by $s$ from the orientation of $\xi$ (cf.
the proof of Lemma~\ref{lem-6-1}), then
\begin{equation*}
e(\xi) = [Z(s)]^M,
\end{equation*}
where $e(\xi)$ stands for the Euler class of $\xi$. Indeed, let $E$ be
the total space of $\xi$ and $p \colon E \to M$ the bundle projection. 
Identify $M$
with the image of the zero section of $\xi$. The section $s$ is
transverse to $M$ and $Z(s) = s^{-1}(M)$. Consequently, $[Z(s)]^M =
s^*([M]^E)$. Hence
\begin{equation*}
p^*([Z(s)]^M) = p^*(s^*([M]^E)) = (s \circ p)^* ([M]^E) = [M]^E,
\end{equation*}
where the last equality holds since $s \circ p \colon E \to E$ is
homotopic to the identity map. On the other hand, $p^*(e(\xi)) = [M]^E$,
cf. \cite[p.~98]{bib38}. It follows that $e(\xi) = [Z(s)]^M$ since $p^*$
is an isomorphism.

If $\xi$ is a $\CB$-vector bundle of rank $k$, then
\begin{equation*}
e(\xi_{\R}) = c_k(\xi),
\end{equation*}
where $\xi_{\R}$ is endowed with the orientation determined by the
complex structure of $\xi$, cf. \cite[p.~158]{bib38}.

\begin{lemma}\label{lem-8-20}
Let $M$ be a smooth manifold, and let $N$ be a smooth codimension $2$
submanifold of $M$ with oriented normal bundle. Let $\lambda$ be a
smooth $\CB$-line bundle on $M$ with $c_1(\lambda) = [N]^M$. Then there
exists a smooth section $s \colon M \to \lambda$ transverse to the zero
section and satisfying $Z(s)=N$.
\end{lemma}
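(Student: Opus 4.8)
The plan is to build, directly, a smooth $\CB$-line bundle $\lambda(N)$ on $M$ together with a preferred section $s_N$ that vanishes transversally exactly along $N$, and then to identify $\lambda$ with $\lambda(N)$ via the first Chern class, exactly as the auxiliary line bundle $\lambda(D)$ attached to a codimension one divisor was used earlier in the paper.

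For the construction I would argue as follows. The normal bundle $\nu$ of $N$ in $M$ has rank $2$ and is oriented, so it carries a canonical smooth $\CB$-line bundle structure (fix a Riemannian metric and let the complex structure be rotation by a right angle compatible with the orientation; the space of such data is contractible). Choose a tubular neighborhood $U$ of $N$ in $M$ together with a diffeomorphism identifying $U$ with the total space $E(\nu)$ and $N$ with the zero section, and let $p \colon U \to N$ be the resulting retraction. Over $U$, set $\lambda(N)|_U \coloneqq p^*\nu$; under the identifications above every point $x \in U$ is a vector in the fibre $\nu_{p(x)} = (p^*\nu)_x$, so $x \mapsto x$ defines a smooth section $s_U \colon U \to p^*\nu$ transverse to the zero section with $Z(s_U) = N$. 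Over $M \setminus N$, set $\lambda(N)|_{M \setminus N} \coloneqq \varepsilon^1_{M \setminus N}(\CB)$ with the constant section $1$. On the overlap $U \setminus N$ the vector $s_U(x) \neq 0$ spans $(p^*\nu)_x$, so there is a unique $\CB$-linear isomorphism $(p^*\nu)_x \to \CB$ sending $s_U(x)$ to $1$; it depends smoothly on $x$ and thus glues $p^*\nu|_{U \setminus N}$ to $\varepsilon^1_{U\setminus N}(\CB)$ while carrying $s_U$ to the constant section $1$. This yields a well-defined smooth $\CB$-line bundle $\lambda(N)$ on $M$ (note $U \cup (M \setminus N) = M$) and a global smooth section $s_N \colon M \to \lambda(N)$, transverse to the zero section, with $Z(s_N) = N$. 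The step I expect to require the most care is precisely this patching: using the \emph{tautological} section of $p^*\nu$, rather than an arbitrary section vanishing on $N$, is what forces the two local sections to agree on the overlap and hence glue to a single global section, and the orientation bookkeeping below depends on this choice.

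Next I would compute $c_1(\lambda(N))$. Orient $\lambda(N)_{\R}$ by its complex structure. By the topological facts recalled immediately before the lemma, $c_1(\lambda(N)) = e(\lambda(N)_{\R}) = [Z(s_N)]^M$, provided the orientation of the normal bundle of $Z(s_N) = N$ in $M$ induced by $s_N$ agrees with the one used to define $[N]^M$, namely the given orientation of $\nu$. But over $U$ we have $\lambda(N) = p^*\nu$ and $s_N = s_U$ is tautological, so the normal bundle of $N$ is identified with $\nu = \lambda(N)_{\R}|_N$ as oriented bundles; hence $c_1(\lambda(N)) = [N]^M$.

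Finally, since by hypothesis $c_1(\lambda) = [N]^M = c_1(\lambda(N))$, and topological $\CB$-line bundles on a smooth (paracompact) manifold are classified up to isomorphism by their first Chern class, $\lambda$ and $\lambda(N)$ are topologically isomorphic; being smooth $\CB$-vector bundles that are topologically isomorphic, they are smoothly isomorphic (cf. the uniqueness up to smooth isomorphism of the smooth model $\eta^{\infty}$ recalled above). Choosing a smooth isomorphism $\varphi \colon \lambda(N) \to \lambda$ and setting $s \coloneqq \varphi \circ s_N$, we get a smooth section of $\lambda$ which is transverse to the zero section (because $\varphi$ is a fibrewise isomorphism) and satisfies $Z(s) = Z(s_N) = N$, as required. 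The case $N = \varnothing$ is the degenerate instance of the construction with $U = \varnothing$, giving $\lambda(N) = \varepsilon^1_M(\CB)$ and $s_N \equiv 1$.
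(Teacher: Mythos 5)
Your proof is correct and follows essentially the same route as the paper: both build the model line bundle by pulling the (complexified, oriented) normal bundle back over a tubular neighborhood, use its tautological section to glue with the trivial bundle on $M \setminus N$, verify $c_1 = [N]^M$ via the Euler-class facts recalled before the lemma, and transport the section through an isomorphism supplied by the classification of $\CB$-line bundles by $c_1$. Your explicit description of the gluing isomorphism (chosen so that the tautological section is carried to the constant section $1$) merely spells out what the paper leaves implicit.
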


\begin{proof}
Let $\tau = (T, \rho, N)$ be a tubular neighborhood of $N$ in $M$. The
smooth section $v \colon T \to \rho^*\tau$, defined by $v(x) = (x,x)$
for all $x$ in $T$, is transverse to the zero section and satisfies
$Z(v)=N$. The differential of $v$ induces an isomorphism between the
normal bundle to $N$ in $M$ and $(\rho^*\tau)|_N \cong \tau$ (cf. the
proof of Lemma~\ref{lem-6-1}). Via this isomorphism, $\tau$ is endowed
with an orientation. Actually, $\tau$ can be regarded as a smooth
$\CB$-line bundle, being oriented of rank $2$ (recall that $SO(2) \cong
U(1)$). The restriction of $\rho^*\tau$ to $T \setminus N$ is a trivial
smooth $\CB$-line bundle. Consequently, the $\CB$-line bundle
$\rho^*\tau$ on $T$ and the standard trivial $\CB$-line bundle
$\varepsilon$ on $M \setminus N$ can be glued over $T \setminus N$. The
resulting smooth $\CB$-line bundle $\mu$ on $M$ has a smooth section $w
\colon M \to \mu$, obtained by gluing $v$ and a nowhere zero section of
$\varepsilon$, which is transverse to the zero section and satisfies
$Z(w)=N$. It follows from the facts recalled before Lemma~\ref{lem-8-20}
that $c_1(\mu) = [N]^M$. The proof is complete since the smooth
$\CB$-line bundles $\lambda$ and $\mu$ are isomorphic.
\end{proof}

We now return to real algebraic geometry. Let $X$ be a compact
nonsingular real algebraic variety. We say that a cohomology class $u$
in $H^2(X;\Z)$ is \emph{adapted} if it is of the form $u=[Z]^X$, where
$Z$ is a nonsingular Zariski locally closed subvariety of $X$ of
codimension $2$, which is closed in the Euclidean topology and whose
normal bundle in $X$ is oriented.

Recall that the definition of an adapted smooth vector bundle is given
in Section~\ref{sec-1}, and then extended to topological vector bundles
in Section~\ref{sec-6} (before Theorem~\ref{th-6-9}).

\begin{proposition}\label{prop-8-21}
For a topological $\CB$-line bundle $\xi$ on a compact nonsingular real
algebraic variety, the following conditions are equivalent:
\begin{conditions}
\item\label{prop-8-21-a} The $\CB$-line bundle $\xi$ is adapted.

\item\label{prop-8-21-b} The cohomology class $c_1(\xi)$ is adapted.
\end{conditions}
\end{proposition}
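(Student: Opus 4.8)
The plan is to deduce both implications from Lemma~\ref{lem-8-20} together with the facts on Euler classes and Thom classes recalled just before it, using Lemma~\ref{lem-6-1} to keep track of orientations of normal bundles. Throughout I write $\xi^{\infty}$ for the smooth $\CB$-line bundle topologically isomorphic to $\xi$; since $c_1$ is a topological invariant we have $c_1(\xi^{\infty}) = c_1(\xi)$, and by definition $\xi$ is adapted precisely when $\xi^{\infty}$ is adapted, so it suffices to argue with $\xi^{\infty}$.

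To prove that (\ref{prop-8-21-a}) implies (\ref{prop-8-21-b}), I would start from a smooth section $u \colon X \to \xi^{\infty}$ transverse to the zero section whose zero locus $Z(u)$ is smoothly isotopic to a nonsingular Zariski locally closed subvariety $Z$ of $X$. Viewing $\xi^{\infty}_{\R}$ as an oriented smooth $\R$-vector bundle of rank $2$ (oriented by the complex structure), the submanifold $Z(u)$ has codimension $2$, and by Lemma~\ref{lem-6-1} its normal bundle in $X$ is isomorphic to $(\xi^{\infty}_{\R})|_{Z(u)}$, hence is oriented. The facts recalled before Lemma~\ref{lem-8-20} then give $c_1(\xi) = e(\xi^{\infty}_{\R}) = [Z(u)]^X$. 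Next I would extend the isotopy to a diffeotopy $h_t$ of $X$ with $h_0 = \one_X$ and $h_1(Z(u)) = Z$ (exactly as in the proof of Lemma~\ref{lem-6-7}), transport the orientation of the normal bundle of $Z(u)$ along $h_1$ to orient the normal bundle of $Z$, and apply the preimage formula $[h_1^{-1}(Q)]^X = h_1^*([Q]^X)$ with $Q = Z$ (note $h_1^{-1}(Z) = Z(u)$ and $h_1$, being a diffeomorphism, is transverse to $Z$) together with $h_1^* = \mathrm{id}$ on cohomology (since $h_1 \simeq \one_X$) to obtain $c_1(\xi) = [Z(u)]^X = [Z]^X$. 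As $Z$ is nonsingular, Zariski locally closed, closed in the Euclidean topology (being diffeomorphic to the compact manifold $Z(u)$), and carries an oriented normal bundle, this shows that $c_1(\xi)$ is adapted.

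For the reverse implication I would simply invoke Lemma~\ref{lem-8-20}: if $c_1(\xi) = [Z]^X$ with $Z$ a nonsingular Zariski locally closed subvariety of $X$ of codimension $2$, closed in the Euclidean topology and with oriented normal bundle, then $Z$ is in particular a smooth codimension-$2$ submanifold with oriented normal bundle and $c_1(\xi^{\infty}) = [Z]^X$, so Lemma~\ref{lem-8-20} (with $M = X$, $N = Z$, $\lambda = \xi^{\infty}$) yields a smooth section $s \colon X \to \xi^{\infty}$ transverse to the zero section with $Z(s) = Z$. Since $Z$ is itself a nonsingular Zariski locally closed subvariety, $Z(s)$ is trivially smoothly isotopic to such a subvariety, so $\xi^{\infty}$, and hence $\xi$, is adapted.

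I expect the only real subtlety to be the orientation bookkeeping in the first implication: one has to be sure that the orientation on the normal bundle of $Z$ produced by transport along the diffeotopy is the one for which $[Z]^X = c_1(\xi)$ holds, so that $Z$ genuinely witnesses that $c_1(\xi)$ is adapted in the sense of the definition preceding the proposition. The remaining steps are a routine assembly of the cited results.
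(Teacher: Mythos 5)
Your proposal is correct and follows essentially the same route as the paper: the paper disposes of (\ref{prop-8-21-a})$\Rightarrow$(\ref{prop-8-21-b}) by appealing to the definition of an adapted $\CB$-line bundle together with the facts recalled before Lemma~\ref{lem-8-20} (namely $c_1(\xi)=e(\xi_{\R})=[Z(s)]^X$ and invariance of $[\,\cdot\,]^X$ under a diffeotopy starting at the identity), and uses Lemma~\ref{lem-8-20} for the converse, exactly as you do. Your write-up merely makes explicit the orientation bookkeeping and the diffeotopy step that the paper leaves implicit.
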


\begin{proof}
We may assume that the $\CB$-line bundle $\xi$ is smooth. It follows
form the definition of an adapted $\CB$-line bundle that
(\ref{prop-8-21-a}) implies (\ref{prop-8-21-b}). If (\ref{prop-8-21-b})
holds, then $\xi$ is adapted in view of Lemma~\ref{lem-8-20}.
\end{proof}

Denote by $G(X)$ the subgroup of $H^2(X;\Z)$ generated by all adapted
cohomology classes.

\begin{theorem}\label{th-8-22}
For any compact nonsingular real algebraic variety $X$,
\begin{equation*}
G(X) \subseteq \HCstr^2(X; \Z).
\end{equation*}
In particular, if $\xi$ is a topological $\CB$-line bundle on $X$ with
$c_1(\xi)$ in $G(X)$, then $\xi$ admits a stratified-algebraic
structure.
\end{theorem}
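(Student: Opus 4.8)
The plan is to reduce the inclusion $G(X) \subseteq \HCstr^2(X;\Z)$ to a statement about a single $\CB$-line bundle and then feed it into the results of Sections~\ref{sec-6} and~\ref{sec-8}. Since $\HCstr^2(X;\Z)$ is a subgroup of $H^2(X;\Z)$ by Proposition~\ref{prop-8-1}, and $G(X)$ is by definition the subgroup generated by the adapted cohomology classes, it will be enough to show that each individual adapted class lies in $\HCstr^2(X;\Z)$.

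So I would fix an adapted class $u$, writing $u = [Z]^X$ where $Z \subseteq X$ is a nonsingular Zariski locally closed subvariety of codimension $2$, closed in the Euclidean topology, with oriented normal bundle in $X$. First I would choose a topological $\CB$-line bundle $\xi$ on $X$ with $c_1(\xi) = u$; such a $\xi$ exists because $c_1 \colon \VB_{\CB}^1(X) \to H^2(X;\Z)$ is an isomorphism. The key point is that $\xi$ is then adapted: this is Proposition~\ref{prop-8-21}, whose proof in turn produces, via Lemma~\ref{lem-8-20}, a smooth section of (the smoothing of) $\xi$ transverse to the zero section whose zero locus is exactly $Z$, so that the zero locus is smoothly isotopic (via the identity) to the nonsingular Zariski locally closed subvariety $Z$.

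With $\xi$ adapted and $X$ compact nonsingular, Theorem~\ref{th-1-8} applies and shows that $\xi$ is topologically isomorphic to a stratified-algebraic $\CB$-line bundle. Then Proposition~\ref{prop-8-3}, specialized to $k=1$, gives $c_1(\xi) \in \HCstr^2(X;\Z)$, i.e. $u \in \HCstr^2(X;\Z)$. This establishes $G(X) \subseteq \HCstr^2(X;\Z)$. For the final assertion, if $\xi$ is any topological $\CB$-line bundle on $X$ with $c_1(\xi) \in G(X)$, then $c_1(\xi) \in \HCstr^2(X;\Z) = c_1(\VBCstr^1(X))$ by Proposition~\ref{prop-8-6}, so there is a stratified-algebraic $\CB$-line bundle $\eta$ with $c_1(\eta) = c_1(\xi)$; since $c_1$ is injective on $\VB_{\CB}^1(X)$, the bundle $\xi$ is topologically isomorphic to $\eta$ and hence admits a stratified-algebraic structure.

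The genuine content here is already packaged in Theorem~\ref{th-1-8} (the adapted $\F$-line bundle theorem) and in Lemma~\ref{lem-8-20}, so I do not expect any real obstacle in the present argument beyond correctly matching the two notions of ``adapted'' — for cohomology classes versus for line bundles — which is exactly the role of Proposition~\ref{prop-8-21}. The one small point to be careful about is the passage between topological and smooth $\CB$-line bundles, but this is harmless, since every topological vector bundle on a smooth manifold is isomorphic to a smooth one and all the notions involved are invariant under such an isomorphism.
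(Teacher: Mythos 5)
Your proposal is correct and follows essentially the same route as the paper: reduce to a single adapted class, realize it as $c_1$ of a topological $\CB$-line bundle, invoke Proposition~\ref{prop-8-21} to see that bundle is adapted, apply Theorem~\ref{th-1-8} with $\F=\CB$, and conclude via Proposition~\ref{prop-8-6} (your use of Proposition~\ref{prop-8-3} for the forward inclusion is exactly what Proposition~\ref{prop-8-6} itself rests on). The handling of the second assertion via the injectivity of $c_1$ on $\VBCstr^1(X)$ also matches the paper.
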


\begin{proof}
The inclusion $G(X) \subseteq \HCstr^2(X;\Z)$ follows form
Theorem~\ref{th-1-8} (with $\F=\CB$) and Propositions~\ref{prop-8-6}
and~\ref{prop-8-21}. Hence, the second assertion is a consequence of
Proposition~\ref{prop-8-6}.
\end{proof}

Theorem~\ref{th-8-22} is of interest since the group $G(X)$ is easier to
compute directly than the group $\HCstr^2(X;\Z)$. Adapted cohomology
classes can also be used to give a complete description of
$\HCstr^2(X;\Z)$.

\begin{theorem}\label{th-8-23}
Let $X$ be a compact real algebraic variety. For a cohomology class $u$
in $H^2(X; \Z)$, the following conditions are equivalent:
\begin{conditions}
\item The cohomology class $u$ belongs to $\HCstr^2(X;\Z)$.

\item There exists a filtration $\FC = (X_{-1}, X_0, \ldots, X_m)$ of
$X$ with $\overline{\FC}$ a nonsingular stratification, and for each $i
= 0, \ldots, m$, there exists a multiblowup $\pi_i \colon X'_i \to X_i$
over $X_{i-1}$ such that $X'_i$ is a nonsingular variety and the
cohomology class $\pi_i^*(u|_{X_i})$ in $H^2(X'_i;\Z)$ is adapted. Here
$u|_{X_i}$ is the image of $u$ under the homomorphism $H^2(X;\Z) \to
H^2(X_i;\Z)$ induced by the inclusion map $X_i \hookrightarrow X$.
\end{conditions}
\end{theorem}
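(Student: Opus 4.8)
The plan is to deduce Theorem~\ref{th-8-23} from Theorem~\ref{th-6-9} (applied with $\F=\CB$) by passing between cohomology classes in $H^2(X;\Z)$ and topological $\CB$-line bundles via the first Chern class. Since $c_1 \colon \VB_{\CB}^1(X) \to H^2(X;\Z)$ is an isomorphism, I would fix a topological $\CB$-line bundle $\xi$ on $X$ with $c_1(\xi)=u$. Because the canonical homomorphism $\VBCstr^1(X) \to \VB_{\CB}^1(X)$ is injective (the variety $X$ being compact) and $c_1(\VBCstr^1(X)) = \HCstr^2(X;\Z)$ by Proposition~\ref{prop-8-6}, the class $u$ lies in $\HCstr^2(X;\Z)$ if and only if $\xi$ admits a stratified-algebraic structure. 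So it suffices to prove that $\xi$ admits a stratified-algebraic structure if and only if condition (b) of the theorem holds.

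For this I would invoke Theorem~\ref{th-6-9} with $\F=\CB$: it asserts that $\xi$ admits a stratified-algebraic structure exactly when there is a filtration $\FC = (X_{-1}, X_0, \ldots, X_m)$ with $\overline{\FC}$ a nonsingular stratification and, for each $i$, a multiblowup $\pi_i \colon X_i' \to X_i$ over $X_{i-1}$ with $X_i'$ nonsingular such that the pullback $\CB$-line bundle $\pi_i^*(\xi|_{X_i})$ on $X_i'$ is adapted. It then remains only to match ``$\pi_i^*(\xi|_{X_i})$ is adapted'' with ``$\pi_i^*(u|_{X_i})$ is adapted''. By naturality of the first Chern class, $c_1(\xi|_{X_i}) = u|_{X_i}$, hence $c_1(\pi_i^*(\xi|_{X_i})) = \pi_i^*(u|_{X_i})$; and since $X_i'$ is a compact nonsingular real algebraic variety, Proposition~\ref{prop-8-21} yields the equivalence of the two adapted conditions for each $i$. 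Combining these equivalences finishes the proof.

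The two translation steps are routine, and I do not expect serious difficulty: the genuine content --- resolving $\xi$ along a filtration and using blowups to split off an adapted $\CB$-line bundle --- is already encapsulated in Theorem~\ref{th-6-9}, hence ultimately in Theorems~\ref{th-5-4} and~\ref{th-1-8}. The only point demanding care is that the subvarieties $X_i$ appearing in a filtration of a compact variety are in general singular, so Proposition~\ref{prop-8-21} (stated only for compact nonsingular varieties) and the very notion of an adapted cohomology class cannot be applied to $\xi|_{X_i}$ on $X_i$ directly; they are applied on the resolved variety $X_i'$ to $\pi_i^*(\xi|_{X_i})$, which is precisely the formulation common to both Theorem~\ref{th-6-9} and condition (b) here. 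Verifying that this matching is seamless is the only subtlety, and it is built into the statements being combined.
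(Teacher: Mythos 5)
Your proposal is correct and follows essentially the same route as the paper: the paper's proof likewise fixes a topological $\CB$-line bundle $\xi$ with $c_1(\xi)=u$, uses naturality of $c_1$ together with Proposition~\ref{prop-8-21} to translate the adaptedness conditions, and then combines Theorem~\ref{th-6-9} (with $\F=\CB$) and Proposition~\ref{prop-8-6}. Your closing remark about why adaptedness must be tested on the resolved varieties $X_i'$ rather than on the possibly singular $X_i$ correctly identifies the one point of care.
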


\begin{proof}
Let $\FC = (X_{-1}, X_0, \ldots, X_m)$ be a filtration of $X$ with
$\overline{\FC}$ a nonsingular stratification, and for each $i = 0,
\ldots, m$, let $\pi_i \colon X'_i \to X_i$ be a multiblowup over
$X_{i-1}$ such that $X'_i$ is a nonsingular variety. If $\xi$ is a
topological $\CB$-line bundle on $X$ with $c_1(\xi) = u$, then
${c_1(\xi|_{X_i}) = u|_{X_i}}$ and $c_1(\pi_i^*(\xi|_{X_i})) =
\pi_i^*(u|_{X_i})$ for $0 \leq i \leq m$. In view of
Proposition~\ref{prop-8-21}, the $\CB$-line bundle $\pi_i^*(\xi|_{X_i})$
on $X'_i$ is adapted if and only if the cohomology class
$\pi_i^*(u|_{X_i})$ is adapted. Consequently, it suffices to combine
Theorem~\ref{th-6-9} (with $\F=\CB$) and Proposition~\ref{prop-8-6}.
\end{proof}

\cleardoublepage
\phantomsection
\addcontentsline{toc}{section}{\refname}
%\begin{document} %--needed for spell checking in vim

\end{document}